\newlength{\hoffsettmp}
\newlength{\voffsettmp}
\theoremstyle{plain}
\newtheorem{theorem}{Theorem}
\newtheorem{lemma}[theorem]{Lemma}
\newtheorem{cor}[theorem]{Corollary}
\newtheorem{prop}[theorem]{Proposition}
\newtheorem{question}[theorem]{Question}
\theoremstyle{definition}
\newtheorem{definition}[theorem]{Definition}
\newtheorem{example}[theorem]{Example}
\theoremstyle{definition}
\newtheorem*{remark}{Remark}
\newtheorem*{claim}{Claim}
\newtheorem*{req}{Requirements}
\newtheorem*{construction}{Construction}
\newtheorem*{ack}{Acknowledgements}
\newcommand{\lrangle}[1]{\langle #1 \rangle}
\newcommand{\pair}[1]{( #1 )}
\newcommand{\res}{\upharpoonright}
\newcommand{\dg}[1]{\mathbf{#1}}
\newcommand{\fr}{\mbox{}^{\smallfrown}}
\newcommand{\lsup}{\otimes}
\newcommand{\linf}{\oplus}
\newcommand{\binf}{\bigoplus}
\newcommand{\nn}{\mathbb{N}}
\newcommand{\cross}{\dagger}
\newcommand{\tie}{\triangledown}
\newcommand{\btie}{\bigtriangledown}
\newcommand{\htie}{\blacktriangledown}
\newcommand{\bhtie}{\mbox{\Large{$\blacktriangledown$}}}
\newcommand{\lcm}{\tie_{\omega}}
\newcommand{\cls}{\tie_\infty}
\newcommand{\bcls}{\btie_\infty\mbox{}}
\newcommand{\hjump}[1]{\htie(#1)}
\newcommand{\cmeet}{\binf\mbox{}^{\longrightarrow}}
\newcommand{\ntie}{\fr}
\newcommand{\shft}{\mbox{}\leftharpoonup}
\newcommand{\concat}{\bigsqcap}
\newcommand{\bhk}[1]{\llbracket #1 \rrbracket}
\title[Inside the Muchnik Degrees II]{Inside the Muchnik Degrees II: The Degree Structures induced by the Arithmetical Hierarchy of Countably Continuous Functions}
\author{K.~Higuchi}
\address{Department of Mathematics and Informatics, Chiba University, 1-33 Yayoi-cho, Inage, Chiba, Japan}
\email{khiguchi@g.math.s.chiba-u.ac.jp}
\author{T.~Kihara}
\address{School of Information Science, Japan Advanced Institute of Science and Technology, Nomi 923-1292, Japan}
\email{kihara@jaist.ac.jp}
\keywords{$\Pi^0_1$ class, Medvedev degree, Borel measurable function, countable continuity}
\subjclass[2010]{03D30, 03D78, 03E15, 26A21, 68Q32}
\begin{document}



\begin{abstract}
It is known that infinitely many Medvedev degrees exist inside the Muchnik degree of any nontrivial $\Pi^0_1$ subset of Cantor space.
We shed light on the fine structures inside these Muchnik degrees related to learnability and piecewise computability.
As for nonempty $\Pi^0_1$ subsets of Cantor space, we show the existence of a finite-$\Delta^0_2$-piecewise degree containing infinitely many finite-$(\Pi^0_1)_2$-piecewise degrees, and a finite-$(\Pi^0_2)_2$-piecewise degree containing infinitely many finite-$\Delta^0_2$-piecewise degrees (where $(\Pi^0_n)_2$ denotes the difference of two $\Pi^0_n$ sets), whereas the greatest degrees in these three ``finite-$\Gamma$-piecewise'' degree structures coincide.
Moreover, as for nonempty $\Pi^0_1$ subsets of Cantor space,  we also show that every nonzero finite-$(\Pi^0_1)_2$-piecewise degree includes infinitely many Medvedev (i.e., one-piecewise) degrees, every nonzero countable-$\Delta^0_2$-piecewise degree includes infinitely many finite-piecewise degrees, every nonzero finite-$(\Pi^0_2)_2$-countable-$\Delta^0_2$-piecewise degree includes infinitely many countable-$\Delta^0_2$-piecewise degrees, and every nonzero Muchnik (i.e., countable-$\Pi^0_2$-piecewise) degree includes infinitely many finite-$(\Pi^0_2)_2$-countable-$\Delta^0_2$-piecewise degrees.
Indeed, we show that any nonzero Medvedev degree and nonzero countable-$\Delta^0_2$-piecewise degree of a nonempty $\Pi^0_1$ subset of Cantor space have the strong anticupping properties.
Finally, we obtain an elementary difference between the Medvedev (Muchnik) degree structure and the finite-$\Gamma$-piecewise degree structure of all subsets of Baire space by showing that none of the finite-$\Gamma$-piecewise structures are Brouwerian, where $\Gamma$ is any of the Wadge classes mentioned above.
\end{abstract}

\maketitle

\section{Summary}

\subsection{Introduction}

This paper is a continuation of Higuchi-Kihara \cite{HK_PartI}.
Our objective in this paper is to investigate the degree structures induced by intermediate notions between the Medvedev reduction (uniformly computable function) and Muchnik reduction (nonuniformly computable function).
We will shed light on a hidden, but extremely deep, structure inside the Muchnik degree of each $\Pi^0_1$ subset of Cantor space.

In 1963, Albert Muchnik \cite{Muc} introduced the notion of Muchnik reduction as a partial function on Baire space that is decomposable into countably many computable functions.
Such a reduction is also called a {\em countably computable} function, {\em $\sigma$-computable} function, or {\em nonuniformly computable} function.
The notion of Muchnik reduction has been a powerful tool for clarifying the noncomputability structure of the $\Pi^0_1$ subsets of Cantor space \cite{Sim,Sim2,Sim5,Simta}.
Muchnik reductions have been classified in Part I \cite{HK_PartI} by introducing the notion of piecewise computability.

Remarkably, many descriptive set theorists have recently focused their attention on the concept of {\em piecewise definability} of functions on Polish spaces, in association with the Baire hierarchy of Borel measurable functions (see \cite{MR2,MRpre,Sem}).
Roughly speaking, if $\mathbf{\Gamma}$ is a pointclass (in the Borel hierarchy) and $\mathbf{\Lambda}$ is a class of functions (in the Baire hierarchy), a function is said to be $\mathbf{\Gamma}$-piecewise $\mathbf{\Lambda}$ if it is decomposable into countably many $\mathbf{\Lambda}$-functions with $\mathbf{\Gamma}$ domains.
If $\mathbf{\Gamma}$ is the class of all closed sets and $\mathbf{\Lambda}$ is the class of all continuous functions, it is simply called {\em piecewise continuous} (see for instance \cite{JR,MRS,MRSK13,PBreta}).
The notion of piecewise continuity is known to be equivalent to the $\mathbf{\Delta}^0_2$-measurability \cite{JR}.
If $\mathbf{\Gamma}$ is the class of all sets and $\mathbf{\Lambda}$ is the class of all continuous functions, it is also called {\em countably continuous} \cite{MRpre} or {\em $\sigma$-continuous} (\cite{Sabok09}).
Nikolai Luzin was the first to investigate the notion of countable-continuity, and today, many researchers have studied this concept, in particular, with an important dichotomy theorem (\cite{Sole98,PawSab}).

Our concepts introduced in Part I \cite{HK_PartI}, such as {\em $\Delta^0_2$-piecewise computability}, are indeed the lightface versions of piecewise definability.
This notion is also known to be equivalent to the effective $\Delta^0_2$-measurability (\cite{PBreta}).
See also \cite{Bra1,deBre13,Kihta} for more information on effective Borel measurability.

To gain a deeper understanding of piecewise definability, we investigate the Medvedev/Muchnik-like degree structures induced by piecewise computable notions.
This also helps us to understand the notion of relative learnability since we have observed a close relationship between lightface piecewise definability and algorithmic learning in Part I \cite{HK_PartI}.

In Part II, we restrict our attention to the local substructures consisting of the degrees of all $\Pi^0_1$ subsets of Cantor space.
This indicates that we consider the relative piecewise computably (or learnably) solvability of {\em computably-refutable problems}.
When a scientist attempts to verify a statement $P$, his verification will be algorithmically refuted whenever it is incorrect.
This {\em falsifiability principle} holds only when $P$ is represented as a $\Pi^0_1$ subset of a space.
Therefore, the restriction to the $\Pi^0_1$ sets can be regarded as an analogy of {\em Popperian learning} \cite{CN79} because of the falsifiability principle.
From this perspective, the universe of the $\Pi^0_1$ sets is expected to be a good playground of Learning Theory \cite{JORS}.

The restriction to the $\Pi^0_1$ subsets of Cantor space $2^\nn$ is also motivated by several other arguments.
First, many mathematical problems can be represented as $\Pi^0_1$ subsets of certain topological spaces (see Cenzer and Remmel \cite{CR}).
The $\Pi^0_1$ sets in such spaces have become important notions in many branches of Computability Theory, such as {\em Recursive Mathematics} \cite{HRM}, {\em Reverse Mathematics} \cite{SimRM}, {\em Computable Analysis} \cite{Wei}, {\em Effective Randomness} \cite{NieR,DHra}, and {\em Effective Descriptive Set Theory} \cite{MosDS}.
For these reasons, degree structures on $\Pi^0_1$ subsets of Cantor space $2^\nn$ are widely studied from the viewpoint of {\em Computability Theory} and {\em Reverse Mathematics}.

In particular, many theorems have been proposed on the algebraic structure of the Medvedev degrees of $\Pi^0_1$ subsets of Cantor space, such as density \cite{CH1}, embeddability of distributive lattices \cite{BS}, join-reducibility \cite{Bin}, meet-irreducibility \cite{Al1}, noncuppability \cite{CKWW}, non-Brouwerian property \cite{Higuchi12}, decidability \cite{CK}, and undecidability \cite{Sha} (see also \cite{Hin,Sim,Sim2,Sim5,Simta} for other properties on the Medvedev and Muchnik degree structures).
The $\Pi^0_1$ sets have also been a key notion (under the name of {\em closed choice}) in the study of the structure of the Weihrauch degrees, which is an extension of the Medvedev degrees (see \cite{BMP,BGa,BG}).

Among other results, Cenzer and Hinman \cite{CH1} showed that the Medvedev degrees of the $\Pi^0_1$ subsets of Cantor space are dense, and Simpson \cite{Sim} questioned whether the Muchnik degrees of $\Pi^0_1$ subsets of Cantor space are also dense.
However, this question remains unanswered.
We have limited knowledge of the Muchnik degree structure of the $\Pi^0_1$ sets because the Muchnik reductions are very difficult to control.
What we know is that as shown by Simpson-Slaman \cite{SiSl} and Cole-Simpson \cite{CoSi}, there are infinitely many Medvedev degrees in the Muchnik degree of any nontrivial $\Pi^0_1$ subsets of Cantor space.
Now, it is necessary to clarify the internal structure of the Muchnik degrees.
In Part II, we apply the disjunction operations introduced in Part I \cite{HK_PartI} to understand the inner structures of the Muchnik degrees induced by various notions of piecewise computability.

\subsection{Results}

In Part I \cite{HK_PartI}, the notions of piecewise computability and the induced degree structures are introduced.
Our objective in Part II is to study the interaction among the structures $\mathcal{P}/\mathcal{F}$ of $\mathcal{F}$-degrees of nonempty $\Pi^0_1$ subsets of Cantor space for notions $\mathcal{F}$ of piecewise computability listed as follows.
\index{$\mathcal{P}/\mathcal{F}$}%

\begin{itemize}
\item
\index{${\rm dec}^{<\omega}_{\rm p}[\Pi^0_1]$}%
${\rm dec}^{<\omega}_{\rm p}[\Pi^0_1]$ also denotes the set of all partial computable functions on $\nn^\nn$ that are decomposable into finitely many partial computable functions with $\Pi^0_1$ domains.
\item
\index{${\rm dec}^{<\omega}_{\rm d}[\Pi^0_1]$}%
${\rm dec}^{<\omega}_{\rm d}[\Pi^0_1]$ denotes the set of all partial functions on $\nn^\nn$ that are decomposable into finitely many partial computable functions with $(\Pi^0_1)_2$ domains, where a $(\Pi^0_1)_2$ set is the difference of two $\Pi^0_1$ sets.
\item ${\rm dec}^{<\omega}_{\rm p}[\Delta^0_2]$ denotes the set of all partial functions on $\nn^\nn$ that are decomposable into finitely many partial computable functions with $\Delta^0_2$ domains.
\item
\index{${\rm dec}^{\omega}_{\rm p}[\Delta^0_2]$}%
${\rm dec}^{\omega}_{\rm p}[\Delta^0_2]$ denotes the set of all partial functions on $\nn^\nn$ that are decomposable into countably many partial computable functions with $\Delta^0_2$ domains.
\item
\index{${\rm dec}^{<\omega}_{\rm d}[\Pi^0_2]$}%
${\rm dec}^{<\omega}_{\rm d}[\Pi^0_2]$ denotes the set of all partial functions on $\nn^\nn$ that are decomposable into finitely many partial computable functions with $(\Pi^0_2)_2$ domains.
\item
\index{${\rm dec}^{<\omega}_{\rm d}[\Pi^0_2]{\rm dec}^{\omega}_{\rm p}[\Delta^0_2]$}%
${\rm dec}^{<\omega}_{\rm d}[\Pi^0_2]{\rm dec}^{\omega}_{\rm p}[\Delta^0_2]$ denotes the set of all partial functions on $\nn^\nn$ that are decomposable into finitely many partial $\Delta^0_2$-piecewise computable functions with $(\Pi^0_2)_2$ domains, where a $(\Pi^0_2)_2$ set is the difference of two $\Pi^0_2$ sets.
\item
\index{${\rm dec}^{\omega}_{\rm p}[\Pi^0_2]$}%
${\rm dec}^{\omega}_{\rm p}[\Pi^0_2]$ denotes the set of all partial functions on $\nn^\nn$ that are decomposable into countably many partial computable functions with $\Pi^0_2$ domains.
\end{itemize}

\begin{center}\footnotesize
\begin{tabular}{ccccc}
 &\rotatebox[origin=r]{30}{---}&$\mathcal{P}/{\rm dec}^{<\omega}_{\rm d}[\Pi^0_2]$&\rotatebox{-30}{---}& \\
$\mathcal{P}/{\rm dec}^{<\omega}_{\rm p}[\Pi^0_1]\;$ --- $\;\mathcal{P}/{\rm dec}^{<\omega}_{\rm d}[\Pi^0_1]\;$ --- $\;\mathcal{P}/{\rm dec}^{<\omega}_{\rm p}[\Delta^0_2]$& & & & $\mathcal{P}/{\rm dec}^{<\omega}_{\rm d}[\Pi^0_2]{\rm dec}^{\omega}_{\rm p}[\Delta^0_2]\;$ --- $\;\mathcal{P}/{\rm dec}^{\omega}_{\rm p}[\Pi^0_2]$\\
 &\rotatebox[origin=r]{-30}{---}&$\mathcal{P}/{\rm dec}^{\omega}_{\rm p}[\Delta^0_2]$&\rotatebox{30}{---}& 
\end{tabular}
\end{center}

In Part I \cite{HK_PartI}, we observed that these degree structure are exactly those induced by the $(\alpha,\beta|\gamma)$-computability.
\begin{itemize}
\item
\index{$[\mathfrak{C}_T]^1_1$}%
$[\mathfrak{C}_T]^1_1$ denotes the set of all partial computable functions on $\nn^\nn$.
\item
\index{$[\mathfrak{C}_T]^1_{<\omega}$}%
$[\mathfrak{C}_T]^1_{<\omega}$ denotes the set of all partial functions on $\nn^\nn$ learnable with bounded mind changes.
\item
\index{$[\mathfrak{C}_T]^1_{\omega|<\omega}$}%
$[\mathfrak{C}_T]^1_{\omega|<\omega}$ denotes the set of all partial functions on $\nn^\nn$ learnable with bounded errors.
\item
\index{$[\mathfrak{C}_T]^1_{\omega}$}%
$[\mathfrak{C}_T]^1_{\omega}$ denotes the set of all partial learnable functions on $\nn^\nn$.
\item
\index{$[\mathfrak{C}_T]^{<\omega}_{1}$}%
$[\mathfrak{C}_T]^{<\omega}_{1}$ denotes the set of all partial $k$-wise computable functions on $\nn^\nn$ for some $k\in\nn$.
\item
\index{$[\mathfrak{C}_T]^{<\omega}_{\omega}$}%
$[\mathfrak{C}_T]^{<\omega}_{\omega}$ denotes the set of all partial functions on $\nn^\nn$ learnable by a team.
\item
\index{$[\mathfrak{C}_T]^\omega_1$}%
$[\mathfrak{C}_T]^\omega_1$ denotes the set of all partial nonuniformly computable functions on $\nn^\nn$ (i.e., all functions $f$ satisfying $f(x)\leq_Tx$ for any $x\in{\rm dom}(f)$).
\end{itemize}

As in Part I \cite{HK_PartI}, each degree structure $\mathcal{P}/[\mathfrak{C}_T]^\alpha_{\beta|\gamma}$ is abbreviated as $\mathcal{P}^\alpha_{\beta|\gamma}$.
\index{$\mathcal{P}^\alpha_{\beta\mid\gamma}$}%

\begin{center}\footnotesize
\begin{tabular}{ccccc}
 &\rotatebox[origin=r]{30}{---}&$\mathcal{P}^{<\omega}_1$&\rotatebox{-30}{---}& \\
$\mathcal{P}^1_1\;\;$ --- $\;\;\mathcal{P}^1_{<\omega}\;\;$ --- $\;\;\mathcal{P}^1_{\omega|<\omega}$& & & & $\mathcal{P}^{<\omega}_\omega\;\;$ --- $\;\;\mathcal{P}^\omega_1$\\
 &\rotatebox[origin=r]{-30}{---}&$\mathcal{P}^1_\omega$&\rotatebox{30}{---}& 
\end{tabular}
\end{center}

We will see that all of the above inclusions are proper.
Beyond the properness of these inclusions, there are four LEVELs signifying the differences between two classes $\mathfrak{F}$ and $\mathfrak{G}$ of partial functions on $\nn^\nn$ (lying between $[\mathfrak{C}_T]^1_1$ and $[\mathfrak{C}_T]^\omega_1$) listed as follows.
\begin{enumerate}
\item
\index{LEVEL 1 separation}%
There is a function $\Gamma\in \mathfrak{F}\setminus \mathfrak{G}$.
\item
\index{LEVEL 2 separation}%
There are sets $X,Y\subseteq\nn^\nn$ such that $\mathfrak{F}$ has a function $\Gamma_\mathfrak{F}:X\to Y$, but $\mathfrak{G}$ has {\em no} function $\Gamma_\mathfrak{G}:X\to Y$.
\item
\index{LEVEL 3 separation}%
There are $\Pi^0_1$ sets $X,Y\subseteq 2^\nn$ such that $\mathfrak{F}$ has a function $\Gamma_\mathfrak{F}:X\to Y$, but $\Gamma_\mathfrak{G}$ has {\em no} function $\Gamma_\mathfrak{G}:X\to Y$.
\item
\index{LEVEL 4 separation}%
For every special $\Pi^0_1$ set $Y\subseteq 2^\nn$, there is a $\Pi^0_1$ set $X\subseteq 2^\nn$ such that $\mathfrak{F}$ has a function $\Gamma_\mathfrak{F}:X\to Y$, but $\mathfrak{G}$ has {\em no} function $\Gamma_\mathfrak{G}:X\to Y$.
\end{enumerate}

The LEVEL 1 separation just represents $\mathfrak{F}\not\subseteq\mathfrak{G}$.
Clearly, $4\rightarrow 3\rightarrow 2\rightarrow 1$.
Note that the LEVEL 2 separation holds for {\em no} $\Sigma^0_1$ sets $X,Y\subseteq\nn^\nn$, since $\Pi^0_1$ is the first level in the arithmetical hierarchy which can define a nonempty set $S\subseteq\nn^\nn$ without computable element.
Such a $\Pi^0_1$ set is called {\em special}, i.e., a subset of Baire space is special if it is nonempty and contains no computable points.
\index{Pi01 set@$\Pi^0_1$ set!special}
As mentioned before, Simpson-Slaman \cite{SiSl} (see Cole-Simpson \cite{CoSi}) showed that the {\rm LEVEL} $4$ separation holds between $[\mathfrak{C}_T]^1_1$ and $[\mathfrak{C}_T]^\omega_1$, that is, every nonzero Muchnik degree $\dg{a}\in\mathcal{P}^\omega_1$ contains infinitely many Medvedev degrees $\dg{b}\in\mathcal{P}^1_1$. 

\medskip

In section $2$, we use the consistent two-tape disjunction operations on $\Pi^0_1$ subsets of Cantor space introduced in Part I \cite{HK_PartI} to obtain LEVEL 3 separation results.
\begin{itemize}
\item
\index{$\tie_{n}$}%
$\tie_{n}$ is the disjunction operation on $\Pi^0_1$ sets induced by the two-tape BHK-interpretation with mind-changes $<n$.
\item
\index{$\tie_\omega$}%
 $\tie_\omega$ is the disjunction operation on $\Pi^0_1$ sets induced by the two-tape BHK-interpretation with finitely many mind-changes.
\item
\index{$\tie_\infty$}%
 $\tie_\infty$ is the disjunction operation on $\Pi^0_1$ sets induced by the two-tape BHK-interpretation permitting unbounded mind-changes.
\end{itemize}

By using these operations, we obtain the LEVEL 3 separations results for $[\mathfrak{C}_T]^1_1$, $[\mathfrak{C}_T]^1_{<\omega}$, $[\mathfrak{C}_T]^1_{\omega|<\omega}$, and $[\mathfrak{C}_T]^{<\omega}_1$.
We show that there exist $\Pi^0_1$ sets $P,Q\subseteq 2^\nn$ such that all of the following conditions are satisfied.
\begin{enumerate}
\item
\begin{enumerate}
\item There is {\em no} computable function $\Gamma^1_1:P\tie_2 Q\to P\tie_1 Q$;
\item There is a function $\Gamma^1_{<\omega}:P\tie_2 Q\to P\tie_1 Q$ learnable with bounded mind-changes.
\end{enumerate}
\item
\begin{enumerate}
\item There is {\em no} function $\Gamma^1_{<\omega}:P\lcm Q\to P\tie_1 Q$ learnable with bounded mind-changes;
\item There is a function $\Gamma^1_{\omega|<\omega}:P\lcm Q\to P\tie_1 Q$ learnable with bounded errors.
\end{enumerate}
\item
\begin{enumerate}
\item There is {\em no} function $\Gamma^1_{\omega|<\omega}:P\cls Q\to P\tie_1 Q$ learnable with bounded errors;
\item There is a $2$-wise computable function $\Gamma^{<\omega}_1:P\cls Q\to P\tie_1 Q$.
\end{enumerate}
\end{enumerate}
The above conditions also suggest how does degrees of difficulty of our disjunction operations behave.

\medskip

In contrast to the above results, in section $3$, we will see that the hierarchy between $[\mathfrak{C}_T]^1_{<\omega}$ and $[\mathfrak{C}_T]^{<\omega}_{1}$ collapses for {\em homogeneous} $\Pi^0_1$ subsets of Cantor space $2^\nn$.
In other words, the LEVEL 4 separations {\em fail} for $[\mathfrak{C}_T]^1_{<\omega}$, $[\mathfrak{C}_T]^1_{\omega|<\omega}$, and $[\mathfrak{C}_T]^{<\omega}_1$.
For other classes, is the LEVEL 4 separation successful?

To archive the LEVEL 4 separations, we use dynamic disjunction operations developed in Part I \cite{HK_PartI}.
\begin{enumerate}
\item The concatenation $P\mapsto P\fr P$ of two $\Pi^0_1$ sets $P\subseteq 2^\nn$ indicates the mass problem ``solve $P$ by a learning proof process with mind-change-bound $2$''.
\item Every iterated concatenation along a well-founded tree indicates a learning proof process with an ordinal bounded mind changes.
\item The hyperconcatenation $P\mapsto P\htie P$ of two $\Pi^0_1$ sets $P\subseteq 2^\nn$ is defined as the iterated concatenation of $P$ along the corresponding ill-founded tree of $P$.
\end{enumerate}

These operations turn out to be extremely useful to establish the LEVEL 4 separation results.
Some of these results will be proved by applying priority argument {\em inside} some learning proof model of $P$.
\begin{enumerate}
\item The LEVEL 4 separation succeeds for $[\mathfrak{C}_T]^1_1$ and $[\mathfrak{C}_T]^1_{<\omega}$, via the map $P\mapsto P\fr P$.
\item The LEVEL 4 separation succeeds for $[\mathfrak{C}_T]^{<\omega}_1$ and $[\mathfrak{C}_T]^1_{\omega}$, via the map
\[P\mapsto\bigcup_{m\in\nn}(P\fr P\fr\dots\text{($m$ times)}\dots\fr P\fr P).\]
\item The LEVEL 4 separation succeeds for $[\mathfrak{C}_T]^1_\omega$ and $[\mathfrak{C}_T]^{<\omega}_{\omega}$, via the map $P\mapsto P\htie P$.
\item The LEVEL 4 separation succeeds for $[\mathfrak{C}_T]^{<\omega}_\omega$ and $[\mathfrak{C}_T]^\omega_1$, via the map $P\mapsto\widehat{\rm Deg}(P)$, where $\widehat{\rm Deg}(P)$ denotes the Turing upward closure of $P$.
\end{enumerate}

The method that we use to show the first and the third items also implies that any nonzero $\dg{a}\in\mathcal{P}^1_1$ and $\dg{a}\in\mathcal{P}^1_\omega$ have {\em the strong anticupping property}, i.e., for every nonzero $\dg{a}\in\mathcal{P}$, there is a nonzero $\dg{b}\in\mathcal{P}$ below $\dg{a}$ such that $\dg{a}\leq\dg{b}\vee\dg{c}$ implies $\dg{a}\leq\dg{c}$.
\index{strong anticupping property}%
Indeed, these strong anticupping results are established via concatenation $\fr$ and hyperconcatenation $\htie$.
\begin{enumerate}
\item $\mathcal{P}^1_1\models(\forall \dg{a},\dg{c})\;(\dg{a}\leq(\dg{a}\fr\dg{a})\vee\dg{c}\;\rightarrow\;\dg{a}\leq\dg{c})$.
\item $\mathcal{P}^1_\omega\models(\forall \dg{a},\dg{c})\;(\dg{a}\leq(\dg{a}\htie\dg{a})\vee\dg{c}\;\rightarrow\;\dg{a}\leq\dg{c})$.
\end{enumerate}

\medskip

In section 5, we apply our results to sharpen Jockusch's theorem \cite{Joc} and Simpson's Embedding Lemma \cite{Sim2}.
Jockusch showed the following nonuniform computability result for ${\sf DNR}_k$, the set of all $k$-valued {\em diagonally noncomputable functions}.
\begin{enumerate}
\item There is {\em no} (uniformly) computable function $\Gamma^1_1:{\sf DNR}_3\to{\sf DNR}_2$.
\item There is a nonuniformly computable function $\Gamma^\omega_1:{\sf DNR}_3\to{\sf DNR}_2$.
\end{enumerate}

This result will be sharpened by using our learnability notions as follows.

\begin{enumerate}
\item There is {\em no} learnable function $\Gamma^1_\omega:{\sf DNR}_3\to{\sf DNR}_2$.
\item There is {\em no} $k$-wise computable function $\Gamma^{<\omega}_1:{\sf DNR}_3\to{\sf DNR}_2$ for $k\in\nn$.
\item There is a (uniformly) computable function $\Gamma^1_1:{\sf DNR}_3\to{\sf DNR}_2\htie{\sf DNR}_2$.
Hence, there is a function $\Gamma^{<\omega}_\omega:{\sf DNR}_3\to{\sf DNR}_2$ learnable by a team of two learners.
\end{enumerate}

Finally, we employ concatenation and hyperconcatenation operations to show that neither $\mathcal{D}^1_{<\omega}$ nor $\mathcal{D}^{<\omega}_{1}$ nor $\mathcal{D}^{<\omega}_{\omega}$ are Brouwerian.
Hence, these degree structures are not elementarily equivalent to the Medvedev (Muchnik) degree structure.

\subsection{Notations and Conventions}

For any sets $X$ and $Y$, we say that {\em $f$ is a function from $X$ to $Y$} (written $f:X\to Y$) if the domain ${\rm dom}(f)$ of $f$ includes $X$, and the range ${\rm range}(f)$ of $f$ is included in $Y$.
We also use the notation $f:\subseteq X\to Y$ to denote that $f$ is a partial function from $X$ to $Y$, i.e., the domain ${\rm dom}(f)$ of $f$ is included in $X$, and the range ${\rm rng}(f)$ of $f$ is also included in $Y$.

\index{$\sigma^-$}\index{tree!immediate predecessor in}\index{$[\sigma]$}\index{tree}\index{$[T]$}%
\index{tree!extendible in}\index{$T^{ext}$}\index{tree!leaf of}\index{tree!dead end of}%
For basic terminology in Computability Theory, see Soare \cite{Soa}.
For $\sigma\in\nn^{<\nn}$, we let $|\sigma|$ denote the length of $\sigma$.
For $\sigma\in\nn^{<\nn}$ and $f\in\nn^{<\nn}\cup\nn^\nn$, we say that $\sigma$ is {\em an initial segment} of $f$ (denoted by $\sigma\subset f$) if $\sigma(n)=f(n)$ for each $n<|\sigma|$.
Moreover, $f\res n$ denotes the unique initial segment of $f$ of length $n$.
let $\sigma^-$ denote an immediate predecessor node of $\sigma$, i.e. $\sigma^-=\sigma\res (|\sigma|-1)$.
We also define $[\sigma]=\{f\in \nn^\nn:f\supset\sigma\}$.
A {\em tree} is a subset of $\nn^{<\nn}$ closed under taking initial segments.
For any tree $T\subseteq \nn^{<\nn}$, we also let $[T]$ be the set of all infinite paths of $T$, i.e., $f$ belongs to $[T]$ if $f\res n$ belongs to $T$ for each $n\in\nn$. 
A node $\sigma\in T$ is {\em extendible} if $[T]\cap[\sigma]\not=\emptyset$.
Let $T^{ext}$ denote the set of all extendible nodes of $T$.
We say that $\sigma\in T$ is {\em a leaf} or {\em a dead end} if there is no $\tau\in T$ with $\tau\supsetneq\sigma$.

\index{$X^{<\nn}$}\index{concatenation}\index{$\fr$}\index{$\bigsqcap$}%
For any set $X$, the tree $X^{<\nn}$ of finite words on $X$ forms a monoid under concatenation $\fr$.
Here {\em the concatenation of $\sigma$ and $\tau$} is defined by $(\sigma\fr \tau)(n)=\sigma(n)$ for $n<|\sigma|$ and $(\sigma\fr\tau)(|\sigma|+n)=\tau(n)$ for $n<|\tau|$.
We use symbols $\fr$ and $\bigsqcap$ for the operation on this monoid, where $\bigsqcap_{i\leq n}\sigma_i$ denotes $\sigma_0\fr\sigma_1\fr\dots\fr\sigma_n$.
To avoid confusion, the symbols $\times$ and $\prod$ are only used for a product of sets.
We often consider the following three left monoid actions of $X^{<\nn}$:
The first one is the set $X^\nn$ of infinite words on $X$ with an operation $\fr:X^{<\nn}\times X^\nn\to X^\nn$; $(\sigma\fr f)(n)=\sigma(n)$ for $n<|\sigma|$ and $(\sigma\fr f)(|\sigma|+n)=f(n)$ for $n\in\nn$.
The second one is the set $\mathcal{T}(X)$ of subtrees $T\subseteq X^{<\nn}$ with an operation $\fr:X^{<\nn}\times\mathcal{T}(X)\to\mathcal{T}(X)$; $\sigma\fr T=\{\sigma\fr\tau:\tau\in T\}$.
The third one is the power set $\mathcal{P}(X^\nn)$ of $X^{\nn}$ with an operation $\fr:X^{<\nn}\times\mathcal{P}(X^\nn)\to\mathcal{P}(X^\nn)$; $\sigma\fr P=\{\sigma\fr f:f\in P\}$.

\index{Pi01 set@$\Pi^0_1$ set}\index{Pi01 set@$\Pi^0_1$ set!effective enumeration of@effective enumeration of}\index{Pi01 set@$\Pi^0_1$ set!corresponding tree of@corresponding tree of}\index{$\Phi(\sigma;n)$}%
\index{Pi01 set@$\Pi^0_1$ set!computable sequence of}\index{Pi01 set@$\Pi^0_1$ set!uniform sequence of}%
\index{Pi01 set@$\Pi^0_1$ set!special}\index{$f\oplus g$}\index{$P\linf Q$}\index{$P\lsup Q$}%
We say that a set $P\subseteq\nn^\nn$ is $\Pi^0_1$ if there is a computable relation $R$ such that $P=\{f\in\nn^\nn:(\forall n)R(n,f)\}$ holds.
Equivalently, $P=[T_P]$ for some computable tree $T_P\subseteq\nn^\nn$.
Let $\{\Phi_e\}_{e\in\nn}$ be an effective enumeration of all Turing functionals (all partial computable functions\footnote{In some context, a function $\Phi$ is sometimes called partial computable if it can be extended to some $\Phi_e$. In this paper, however, we do not need to distinguish our definition as being different from this definition.}) on $\nn^\nn$.
Then the $e$-th $\Pi^0_1$ subset of $2^\nn$ is defined by $P_e=\{f\in 2^\nn:\Phi_e(f;0)\uparrow\}$.
Note that $\{P_e\}_{e\in\nn}$ is an effective enumeration of all $\Pi^0_1$ subsets of Cantor space $2^\nn$.
If (an index $e$ of) a $\Pi^0_1$ set $P_e\subseteq 2^\nn$ is given, then $T_e=\{\sigma\in 2^{<\nn}:\Phi_e(\sigma;0)\uparrow\}$ is called {\em the corresponding tree for $P_e$}.
Here $\Phi(\sigma;n)$ for $\sigma\in\nn^{<\nn}$ and $n\in\nn$ denotes the computation of $\Phi$ with an oracle $\sigma$, an input $n$, and step $|\sigma|$.
Whenever a $\Pi^0_1$ set $P$ is given, we assume that an index $e$ of $P$ is also given.
If $P\subseteq 2^\nn$ is $\Pi^0_1$, then the corresponding tree $T_P\subseteq 2^{<\nn}$ of $P$ is computable, and $[T_P]=P$.
Moreover, the set $L_P$ of all leaves of the computable tree $T_P$ is also computable.
We also say that a sequence of $\{P_i\}_{i\in I}$ of $\Pi^0_1$ subsets of a space $X$ is {\em computable} or {\em uniform} if the set $\{(i,f)\in I\times X:f\in P_i\}$ is again a $\Pi^0_1$ subset of the product space $I\times X$.
A set $P\subseteq\nn^\nn$ is {\em special} if $P$ is nonempty and $P$ has no computable member.
For $f,g\in\nn^\nn$, $f\oplus g$ is defined by $(f\oplus g)(2n)=f(n)$ and $(f\oplus g)(2n+1)=g(n)$ for each $n\in\nn$.
For $P,Q\subseteq\nn^\nn$, put $P\linf Q=(\lrangle{0}\fr P)\cup(\lrangle{1}\fr Q)$ and $P\lsup Q=\{f\oplus g:f\in P\;\&\;g\in Q\}$.

\subsection{Notations from Part I}

\subsubsection{Functions}

Every partial function $\Psi:\subseteq\nn^{<\nn}\to\nn$ is called a {\em learner}.
\index{learner}%
In Part II \cite[Proposition 1]{HK_PartI}, it is shown that we may assume that $\Psi$ is total, and we fix an effective enumeration $\{\Psi_e\}_{e\in\nn}$ of all learners.
For any string $\sigma\in\nn^{<\nn}$, the set of {\em mind-change locations of a learner $\Psi$ on the informant $\sigma$} is defined by
\index{learner!mind-change location of}%
\index{${\tt mcl}_\Psi$}%
\[{\tt mcl}_\Psi(\sigma)=\{n<|\sigma|:\Psi(\sigma\res n+1)\not=\Psi(\sigma\res n)\}.\]
We also define ${\tt mcl}_\Psi(f)=\bigcup_{n\in\nn}{\tt mcl}_\Psi(f\res n)$ for any $f\in\nn^{\nn}$.
Then, $\#{\tt mcl}_\Psi(f)$ denotes the {\em number of times that the learner $\Psi$ changes her/his mind on the informant $f$}.
\index{$\#{\tt mcl}_\Psi$}%
Moreover, the set of {\em indices predicted by a learner $\Psi$ on the informant $\sigma$} is defined by
\index{${\tt indx}_\Psi$}%
\[{\tt indx}_\Psi(\sigma)=\{\Psi(\sigma\res n):n\leq|\sigma|\}.\]
We also define ${\tt indx}_\Psi(f)=\bigcup_{n\in\nn}{\tt indx}_\Psi(f\res n)$ for any $f\in\nn^{\nn}$.
We say that {\em a partial function $\Gamma:\subseteq\nn^\nn\to\nn^\nn$ is identified by a learner $\Psi$ on $g\in\nn^\nn$} if $\lim_n\Psi_e(g\res n)$ converges, and $\Phi_{\lim_n\Psi_e(g\res n)}(g)=\Gamma(g)$.
We also say that a partial function $\Gamma$ is identified by a learner $\Psi$ if it is identified by $\Psi$ on every $g\in{\rm dom}(\Gamma)$.
In Part I \cite[Definition 2]{HK_PartI}, we introduced the seven notions of $(\alpha,\beta|\gamma)$-computability for a partial function $\Gamma:\subseteq\nn^\nn\to\nn^\nn$ listed as follows:
\index{computability!$(\alpha,\beta\mid\gamma)$-}%

\begin{enumerate}
\item $\Gamma$ is {\em $(1,1)$-computable} if it is computable.
\item $\Gamma$ is {\em $(1,<\omega)$-computable} if it is identified by a learner $\Psi$ with $\sup\{\#{\tt mcl}_\Psi(g):g\in{\rm dom}(\Gamma)\}<\omega$.
\item $\Gamma$ is {\em $(1,\omega|<\omega)$-computable} if it is identified by a learner $\Psi$ with $\sup\{\#{\tt indx}_\Psi(g):g\in{\rm dom}(\Gamma)\}<\omega$.
\item $\Gamma$ is {\em $(1,\omega)$-computable} if it is identified by a learner.
\item $\Gamma$ is {\em $(<\omega,1)$-computable} if there is $b\in\nn$ such that for every $g\in{\rm dom}(\Gamma)$, $\Gamma(g)=\Phi_e(g)$ for some $e<b$.
\item $\Gamma$ is {\em $(<\omega,\omega)$-computable} if there is $b\in\nn$ such that for every $g\in{\rm dom}(\Gamma)$, $\Gamma$ is identified by $\Psi_e$ for some $e<b$ on $g$.
\item $\Gamma$ is {\em $(\omega,1)$-computable} if it is nonuniformly computable, i.e., $\Gamma(g)\leq_Tg$ for every $g\in{\rm dom}(\Gamma)$.
\end{enumerate}

\index{$[\mathfrak{C}_T]^\alpha_{\beta}$}\index{$[\mathfrak{C}_T]^\alpha_{\beta\mid\gamma}$}%
Let $[\mathfrak{C}_T]^\alpha_{\beta}$ (resp.\ $[\mathfrak{C}_T]^\alpha_{\beta|\gamma}$) denote the set of all $(\alpha,\beta)$-computable (resp.\ $(\alpha,\beta|\gamma)$-computable) functions.
If $\mathcal{F}$ be a monoid consisting of partial functions under composition, $\mathcal{P}(\nn^\nn)$ is preordered by the relation $P\leq_\mathcal{F}Q$ indicating the existence of a function $\Gamma\in\mathcal{F}$ from $Q$ into $P$, that is, $P\leq_\mathcal{F}Q$ if and only if there is a partial function $\Gamma:\subseteq\nn^\nn\to\nn^\nn$ such that $\Gamma\in\mathcal{F}$ and $\Gamma(g)\in P$ for every $g\in Q$.
\index{$P\leq_\mathcal{F}Q$}%
Let $\mathcal{D}/\mathcal{F}$ and $\mathcal{P}/\mathcal{F}$ denote the quotient sets $\mathcal{P}(\nn^\nn)/\equiv_\mathcal{F}$ and $\Pi^0_1(2^\nn)/\equiv_\mathcal{F}$, respectively.
\index{$\mathcal{D}/\mathcal{F}$}\index{$\mathcal{P}/\mathcal{F}$}%
Here, $\Pi^0_1(2^\nn)$ denotes the set of all nonempty $\Pi^0_1$ subsets of $2^\nn$.
For $P\in\mathcal{P}(\nn^\nn)$, the equivalence class $\{Q\subseteq\nn^\nn:Q\equiv_\mathcal{F}P\}\in\mathcal{D}/\mathcal{F}$ is called {\em the $\mathcal{F}$-degree} of $P$.
\index{degree}%
If $\mathcal{F}=[\mathfrak{C}_T]^\alpha_{\beta|\gamma}$ for some $\alpha,\beta,\gamma\in\{1,<\omega,\omega\}$, we write $\leq^\alpha_{\beta|\gamma}$, $\mathcal{D}^\alpha_{\beta|\gamma}$, and $\mathcal{P}^\alpha_{\beta|\gamma}$ instead of $\leq_\mathcal{F}$, $\mathcal{D}/\mathcal{F}$ and $\mathcal{P}/\mathcal{F}$.
\index{$\leq^\alpha_{\beta\mid\gamma}$}\index{$\mathcal{D}^\alpha_{\beta\mid\gamma}$}%
\index{$\mathcal{P}^\alpha_{\beta\mid\gamma}$}%
The preorderings $\leq^1_1$ and $\leq^\omega_1$ are equivalent to the Medvedev reducibility \cite{Med} and the Muchnik reducibility \cite{Muc}, respectively.
In Part I \cite[Theorem 26 and Proposition 27]{HK_PartI}, we showed the following equivalences:
\begin{align*}
&\mathcal{P}^1_{<\omega}=\mathcal{P}/{\rm dec}^{<\omega}_{\rm d}[\Pi^0_1] & 
&\mathcal{P}^1_{\omega|<\omega}=\mathcal{P}/{\rm dec}^{<\omega}_{\rm p}[\Delta^0_2] &
&\mathcal{P}^1_{\omega}=\mathcal{P}/{\rm dec}^{\omega}_{\rm p}[\Delta^0_2] \\
&\mathcal{P}^{<\omega}_{1}=\mathcal{P}/{\rm dec}^{<\omega}_{\rm d}[\Pi^0_2] &
&\mathcal{P}^{<\omega}_{\omega}=\mathcal{P}/{\rm dec}^{<\omega}_{\rm d}[\Pi^0_2]{\rm dec}^{\omega}_{\rm p}[\Delta^0_2] &
&\mathcal{P}^{\omega}_{1}=\mathcal{P}/{\rm dec}^{\omega}_{\rm p}[\Pi^0_2]
\end{align*}

\subsubsection{Sets}

To define the disjunction operations in Part I \cite[Definition 29]{HK_PartI}, we introduced some auxiliary notions.
Let $I\subseteq\nn$ be a set.
Fix $\sigma\in(I\times\nn)^{<\nn}$, and $i\in I$.
Then {\em the $i$-th projection of $\sigma$} is inductively defined as follows.
\index{i-th projection@$i$-th projection}\index{${\tt pr}_i(\sigma)$}%
\begin{align*}
{\tt pr}_i(\lrangle{})=\lrangle{},& & {\tt pr}_i(\sigma)=
\begin{cases}
{\tt pr}_i(\sigma^-)\fr n, \mbox{ if } \sigma=\sigma^-\fr\lrangle{\pair{i,n}},\\
{\tt pr}_i(\sigma^-), \mbox{ otherwise.}
\end{cases}
\end{align*}
Moreover, {\em the number of times of mind-changes of (the process reconstructed from a record) $\sigma\in (I\times\nn)^{<\nn}$} is given by
\index{number of times of mind-changes}\index{${\tt mc}$}%
\[{\tt mc}(\sigma)=\#\{n<|\sigma|-1:(\sigma(n))_0\not=(\sigma(n+1))_0\}.\]
Here, for $x=\pair{x_0,x_1}\in I\times\nn$, the first (second, resp.) coordinate $x_0$ ($x_1$, resp.) is denoted by $(x)_0$ ($(x)_1$, resp.).
Furthermore, for $f\in(I\times\nn)^{\nn}$, we define ${\tt pr}_i(f)=\bigcup_{n\in\nn}{\tt pr}_i(f\res n)$ for each $i\in I$, and ${\tt mc}(f)=\lim_n{\tt mc}(f\res n)$, where if the limit does not exist, we write ${\tt mc}(f)=\infty$.

In Part I \cite[Definition 33, 36 and 55]{HK_PartI}, we introduced the disjunction operations.
Fix a collection $\{P_i\}_{i\in I}$ of subsets of Baire space $\nn^\nn$.
\begin{enumerate}
\item
\index{$\bhk{\bigvee_{i\in I}P_i}_{\sf Int}$}%
$\bhk{\bigvee_{i\in I}P_i}_{\sf Int}=\{f\in(I\times\nn)^\nn:((\exists i\in I)\;{\tt pr}_i(f)\in P_i)\;\&\;{\tt mc}(f)=0\}$.
\item
\index{$\bhk{\bigvee_{i\in I}P_i}_{\sf LCM[n]}$}%
 $\bhk{\bigvee_{i\in I}P_i}_{\sf LCM[n]}=\{f\in(I\times\nn)^\nn:((\exists i\in I)\;{\tt pr}_i(f)\in P_i)\;\&\;{\tt mc}(f)<n\}$.
\item
\index{$\bhk{\bigvee_{i\in I}P_i}_{\sf CL}$}%
 $\bhk{\bigvee_{i\in I}P_i}_{\sf CL}=\{f\in(I\times\nn)^\nn:(\exists i\in I)\;{\tt pr}_i(f)\in P_i\}$.
\end{enumerate}

As in Part I, we use the notation ${\tt write}(i,\sigma)$ for any $i\in\nn$ and $\sigma\in\nn^{<\nn}$.
\index{${\tt write}$}%
\[{\tt write}(i,\sigma)=i^{|\sigma|}\oplus\sigma=\lrangle{\pair{i,\sigma(0)},\pair{i,\sigma(1)},\pair{i,\sigma(2)},\dots,\pair{i,\sigma(|\sigma|-1)}}.\]
This string indicates the {\em instruction to write the string $\sigma$ on the $i$-th tape} in the one/two-tape model.
We also use the notation ${\tt write}(i,f)=\bigcup_{n\in\nn}{\tt write}(i,f\res n)=i^\nn\oplus f$ for any $f\in\nn^{\nn}$.

In Part II, we are mostly interested in the degree structures of $\Pi^0_1$ subsets of $2^\nn$.
As mentioned in Part I \cite{HK_PartI}, the consistent disjunction operations are useful to study such local degree structures.
{\em The consistency set ${\rm Con}(T_i)_{i\in I}$ for a collection $\{T_i\}_{i\in I}$ of trees} is defined as follows.
\index{consistency set}\index{${\rm Con}(T_i)_{i\in I}$}%
\[{\rm Con}(T_i)_{i\in I}=\{f\in(I\times\nn)^\nn:(\forall i\in I)(\forall n\in\nn)\;{\tt pr}_i(f\res n)\in T_i\}.\]
Then we use the following modified definitions.
Fix a collection $\{P_i\}_{i\in I}$ of $\Pi^0_1$ subsets of Baire space $\nn^\nn$ and $n\in\omega\cup\{\omega\}$.
\begin{enumerate}
\item
\index{$\left[\btie_n\right]_{i\in I}P_i$}%
$\left[\btie_n\right]_{i\in I}P_i=\bhk{\bigvee_{i\in I}P_i}_{{\sf LCM}[n]}\cap{\rm Con}(T_{P_i})_{i\in I}$.
\item
\index{$\left[\bcls\right]_{i\in I}P_i$}%
$\left[\bcls\right]_{i\in I}P_i=\bhk{\bigvee_{i\in I}P_i}_{\sf CL}\cap{\rm Con}(T_{P_i})_{i\in I}$.
\end{enumerate}
Here $T_{P_i}$ is the corresponding tree for $P_i$ for every $i\in I$.
If $i\in\{0,1\}$, then we simply write $P_0\tie_nP_1$, $P_0\tie_\omega P_1$, and $P_0\tie_\infty P_1$ for these notions.
\index{$P_0\tie_nP_1$}\index{$P_0\tie_\omega P_1$}\index{$P_0\tie_\infty P_1$}%
In Part II, we use the following notion.

\begin{definition}
Pick any $*\in\nn\cup\{{\omega}\}\cup\{\infty\}$.
For each disjunctive notions $\tie_*$ and collection $\{P_i\}_{i\in I}$ of subsets of $\nn^\nn$, fix the corresponding tree $T_{P_i}\subseteq\nn^{<\nn}$ of $P_i$ for every $i\in I$ and we may also associate a tree $T_*$ with (the closure of) $P_0\tie_*P_1$.
Then {\em the heart of $P_0\tie_*P_1$} is defined by $T_*^\heartsuit=\{\sigma\in T_*:(\forall i\in I)\;{\tt pr}_i(\sigma)\in T_{P_i}^{ext}\}$.
\index{heart}\index{$T_*^\heartsuit$}%
\end{definition}

Note that every $\sigma\in T_*^\heartsuit$ is extendible in $T_*$, since $T_*^\heartsuit\subseteq\{\sigma\in T_*:(\exists i\in I)\;{\tt pr}_i(\sigma)\in T_{P_i}^{ext}\}$.

Let $L_P$ denote the set of all leaves of the corresponding tree for a nonempty $\Pi^0_1$ set $P$ (where recall that such a tree is assumed to be uniquely determined when an index of $P$ is given).
Then {\em the (non-commutative) concatenation of $P$ and $Q$} is defined as follows.
\index{concatenation}\index{concatenation!non-commutative}\index{$P\ntie Q$}%
\[P\ntie Q=P\cup\bigcup_{\rho\in L_P}\rho\fr Q.\]
Moreover, the commutative concatenation $P\tie Q$ is defined as $(P\ntie Q)\oplus(Q\ntie P)$.
\index{concatenation!commutative}\index{$P\tie Q$}%
Let $P$ and $\{Q_n\}_{n\in\nn}$ be computable collection of $\Pi^0_1$ subsets of $2^\nn$, and let $\rho_n$ denote the length-lexicographically $n$-th leaf of the corresponding computable tree of $P$.
Then, we define the {\em infinitary concatenation} and {\em recursive meet} \cite{BS} as follows:
\index{$P\ntie\{Q_i\}_{i\in\nn}$}\index{concatenation!infinitary}%
\index{$\cmeet_{i\in\nn}Q_i$}\index{recursive meet}%
\begin{align*}
P\ntie\{Q_i\}_{i\in\nn}=P\cup\bigcup_n\rho_n\fr Q_n,& &\cmeet_{i\in\nn}Q_i={\sf CPA}\ntie\{Q_i\}_{i\in\nn}.
\end{align*}
Here, recall that ${\sf CPA}$ is a Medvedev complete set, which consists of all {\em complete consistent extensions of Peano Arithmetic}.
\index{${\sf CPA}$}%
The Medvedev completeness of ${\sf CPA}$ ensures that for any nonempty $\Pi^0_1$ subset $P\subseteq 2^\nn$, a computable function $\Phi:{\sf CPA}\to P$ exists.

In Part I, we studied the disjunction and concatenation operations along graphs.
For nonempty $\Pi^0_1$ subsets $P$ and $Q$ of $2^\nn$, the {\em hyperconcatenation $Q\htie P$ of $Q$ and $P$} is defined by the iterated concatenation of $P$'s along the ill-founded tree $T_Q$, that is,
\index{concatenation!hyper-}\index{hyperconcatenation}\index{$Q\htie P$}%
\[Q\htie P=\left[\bigcup_{\tau\in T_Q}\left(\concat_{i<|\tau|}T_P\fr\lrangle{\tau(i)}\right)\fr T_P\right].\]

\tableofcontents

\section{Degrees of Difficulty of Disjunctions}

The main objective in this section is to establish {\rm LEVEL} $3$ separation results among our classes of nonuniformly computable functions by using disjunction operations introduced in Part I \cite{HK_PartI}.
We have already seen the following inequalities for $\Pi^0_1$ subsets $P,Q\subseteq 2^\nn$ in Part I \cite{HK_PartI}.
\[P\oplus Q\geq^1_1P\cup Q\geq^1_1P\tie Q\geq^1_1P\lcm Q\geq^1_1P\cls Q.\]

As observed in Part I \cite{HK_PartI}, these binary disjunctions are closely related to the reducibilities $\leq^1_1$, $\leq^{<\omega}_{tt,1}$, $\leq^1_{<\omega}$, $\leq^1_{\omega|<\omega}$, and $\leq^{<\omega}_1$, respectively.
We employ rather exotic $\Pi^0_1$ sets constructed by Jockusch and Soare to separate the strength of these disjunctions.
We say that a set $A\subseteq\nn^\nn$ is {\em an antichain} if it is an antichain with respect to the Turing reducibility $\leq_T$.
\index{Pi01 set@$\Pi^0_1$ set!antichain}%
In other words, $f$ is Turing incomparable with $g$, for any two distinct elements $f,g\in A$.
A nonempty closed set $A\subseteq\nn^\nn$ is {\em perfect} if it has no isolated point.
\index{Pi01 set@$\Pi^0_1$ set!perfect}%

\begin{theorem}[Jockusch-Soare \cite{JS}]\label{thm:JS}
There exists a perfect $\Pi^0_1$ antichain in $2^\nn$.
\end{theorem}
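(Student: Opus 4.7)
The plan is to build a computable pruned binary tree $T \subseteq 2^{<\nn}$ whose body $[T]$ is perfect and forms a Turing-antichain. I realize $T$ as the downward closure of the range of a total computable monotone embedding $h : 2^{<\nn} \to 2^{<\nn}$ satisfying
\[ \sigma \subseteq \rho \Rightarrow h(\sigma) \subseteq h(\rho), \qquad \sigma \sbot \rho \Rightarrow h(\sigma) \sbot h(\rho), \qquad |h(\sigma)| \geq |\sigma|. \]
Such an $h$ makes $T$ a computable perfect tree and induces a homeomorphism $\pi : 2^\nn \to [T]$, $\pi(g) = \bigcup_n h(g\res n)$. Perfection of $[T]$ is automatic from $h(\sigma \fr 0) \sbot h(\sigma \fr 1)$ at every $\sigma$.

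The antichain requirements, indexed by $e \in \nn$ and pairs $\sigma \sbot \tau$ in $2^{<\nn}$, read
\[ R_{e,\sigma,\tau} :\ (\forall f \in [T] \cap [h(\sigma)])(\forall g \in [T] \cap [h(\tau)])\ \Phi_e^f \neq g. \]
Granting all $R_{e,\sigma,\tau}$, for any distinct $f, g \in [T]$ we have $f = \pi(g_0)$, $g = \pi(g_1)$ with $g_0 \sbot g_1$, and taking sufficiently long initial segments yields $\sigma \sbot \tau$ activating the requirement and ruling out $\Phi_e^f = g$ for every $e$. Hence $[T]$ is an antichain.

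To meet $R_{e,\sigma,\tau}$, at the stage where $h$ is being extended past $\sigma$ and $\tau$, I search for $\alpha \supseteq h(\sigma)$ and $n \geq |h(\tau)|$ with $\Phi_e^\alpha(n)\downarrow = v$. If found, I commit $h$ on the cone above $\sigma$ in $2^{<\nn}$ to lie above $\alpha$ (so every later $h(\sigma')$ with $\sigma' \supseteq \sigma$ extends $\alpha$), and commit $h$ on the cone above $\tau$ to assign bit $1-v$ at position $n$ (i.e.\ pick $\beta \supseteq h(\tau)$ with $|\beta| > n$ and $\beta(n) = 1-v$, and force $h(\tau') \supseteq \beta$ for all subsequent $\tau' \supseteq \tau$). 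Then any $f$ through the $\sigma$-cone has $\Phi_e^f(n) = v$ by monotonicity of the functional, while any $g$ through the $\tau$-cone has $g(n) = 1-v$, so the requirement is met. If the search never succeeds, then $\Phi_e^f(n)$ diverges for every $f \in [T]\cap[h(\sigma)]$ and every such $n$, so $\Phi_e^f$ is partial and cannot equal any total $g \in 2^\nn$; the requirement holds vacuously.

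Each action only further specifies previously undetermined values of $h$; nothing is retracted, so $h$ stays total computable. Processing requirements in an $\omega$-list, one per stage, yields the full $h$ and hence the computable tree $T$. The main obstacle I anticipate is the bookkeeping needed so that concurrent commitments above different $\sigma$ and $\tau$ remain consistent with monotonicity and incomparability-preservation; however, since every commitment only constrains the image on a specific cone $[\sigma]$ or $[\tau]$ of $2^{<\nn}$ by demanding a longer fixed extension of the currently-specified value, compatibility is immediate. The resulting $[T]$ is a perfect $\Pi^0_1$ antichain in $2^\nn$.
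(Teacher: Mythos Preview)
The paper does not supply its own proof of this theorem; it is quoted from Jockusch--Soare. So I evaluate your argument on its own merits.

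Your overall strategy---build a computable monotone embedding $h:2^{<\nn}\to 2^{<\nn}$, take $T$ to be the downward closure of its range, and diagonalize against each $(\Phi_e,\sigma,\tau)$---is the standard one and is correct in outline. But there is a genuine gap in the execution.

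You write that at the stage for $R_{e,\sigma,\tau}$ you ``search for $\alpha\supseteq h(\sigma)$ and $n\geq |h(\tau)|$ with $\Phi_e^\alpha(n)\downarrow$,'' and that ``if the search never succeeds\ldots the requirement holds vacuously.'' You then assert $h$ is total computable. These two claims are in tension: the search is an unbounded $\Sigma^0_1$ question, and if it never halts then that stage never finishes, so $h$ is never extended further and is not total. As written, your construction is really using one bit of $\emptyset'$ per stage (does the search halt?), which makes $h$ only $\emptyset'$-computable and $[T]$ only $\Pi^0_2$, not $\Pi^0_1$.

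There is a related timing problem you pass over. For a fixed pair $(\sigma,\tau)$ there are infinitely many requirements $R_{e,\sigma,\tau}$, so they cannot all be handled ``at the stage where $h$ is being extended past $\sigma$ and $\tau$.'' Any $R_{e,\sigma,\tau}$ processed later will find $h$ already defined on proper extensions of $\sigma$, and your commitment ``on the cone above $\sigma$'' to lie above some $\alpha$ may be incompatible with those already-assigned values. Your last paragraph correctly notes that nested commitments on the same cone are compatible when processed in order, but it does not address this conflict between a late-arriving commitment and earlier default extensions of $h$.

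The Jockusch--Soare construction resolves both issues simultaneously: one runs all pending searches with a time bound at every stage while extending the tree by default, and when a witness appears one acts by \emph{pruning} the co-c.e.\ tree above the relevant node (so $T$ is $\Pi^0_1$ because nodes are only removed, never added back). This pruning may injure lower-priority requirements, and one must verify the usual finite-injury bookkeeping: each requirement is injured only finitely often, eventually either acts or is vacuously satisfied, and the surviving tree is perfect. That argument is genuinely more involved than the sketch you have given.
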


A stronger condition is sometimes required.
For a set $P\subseteq\nn^\nn$ and an element $g\in\nn^\nn$, let $P^{\leq_Tg}$ denote the set of all element of $P$ which are Turing reducible to $g$.
\index{$P^{\leq_Tg}$}%
Then, a set $A\subseteq\nn^\nn$ is antichain if and only if $A^{\leq_Tg}=\{g\}$ for every $g\in A$.
A set $P\subseteq\nn^\nn$ is {\em independent} if $P^{\leq_T\bigoplus D}=D$ for every finite subset $D\subset P$.
\index{Pi01 set@$\Pi^0_1$ set!independent}%

\begin{theorem}[see Binns-Simpson \cite{BS}]
There exists a perfect independent $\Pi^0_1$ subset of $2^\nn$.
\end{theorem}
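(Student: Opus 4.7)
The plan is to construct a computable perfect binary tree $T\subseteq 2^{<\nn}$ such that $P=[T]$ is independent. Since $T$ is computable, $P$ is $\Pi^0_1$; if additionally every node of $T$ has two incomparable extensions in $T$, then $P$ will be perfect. What remains is to arrange Turing independence through a priority construction. I would set up a recursive embedding $\phi:2^{<\nn}\to T$ with $\phi(\rho\fr 0)\sbot\phi(\rho\fr 1)$ for every $\rho\in 2^{<\nn}$, so that the final paths of $T$ are naturally indexed by $2^\nn$ via $g_x=\bigcup_n\phi(x\res n)$. The ``guide nodes'' $\phi(\rho)$ are what we update during the construction.

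The requirements are indexed by tuples $R_{e,k,\vec{x}}$, where $e\in\nn$, $k\geq 1$, and $\vec{x}=(x_0,\dots,x_{k-1})$ is a finite sequence of pairwise incomparable elements of $2^{<\nn}$. Such a requirement reads: for all $g_i\in[T]$ with $g_i\supset\phi(x_i)$ ($i<k$), if $\Phi_e^{g_0\oplus\cdots\oplus g_{k-1}}$ is total and lies in $[T]$, then it equals $g_j$ for some $j<k$. Meeting every $R_{e,k,\vec{x}}$ simultaneously yields independence: given any finite $D=\{h_0,\dots,h_{k-1}\}\subset P$ with $h_j=g_{y_j}$ for distinct $y_j\in 2^\nn$, any $h\in P^{\leq_T\bigoplus D}$ is obtained by some $\Phi_e$ applied to $\bigoplus D$, and suitably long initial segments $x_j\subset y_j$ turn $h$ into the output of $R_{e,k,(x_0,\dots,x_{k-1})}$, forcing $h\in D$.

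To meet $R_{e,k,\vec{x}}$, I would perform the usual ``forcing with perfect subtrees'' step above each guide node: one searches for extensions $\tau_i\supset \phi(x_i)$ together with a pair of further splittings $\tau_i',\tau_i''\supset\tau_i$ such that the values $\Phi_e^{\tau_0'\oplus\cdots\oplus\tau_{k-1}'}$ and $\Phi_e^{\tau_0''\oplus\cdots\oplus\tau_{k-1}''}$ converge to incomparable finite strings. When such a splitting is found, one replaces each $\phi(x_i)$ by $\tau_i$ (and updates $\phi$ coherently on the cone above each $x_i$ so as to preserve the perfect-tree structure), and then prunes $T$ to exclude the output of $\Phi_e$ along the chosen cone; this can be done because the splitting provides two incompatible outputs, at least one of which can be cut out of $T$ while leaving the cone perfect. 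If no such splitting exists, then $\Phi_e$ applied to any paths above $(\phi(x_0),\dots,\phi(x_{k-1}))$ either diverges or its limit is $\leq_T\emptyset$ on an arbitrarily long initial segment, in which case standard diagonalization (killing a single path in $T$ above one of the cones) suffices.

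The main obstacle is the verification: one must show that the ``no splitting'' alternative really does reduce the requirement to a trivial one, and that the construction, in which the guide nodes $\phi(x)$ are redefined only finitely often per requirement (each $R_{e,k,\vec{x}}$ acts at most once), produces a computable tree $T$. Organizing the requirements in a priority list $\{R_n\}_{n\in\nn}$, at stage $s$ we only attend to $R_n$ for $n\leq s$ and only revise guide nodes of length at most some bound growing with $s$; injury is finite because higher-priority action happens at most once. The resulting tree is computable, perfect by construction, and independent by the joint satisfaction of all $R_{e,k,\vec{x}}$.
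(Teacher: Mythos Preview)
The paper does not prove this theorem; it is simply cited from Binns--Simpson. So there is nothing to compare your sketch against, and I assess it on its own.

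Your overall strategy (a priority construction of a computable perfect tree via guide nodes $\phi(\rho)$, with $e$-splitting to meet each $R_{e,k,\vec{x}}$) is the standard shape for such results, but the ``prune $T$ to exclude the output'' step has a real gap. Suppose you find incomparable outputs $\sigma',\sigma''$ and commit to the primed side so that every relevant output now extends $\sigma'$; you then want to delete $[\sigma']$ from $T$. Nothing in your argument prevents $\sigma'$ from being an initial segment of one of your new guide nodes $\tau_j'$, or of some unrelated guide node $\phi(y)$ needed for perfectness---in which case pruning $[\sigma']$ destroys the tree you are building. Your assertion that ``at least one of which can be cut out of $T$ while leaving the cone perfect'' is exactly what needs proof, and it is not automatic: both $\sigma'$ and $\sigma''$ may be short strings lying below many current guide nodes. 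A correct construction (as in the Jockusch--Soare antichain argument or the Binns--Simpson independence version) handles this either by iterating the splitting to drive the output to a safely prune-able location, or by more careful bookkeeping that keeps output cones disjoint from the input cones throughout. The ``no splitting'' case has the analogous (though more easily repaired) issue: you cannot ``kill a single path'' in a perfect tree without first moving all guide nodes off that path and then pruning an entire cone around it.
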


On the other hand, in the later section, we will see that our hierarchy of disjunctions collapses for homogeneous sets, which may be regarded as an opposite notion to antichains and independent sets.

\subsection{The Disjunction $\linf$ versus the Disjunction $\cup$}

We first separate the strength of the coproduct (the intuitionistic disjunction) $\linf$ and the union (the classical one-tape disjunction) $\cup$.
This automatically establish the LEVEL $3$ separation result between $[\mathfrak{C}_{T}]^1_1$ and $[\mathfrak{C}_{tt}]^{<\omega}_1$.
Recall that a set $P\subseteq\nn^\nn$ is {\em special} if it is nonempty and it contains no computable points.
\index{Pi01 set@$\Pi^0_1$ set!special}%

\begin{lemma}\label{lem:2:separ}
Let $P_0,P_1$ be $\Pi^0_1$ subsets of $2^\nn$, and let $Q$ be a special $\Pi^0_1$ subset of $2^\nn$.
Assume that there exist $f\in P_0$ and $g\in P_1$ with $Q^{\leq_Tf\oplus g}=Q^{\leq_Tf}\cup Q^{\leq_Tg}$ such that $Q^{\leq_Tf}$ and $Q^{\leq_Tg}$ are separated by open sets.
Then $Q\not\leq^{1}_{1}(P_0\lsup 2^\nn)\cup(2^\nn\lsup P_1)$.
\end{lemma}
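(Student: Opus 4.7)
The plan is to assume, for contradiction, that a computable $\Gamma:(P_0\lsup 2^\nn)\cup(2^\nn\lsup P_1)\to Q$ exists, then exploit the continuity of $\Gamma$ at the specific point $f\oplus g$ together with the Turing-separation hypothesis to produce an element of $Q$ that must lie in two disjoint open sets simultaneously.

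First I would fix disjoint open sets $U_0,U_1\subseteq 2^\nn$ with $Q^{\leq_T f}\subseteq U_0$ and $Q^{\leq_T g}\subseteq U_1$ (this is the ``separated by open sets'' assumption; note that the special-ness of $Q$ is what makes this assumption non-vacuous, since a computable point in $Q$ would lie in both $Q^{\leq_T f}$ and $Q^{\leq_T g}$). Since $f\in P_0$, we have $f\oplus g\in P_0\lsup 2^\nn\subseteq\operatorname{dom}(\Gamma)$, so $\Gamma(f\oplus g)\in Q$ is defined and satisfies $\Gamma(f\oplus g)\leq_T f\oplus g$. The hypothesis $Q^{\leq_T f\oplus g}=Q^{\leq_T f}\cup Q^{\leq_T g}$ then forces $\Gamma(f\oplus g)\in Q^{\leq_T f}\cup Q^{\leq_T g}$. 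Without loss of generality assume $\Gamma(f\oplus g)\in Q^{\leq_T f}\subseteq U_0$ (the other case is symmetric after swapping the roles of $f,g,P_0,P_1$).

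Next I would localize using continuity of $\Gamma$. Because $U_0$ is open, some basic neighborhood $[\tau_0]\ni\Gamma(f\oplus g)$ is contained in $U_0$; because $\Gamma$ is computable on its domain, there is $k\in\nn$ such that any $x\in\operatorname{dom}(\Gamma)$ with $x\res 2k=(f\oplus g)\res 2k$ satisfies $\Gamma(x)\in[\tau_0]\subseteq U_0$. Now choose any computable $h\in 2^\nn$ with $h\res k=f\res k$, for instance $h=(f\res k)\fr 0^\omega$. Then $h\oplus g$ agrees with $f\oplus g$ on the first $2k$ bits, and $h\oplus g$ still lies in the domain because $g\in P_1$ puts it in $2^\nn\lsup P_1$. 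Consequently $\Gamma(h\oplus g)\in U_0$.

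Finally I would close the argument by computing the Turing degree of $h\oplus g$. Since $h$ is computable, $h\oplus g\equiv_T g$, so $\Gamma(h\oplus g)\leq_T g$ and $\Gamma(h\oplus g)\in Q$, whence $\Gamma(h\oplus g)\in Q^{\leq_T g}\subseteq U_1$. This places $\Gamma(h\oplus g)$ in $U_0\cap U_1=\emptyset$, the desired contradiction. The only genuinely subtle point is the localization step: one must verify that the cylinder controlling $\Gamma(f\oplus g)$ can be cut at an \emph{even} coordinate $2k$ so that replacing the even-indexed input by a computable stream does not spoil the prefix condition. This is automatic once one picks $k$ large enough that the continuity modulus is at most $2k$, since refining a modulus to an even value is harmless.
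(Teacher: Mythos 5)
Your proposal is correct and follows essentially the same route as the paper's own proof: assume the reduction $\Gamma$ exists, observe that $\Gamma(f\oplus g)$ lands in one of the two separating open sets (WLOG the one containing $Q^{\leq_Tf}$), use the use principle to localize to a finite cylinder, replace the $P_0$-component by a computable tail, and derive the contradiction that the image is in both open sets. The paper picks initial segments $\tau_0\subset f$, $\tau_1\subset g$ rather than talking about a single even modulus $2k$, but this is only a cosmetic difference; the content and structure are identical.
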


\begin{proof}\upshape
Suppose that $Q\leq^{1}_{1}(P_0\lsup 2^\nn)\cup(2^\nn\lsup P_1)$ via a computable functional $\Phi$.
Then $f\oplus g\in(P_0\lsup 2^\nn)\cup(2^\nn\lsup P_1)$.
By our choice of $f$ and $g$, $\Phi(f\oplus g)$ must belong to $Q^{\leq_Tf\oplus g}=Q^{\leq_Tf}\cup Q^{\leq_Tg}$.
By our assumption, $Q^{\leq_Tf}$ and $Q^{\leq_Tg}$ are separated by two disjoint open sets $U,V\subseteq 2^\nn$.
That is, $Q^{\leq_Tf}\subseteq U$, $Q^{\leq_Tg}\subseteq V$, and $U\cap V=\emptyset$.
Therefore, either $\Phi(f\oplus g)\in Q\cap U$ or $\Phi(f\oplus g)\in Q\cap V$ holds.
In any case, there exists an open neighborhood $[\sigma]\ni\Phi(f\oplus g)$ such that $[\sigma]\subseteq U$ or $[\sigma]\subseteq V$.
Without loss of generality, we can assume $[\sigma]\subseteq U$.
We pick initial segments $\tau_0\subset f$ and $\tau_1\subset g$ with $\Phi(\tau_0\oplus\tau_1)\supseteq\sigma$.
Then $(\tau_0\fr 0^\nn)\oplus g\in(P_0\lsup 2^\nn)\cup(2^\nn\lsup P_1)$, and it is Turing equivalent to $g$.
However this is impossible because $\Phi(\tau_0\fr 0^\nn\oplus g)\in [\sigma]$, and $[\sigma]\cap Q^{\leq_Tg}\subseteq U\cap Q^{\leq_Tg}=\emptyset$.
\end{proof}

\begin{cor}\label{cor:2:separ}
\begin{enumerate}
\item There are $\Pi^0_1$ sets $P,Q\subseteq 2^\nn$ such that $P\cup Q<^{1}_{1}P\linf Q$.
\item There are $\Pi^0_1$ sets $P,Q\subseteq 2^\nn$ such that $P\equiv^{<\omega}_{tt,1}Q$ and $P<^{1}_{1}Q$.
\end{enumerate}
\end{cor}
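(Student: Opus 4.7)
The plan is to derive both parts of the corollary from a single application of Lemma \ref{lem:2:separ} to a perfect independent $\Pi^0_1$ set $A\subseteq 2^\nn$, supplied by the Binns--Simpson result cited just above (recall that independence trivially implies the antichain property). I fix any two distinct $f,g\in A$, which exist by perfectness. By the antichain property, $A^{\leq_T f}=\{f\}$ and $A^{\leq_T g}=\{g\}$, and by independence $A^{\leq_T f\oplus g}=\{f,g\}$; hence $A^{\leq_T f\oplus g}=A^{\leq_T f}\cup A^{\leq_T g}$. The singletons $\{f\}$ and $\{g\}$ are trivially separated by disjoint clopen neighbourhoods in $2^\nn$. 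Applying Lemma \ref{lem:2:separ} with $P_0=P_1=Q=A$ therefore yields the key non-reduction
\[
A\;\not\leq^1_1\; (A\lsup 2^\nn)\cup(2^\nn\lsup A),
\]
which will be the backbone of both parts.

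For part (1), set $P:=A\lsup 2^\nn$ and $Q:=2^\nn\lsup A$; both are $\Pi^0_1$ subsets of $2^\nn$. A direct check gives $P\linf Q\equiv^1_1 A$: the map $a\mapsto\langle 0\rangle\fr(a\oplus 0^\nn)$ computably sends $A$ into $\langle 0\rangle\fr P\subseteq P\linf Q$, and conversely any $x\in P\linf Q$ starts with a tag $i\in\{0,1\}$ followed by an interleaved pair whose $i$-th coordinate lies in $A$. Combined with the trivial computable reduction $P\cup Q\leq^1_1 P\linf Q$ (strip the tag) and the displayed non-reduction $A\not\leq^1_1 P\cup Q$, this gives $P\cup Q<^1_1 P\linf Q$.

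For part (2), keep the same $A$ and set $P:=(A\lsup 2^\nn)\cup(2^\nn\lsup A)$ and $Q:=A$. Then $P\leq^1_1 Q$ is witnessed by $a\mapsto a\oplus a\in A\lsup 2^\nn\subseteq P$, while $Q\not\leq^1_1 P$ is exactly the displayed non-reduction, so $P<^1_1 Q$. For the $\equiv^{<\omega}_{tt,1}$ direction: the inequality $P\leq^{<\omega}_{tt,1}Q$ follows from the already-established $P\leq^1_1 Q$, while in the other direction, under the correspondence between the union disjunction $\cup$ and $\leq^{<\omega}_{tt,1}$ recalled from Part I, the reduction $Q\leq^{<\omega}_{tt,1}P$ is realised by the two-candidate truth-table map $h\mapsto((h)_0,(h)_1)$, since by construction of $P$ at least one of $(h)_0,(h)_1$ lies in $A=Q$.

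The one nontrivial moment is the choice of inputs for Lemma \ref{lem:2:separ}: topological separation is automatic once $Q^{\leq_T f}$ and $Q^{\leq_T g}$ are singletons, but the identity $A^{\leq_T f\oplus g}=A^{\leq_T f}\cup A^{\leq_T g}$ says precisely that $f\oplus g$ computes no element of $A$ beyond $f$ and $g$, which is exactly the content of independence. This is why a Binns--Simpson perfect independent $\Pi^0_1$ set is needed and a mere Jockusch--Soare antichain would not suffice; once this tool is in hand, the two parts of the corollary are routine repackagings of the same non-reduction.
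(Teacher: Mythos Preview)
Your proof is correct and follows essentially the same approach as the paper: both use a perfect independent $\Pi^0_1$ set, apply Lemma~\ref{lem:2:separ} with $P_0=P_1=Q$ equal to that set, and then observe that $P\linf Q\equiv^1_1 A$ for the choices $P=A\lsup 2^\nn$, $Q=2^\nn\lsup A$ (the paper swaps the roles of $P$ and $Q$, which is immaterial). For part~(2) the paper simply records $P\linf Q\equiv^{<\omega}_{tt,1}P\cup Q<^1_1 P\linf Q$, while you replace $P\linf Q$ by the $(1,1)$-equivalent set $A$ and spell out the two truth-table projections explicitly; this is the same content with more detail.
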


\begin{proof}\upshape
(1)
Let $R$ be a perfect independent $\Pi^0_1$ subset of $2^\nn$.
Set $P=2^\nn\lsup R$ and $Q=R\lsup 2^\nn$.
Note that $P\oplus Q\equiv^{1}_{1} R$.
Pick $f,g\in R$ such that $f\not=g$.
Then $R^{\leq_Tf}=\{f\}$, $R^{\leq_Tg}=\{g\}$, and $R^{\leq_Tf\oplus g}=R^{\leq_Tf}\sqcup R^{\leq_Tg}=\{f,g\}$.
Since $2^\nn$ is Hausdorff, two points $f$ and $g$ are separated by open sets.
Thus, $P\oplus Q\equiv^1_1R\not\leq^1_1P\cup Q$ by Lemma \ref{lem:2:separ}.
(2) $P\oplus Q\equiv^{<\omega}_{tt,1}P\cup Q<^1_1P\oplus Q$.
\end{proof}

\begin{remark}
One can adopt the unit interval $[0,1]$ as our whole space instead of Cantor space $2^\nn$.
Then, $P_0\cross P_1:=(P_0\times[0,1])\cup([0,1]\times P_1)$ is connected as a topological space.
\index{$P_0\cross P_1$}%
If $P_0\subseteq[0,1]$ is homeomorphic to Cantor space, then the connected space $P_0\cross P_0$ is sometimes called {\em the Cantor tartan}. 
\index{Cantor tartan}%
The above proof shows that every perfect independent $\Pi^0_1$ set $R\subseteq[0,1]$ is not $(1,1)$-reducible to the obtained tartan $R\cross R$, while these sets are $(<\omega,1)$-$tt$-equivalent. 
Note that the tartan plays an important role on the constructive study of Brouwer's fixed point theorem (see \cite{BMP}).
\end{remark}

\subsection{The Disjunction $\cup$ versus the Disjunction $\tie$}

We next separate the strength of the union $\cup$ and the concatenation (the LCM disjunction with mind-change-bound $2$) $\tie$.
Moreover, we also see the LEVEL $3$ separation between $[\mathfrak{C}_{tt}]^{<\omega}_1$ and $[\mathfrak{C}_{T}]^1_{<\omega}$.

\begin{lemma}\label{lem:NRMP:tiecup}
Let $P_0,P_1$ be $\Pi^0_1$ subsets of $2^\nn$, and let $Q$ be a special $\Pi^0_1$ subset of $2^\nn$.
Assume that there exist $f\in P_0$ and $g\in P_1$ such that any $h\in Q^{\leq_Tf}$ and $Q^{\leq_Tg}$ are separated by open sets.
Then $Q\not\leq^{1}_{1}P_0\fr P_1$.
\end{lemma}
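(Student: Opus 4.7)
The plan is to mimic the continuity-extraction argument from Lemma~\ref{lem:2:separ}, adapted to the concatenation structure $P_0 \fr P_1 = P_0 \cup \bigcup_{\rho \in L_{P_0}} \rho \fr P_1$. First, suppose toward contradiction that a computable functional $\Phi$ witnesses $Q \leq^1_1 P_0 \fr P_1$. Since $f \in P_0 \subseteq P_0 \fr P_1$, the value $h := \Phi(f) \in Q$ satisfies $h \leq_T f$, so $h \in Q^{\leq_T f}$. The hypothesis then supplies disjoint open sets $U, V \subseteq 2^\nn$ with $h \in U$ and $Q^{\leq_T g} \subseteq V$.

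Next, by the use principle for $\Phi$, I would fix a prefix $\tau \subset f$ and a string $\sigma \subset h$ with $\Phi(\tau) \supseteq \sigma$ and $[\sigma] \subseteq U$. The aim is now to produce an element of $P_0 \fr P_1$ extending $\tau$ and Turing-equivalent to $g$: this would force its $\Phi$-image into $[\sigma] \cap Q^{\leq_T g} \subseteq U \cap V = \emptyset$, a contradiction. The natural candidate is obtained by extending $\tau$, which is an extendible node of $T_{P_0}$, to a leaf $\rho \in L_{P_0}$ with $\rho \supseteq \tau$ and setting $y := \rho \fr g$. Then $y \in \rho \fr P_1 \subseteq P_0 \fr P_1$ and $y \equiv_T g$, so $\Phi(y) \in Q^{\leq_T g} \subseteq V$, while $\Phi(y) \supseteq \Phi(\tau) \supseteq \sigma$ forces $\Phi(y) \in [\sigma] \subseteq U$. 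The desired contradiction follows.

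The main obstacle is the last step: one needs $\tau$ to admit a leaf extension inside $T_{P_0}$. If the chosen $f$ happened to lie in a leafless core of $T_{P_0}$ (every node above some prefix of $f$ is extendible), no such $\rho$ would exist, and one would have to substitute a different argument—say, exploiting that $Q$ is special to rule out a direct reduction from $P_0 \cap [\tau]$, or refining the choice of $f$ to lie in the closure of $L_{P_0}$. In the intended applications, such as the corollary paralleling Corollary~\ref{cor:2:separ}, the set $P_0$ is arranged so that $L_{P_0}$ is dense above $f$, and the straightforward argument above suffices.
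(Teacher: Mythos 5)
Your proof follows the paper's argument step for step: use $f \in P_0 \subseteq P_0 \fr P_1$, invoke the separation of $\Phi(f)$ from $Q^{\leq_T g}$, extract a locking prefix $\tau \subset f$ with $\Phi(\tau) \supseteq \sigma$ and $[\sigma] \subseteq U$, extend $\tau$ to a leaf $\rho \in L_{P_0}$, and observe that $\Phi(\rho \fr g)$ lands both in $[\sigma] \subseteq U$ and in $Q^{\leq_T g} \subseteq V$, a contradiction. This is the same argument as in the paper.

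The caveat you raise about leaf existence is a genuine technical point that the paper's proof passes over in silence, and your instinct for how to close it is the right one. Here is the clean version of the repair you gesture at: if $T_{P_0}$ has \emph{no} leaf above $\tau$, then every node of $T_{P_0}$ extending $\tau$ has a successor in $T_{P_0}$, so $T_{P_0}$ is a pruned computable tree above $\tau$; its leftmost path through $\tau$ is a computable element $r$ of $P_0 \subseteq P_0 \fr P_1$, and then $\Phi(r)$ is a computable element of $Q$, contradicting that $Q$ is special. So the lemma holds in the ``leafless core'' case too, and in both cases one gets the conclusion. In short: your proof is correct and essentially identical to the paper's; the extra care you take about leaves is warranted, and the resolution (via specialness of $Q$) is exactly what makes the paper's terse ``pick $\rho \in L_{P_0}$ extending $\tau$'' harmless.
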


\begin{proof}\upshape
Suppose that $Q\leq^{1}_{1}P_0\fr P_1$ via a computable functional $\Phi$.
By our choice of $f\in P_0\subseteq P_0\ntie P_1$, there must exist an open set $U\subseteq 2^\nn$ such that $\Phi(f)\in Q\cap U$ and $Q^{\leq_Tg}\cap U=\emptyset$.
Since $U$ is open there exists a clopen neighborhood $[\sigma]\ni\Phi(f)$ such that $[\sigma]\cap Q\subseteq U$.
We pick an initial segment $\tau\subset f$ with $\Phi(\tau)\supseteq\sigma$.
Since $f\in P_0$ holds, we have that $\tau\in T_{P_0}$, and we pick $\rho\in L_{P_0}$ extending $\tau$.
Then $\rho\fr g\in P_0\fr P_1$, and $\rho\fr g$ is Turing equivalent to $g$.
So, if $Q\leq^{1}_{1}P_0\fr P_1$ via $\Phi$, then $\Phi(\rho\fr g)$ must belong to $Q^{\leq_Tg}$.
However this is impossible because $\Phi(\rho\fr g)\in [\sigma]$, and $[\sigma]\cap Q^{\leq_Tg}\subseteq U\cap Q^{\leq_Tg}=\emptyset$.
\end{proof}

\begin{cor}
There are $\Pi^0_1$ sets $P,Q\subseteq 2^\nn$ such that $P\fr Q<^{1}_{1}P\cup Q<^{1}_{1}P\linf Q$.
\end{cor}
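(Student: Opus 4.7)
The plan is to verify both strict inequalities for the same pair $(P,Q)$ that already witnesses Corollary~\ref{cor:2:separ}(1): take $R\subseteq 2^\nn$ to be a perfect independent $\Pi^0_1$ set and set $P=2^\nn\lsup R$ and $Q=R\lsup 2^\nn$. Before starting, I would replace $P$ and $Q$ by Medvedev-equivalent padded versions so that their corresponding trees have the convenient property that every extendible node admits a leaf extension in $L_P$; this is the same implicit bookkeeping one needs whenever $\fr$ is applied non-trivially. With this in place, the right-hand strict inequality $P\cup Q<^1_1 P\linf Q$ is handed to us by Corollary~\ref{cor:2:separ}(1), and the weak inequality $P\fr Q\leq^1_1 P\cup Q$ comes from the general chain $P\cup Q\geq^1_1 P\tie Q\geq^1_1 P\fr Q$ recorded at the opening of the section. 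The remaining task is to prove $P\cup Q\not\leq^1_1 P\fr Q$.

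For this non-reducibility I would adapt the argument of Lemma~\ref{lem:NRMP:tiecup}. Suppose for contradiction that $\Phi$ is a computable reduction witnessing $P\cup Q\leq^1_1 P\fr Q$, and fix distinct $r_1,r_2\in R$. Apply $\Phi$ to $f=0^\nn\oplus r_1\in P\subseteq P\fr Q$: then $\Phi(f)=\alpha\oplus\beta\in P\cup Q$ with $\Phi(f)\leq_T r_1$, and the independence of $R$ forces whichever of $\alpha,\beta$ is in $R$ to equal $r_1$. Using continuity I then pick $K$ large enough to exceed the first disagreement of $r_1$ and $r_2$ (and, in case $\alpha\ne r_2$ or $\beta\ne r_2$, their first disagreement with $r_2$), choose $m$ with $\Phi(f\res m)\supseteq\Phi(f)\res 2K$, extend $f\res m$ to a leaf $\rho\in L_P$, and set $h=\rho\fr(r_2\oplus 0^\nn)\in P\fr Q$. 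Since $\rho$ is finite, $h\equiv_T r_2$, so $\Phi(h)=\alpha'\oplus\beta'\leq_T r_2$ and $\Phi(h)\supseteq\Phi(f)\res 2K$, giving $\alpha'\res K=\alpha\res K$ and $\beta'\res K=\beta\res K$. A case split on (i) which of $\alpha,\beta$ equals $r_1$ and (ii) which of $\alpha',\beta'$ lies in $R$ (hence equals $r_2$ by independence) produces a contradiction in every subcase: either $r_1\res K=r_2\res K$, violating the choice of $K$, or a component of $\Phi(f)$ that is Turing below $r_1$ agrees with $r_2$ on its first $K$ bits and therefore equals $r_2$, forcing $r_2\leq_T r_1$ against the antichain property of $R$.

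The main obstacle is the leaf step. The natural tree of $P=2^\nn\lsup R$ is itself perfect whenever $R$ is, so it contains no leaves at all, and the direct transcription of Lemma~\ref{lem:NRMP:tiecup} would have nothing to cut at. Passing to a padded index of $P$ (interleaving a fixed computable pattern creates room for side-branches that can be marked as dead ends without altering $[T_P]=P$) is the one nontrivial preliminary; once leaves are available above every extendible node, the continuity/independence analysis above runs parallel to Lemma~\ref{lem:NRMP:tiecup} and delivers $P\cup Q\not\leq^1_1 P\fr Q$, completing the chain $P\fr Q<^1_1 P\cup Q<^1_1 P\linf Q$.
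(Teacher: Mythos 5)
Your proposal is correct and takes essentially the same route as the paper: with $R$ a perfect independent $\Pi^0_1$ set, $P=2^\nn\lsup R$, and $Q=R\lsup 2^\nn$, the right-hand strictness comes from Corollary~\ref{cor:2:separ}(1) (and your choice of an \emph{independent} $R$, rather than the bare perfect-antichain hypothesis the paper's proof nominally invokes, is exactly what is needed there, since Lemma~\ref{lem:2:separ} requires $R^{\leq_T f\oplus g}=R^{\leq_T f}\cup R^{\leq_T g}$), while the left-hand strictness $P\cup Q\not\leq^1_1 P\fr Q$ is the separation argument of Lemma~\ref{lem:NRMP:tiecup}, which you replay inline with explicit witnesses instead of invoking the lemma. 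Your one digression, the padding preliminary, rests on a misreading of the conventions: the ``corresponding tree'' $T_P$ is $\{\sigma:\Phi_e(\sigma;0)\uparrow\}$ for the given $\Pi^0_1$ index $e$, which is \emph{not} the pruned tree $T_P^{ext}$; when $R$ is presented by a computable tree $T_R$ with dead ends (as the Jockusch--Soare construction gives), the product tree $T_P$ already has a dead end above every extendible node, so the leaf step of Lemma~\ref{lem:NRMP:tiecup} is available without re-indexing, and the padding is a harmless but unnecessary safety net.
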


\begin{proof}\upshape
Assume that $R$ be a perfect $\Pi^0_1$ antichain of $2^\nn$.
Set $P=2^\nn\lsup R$ and $Q=R\lsup 2^\nn$.
Pick $f,g\in R$ such that $f\not=g$.
Then $R^{\leq_Tf}=\{f\}$ and $R^{\leq_Tg}=\{g\}$ since $R$ is antichain.
Therefore, $(P\cup Q)^{\leq_TX}\subseteq(\{X\}\otimes 2^\nn)\cup(2^\nn\otimes\{X\})$ for each $X\in\{f,g\}$.
For $h=h_0\oplus h_1\in (P\cup Q)^{\leq_Tf}$, we have $h_0\not=g$ and $h_1\not=g$.
Thus, $h\not\in(2^\nn\otimes\{g\})\cup(\{g\}\otimes 2^\nn)$, and note that $(2^\nn\otimes\{g\})\cup(\{g\}\otimes 2^\nn)$ is closed.
Then, there is an open neighborhood $U\subseteq 2^\nn$ such that $h\in U$ and $U\cap (P\cup Q)^{\leq_Tg}=\emptyset$, since $P\cup Q$ is regular, and $(P\cup Q)^{\leq_Tg}\subseteq(2^\nn\otimes\{g\})\cup(\{g\}\otimes 2^\nn)$.
Namely, any $h\in(P\cup Q)^{\leq_Tf}$ and $(P\cup Q)^{\leq_Tg}$ are separated by some open set.
Consequently, by Lemma \ref{lem:NRMP:tiecup}, we have $P\cup Q\not\leq^1_1P\fr Q$.
\end{proof}

One can establish another separation result for the concatenation.
Recall from \cite{CKWW} that a closed set $P\subseteq\nn^\nn$ is {\em immune} if $T_P^{ext}$ contains no infinite c.e.~subset.
\index{immune}%
In \cite{CKWW} it is shown that the class of non-immune $\Pi^0_1$ subsets of Cantor space is downward closed in the Medvedev degrees $\mathcal{P}^1_1$.
This property also holds in a coarser degree structure.
In Part I \cite{HK_PartI} we have seen that $\mathcal{P}^{<\omega}_{tt,1}$ is an intermediate structure between $\mathcal{P}^1_1$ and $\mathcal{P}^1_{<\omega}$.

\begin{lemma}\label{lem:NRMP:immuclosurett}
Let $P$ and $Q$ be $\Pi^0_1$ subsets of $2^\nn$.
If $P$ is not immune, and $Q\leq^{<\omega}_{tt,1}P$, then $Q$ is not immune.
\end{lemma}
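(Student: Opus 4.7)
The plan is to transport the c.e.\ witness of $P$'s non-immunity to $Q$ via the finitely many tt-functionals. Let $\Phi_0,\ldots,\Phi_{k-1}$ be tt-functionals witnessing $Q\leq^{<\omega}_{tt,1}P$, so that every $g\in P$ has $\Phi_e(g)\in Q$ for some $e<k$, and let $S\subseteq T_P^{ext}$ be an infinite c.e.\ subset, which I may assume is downward closed under initial segments.

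First I establish the pointwise observation. Because each $\Phi_e$ is a tt-functional with a computably bounded use, $\Phi_e(\sigma)$ is a well-defined finite string for every $\sigma\in 2^{<\nn}$, its length tends to infinity as $|\sigma|\to\infty$, and any extension $g\supseteq\sigma$ satisfies $\Phi_e(g)\supseteq\Phi_e(\sigma)$. Consequently, for any $\sigma\in T_P^{ext}$, picking $g\in P\cap[\sigma]$ and any $e<k$ with $\Phi_e(g)\in Q$ yields $\Phi_e(\sigma)\subseteq\Phi_e(g)\in Q$, so $\Phi_e(\sigma)\in T_Q^{ext}$. In particular, for every $\sigma\in S$ at least one of the $k$ computable strings $\Phi_0(\sigma),\ldots,\Phi_{k-1}(\sigma)$ lies in $T_Q^{ext}$.

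Now I form the candidate c.e.\ set
\[
W=\Bigl(\bigcup_{e<k}\{\Phi_e(\sigma):\sigma\in S\}\Bigr)\cap T_Q,
\]
which is c.e.\ by decidability of the corresponding tree $T_Q$. By the pointwise observation, $W$ contains at least one element of $T_Q^{ext}$ for each $\sigma\in S$, and since the tt-use bound forces $|\Phi_e(\sigma)|\to\infty$ as $|\sigma|\to\infty$ in $S$, the set $W$ has elements of arbitrarily large length and is therefore infinite.

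The main obstacle is that $W$ may also contain ``false'' values $\Phi_e(\sigma)\in T_Q\setminus T_Q^{ext}$ coming from wrong indices $e$, so that $W\not\subseteq T_Q^{ext}$ a priori. To extract a c.e.\ subset lying entirely inside $T_Q^{ext}$, I exploit the fact that dead ends of the decidable binary tree $T_Q$ form a c.e.\ set: a node $\tau\in T_Q$ is a dead end iff some level $n$ contains no descendant of $\tau$ in $T_Q$. Combining this with the finiteness of $k$, one can, for each $\sigma\in S$, monitor the least index $e_s(\sigma)<k$ with $\Phi_{e_s(\sigma)}(\sigma)\in T_Q$ and not yet revealed as a dead end; because at least one $e$ is correct, $e_s(\sigma)$ stabilizes at some $e_\infty(\sigma)$ with $\Phi_{e_\infty(\sigma)}(\sigma)\in T_Q^{ext}$. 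The crux is then to package this bounded-mind-change bookkeeping into a genuinely c.e.\ enumeration: the mind-change count for each $\sigma$ is at most $k-1$, and the tt-use growth ensures the enumerated values have unboundedly many distinct lengths, producing the required infinite c.e.\ subset of $T_Q^{ext}$. This parallels the proof of the Medvedev ($\leq^1_1$) case due to Cenzer--Kjos-Hanssen--Weber--Wright and only needs the additional finite pigeonhole across the $k$ functionals.
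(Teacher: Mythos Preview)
Your setup is fine and matches the paper's: you have the infinite c.e.\ set $S\subseteq T_P^{ext}$, the total tt-functionals $\Phi_0,\ldots,\Phi_{k-1}$, and the observation that every $\sigma\in S$ has some $e<k$ with $\Phi_e(\sigma)\in T_Q^{ext}$. The gap is exactly where you flag ``the crux'': the per-$\sigma$ approximation $e_s(\sigma)\to e_\infty(\sigma)$ is only a $\Delta^0_2$ function, and you never say what c.e.\ set you are enumerating. Outputting $\Phi_{e_s(\sigma)}(\sigma)$ at each stage just reproduces your $W$ (which may contain dead ends), and outputting only the limit value $\Phi_{e_\infty(\sigma)}(\sigma)$ is not a c.e.\ procedure. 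The phrase ``finite pigeonhole across the $k$ functionals'' gestures at the right idea but does not specify the c.e.\ set.

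The paper's proof fills this gap by applying the pigeonhole \emph{globally on $S$} rather than per-$\sigma$. Set
\[
V_j=\{\sigma\in S:(\forall i<j)\ \Phi_i(\sigma)\notin T_Q^{ext}\}\qquad(j\leq k).
\]
Each $V_j$ is c.e.\ because ``$\notin T_Q^{ext}$'' is $\Sigma^0_1$, and $V_0=S$ is infinite while $V_k=\emptyset$ by your pointwise observation. Let $j$ be least with $V_{j+1}$ finite. Then $V_j\setminus V_{j+1}$ is an infinite c.e.\ set (c.e.\ minus a fixed finite set), and for every $\sigma\in V_j\setminus V_{j+1}$ one has $\Phi_j(\sigma)\in T_Q^{ext}$ by definition. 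Hence $\Phi_j[V_j\setminus V_{j+1}]$ is an infinite c.e.\ subset of $T_Q^{ext}$, witnessing that $Q$ is not immune. Note that the choice of $j$ is nonuniform, which is harmless since immunity is not a uniform notion; this nonuniform pigeonhole is precisely the step your sketch omits.
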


\begin{proof}
Let $V$ be an infinite c.e.~subset of $T_P^{ext}$.
Assume that $Q\leq^{<\omega}_{tt,1}P$ holds via $n$ truth-table functionals $\{\Gamma_i\}_{i<n}$.
Note that every functional $\Gamma_i$ can be viewed as a computable monotone function from $2^{<\omega}$ into $2^{<\omega}$.
Let $V_k$ be the c.e.~set $V\cap\bigcap_{i<k}\Gamma_i^{-1}[2^{<\omega}\setminus T_P^{ext}]$ for each $k\leq n$.
By our assumption, $V_n$ is finite, since otherwise the tree generated from $V$ has an infinite path $f$ such that $\Phi_i(f)\not\in P$ for every $i<n$.
Let $k$ be the least number such that $V_{k+1}$ is finite.
Then, $\Gamma_k[V_k]$ is an infinite c.e.~set, and $\Gamma_k[V_k]$ is included in $T_P^{ext}$ except for finite elements. 
\end{proof}

\begin{cor}\label{cor:2:separ2}
There are $\Pi^0_1$ sets $P,Q\subseteq 2^\nn$ such that $Q<^{<\omega}_{tt,1}P\equiv^1_{<\omega}Q$.
\end{cor}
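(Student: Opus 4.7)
The strategy is to take an immune $\Pi^0_1$ subset $P\subseteq 2^\nn$ whose corresponding tree $T_P$ has infinitely many leaves, and to set $Q=P\ntie P$. The point is that $\ntie$-concatenation preserves the $(1,<\omega)$-degree of $P$ while implanting an infinite computable set of extendible nodes into $T_Q^{ext}$, destroying immunity; Lemma~\ref{lem:NRMP:immuclosurett} then separates $P$ and $Q$ at the $(<\omega,tt,1)$-level.

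Immune $\Pi^0_1$ subsets of $2^\nn$ are known to exist (see \cite{CKWW}); starting from such a set, I would pick an index whose corresponding computable tree has infinitely many leaves. If a given immune $P$ has a leaf-poor tree, I pad it by adjoining, for infinitely many $n$, some string $\sigma\fr b$ with $\sigma\in T_P$ of length $n$ and $[\sigma\fr b]\cap P=\emptyset$. The modification leaves the set of infinite paths, and hence $T_P^{ext}$, unchanged, so $P$ remains immune, while $L_P$ becomes infinite and computable. Setting $Q=P\ntie P=P\cup\bigcup_{\rho\in L_P}\rho\fr P$, its corresponding tree $T_P\cup\bigcup_{\rho\in L_P}\rho\fr T_P$ is computable, so $Q\in\Pi^0_1(2^\nn)$.

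For $Q\leq^{<\omega}_{tt,1}P$, the inclusion $P\subseteq Q$ gives $Q\leq^1_1 P$ via the identity functional, and $\leq^1_1$ is stronger than $\leq^{<\omega}_{tt,1}$. For $P\leq^1_{<\omega}Q$, I would design a learner $\Psi$ with mind-change bound $2$: on an input $g\in Q$, $\Psi$ initially conjectures the index of the identity functional and, while reading $g\res n$, checks the decidable condition $g\res n\in L_P$; upon detecting the (necessarily unique) leaf prefix $\rho\subseteq g$, it switches once to the index of the shift functional $h\mapsto(h(k+|\rho|))_{k\in\omega}$. Since leaves of $T_P$ are pairwise incomparable, $\#{\tt mcl}_\Psi(g)\leq 1$ for every $g$. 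On $g\in P$ no prefix is a leaf (every prefix is extendible), so $\Psi$ returns $g\in P$; on $g=\rho\fr f$ with $f\in P$, $\Psi$ stabilizes at the shift and returns $f\in P$. Hence $P\equiv^1_{<\omega}Q$.

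Finally, $L_P$ is an infinite computable subset of $T_Q^{ext}$, because each $\rho\in L_P$ extends in $T_Q$ to any path in $\rho\fr P$, so $Q$ is not immune. Since $P$ is immune, Lemma~\ref{lem:NRMP:immuclosurett}, applied with $Q$ playing the role of the set reduced to and $P$ the role of the reducer, precludes $P\leq^{<\omega}_{tt,1}Q$: otherwise non-immunity would propagate from $Q$ to $P$. Therefore $Q<^{<\omega}_{tt,1}P\equiv^1_{<\omega}Q$, as required. The only delicate point is securing an index of $P$ with $L_P$ infinite, which is a routine padding; the substantive content is the observation that $\ntie$-concatenation introduces non-immunity while being absorbed by a single mind change.
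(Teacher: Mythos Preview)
Your proof is correct and follows essentially the same route as the paper: take an immune $\Pi^0_1$ set $P$, set $Q=P\fr P$, observe that $Q$ is not immune, and invoke Lemma~\ref{lem:NRMP:immuclosurett}. The only difference is that the paper witnesses non-immunity of $Q$ via the whole tree $T_P\subseteq T_Q^{ext}$ (which is automatically infinite since $P$ is nonempty), so no padding of $L_P$ is needed; your detour through ensuring $L_P$ is infinite is harmless but avoidable.
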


\begin{proof}
Let $P$ be an immune $\Pi^0_1$ subset of $2^\nn$.
Put $Q=P\fr P$.
As seen in Part I \cite{HK_PartI}, we have $Q\leq^1_1P\equiv^1_{<\omega}Q$.
Then, $Q$ is not immune since $T_Q^{ext}$ include an infinite computable subset $T_P$.
Hence, $P\not\leq^{<\omega}_{tt,1}Q$ by Proposition \ref{lem:NRMP:immuclosurett}.
\end{proof}

We have introduced two concatenation operations $\fr$ and $\tie$, while there are several other concatenation-like operations (see Duparc \cite{Dup}).
Let $P^{\rightarrow}Q$ and $P^{\sqcap} Q$ denote $[\{\sigma\fr\sharp\fr\tau:\sigma\in T_P\;\&\;\tau\in T_Q\}]$ and $[\{\sigma\fr\tau:\sigma\in T_P\;\&\;\tau\in T_Q\}]$, respectively.
\index{$P^{\rightarrow}Q$}\index{$P^{\sqcap} Q$}%
As seen in Part I \cite{HK_PartI}, we have $P\fr Q\equiv^1_1P^{\rightarrow}Q$.
However, there is a $(1,1)$-difference between $P\fr Q$ and $P^{\sqcap}Q$.

\begin{prop}
There are $\Pi^0_1$ sets $P,Q\subseteq 2^\nn$ such that $P^{\sqcap}Q<^1_1P\fr Q$.
\end{prop}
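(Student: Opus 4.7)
The approach hinges on a simple structural asymmetry. Because $\emptyset \in T_P$ for any nonempty $\Pi^0_1$ set $P$, one always has $Q = \emptyset \fr Q \subseteq P^{\sqcap} Q$; by contrast, $P \fr Q$ grafts copies of $Q$ only onto the leaves of $T_P$. If we can arrange $T_P$ to have no leaves at all, then $P \fr Q$ collapses to $P$, while $P^{\sqcap} Q$ still contains the whole of $Q$. Choosing $Q$ to be a computable singleton thus injects a computable point into $P^{\sqcap} Q$, which precludes any Medvedev reduction of $P^{\sqcap} Q$ down to $P$ whenever $P$ is special.

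The technical core is the first step: I will construct a nonempty special $\Pi^0_1$ set $P \subseteq 2^\nn$ whose corresponding computable tree $T_P$ has no leaves, via a standard finite-injury priority argument. Build $T_P$ level by level, by default including both immediate successors of every node currently in $T_P$; this maintains the invariant ``every node in $T_P$ has a child'' throughout, so $T_P$ is leafless. To meet the requirement $R_e : \Phi_e \notin [T_P]$, at the first stage at which a prefix $\sigma$ of $\Phi_e$ lies in the current tree and $\Phi_e(|\sigma|)$ has been computed, remove the single edge to $\sigma \fr \Phi_e(|\sigma|)$ while leaving $\sigma \fr (1 - \Phi_e(|\sigma|))$ alive. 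Only one successor per node is ever deleted, so leaflessness is preserved; K\"onig's lemma provides $[T_P] \neq \emptyset$, and every total computable $\Phi_e$ gets killed off.

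Once $P$ is in hand, set $Q = \{0^\nn\}$. Since $L_P = \emptyset$, the definition collapses to $P \fr Q = P \cup \bigcup_{\rho \in L_P} \rho \fr Q = P$. On the other hand, since $\emptyset \in T_P$ and $0^n \in T_Q$ for every $n$, the computable point $0^\nn$ belongs to $P^{\sqcap} Q$. The inclusion $P = P \fr Q \subseteq P^{\sqcap} Q$ immediately witnesses $P^{\sqcap} Q \leq^1_1 P \fr Q$ via the identity. For strictness, suppose a computable functional $\Phi$ mapped $P^{\sqcap} Q$ into $P \fr Q = P$; then $\Phi(0^\nn)$ would be a computable element of $P$, contradicting that $P$ is special. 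Hence $P^{\sqcap} Q <^1_1 P \fr Q$.

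The main obstacle is the priority construction in step one. The natural computable trees for standard special $\Pi^0_1$ sets (for instance the ${\sf DNR}_2$-tree or the tree of consistent extensions of PA) routinely acquire leaves when an inconsistency newly detected at a low position simultaneously kills both successors of some short node. The finite-injury scheme above circumvents this by making ``both successors present'' the default posture of the tree and spending at most one successor deletion per diagonalization requirement, so that the leaflessness invariant is never threatened.
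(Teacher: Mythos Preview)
Your first step is impossible: no special $\Pi^0_1$ set $P\subseteq 2^\nn$ can have a leafless computable tree $T_P$. If $T\subseteq 2^{<\nn}$ is any computable tree without leaves, its leftmost branch $f$, given by $f(n)=0$ if $(f\res n)\fr 0\in T$ and $f(n)=1$ otherwise, is a computable element of $[T]$; leaflessness guarantees $f\res n\in T$ for every $n$, and $f$ is computable because membership in $T$ is decidable. Your priority construction cannot meet the requirement $R_d$ for an index $d$ of this $f$: computing $f(s)$ requires knowing which children of $f\res s$ lie in $T$, which is only settled at stage $s+1$, so $\Phi_d(s)$ cannot have converged by the time you are deciding level $s+1$. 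More to the point, the obstruction is absolute---a decidable leafless binary tree always has a computable path---so no variant of the construction can succeed.

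With the leafless tree unavailable, the rest of the argument collapses: if $L_P\neq\emptyset$ and $Q=\{0^\nn\}$, then any leaf $\rho\in L_P$ yields a computable point $\rho\fr 0^\nn\in P\fr Q$, so $P\fr Q$ itself has Medvedev degree zero and the strict inequality fails. The paper instead takes both $P$ and $Q$ special, carved from a $\Pi^0_1$ antichain, and defeats any putative reduction $P^{\rightarrow}Q\leq^1_1 P^{\sqcap}Q$ by exploiting elements of $P^{\sqcap}Q$ that admit two incompatible decompositions as $\sigma\fr\tau$.
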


\begin{proof}\upshape
It is easy to see that $P^{\sqcap}Q\leq^1_1P^{\rightarrow}Q$ for any $P,Q\subseteq\nn^\nn$.
Let $R\subseteq 2^\nn$ be a $\Pi^0_1$ antichain.
Then we divide $R$ into four parts, $P_0$, $P_1$, $P_2$, and $P_3$.
Put $P=P_3$, and $Q=(\lrangle{0,1}\fr P_2\fr P_0)\cup(\lrangle{1}\fr P_2\fr P_1)$.
Without loss of generality, we may assume that $\lrangle{0}\in T_P$.
Suppose that $P^{\rightarrow}Q\leq^1_1P^{\sqcap}Q$ via a computable function $\Phi$.
Choose $g\in P_2$.
Then we have $\lrangle{0,1}\fr g\in P^\sqcap Q$.
Therefore, $\Phi(\lrangle{0,1}\fr g)\in P^{\rightarrow}Q$ must contain $\sharp$, since $P=P_3$ has no element computable in $g\in P_2$.
Thus, there is $n\in\nn$ such that $\Phi(\lrangle{0,1}\fr(g\res n))$ contains $\lrangle{\sharp,i}$ as a substring for some $i<2$.
Fix such $i$.
Then, $\Phi(\lrangle{0,1}\fr(g\res n))\in P^{\rightarrow}(Q\cap[\lrangle{i}])$.
We extend $g\res n$ to some leaf $\rho$ of $P_2$.
Choose $h_k\in P_k$ for each $k<2$.
Then, $\lrangle{0,1}\fr\rho\fr h_0\in Q\subseteq P^\sqcap Q$, and $\lrangle{0,1}\fr\rho\fr h_1\in\lrangle{0}\fr Q\subseteq P^{\sqcap}Q$.
Thus, $\Phi(\lrangle{0,1}\fr\rho\fr h_k)$ must belongs to $P^\rightarrow(Q\cap[\lrangle{i}])$, for each $k<2$.
However $P^{\rightarrow}(Q\cap[\lrangle{i}])$ has no element computable in $\lrangle{0,1}\fr\rho\fr h_{1-i}$.
A contradiction.
\end{proof}

\subsection{The Disjunction $\tie$ versus the Disjunction $\lcm$}

Let $\Psi$ be a learner (i.e., a total computable function $\Psi:\nn^{<\nn}\to\nn$).
A point $\alpha\in\nn^\nn$ is said to be {\em an $m$-changing point of $\Psi$} if $\#{\tt mcl}_\Psi(\alpha)\geq m$.
\index{leaner!$m$-changing point of}%
Then, the set of all $m$-changing points\footnote{The set of $m$-changing points is closedly related to the $m$-th derived set obtained from the notion of discontinuity levels (\cite{deBre13,Hemmerling08,Hertling96,Malek06}).
See also Part I \cite[Section 5.3]{HK_PartI} for more information on the relationship between the notion of mind-changes and the level of discontinuity.} of $\Psi$ is denoted by ${\tt mc}_\Psi(\geq m)$.
\index{${\tt mc}_\Psi(\geq m)$}%
A point $\alpha\in\nn^\nn$ is {\em anti-Popperian} with respect to $\Psi$ if $\lim_n\Psi(\alpha\res n)$ converges, but $\Phi_{\lim_n\Psi(\alpha\res n)}(\alpha)$ is partial\footnote{In the sense of the identification in the limit \cite{Gol}, the learner $\Psi$ is said to be Popperian if $\Phi_{\Psi(\sigma)}(\emptyset)$ is total for every $\sigma\in\nn^{<\nn}$ such that $\Psi(\sigma)$ is defined.
This definition indicates that, given any sequence $\alpha\in\nn^\nn$, if the learner makes an incorrect guess $\Phi_{\Psi(\alpha\res s)}(\emptyset)\not=\alpha$ at stage $s$, the leaner will eventually find his mistake $\Phi_{\Psi(\alpha\res s)}(\emptyset;n)\downarrow\not=\alpha(n)$.
In our context, the learner shall be called Popperian if given any falsifiable (i.e., $\Pi^0_1$) mass problem $Q$ and any sequence $\alpha\in\nn^\nn$, the incorrectness $\Phi_{\Psi(\alpha\res s)}(\alpha)\not\in Q$ implies $\Phi_{\Psi(\alpha\res s)}(\alpha)\res n\downarrow\not\in T_Q$ for some $n\in\nn$.
Every anti-Popperial point of $\Psi$ witnesses that $\Psi$ is not Popperian.}.
The set of all anti-Popperian points of $\Psi$ is denoted by ${\rm AP}_\Psi$.
\index{learner!anti-Popperian point of}\index{${\rm AP}_\Psi$}%

\begin{remark}[Trichotomy]
Let $\Gamma$ be a $(1,\omega)$-computable function identified by a learner $\Psi$, and let $P$ be any subset of Baire space $\nn^\nn$.
Then $\nn^\nn\setminus\Gamma^{-1}(P)$ is divided into the following three parts: the set $\Gamma^{-1}(\nn^\nn\setminus P)$; the $\Sigma^0_2$ set ${\rm AP}_\Psi$; and the $\Pi^0_2$ set $\bigcap_{m\in\nn}{\tt mc}_\Psi(\geq m)$.
\end{remark}

We say that $P_0$ and $P_1$ are {\em everywhere $(\omega,1)$-incomparable} if $P_0\cap[\sigma_0]$ is Muchnik incomparable with $P_1\cap[\sigma_1]$ (that is, $P_i\cap[\sigma_i]\not\leq^\omega_1P_{1-i}\cap[\sigma_{1-i}]$ for each $i<2$) whenever  $[\sigma_i]\cap P_i\not=\emptyset$ for each $i<2$.
\index{Pi01 set@$\Pi^0_1$ set!everywhere $(\omega,1)$-incomparable}%

\begin{theorem}\label{thm:NRMP:tielim}
Let $P_0,P_1$ be everywhere $(\omega,1)$-incomparable $\Pi^0_1$ subsets of $2^\nn$, and $\rho$ be any binary string.
For any $(1,\omega)$-computable function $\Gamma$ identified by a learner $\Psi$, the closure of ${\tt mc}_\Psi(\geq m)\cup\Gamma^{-1}(\nn^\nn\setminus P_0\oplus P_1)\cup{\rm AP}_\Psi$ includes $\rho\fr(P_0\tie_nP_1)^{\heartsuit}$ with respect to the relative topology on $\rho\fr(P_0\tie_{m+n}P_1)^{\heartsuit}$ (as a subspace of Baire space $\nn^\nn$).
\end{theorem}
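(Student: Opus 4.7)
The plan is to argue by contradiction, exploiting the Muchnik incomparability of $P_0$ and $P_1$ via extensions that stabilise on a single tape. I fix $\alpha \in \rho \fr (P_0 \tie_n P_1)^\heartsuit$ and a basic clopen neighborhood $[\tau]$ of $\alpha$ with $\tau \supseteq \rho$; the aim is to locate some $\beta \in [\tau] \cap \rho \fr (P_0 \tie_{m+n} P_1)^\heartsuit$ lying in $S := {\tt mc}_\Psi(\geq m) \cup \Gamma^{-1}(\nn^\nn \setminus P_0 \oplus P_1) \cup {\rm AP}_\Psi$. Suppose no such $\beta$ exists. Then for every $\beta$ in this clopen slice of the ambient, $\beta \notin {\tt mc}_\Psi(\geq m)$ forces fewer than $m$ mind-changes, so $\Psi$ converges to some $e_\beta := \lim_n \Psi(\beta \res n)$; the condition $\beta \notin {\rm AP}_\Psi$ forces $\Phi_{e_\beta}(\beta)$ to be total; and $\beta \notin \Gamma^{-1}(\nn^\nn \setminus P_0 \oplus P_1)$ forces $\Phi_{e_\beta}(\beta) = \Gamma(\beta) \in P_0 \oplus P_1$ (under the natural setting that ${\rm dom}(\Gamma)$ contains the ambient, which is the context in which the theorem is applied).

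Next I exploit the geometry of the heart to produce many witnesses. By extending $\tau$ along $\alpha$ by one symbol if needed, I may assume without loss of generality that the last pair of $\tau$ beyond $\rho$ records an instruction on tape $1$. Since $\tau$ lies in the heart tree, ${\tt pr}_1(\tau) \in T_{P_1}^{ext}$, hence $P_1 \cap [{\tt pr}_1(\tau)] \ne \emptyset$. For each $g_1 \in P_1 \cap [{\tt pr}_1(\tau)]$, define the tape-$1$-stable extension
$\beta_1(g_1) := \tau \fr {\tt write}(1, g_1^*)$,
where $g_1^*$ is the tail of $g_1$ past ${\tt pr}_1(\tau)$. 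Then ${\tt mc}(\beta_1(g_1)) = {\tt mc}(\tau) < n$; ${\tt pr}_0(\beta_1(g_1)) = {\tt pr}_0(\tau) \in T_{P_0}^{ext}$ by the heart condition on $\alpha$; and ${\tt pr}_1(\beta_1(g_1)) = g_1 \in P_1$. Hence $\beta_1(g_1) \in [\tau] \cap \rho \fr (P_0 \tie_n P_1)^\heartsuit \subseteq [\tau] \cap \rho \fr (P_0 \tie_{m+n} P_1)^\heartsuit$, and the map $g_1 \mapsto \beta_1(g_1)$ is uniformly computable, giving $\beta_1(g_1) \leq_T g_1$.

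Now the contradiction falls out. Applying the standing assumption to each $\beta_1(g_1)$ yields $\Phi_{e_{g_1}}(\beta_1(g_1)) \in P_0 \oplus P_1$; reading off its even-indexed subsequence $h_0^{g_1}$ gives $h_0^{g_1} \in P_0$ with $h_0^{g_1} \leq_T \beta_1(g_1) \leq_T g_1$ (non-uniformly, via the natural number $e_{g_1}$). Ranging over $g_1 \in P_1 \cap [{\tt pr}_1(\tau)]$ produces $P_0 \leq^\omega_1 P_1 \cap [{\tt pr}_1(\tau)]$. But everywhere $(\omega,1)$-incomparability applied to $\sigma_0 = \lrangle{}$ and $\sigma_1 = {\tt pr}_1(\tau)$—both slices being nonempty—forces $P_0 \not\leq^\omega_1 P_1 \cap [{\tt pr}_1(\tau)]$, the desired contradiction.

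The main obstacle I anticipate is the edge case when $\beta_1(g_1) \notin {\rm dom}(\Gamma)$: in this situation $\beta_1(g_1) \notin \Gamma^{-1}(\nn^\nn \setminus P_0 \oplus P_1)$ is vacuous and $\Phi_{e_{g_1}}(\beta_1(g_1)) \in P_0 \oplus P_1$ cannot be inferred; a careful write-up should either stipulate $\rho \fr (P_0 \tie_{m+n} P_1)^\heartsuit \subseteq {\rm dom}(\Gamma)$, which is the natural application setting, or reason locally around such points. A minor second case is when the last pair of $\tau$ after the one-symbol extension lies on tape $0$; it is handled by the dual argument producing $P_1 \leq^\omega_1 P_0 \cap [{\tt pr}_0(\tau)]$. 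Note that the specific value of $m \geq 1$ plays only the role of ensuring convergence of $\Psi$ on $\beta_1(g_1)$; the argument goes through for every positive $m$, while the case $m = 0$ is trivial since ${\tt mc}_\Psi(\geq 0)$ already exhausts the ambient.
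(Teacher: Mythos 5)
There is a genuine gap, and it is load-bearing: you have misread $P_0\oplus P_1$. In this paper $\oplus$ denotes $\linf$, the labeled coproduct $(\lrangle{0}\fr P_0)\cup(\lrangle{1}\fr P_1)$, not the interleaving product $P_0\lsup P_1=\{h_0\oplus h_1:h_0\in P_0,\,h_1\in P_1\}$. The paper's own proof makes this explicit when it concludes ``$\Gamma(g_l)\notin(1-j_l)\fr P_{1-j_l}$''. Consequently, from $\Phi_{e_{g_1}}(\beta_1(g_1))\in P_0\oplus P_1$ you cannot ``read off the even-indexed subsequence $h_0^{g_1}\in P_0$''. The output is $\lrangle{i}\fr h$ with $h\in P_i$ for \emph{some} $i<2$, and on your tape-$1$ lock $\beta_1(g_1)$ nothing prevents the benign outcome $i=1$ (for instance, $\Gamma$ could just output $\lrangle{1}\fr g_1$). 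Your argument then gives no information about $P_0$, the reduction $P_0\leq^\omega_1 P_1\cap[{\tt pr}_1(\tau)]$ never materializes, and the contradiction collapses. Ruling this benign outcome out is the entire difficulty of the theorem, and a one-sided ``stabilize on tape $1$'' family of witnesses cannot do it.

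The paper's proof attacks exactly this. It fixes $f_0\in P_0\cap[{\tt pr}_0(\tau_0)]$ and $f_1\in P_1\cap[{\tt pr}_1(\tau_0)]$ witnessing two-sided incomparability, then builds a \emph{single} point $g$ by a dynamic back-and-forth: write along the currently declared tape $j_0$ following $f_{j_0}$, and \emph{wait} for a stage $s_0$ at which the learner's current guess $\Phi_{\Psi(g_0\res s_0)}(g_0\res s_0;0)$ commits to label $j_0$; if that never happens then $g_0\equiv_T f_{j_0}$ already lands in $\Gamma^{-1}(\nn^\nn\setminus P_0\oplus P_1)\cup{\rm AP}_\Psi$ (the learner's limiting guess either diverges, is partial, or outputs the other label, which is impossible since $P_{1-j_0}$ has no $g_0$-computable element); if it does happen, switch to tape $j_1=1-j_0$ (costing one mind-change for the string, hence staying inside $\tie_{n+1}$) and wait again, forcing the learner to revise its committed label and thus incur a mind-change, so $g_1\in{\tt mc}_\Psi(\geq 1)$. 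Iterating this $m$ times pushes the witness into ${\tt mc}_\Psi(\geq m)$ or terminates early in one of the other two sets, always within $\rho\fr(P_0\tie_{m+n}P_1)^\heartsuit$. If you want to salvage your contradiction framing, you would have to interleave tape-$0$ locks with tape-$1$ locks and track the learner's label commitments across switches, which reconstructs precisely this iteration; the single family $\{\beta_1(g_1)\}$ is not enough.

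The other points you flag are fine: handling $\beta\notin{\rm dom}(\Gamma)$ via the intended reading of ${\rm AP}_\Psi$, the one-symbol extension fixing the last declared tape, the dual case, and the triviality of $m=0$. None of those rescues the missing back-and-forth.
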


\begin{proof}\upshape
Fix a string $\rho\fr\tau_0$ which is extendible in the heart of $\rho\fr(P_0\tie_nP_1)$.
Then, ${\tt pr}_i(\tau_0)$ must be extendible in $P_i$.
Fix $f_i\in P_i\cap[{\tt pr}_i(\tau_0)]$ witnessing $P_{1-i}\not\leq^\omega_1P_i$ for each $i<2$, i.e., $P_{1-i}$ contains no $f_i$-computable element.
Such $f_i$ exists, by everywhere $(\omega,1)$-incomparability.
Assume that $f_i={\tt pr}_i(\tau_0)\fr f^*_i$ for each $i<2$ and that the last declaration along $\tau_0$ is $j_0$, i.e., $\tau_0=\tau_0^-\fr(j_0,k)$ for some $k<2$.
Then we can proceed the following actions.
\begin{itemize}
\item Extend $\tau_0$ to $g_0=\tau_0\fr{\tt write}(j_0,f^*_{j_0})\in\rho\fr(P_0\tie_nP_1)$.
\item Wait for the least $s_0>|\tau_0|$ such that $\Phi_{\Psi(g_0\res s_0)}(g_0\res s_0;0)=j_0$.
\item Extend $g_0\res s_0$ to $g_1=(g_0\res s_0)\fr{\tt write}(j_1,f^*_{j_1})\in\rho\fr(P_0\tie_{n+1}P_1)$, where $j_1=1-j_0$.
\item Wait for the least $s_1>s_0$ such that $\Phi_{\Psi(g_1\res s_1)}(g_1\res s_1;0)=1-j_0$.
\end{itemize}

If both $s_0$ and $s_1$ are defined, then this action forces the learner $\Psi$ to change his mind.
In other words, $g_1\in{\tt mc}_\Psi(\geq 1)$.
Assume that $s_l$ is undefined for some $l<2$
Note that $g_l\equiv_Tf_{j_l}$, since ${\tt pr}_{j_l}(g_l)=f_{j_l}$ and ${\tt pr}_{1-j_l}(g_l)$ is finite, for each $l<2$.
Therefore, $\Gamma(g_l)\not\in(1-j_l)\fr P_{1-j_l}$ since $P_{1-j_l}$ has no $g_l$-computable element.
In this case, $g_l\in\Gamma^{-1}(\nn^\nn\setminus P_0\oplus P_1)$.
Hence, in $\rho\fr(P_0\tie_{n+1}P_1)^{\heartsuit}$, the closure of ${\tt mc}_\Psi(\geq 1)\cup\Gamma^{-1}(\nn^\nn\setminus P_0\oplus P_1)\cup{\rm AP}_\Psi$ includes $\rho\fr(P_0\tie_nP_1)^{\heartsuit}$.
By iterating this procedure, in $\rho\fr(P_0\tie_{m+n}P_1)^{\heartsuit}$, we can easily see that the closure of ${\tt mc}_\Psi(\geq m)\cup\Gamma^{-1}(\nn^\nn\setminus P_0\oplus P_1)\cup{\rm AP}_\Psi$ includes $\rho\fr(P_0\tie_nP_1)^{\heartsuit}$.
\end{proof}

\begin{cor}\label{cor:2:bl-bel}~
\begin{enumerate}
\item There exists $\Pi^0_1$ sets $P,Q\subseteq 2^\nn$ such that $P\lcm Q<^1_{<\omega}P\tie Q$.
\item There exists $\Pi^0_1$ sets $P,Q\subseteq 2^\nn$ such that $P\equiv^1_{\omega|<\omega}Q$ and $P<^1_{<\omega}Q$.
\end{enumerate}
\end{cor}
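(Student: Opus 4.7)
The plan is to deduce both parts of Corollary~\ref{cor:2:bl-bel} from Theorem~\ref{thm:NRMP:tielim}. Fix a perfect $\Pi^0_1$ antichain $A\subseteq 2^\nn$ (Theorem~\ref{thm:JS}), split it into two disjoint clopen pieces, and let $P_0,P_1\subseteq 2^\nn$ denote the resulting $\Pi^0_1$ sets, which are everywhere $(\omega,1)$-incomparable since any two distinct elements of $A$ are Turing incomparable. To prove (1), I first note $P_0\linf P_1\leq^1_{<\omega}P_0\tie_2 P_1$ with mind-change bound $1$: on $h\in P_0\tie_2 P_1$, the learner that outputs at stage $s$ the index of the program $h\mapsto\lrangle{h(s-1)_0}\fr{\tt pr}_{h(s-1)_0}(h)$ eventually stabilizes at the index for the limit tape $j_h:=\lim_n h(n)_0$ and yields an element of $P_0\linf P_1$. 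Now suppose toward contradiction that $P_0\tie_2 P_1\leq^1_{<\omega}P_0\tie_\omega P_1$ via a learner with bound $m_1$. Using that $\mathcal{P}^1_{<\omega}=\mathcal{P}/{\rm dec}^{<\omega}_{\rm d}[\Pi^0_1]$ from Part~I is closed under composition, the composition yields a $(1,<\omega)$-computable $\Gamma:P_0\tie_\omega P_1\to P_0\linf P_1$ identified by a learner $\Psi$ with mind-change bound $<m$ for a suitable $m$. Apply Theorem~\ref{thm:NRMP:tielim} to this $\Psi$ with $n=1$ and $\rho=\lrangle{}$: on $(P_0\tie_{m+1}P_1)^{\heartsuit}\subseteq P_0\tie_\omega P_1\subseteq{\rm dom}(\Gamma)$, the sets ${\tt mc}_\Psi(\geq m)$, ${\rm AP}_\Psi$, and $\Gamma^{-1}(\nn^\nn\setminus P_0\linf P_1)$ are all empty (by the bound, by totality of $\Gamma$, and by the choice of target, respectively), forcing the empty set to be dense in the nonempty $(P_0\tie_1 P_1)^{\heartsuit}$, a contradiction. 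Hence $P_0\tie_2 P_1\not\leq^1_{<\omega}P_0\tie_\omega P_1$; combined with $P_0\tie_\omega P_1\leq^1_1 P_0\tie_2 P_1$ (from the inclusion $P_0\tie_2 P_1\subseteq P_0\tie_\omega P_1$), this proves (1) with $P=P_0$ and $Q=P_1$.

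For (2), I relabel: set $P:=P_0\tie_\omega P_1$ and $Q:=P_0\tie_2 P_1$, both $\Pi^0_1$ subsets of Cantor space. The strict inequality $P<^1_{<\omega}Q$ is exactly (1). For $P\equiv^1_{\omega|<\omega}Q$, the direction $P\leq^1_1 Q$ is the inclusion, while $Q\leq^1_{\omega|<\omega}P$ is witnessed by the learner that on $h\in P$ outputs at stage $s$ the index of the program $h\mapsto{\tt write}(h(s-1)_0,{\tt pr}_{h(s-1)_0}(h))$: since $h\in P_0\tie_\omega P_1$ has only finitely many mind changes in its tape coordinate, this learner eventually stabilizes at the index for the limit tape $j_h$ and produces ${\tt write}(j_h,{\tt pr}_{j_h}(h))\in Q$; although the learner may change its hypothesis unboundedly often, it uses only two distinct indices (one for each $i\in\{0,1\}$), so it is $(1,\omega|<\omega)$-computable with two errors.

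The main technical point is the closure of $(1,<\omega)$-computability under composition invoked in (1); one can alternatively sidestep this by arguing directly that, given a hypothetical $(1,<\omega)$-reduction $P_0\tie_\omega P_1\to P_0\tie_2 P_1$ with bound $m_1$, extracting the limit tape of the output costs at most one additional mind change, yielding a $(1,<\omega)$-reduction $P_0\tie_\omega P_1\to P_0\linf P_1$ with bound $m_1+1$ to which Theorem~\ref{thm:NRMP:tielim} applies verbatim. Beyond this routine bookkeeping, the argument is a direct application of the previously established density theorem.
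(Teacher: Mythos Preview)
Your argument for (1) is sound and follows the paper's strategy: both deduce the non-reduction from Theorem~\ref{thm:NRMP:tielim} by choosing $m+1$ large enough that the ``bad set'' must meet the heart, contradicting the hypothetical bounded learner. The paper routes this through the $\Pi^0_1$ set $\cmeet_n(P_0\tie_nP_1)$ and applies the theorem with $\rho=\rho_{m+1}$, while you work directly inside $P_0\tie_\omega P_1$ with $\rho=\lrangle{}$; for (1) either is fine. One small omission: the statement concerns the commutative concatenation $P_0\tie P_1$, not $P_0\tie_2 P_1$, so you should note $P_0\tie_2P_1\leq^1_1P_0\tie P_1$ (translate the one-tape concatenation into two-tape form) so that $P_0\tie_2 P_1\not\leq^1_{<\omega}P_0\tie_\omega P_1$ transfers to $P_0\tie P_1$.

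Part (2), however, has a genuine gap: $P_0\tie_\omega P_1$ is \emph{not} a $\Pi^0_1$ set. Its defining condition ${\tt mc}(f)<\omega$ is properly $\Sigma^0_2$, so your relabeled $P$ does not satisfy the hypothesis of the statement. Nor can you simply replace $P_0\tie_\omega P_1$ by a $\Pi^0_1$ set in its $(1,\omega|<\omega)$-equivalence class: since $P_0\tie_\omega P_1\equiv^1_{\omega|<\omega}P_0\oplus P_1\equiv^1_{<\omega}P_0\tie_2P_1$, doing so na\"ively collapses the strict inequality you want. This is precisely why the paper introduces the recursive meet $\cmeet_n(P_0\tie_nP_1)$: it is $\Pi^0_1$, it sits $(1,<\omega)$-above $P_0\lcm P_1$, and the argument of (1) shows $P_0\oplus P_1\not\leq^1_{<\omega}\cmeet_n(P_0\tie_nP_1)$ because for any $(1,m)$-learner the piece $\rho_{m+1}\fr(P_0\tie_{m+1}P_1)$ already yields the contradiction. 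Taking $P=\cmeet_n(P_0\tie_nP_1)$ and $Q=P_0\linf P_1$ then gives genuine $\Pi^0_1$ witnesses: your two-index learner still witnesses $Q\leq^1_{\omega|<\omega}P$ after stripping the prefix $\rho_n$, and $P<^1_{<\omega}Q$ follows from the non-reduction just established.
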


\begin{proof}\upshape
(1) Let $P$ be a perfect $\Pi^0_1$ antichain in $2^\nn$ of Theorem \ref{thm:JS}.
Fix a clopen set $C$ such that $P_0=P\cap C\not=\emptyset$, and $P_1=P\setminus C\not=\emptyset$.
Then every $f\in P_0$ and $g\in P_1$ are Turing incomparable.
Therefore, $P_0$ and $P_1$ are everywhere $(\omega,1)$-incomparable.
Let $\rho_n$ denote the $n$-th leaf of the tree $T_{\sf CPA}$ of a Medvedev complete $\Pi^0_1$ set ${\sf CPA}\subseteq 2^\nn$.
Fix a $(1,m)$-computable function $\Gamma$ identified by a learner $\Psi$.
By Theorem \ref{thm:NRMP:tielim}, $\rho_{m+1}\fr(P_0\tie_{m+1}P_1)$ intersects with ${\tt mc}_\Psi(\geq m+1)\cup\Gamma^{-1}(\omega^\omega\setminus P_0\oplus P_1)$.
Thus, $P_0\oplus P_1\not\leq^1_{<\omega}\cmeet_n(P_0\tie_nP_1)$.
Additionally, we easily have $P_0\lcm P_1\leq^1_{<\omega}\cmeet_n(P_0\tie_{n}P_1)$.
(2) $P=\cmeet_n(P_0\tie_nP_1)$ and $Q=P_0\linf P_1$ are $\Pi^0_1$.
\end{proof}

\subsection{The Disjunction $\lcm$ versus the Disjunction $\cls$}

By the similar argument, we can separate the strength of the concatenation $\lcm$ and the classical disjunction $\cls$.

\begin{theorem}\label{thm:NRMP:tielim2}
Let $P_0,P_1$ be everywhere $(\omega,1)$-incomparable $\Pi^0_1$ subsets of $2^\nn$.
For any $(1,\omega)$-computable function $\Gamma$, the complement of $\Gamma^{-1}(P_0\oplus P_1)$ is dense in $(P_0\cls P_1)^{\heartsuit}$ (as a subspace of Baire space $\nn^\nn$).
\end{theorem}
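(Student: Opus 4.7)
The plan is to adapt the forcing module of Theorem~\ref{thm:NRMP:tielim} into an $\omega$-stage construction. Since $\cls=\tie_\infty$ permits unbounded mind-changes at the record level, we no longer need to absorb unresolved branches into the closure or into the anti-Popperian set; instead we can build an actual point $g\in (P_0\cls P_1)^{\heartsuit}$ extending any prescribed node $\rho$ such that $\Gamma(g)\notin P_0\oplus P_1$.

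So fix any $\rho$ extendible in $(P_0\cls P_1)^{\heartsuit}$ and any $(1,\omega)$-computable $\Gamma$ identified by a learner $\Psi$. By everywhere $(\omega,1)$-incomparability, choose $f_i\in P_i\cap [{\tt pr}_i(\rho)]$ such that $P_{1-i}$ contains no $f_i$-computable element, writing $f_i={\tt pr}_i(\rho)\fr f_i^*$; let $j_0$ be the tape coordinate of the last declaration of $\rho$. Iterate the following module: at stage $k$, having constructed $g_k$, extend by writing the remainder of $f_{j_k}$ on tape $j_k$, and wait for the least $s_k$ with $\Phi_{\Psi(g_k\res s_k)}(g_k\res s_k;0)=j_k$; if found, toggle $j_{k+1}=1-j_k$ and set $g_{k+1}=(g_k\res s_k)\fr{\tt write}(j_{k+1},f_{j_{k+1}}^*)$, and proceed to stage $k+1$. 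If the wait at some stage $k$ never returns, take $g=g_k$; otherwise take $g=\bigcup_k g_k$.

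The verification splits into two cases. If the wait fails at some stage $k$, then $g\equiv_T f_{j_k}$ since the opposite projection stays finite; but then $\Gamma(g)\in P_0\oplus P_1$ would force $\Gamma(g)=\lrangle{1-j_k}\fr h$ with $h\in P_{1-j_k}$ and $h\leq_T g\equiv_T f_{j_k}$, contradicting the choice of $f_{j_k}$, so $g\notin\Gamma^{-1}(P_0\oplus P_1)$. If every wait succeeds, then the first-bit predictions $\Phi_{\Psi(g\res s_k)}(g\res s_k;0)=j_k$ alternate in $k$, so $\Psi(g\res s_k)\neq\Psi(g\res s_{k+1})$, forcing $\lim_n\Psi(g\res n)$ to diverge and hence $\Gamma$ to be unidentified at $g$. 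The main point requiring attention is that in the infinitary branch $g\in (P_0\cls P_1)^{\heartsuit}$: each projection of any initial segment $g\res n$ is a finite prefix of $f_0$ or $f_1$, hence extendible in $T_{P_i}$, so heart-membership is automatic; no genuine obstacle arises beyond bookkeeping the alternation to defeat convergence of $\Psi$.
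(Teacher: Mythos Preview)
Your argument is correct but proceeds along a different route from the paper. The paper does not build the point directly: it fixes a basic open $[\tau]$ meeting the heart, assumes (toward a case split) that $[\tau]\cap(P_0\cls P_1)^{\heartsuit}$ contains no element of $\Gamma^{-1}(\nn^\nn\setminus P_0\oplus P_1)\cup{\rm AP}_\Psi$, and then invokes Theorem~\ref{thm:NRMP:tielim} to see that each ${\tt mc}_\Psi(\geq n)$ is dense open in $[\tau]\cap(P_0\cls P_1)^{\heartsuit}$; since the heart is $\Pi^0_1$ (hence Baire), the intersection $\bigcap_n{\tt mc}_\Psi(\geq n)$ is nonempty there. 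You instead unroll the module of Theorem~\ref{thm:NRMP:tielim} into a single explicit $\omega$-stage alternation and manufacture the witness by hand, dispensing with Baire Category altogether. This is closer in style to the later proof of Theorem~\ref{thm:NRMP:tielim3}; it buys a more concrete witness (in the infinite-alternation branch one sees $g\leq_T f_0\oplus f_1$), while the paper's modular argument makes the reduction to the bounded-mind-change lemma transparent and is shorter.

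One notational slip worth fixing: in $g_{k+1}=(g_k\res s_k)\fr{\tt write}(j_{k+1},f_{j_{k+1}}^*)$ you reuse $f_{j_{k+1}}^*$, which was defined as the tail of $f_{j_{k+1}}$ past ${\tt pr}_{j_{k+1}}(\rho)$. For $k\geq 1$ part of that tail was already laid down at stage $k-1$, so as written the projection on tape $j_{k+1}$ would cease to be an initial segment of $f_{j_{k+1}}$. Your prose (``the remainder of $f_{j_k}$'') has it right; the displayed suffix should be taken past ${\tt pr}_{j_{k+1}}(g_k\res s_k)$.
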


\begin{proof}\upshape
Fix a learner $\Psi$ which identifies the $(1,\omega)$-computable function $\Gamma$.
Fix any clopen set $[\tau]$ intersecting with the heart of $(P_0\cls P_1)$.
Assume that $[\tau]\cap(P_0\cls P_1)^{\heartsuit}$ contains no element of $\Gamma^{-1}(\nn^\nn\setminus P_0\oplus P_1)\cup{\rm AP}_\Psi$.
By Theorem \ref{thm:NRMP:tielim}, ${\tt mc}_\Psi(\geq n)$ is dense and open in the heart of $(P_0\cls P_1)\cap[\tau]=\tau\fr((P_0\cap[{\tt pr}_0(\tau)])\cls(P_1\cap[{\tt pr}_1(\tau)]))$.
As $[\tau]\cap(P_0\cls P_1)^\heartsuit$ is $\Pi^0_1$, the intersection $\bigcap_{n\in\nn}{\tt mc}_\Psi(\geq n)$ is dense in $[\tau]\cap(P_0\cls P_1)^\heartsuit$, by Baire Category Theorem.
Hence, $\nn^\nn\setminus\Gamma^{-1}(P_0\oplus P_1)$ intersects with any nonempty clopen set $[\tau]$ with $[\tau]\cap(P_0\cls P_1)^{\heartsuit}\not=\emptyset$.
\end{proof}

\begin{cor}\label{cor:2:l-b}~
\begin{enumerate}
\item There exist $\Pi^0_1$ sets $P,Q\subseteq 2^\nn$ such that $P\equiv^{<\omega}_{1}Q$ holds but $Q\not\leq^1_{\omega}P$ holds.
\item There exist $\Pi^0_1$ sets $P,Q\subseteq 2^\nn$ such that $P\equiv^{<\omega}_{\omega}Q$ holds but $P<^{1}_{\omega}Q$ holds.
\end{enumerate}
\end{cor}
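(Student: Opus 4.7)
My plan is to witness both items of the corollary with a single pair $(P, Q)$. I take $P_0, P_1$ to be everywhere $(\omega, 1)$-incomparable nonempty $\Pi^0_1$ subsets of $2^\nn$, obtained for instance by splitting a perfect $\Pi^0_1$ antichain (Theorem \ref{thm:JS}) along a clopen set, and I set $P := P_0 \cls P_1$ and $Q := P_0 \linf P_1$; after the standard computable encoding, both are $\Pi^0_1$ subsets of $2^\nn$.

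For the positive reductions, $P \leq^1_1 Q$ is immediate: given $\lrangle{i} \fr x \in Q$, return ${\tt write}(i, x) \in P$. In the reverse direction, I establish $Q \leq^{<\omega}_1 P$ via the two-element computable team $\{\Phi_0, \Phi_1\}$ defined by $\Phi_i(f) := \lrangle{i} \fr {\tt pr}_i(f)$: each $\Phi_i$ is partial computable (with domain $\{f : {\tt pr}_i(f)\text{ is infinite}\}$), and for every $f \in P = P_0 \cls P_1$ at least one projection ${\tt pr}_i(f)$ must be infinite and, by consistency of $f$, must lie in $P_i$, so at least one $\Phi_i(f) \in \lrangle{i} \fr P_i \subseteq Q$. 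Thus $P \equiv^{<\omega}_1 Q$, and in particular $P \equiv^{<\omega}_\omega Q$ and $P \leq^1_\omega Q$.

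The crux is to show $Q \not\leq^1_\omega P$, for which I invoke Theorem \ref{thm:NRMP:tielim2}: for any $(1, \omega)$-computable $\Gamma$, the complement of $\Gamma^{-1}(P_0 \linf P_1)$ is dense in the subspace $(P_0 \cls P_1)^\heartsuit$ of Baire space. To convert this abstract density into a concrete failure witness inside $P$, I verify that $[T_*^\heartsuit]$ is a nonempty subset of $P$: nonemptiness follows because ${\tt write}(0, \alpha) \in [T_*^\heartsuit]$ for any $\alpha \in P_0$ (both projections remain extendible at every prefix), and any $f \in [T_*^\heartsuit]$ has both projections in $T_{P_i}^{ext}$ at every prefix, so whichever of ${\tt pr}_0(f)$ and ${\tt pr}_1(f)$ turns out to be infinite (at least one must) traces a genuine path in the corresponding $T_{P_i}$ and lies in $P_i$, giving $f \in P$. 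Density then delivers some $f \in P$ at which $\Gamma(f)$ is either undefined or not in $Q$, so no $(1, \omega)$-computable $\Gamma$ can witness $Q \leq^1_\omega P$. Item (1) is now $P \equiv^{<\omega}_1 Q$ together with $Q \not\leq^1_\omega P$, and item (2) combines $P \equiv^{<\omega}_\omega Q$ with the strict inequality $P <^1_\omega Q$ obtained from $P \leq^1_1 Q$ and $Q \not\leq^1_\omega P$. The main technical obstacle is precisely the containment $[T_*^\heartsuit] \subseteq P$ used to promote the density statement of Theorem \ref{thm:NRMP:tielim2} into an honest counterexample at a point of $P_0 \cls P_1$ itself.
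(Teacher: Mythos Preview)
Your proof is correct and follows essentially the same approach as the paper: both take $P_0,P_1$ as the halves of a perfect $\Pi^0_1$ antichain, set $P=P_0\cls P_1$ and $Q=P_0\linf P_1$, and invoke Theorem~\ref{thm:NRMP:tielim2} for the non-reduction. Your write-up is in fact more complete than the paper's, since you explicitly verify the containment $[T_*^\heartsuit]\subseteq P_0\cls P_1$ needed to extract an actual witness from the density statement, and you spell out the $(<\omega,1)$-reduction via the two projections (the paper defers this to Part~I).
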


\begin{proof}\upshape
Let $P$ be a perfect $\Pi^0_1$ antichain in $2^\nn$ of Theorem \ref{thm:JS}.
Fix a clopen set $C$ such that $P_0=P\cap C\not=\emptyset$, and $P_1=P\setminus C\not=\emptyset$.
Then $P_0$ and $P_1$ are everywhere $(\omega,1)$-incomparable.
Fix a $(1,\omega)$-computable function $\Gamma$ identified by a learner $\Psi$.
By Theorem \ref{thm:NRMP:tielim2}, $\nn^\nn\setminus(P_0\oplus P_1)$ is dense in $(P_0\cls P_1)^\heartsuit$.
For $\Pi^0_1$ sets $P_0,P_1\subseteq 2^\nn$, both $P_0\cls P_1$ and $P_0\linf P_1$ are $\Pi^0_1$, and $P_0\lcm P_1\leq^{1}_{1}P_0\linf P_1$.
\end{proof}

\begin{figure}[t]\centering
\begin{center}
\[
	\begin{xy}
	(0,20) *{P\oplus Q\ \ }="A", (25,15) *{\ \ P\cup Q\ \ }="B", (50,10) *{\ \ P\tie Q\ \ }="C", (75,5) *{\ \ P\lcm Q\ \ }="D", (100,0) *{\ \ P\cls Q}="E",
	\ar @<1mm> "A";"B"^{{\rm id}}
	\ar @<1mm> "B";"A"^{(<\omega,1)\text{-truth-table}}
	\ar @/_1pc/ (24,17.5);(1,22.5)|{\varprod}_{(1,1)\text{-computable}}
	\ar @<1mm> "B";"C"^{{\rm id}}
	\ar @<1mm> "C";"B"^{(1,<\omega)\text{-computable}}
	\ar @/_1pc/ (49,12.5);(26,17.5)|{\varprod}_{(1,1)\text{-computable}}
	\ar @<1mm> "C";"D"^{{\rm id}}
	\ar @<1mm> "D";"C"^{(1,\omega|<\omega)\text{-computable}}
	\ar @/_1pc/ (74,7.5);(51,12.5)|{\varprod}_{(1,<\omega)\text{-computable}}
	\ar @<1mm> "D";"E"^{{\rm id}}
	\ar @<1mm> "E";"D"^{(<\omega,1)\text{-computable}}
	\ar @/_1pc/ (99,2.5);(76,7.5)|{\varprod}_{(1,\omega)\text{-computable}}
	\end{xy}
\]
\end{center}
 \vspace{-0.5em}
\caption{The two-tape (bounded-errors) model of disjunctions for independent $\Pi^0_1$ sets $P,Q\subseteq 2^\nn$.}
  \label{fig:arrow1}
\end{figure}
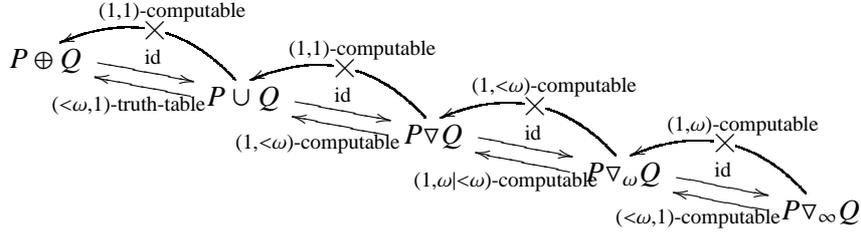

\subsection{The Disjunction $\bigoplus$ versus the Disjunction $\bcls$}

By the similar argument, we can separate infinitary disjunctions.
A sequence $\{x_i\}_{i\in\nn}$ of elements of $\nn^\nn$ is {\em Turing independent} if $x_i$ is not computable in $\bigoplus_{j\not=i}x_j$ for each $i\in\nn$.
\index{Turing independent}%
A collection $\{P_i\}_{i\in I}$ of subsets of $\nn^\nn$ is {\em pairwise everywhere independent} if, for any collection $\{[\sigma_i]\}_{i\in I}$ of clopen sets with $P_i\cap[\sigma_i]\not=\emptyset$ for each $i\in I$, there is a choice $\{x_i\}_{i\in I}\in\prod_{i\in I}(P_i\cap[\sigma_i])$ such that $P_i$ has no element computable in  $\bigoplus_{j\in I\setminus\{i\}}x_j$ for each $i\in I$.
\index{pairwise everywhere independent}%

\begin{theorem}\label{thm:NRMP:tielim3}
Let $\{P_i\}_{i<2^t}$ be a pairwise everywhere independent collection of $\Pi^0_1$ subsets of $2^\nn$, and let $\rho$ be any binary string.
For any $(t,\omega)$-computable function $\Gamma$, the complement of $\Gamma^{-1}(P_0\oplus\dots\oplus P_{2^t-1})$ is dense in the heart of $\rho\fr(P_0\cls\dots\cls P_{2^t-1})$ (as a subspace of Baire space $\nn^\nn$).
Indeed, for any nonempty interval $I$ in the heart of $\rho\fr(P_0\cls\dots\cls P_{2^t-1})$, there is $g\in \rho\fr(P_0\cls\dots\cls P_{2^t-1})^\heartsuit\cap I\setminus\Gamma^{-1}(P_0\oplus\dots\oplus P_{2^t-1})$ which is computable in some $g^*\in\bigotimes_{k<2^t-1}P_k$.
\end{theorem}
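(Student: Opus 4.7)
The plan is to leverage the pairwise everywhere independence of $\{P_i\}_{i<2^t}$ directly, combined with the observation that every $(t,\omega)$-computable $\Gamma$ is in particular $(\omega,1)$-computable, so $\Gamma(g)\leq_T g$ on ${\rm dom}(\Gamma)$. This permits a forcing-free construction of the required $g$, bypassing the mind-change machinery of Theorems \ref{thm:NRMP:tielim} and \ref{thm:NRMP:tielim2}.

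Fix a nonempty interval $I=[\rho\fr\tau]$ with $\rho\fr\tau$ lying in the heart of $\rho\fr(P_0\cls\cdots\cls P_{2^t-1})$. Writing $\tau_i:={\tt pr}_i(\tau)$, each $\tau_i\in T_{P_i}^{ext}$, so $P_i\cap[\tau_i]\not=\emptyset$. Applying the pairwise-everywhere-independence hypothesis to the family $\{P_i\cap[\tau_i]\}_{i<2^t}$ yields a tuple $(x_i)_{i<2^t}$ with $x_i\in P_i\cap[\tau_i]$ such that no element of $P_i$ is Turing-computable in $\bigoplus_{j\not=i}x_j$, for any $i<2^t$.

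Next I would construct $g$ extending $\rho\fr\tau$ by writing symbols only on tapes $0,1,\ldots,2^t-2$, filling each such tape $k$ with the tail of $x_k$ beyond $\tau_k$ via a computable round-robin interleaving, and leaving tape $2^t-1$ idle at $\tau_{2^t-1}$. Then ${\tt pr}_k(g)=x_k\in P_k$ for $k<2^t-1$ and ${\tt pr}_{2^t-1}(g)=\tau_{2^t-1}\in T_{P_{2^t-1}}^{ext}$, whence $g\in\rho\fr(P_0\cls\cdots\cls P_{2^t-1})^{\heartsuit}\cap I$; moreover $g$ is computable in $g^*:=\bigoplus_{k<2^t-1}x_k\in\bigotimes_{k<2^t-1}P_k$, as every symbol appended to $\rho\fr\tau$ is read off of some $x_k$ with $k<2^t-1$.

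To conclude, suppose toward contradiction that $\Gamma(g)\in P_0\oplus\cdots\oplus P_{2^t-1}$; then the $(2^t-1)$-th column of $\Gamma(g)$ would be an element of $P_{2^t-1}$ Turing-computable in $\Gamma(g)\leq_T g\leq_T g^*$, contradicting the independence of $x_{2^t-1}$ from the remaining $x_j$. I expect the main subtlety to be checking that the round-robin extension genuinely lies in the heart at every finite stage---that each partial projection remains extendible in the corresponding tree---which is automatic because each $x_k$ is already a full infinite path in $P_k$, so every prefix of $x_k$ written on tape $k$ belongs to $T_{P_k}^{ext}$.
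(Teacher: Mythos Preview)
There is a genuine gap: you have confused the coproduct $\oplus$ with the product $\otimes$. In this paper (see the Notations section), $P\oplus Q=(\langle 0\rangle\fr P)\cup(\langle 1\rangle\fr Q)$, so an element of $P_0\oplus\cdots\oplus P_{2^t-1}$ is a single string $\langle i\rangle\fr h$ with $h\in P_i$ for \emph{exactly one} index $i<2^t$; it does not have a ``$(2^t-1)$-th column''. Thus if $\Gamma(g)\in P_0\oplus\cdots\oplus P_{2^t-1}$ happens to land in $\langle 0\rangle\fr P_0$, your $g$---which is computable in $x_0\oplus\cdots\oplus x_{2^t-2}$ and hence in particular computes $x_0\in P_0$---gives no contradiction whatsoever. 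Your argument would only rule out the single possibility $\Gamma(g)\in\langle 2^t-1\rangle\fr P_{2^t-1}$, leaving $2^t-1$ others wide open.

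This is precisely why the paper's proof must engage with the team structure of $(t,\omega)$-computability rather than collapsing to $(\omega,1)$-computability. The function $\Gamma$ is identified by a team $\{\Psi_e\}_{e<t}$ of $t$ learners; for any given input, \emph{some} learner succeeds, but we do not know which one or which $P_i$ it targets. The paper assigns to each learner $\Psi_e$ the $e$-th bit of the binary expansion of the current tape index $j<2^t$, and dynamically steers the construction of $g$ so that every learner $\Psi_e$ either changes its mind infinitely often or commits to an index whose $e$-th bit corresponds to a tape that is eventually abandoned (hence finite) in $g$. This simultaneous diagonalization against all $t$ learners is the essential content; the number $2^t$ of sets is exactly what is needed to give each learner its own bit to be fooled on.
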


\begin{proof}\upshape
Assume that the $(t,\omega)$-computable function $\Gamma$ is identified by a team $\{\Psi_i\}_{i<t}$ of learners.
Fix a string $\rho\fr\tau_0$ which is extendible in the heart of $\rho\fr(P_0\cls\dots\cls P_{2^t-1})$.
Then, ${\tt pr}_i(\tau_0)$ must be extendible in $P_i$ for each $i<2^t$.
Fix $\{f_i\}_{i<2^t}\in\prod_{i<2^t}(P_i\cap[{\tt pr}_i(\tau_0)])$ witnessing the independence of $\{P_i\}_{i<2^t}$, i.e., $P_i$ contains no $\bigoplus_{j\not=i}f_j$-computable element.
Assume that $f_i={\tt pr}_i(\tau_0)\fr f^*_i$ for each $i<2^t$ and that the last declaration along $\tau_0$ is $j_0<2^t$, i.e., $\tau_0=\tau_0^-\fr(j_0,k)$ for some $k<2$.
Fix a computable function $\delta$ mapping $j<2^t$ to a unique binary string $\delta(j)$ satisfying $j=\sum_{e=0}^{t-1}2^e\cdot\delta(j;e)$.
Let $E^e_k$ denote the set $\{j<2^t:\delta(j;e)=k\}$.
Then we can proceed the following actions.
\begin{itemize}
\item Extend $\tau_0$ to $g_0=\tau_0\fr{\tt write}(j_0,f^*_{j_0})\in\rho\fr(P_0\cls\dots\cls P_{2^t-1})$.
\item Wait for the least $s_0>|\tau_0|$ such that $\Phi_{\Psi_e(g_0\res s_0)}(g_0\res s_0;0)\in E^e_{\delta(j_0;e)}$ for some $e<2^t$.
\item 
If such $s_0$ exists, then enumerate all such $e<2^t$ into an auxiliary set ${\rm Ch}_0$, and define $\delta(j_1)$ as follows:
\[
\delta(j_1;e)=
\begin{cases}
\delta(j_0;e) & \mbox{ if }e\not\in {\rm Ch}_0,\\
1-\delta(j_0;e) & \mbox{ if }e\in {\rm Ch}_0.
\end{cases}
\]
\item Extend $g_0\res s_0$ to $g_1=(g_0\res s_0)\fr{\tt write}(j_1,f^*_{j_1})\in\rho\fr(P_0\cls\dots\cls P_{2^t-1})$, where $j_1=\sum_{e=0}^{t-1}2^e\cdot\delta(j_1;e)$.
\end{itemize}

These actions force each learner $\Psi_e$ with $e\in{\rm Ch}_0$ to change his mind whenever the learner $\Psi_e$ want to have an element of $\bigoplus_{i<2^t}P_i$.
Fix $u\in\nn$.
Assume that $j_u$, $g_u$, $s_u$, and ${\rm Ch}_u$ has been already defined, and the following induction hypothesis at stage $u$ is satisfied.

\begin{itemize}
\item ${\tt pr}_e(g_u)\subseteq f_e$ for any $e<2^t$, hence, $g_u\in\rho\fr(P_0\cls\dots\cls P_{2^t-1})^\heartsuit\cap[\rho\fr\tau_0]$.
\item $\{s_v\}_{v\leq u}$ is strict increasing, and ${\rm Ch}_u\not=\emptyset$.
\item For each $e\in{\rm Ch}_u$, if $\Phi_{\Psi_e(g_u\res s_u)}(g_u\res s_u;0)$ converges to some value $k<2^t$, then $k\in E^e_{\delta(j_u;e)}$
\end{itemize}

It is easy to see that $u=0$ satisfies the induction hypothesis.
At stage $u+1\in\nn$, we proceeds the following actions.

\begin{itemize}
\item Define $\delta(j_{u+1})$ as follows:
\[
\delta(j_{u+1};e)=
\begin{cases}
\delta(j_u;e) & \mbox{ if }e\not\in {\rm Ch}_u,\\
1-\delta(j_u;e) & \mbox{ if }e\in {\rm Ch}_u.
\end{cases}
\]
\item Extend $g_u\res s_u$ to $g_{u+1}=(g_u\res s_u)\fr{\tt write}(j_{u+1},f^{(u+1)}_{j_{u+1}})$, where $j_{u+1}=\sum_{e=0}^{t-1}2^e\cdot\delta(j_{u+1};e)$, and $f^{u+1}$ satisfies ${\tt pr}_{j_{u+1}}(g_{u+1})={\tt pr}_{j_{u+1}}(g_u\res s_u)\fr f^{(u+1)}_{j_{u+1}}=\rho\fr f_{j_{u+1}}$.
\item Wait for the least $s_{u+1}>s_u$ such that $\Phi_{\Psi_e(g_{u+1}\res s_{u+1})}(g_{u+1}\res s_{u+1};0)\in E^e_{\delta(j_{u+1};e)}$ for some $e<2^t$.
\item If such $s_{u+1}$ exists, then enumerate all such $e<2^t$ into ${\rm Ch}_{u+1}$,
\end{itemize}

By our action, it is easy to see that $u+1$ satisfies the induction hypothesis.
As the set $\rho\fr(P_0\cls\dots\cls P_{2^t-1})^\heartsuit$ is closed (with respect to the Baire topology) and $\{s_u\}_{u\in\nn}$ is strictly increasing, the sequence $\{g_u\}_{u\in\nn}$ converges to some $g\in\rho\fr(P_0\cls\dots\cls P_{2^t-1})^\heartsuit$.
Let $I(g)\subseteq 2^t$ be the set of all $e<2^t$ such that ${\tt pr}_e(g)$ is total.

\begin{claim}
$g\leq_T\bigoplus_{e\in I(g)}f_e$.
\end{claim}

Note that $g=g[f_0,\dots,f_{2^t-1}]$ is effectively constructed uniformly in a given collection $\{f_k\}_{k<2^t}$.
In other words, there is a (uniformly) computable function $\Theta$ mapping $\{f_k\}_{k<2^t}$ to $\Theta(\{f_k\}_{k<2^t})=g=g[f_0,\dots,f_{2^t-1}]$.
Then, it is easy to see that the function $\Theta$ maps $\{f_e\}_{e\in I(g)}\cup\{{\tt pr}_e(g)\fr 0^\nn\}_{e\in 2^t\setminus I(g)}$ to $g$.
Hence, $g\leq_T\bigoplus_{e\in I(g)}f_e\oplus\bigoplus_{e\in 2^t\setminus I(g)}{\tt pr}_e(g)\fr 0^\nn$.
Therefore, $g\leq_T\bigoplus_{e\in I(g)}f_e$ as desired, since ${\tt pr}_e(g)\fr 0^\nn$ is computable for any $e\in 2^t\setminus I(g)$.

\medskip

Let $\Gamma_e$ denote the $(1,\omega)$-computable function identified by $\Psi_e$, that is, $\Gamma_e(\alpha)=\Phi_{\lim_n\Psi(\alpha\res n)}(\alpha)$ for any $\alpha\in\nn^\nn$.
We consider the following two cases.

\medskip

\noindent
{\bf Case 1} ($e\in{\rm Ch}_u$ for finitely many $u\in\nn$).
Fix $u$ such that $e\not\in{\rm Ch}_v$ for any $v>u$.
For each $v>u$, $\Phi_{\Psi_e(g\res s_u)}(g\res s_u;0)$ does not converges to an element of $E^e_{\delta(j_v;e)}=E^e_{\delta(j_u;e)}$.
By our definition, for each $k\not\in E^e_{\delta(j_u;e)}$, ${\tt pr}_k(g)\subset\rho\fr f_k$ is finite.
By previous claim, $g\leq_T\bigoplus_{e\not=k}f_e$.
Thus, by independence, $P_k$ has no $g$-computable element.
If $\Phi_{\Psi_e(g\res s_u)}(g\res s_u;0)\uparrow$ for any $u\in\nn$, then $g\in{\rm AP}_{\Psi_e}$.
If $\lim_n\Psi_e(g\res n)$ does not converge, then $g\in\bigcap_{m\in\nn}{\tt mc}_{\Psi_e}(\geq m)$.
Otherwise, $\Phi_{\lim_n\Psi_e(g\res n)}(g;0)$ converges to some value $k\not\in E^e_{\delta(j_u;e)}$.
As $\Phi_{\lim_n\Psi_e(g\res n)}(g)$ is $g$-computable, we see $\Phi_{\lim_n\Psi_e(g\res n)}(g)\not\in k\fr P_k$.
Consequently, $g\in\nn^\nn\setminus\Gamma_e^{-1}(\bigoplus_{k<2^t}P_k)$.

\medskip

\noindent
{\bf Case 2} ($e\in{\rm Ch}_u$ for infinitely many $u\in\nn$).
We enumerate an infinite increasing sequence $\{u[n]\}_{n\in\nn}$, where $u[n]$ is the $n$-th element such that $e\in{\rm Ch}_{u}$.
As $e\in{\rm Ch}_{u[n]}$, we have $\Phi_{\Psi_e(g\res u[n])}(g\res u[n];0)\in E^e_{\delta(j_{u[n]};e)}$.
By our action, $\delta(j_{u[n+1]};e)=\delta(j_{u[n]+1};e)\not=\delta(j_{u[n]};e)$.
This implies $E^e_{\delta(j_{u[n+1]};e)}\cap E^e_{\delta(j_{u[n]};e)}=\emptyset$.
However, we must have $\Phi_{\Psi_e(g\res u[n+1])}(g\res u[n+1];0)\in E^e_{\delta(j_{u[n+1]};e)}$, since $e\in{\rm Ch}_{u[n+1]}$.
This forces the learner $\Psi_e$ to change his mind.
By iterating this procedure, we eventually obtain $g\in\bigcap_{m\in\nn}{\tt mc}_{\Psi_e}(\geq m)$.

\medskip

Consequently, $g\in\bigcap_{e\in\nn}(\nn^\nn\setminus\Gamma_e^{-1}(\bigoplus_{k<2^t}P_k))$.
Thus, $g\in\nn^\nn\setminus\Gamma_e^{-1}(\bigoplus_{k<2^t}P_k)$.
For any $\tau_0$ such that $\rho\fr\tau_0$ which is extendible in the heart of $\rho\fr(P_0\cls\dots\cls P_{2^t-1})$, we can construct such $g$ extending $\tau_0$.
Therefore, $\nn^\nn\setminus\Gamma_e^{-1}(\bigoplus_{k<2^t}P_k)$ intersects any nonempty interval in $\rho\fr(P_0\cls\dots\cls P_{2^t-1})^\heartsuit$.
In other words, $\nn^\nn\setminus\Gamma_e^{-1}(P_0\oplus\dots\oplus P_{2^t-1})$ is dense in $\rho\fr(P_0\cls\dots\cls P_{2^t-1})^\heartsuit$ as desired.
\end{proof}

The following theorem by Jockusch-Soare \cite[Theorem 4.1]{JS} is important.

\begin{theorem}[Jockusch-Soare \cite{JS}]\label{thm:JS2}
There is a computable sequence $\{\prod_nP^i_n\}_{i\in\nn}$ of nonempty homogeneous $\Pi^0_1$ subsets of $2^\nn$ such that $\{x_i\}_{i\in\nn}$ is Turing independent for any choice $x_i\in\prod_nP^i_n$, $i\in\nn$.
\end{theorem}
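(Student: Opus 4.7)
The plan is to construct each $P^i=\prod_nP^i_n$ directly, treating each $P^i_n\subseteq\{0,1\}$ as a co-c.e.\ (hence $\Pi^0_1$) set. Initially $P^i_n[0]=\{0,1\}$ for all $i,n$, and I would enumerate ``removals'' across stages so that each final $P^i_n$ remains nonempty. Homogeneity is then automatic by construction, and $\Pi^0_1$-ness uniform in $i$ follows from uniformity of the enumeration; what remains is to force Turing independence.

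The requirements are, for each $e,i\in\nn$, $R_{e,i}$: for every choice $(x_k)_{k\in\nn}$ with $x_k\in P^k$, $\Phi_e^{\bigoplus_{j\neq i}x_j}\neq x_i$. I would reserve a distinct coordinate $n_{e,i}$ within $P^i$ for each $R_{e,i}$ (e.g.\ $n_{e,i}=e$), so that different requirements affecting the same $P^i$ act on different coordinates. At stage $s$, for each requirement $R_{e,i}$ with $\pair{e,i}\leq s$ that has not yet acted, I would effectively search for a finite $F\subset\nn\setminus\{i\}$ and strings $(\sigma_j)_{j\in F}$ with the properties that (a)~each $\sigma_j$ is compatible with the current state of $P^j$ (i.e.\ $\sigma_j(k)\in P^j_k[s]$ for all $k<|\sigma_j|$), and (b)~$\Phi_{e,s}^{\bigoplus_{j\in F}\sigma_j}(n_{e,i})\downarrow = v$ with oracle use confined to the $\sigma_j$'s. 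Whenever such witnesses appear, act: enumerate the removals ``$v\notin P^i_{n_{e,i}}$'' and ``$1-\sigma_j(k)\notin P^j_k$'' for each $j\in F$, $k<|\sigma_j|$.

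For the verification, the key observation is that actions are always safe. The reserved coordinate $n_{e,i}$ starts as $\{0,1\}$ and is touched only once, leaving $\{1-v\}$ nonempty. The compatibility condition $\sigma_j(k)\in P^j_k[s]$ at the moment of action ensures removing $1-\sigma_j(k)$ preserves $\sigma_j(k)\in P^j_k$. If $R_{e,i}$ eventually acts, the requirement is met by direct diagonalization at $n_{e,i}$: any $(x_k)\in\prod_kP^k$ has $x_j\supset\sigma_j$ for $j\in F$, so $\Phi_e^{\bigoplus_{j\neq i}x_j}(n_{e,i})=v\neq x_i(n_{e,i})$. If $R_{e,i}$ never acts, suppose for contradiction that some choice yields $\Phi_e^{\bigoplus x_j}=x_i$; then taking $\sigma_j^*:=x_j\res u$ (where $u$ is the use of $\Phi_e$ at $n_{e,i}$), $\sigma_j^*$ is compatible with $P^j[s]$ for every $s$ (because $x_j\in P^j\subseteq P^j[s]$), and at a sufficiently large stage the computation converges---so the search would have succeeded, a contradiction.

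The main technical point I expect to require care is the handling of potential overlaps when several requirements affect the same $P^j_k$ via their $\sigma_j$'s. The construction is in fact priority-free: every action only monotonically narrows the classes, and because a later search only considers $\sigma_j$ compatible with the current restrictions, it automatically respects all earlier actions. Concretely, if an earlier action has set $P^j_k$ to $\{\sigma_j(k)\}$, then any later compatible $\sigma'_j$ must satisfy $\sigma'_j(k)=\sigma_j(k)$, so no subsequent removal can empty $P^j_k$. This monotonicity, combined with the reservation of the diagonalizing coordinates $n_{e,i}$, makes the argument go through without any finite-injury bookkeeping, yielding the uniformly $\Pi^0_1$, homogeneous, and Turing-independent sequence claimed.
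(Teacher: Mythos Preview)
The paper does not prove this theorem; it is quoted from Jockusch--Soare and used as a black box. So your proposal is being judged on its own merits, and it has a real gap.

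The problem is your claim that ``the reserved coordinate $n_{e,i}$ starts as $\{0,1\}$ and is touched only once.'' This is false under your construction. When some \emph{other} requirement $R_{e',i'}$ with $i'\neq i$ acts, it may have $i\in F$ and a witness string $\sigma_i$ of length exceeding $n_{e,i}$; its restraint step ``remove $1-\sigma_i(k)$ from $P^i_k$'' then collapses $P^i_{n_{e,i}}$ to a singleton \emph{before} $R_{e,i}$ has had a chance to diagonalize there. If that singleton is $\{v\}$ where $v$ is exactly the value your later search produces for $R_{e,i}$, removing $v$ would empty $P^i_{n_{e,i}}$; and if you simply decline to act in that case, the requirement can go unmet, since $x_i(n_{e,i})=v$ is now forced and nothing prevents $\Phi_e^{\bigoplus_{j\neq i}x_j}(n_{e,i})=v$. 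Your final paragraph correctly disposes of restraint-versus-restraint overlaps (a later compatible $\sigma'_j$ must agree with the earlier restraint), but it does not address this restraint-versus-diagonalization conflict.

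The easy repair is to drop the advance reservation $n_{e,i}=e$ and instead let $R_{e,i}$ choose its diagonalizing coordinate at the moment it acts: include in the search the condition that $P^i_n[s]=\{0,1\}$ for the target $n$. At any stage only finitely many coordinates of $P^i$ have been restrained, so cofinitely many fresh $n$ remain; the safeness of the removal is then immediate. The verification that a never-acting $R_{e,i}$ is satisfied still goes through exactly as you wrote, since a putative total $\Phi_e^{\bigoplus_{j\neq i}x_j}=x_i$ would converge at every fresh $n$ and the search would eventually see it. With this change the construction really is priority-free, as you intended.
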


Clearly any such $\Pi^0_1$ set contains no element of a {\sf PA} degree, a Turing degree of a complete consistent extension of Peano Arithmetic.
Accordingly, every element of such a $\Pi^0_1$ set computes no element of a Medvedev complete $\Pi^0_1$ set ${\sf CPA}$.

\begin{cor}\label{thm:2:construction-tl-w}
There are $\Pi^0_1$ sets $P_n\subseteq 2^\nn$, $n\in\nn$, such that $\cmeet_t(P_0\cls\dots\cls P_t)<^{<\omega}_{\omega}\cmeet_t P_t$.
\end{cor}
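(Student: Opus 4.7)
The plan is to take $\{P_n\}_{n\in\nn}$ to be the computable sequence of homogeneous $\Pi^0_1$ subsets of $2^\nn$ supplied by Theorem \ref{thm:JS2}, so that $\{x_i\}_{i\in\nn}$ is Turing independent for every choice $x_i\in P_i$. The $\leq^{<\omega}_\omega$-direction is the easy one: I would exhibit $\cmeet_t(P_0\cls\dots\cls P_t)\leq^1_1\cmeet_t P_t$ by the computable map that sends $x\in{\sf CPA}$ to itself and sends $\rho_n\fr p\in\rho_n\fr P_n$ to $\rho_n\fr{\tt write}(n,p)$, where $n$ is read off along the computable tree $T_{\sf CPA}$ and ${\tt write}(n,p)\in P_0\cls\dots\cls P_n$.

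For the strict inequality, suppose toward contradiction that some $\Gamma\in[\mathfrak{C}_T]^{<\omega}_\omega$ witnesses $\cmeet_t P_t\leq^{<\omega}_\omega\cmeet_t(P_0\cls\dots\cls P_t)$, identified by a team of $b$ learners. Fix $t\geq b$ and set $T=2^t-1$. I would first verify that $\{P_0,\dots,P_T\}$ is pairwise everywhere independent in the sense of Theorem \ref{thm:NRMP:tielim3}: this is the homogeneity of each $P_i$ combined with the Turing independence clause of Theorem \ref{thm:JS2} after clopen restrictions, upgrading pointwise independence into the statement that $P_i$ contains no element computable from any finite join of elements chosen from the other $P_j$'s.

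Next, I would manufacture a $(t,\omega)$-computable auxiliary function $\Gamma'\colon P_0\cls\dots\cls P_T\to\bigoplus_{k\leq T}P_k$ out of $\Gamma$. Given $g$, simulate $\Gamma(\rho_T\fr g)$: if the output is seen to leave $T_{\sf CPA}$ through a leaf $\rho_n$ with $n\leq T$, so that $\Gamma(\rho_T\fr g)=\rho_n\fr p$ with $p\in P_n$, set $\Gamma'(g)=\lrangle{n}\fr p$; if the output stays inside $T_{\sf CPA}$ (so $\Gamma(\rho_T\fr g)\in{\sf CPA}$), let $\Gamma'(g)$ be obtained from $\Gamma(\rho_T\fr g)$ by a fixed computable Medvedev reduction of ${\sf CPA}$ to the $\Pi^0_1$ set $\bigoplus_{k\leq T}P_k$; otherwise leave $\Gamma'(g)$ undefined. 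The team for $\Gamma$ lifts uniformly to a team for $\Gamma'$, so $\Gamma'$ is $(b,\omega)$- and hence $(t,\omega)$-computable.

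The main obstacle is the closing step. Applying Theorem \ref{thm:NRMP:tielim3} (with $\rho=\lrangle{}$) to $\Gamma'$ yields $g\in(P_0\cls\dots\cls P_T)^\heartsuit$ with $\Gamma'(g)\notin\bigoplus_{k\leq T}P_k$ and $g\leq_T g^*$ for some $g^*\in\bsup_{k<T}P_k$. By the construction of $\Gamma'$, the only possibility is that $\Gamma'(g)$ is undefined, forcing $\Gamma(\rho_T\fr g)=\rho_n\fr p$ with $p\in P_n$ and $n>T$. But team-learnability gives $p\leq_T\Gamma(\rho_T\fr g)\leq_T g\leq_T g^*$, and $g^*$ is a join of elements from $P_k$ with $k<T$; since $n\geq 2^t>T$, Theorem \ref{thm:JS2} forces $P_n$ to contain no such $g^*$-computable element, the desired contradiction.
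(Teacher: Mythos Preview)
Your proposal is correct and follows essentially the same route as the paper: take the Jockusch--Soare sequence from Theorem~\ref{thm:JS2}, apply Theorem~\ref{thm:NRMP:tielim3} on the $\rho_T$-block to produce a $g$ whose image under the team cannot land in any $\rho_k\fr P_k$ with $k\leq T$, and then use Turing independence to rule out the remaining pieces $\rho_n\fr P_n$ with $n>T$. The only differences are cosmetic: you build an explicit auxiliary $\Gamma'$ so that Theorem~\ref{thm:NRMP:tielim3} applies in its literal $\bigoplus$-form, and you dispose of the ${\sf CPA}$ case by folding a Medvedev reduction into $\Gamma'$, whereas the paper applies the theorem directly to $\Gamma$ (with $\rho=\rho_{2^t}$) and eliminates the ${\sf CPA}$ case by invoking the observation that the Jockusch--Soare sets contain no element of PA degree.
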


\begin{proof}\upshape
Fix the computable sequence $\{P_i\}_{i\in\nn}$ of Theorem \ref{thm:JS2}.
Then $\{P_i\}_{i\in\nn}$ is pairwise everywhere independent.
Assume that $\cmeet_t P_t\leq^{<\omega}_{\omega}\cmeet_t(P_0\cls\dots\cls P_t)$ via a $(t,\omega)$-computable function $\Gamma$.
Let $\rho_n$ denote the $n$-th leaf of the tree $T_{\sf CPA}$ of a Medvedev complete $\Pi^0_1$ subset of $2^\nn$.
By Theorem \ref{thm:NRMP:tielim3}, $\Gamma^{-1}(\bigcup_{k<2^t}\rho_k\fr P_k)$ is dense in the heart of $\rho_{2^t}\fr(P_0\cls\dots\cls P_{2^t-1})$.
In particular, there is $g\in\rho_{2^t}\fr(P_0\cls\dots\cls P_{2^t-1})$ such that $\Gamma(g)\not\in\bigcup_{k<2^t}\rho_k\fr P_k$ which is computable in some $g^*\leq_T\bigotimes_{k<2^t}P_k$.
By our choice of $\{P_i\}_{i\in\nn}$, $\Gamma(g)$ computes no element of $\bigcup_{k\geq 2^t}P_k\cup{\sf CPA}$.
Thus, $\Gamma(g)\not\in\cmeet_tP_t$.
\end{proof}

\begin{cor}\label{thm:tl-w}~
\begin{enumerate}
\item There exists a computable sequence $\{P_n\}_{n\in\nn}$ of $\Pi^0_1$ subsets of Cantor space $2^\nn$, such that the condition $\left[\bcls\right]_nP_n<^{<\omega}_{\omega}\binf_nP_n$ is satisfied.
\item There exist $\Pi^0_1$ sets $P,Q\subseteq 2^\nn$ such that $P\equiv^{\omega}_{1}Q$ holds but $P<^{<\omega}_{\omega}Q$ holds.
\end{enumerate}
\end{cor}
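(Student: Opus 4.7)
The plan is to specialize and slightly extend the argument of Corollary~\ref{thm:2:construction-tl-w} to the infinitary disjunctions $[\bcls]_n P_n$ and $\binf_n P_n$, using the Jockusch--Soare sequence $\{P_n\}_{n\in\nn}$ from Theorem~\ref{thm:JS2}. That sequence is computable and pairwise everywhere independent; every choice from it is Turing independent, and no element of any $P_n$ computes a point of ${\sf CPA}$.

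For item~(1), I would first verify the easy direction $[\bcls]_n P_n\leq^1_1\binf_n P_n$ by the map $\langle n\rangle\fr f\mapsto{\tt write}(n,f)$; the resulting element of $(\nn\times\nn)^\nn$ has $n$-th projection $f\in P_n$ and empty projections elsewhere, hence lies in $[\bcls]_n P_n$. For the strict separation I would suppose toward contradiction that $\binf_n P_n\leq^{<\omega}_\omega[\bcls]_n P_n$ via a $(t,\omega)$-computable function $\Gamma$. Since $P_0\cls\dots\cls P_{2^t-1}$ embeds into $[\bcls]_n P_n$ by regarding $\{0,\dots,2^t-1\}\subseteq\nn$ (consistency is preserved since the extra projections are empty), Theorem~\ref{thm:NRMP:tielim3} applied with empty $\rho$ produces a point $g$ in the heart of $P_0\cls\dots\cls P_{2^t-1}$ with $\Gamma(g)\notin P_0\oplus\dots\oplus P_{2^t-1}$ and $g\leq_T g^*$ for some $g^*\in\bigotimes_{k<2^t-1}P_k$. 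Writing $\Gamma(g)=\langle n\rangle\fr f$, the exclusion forces $n\geq 2^t$, so $f\in P_n$ with $n$ distinct from every coordinate of $g^*$. Extending $\{(g^*)_k\}_{k<2^t-1}\cup\{h_n=f\}$ to a full choice $\{h_k\}_{k\in\nn}\in\prod_k P_k$ and invoking Turing independence of this choice contradicts $f\leq_T g\leq_T g^*\leq_T\bigoplus_{k\neq n}h_k$.

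For item~(2), I would take $P=[\bcls]_n P_n$ and $Q=\binf_n P_n$, both $\Pi^0_1$ subsets of $2^\nn$ via the standard binary encoding of $\nn\times\nn$ and $\nn\times 2^\nn$. The strict inequality $P<^{<\omega}_\omega Q$ is precisely item~(1); for the Muchnik equivalence $P\equiv^\omega_1 Q$, the direction $P\leq^\omega_1 Q$ follows from $P\leq^1_1 Q$, while conversely any $g\in P$ nonuniformly determines an index $n$ with ${\tt pr}_n(g)\in P_n$, so $\langle n\rangle\fr{\tt pr}_n(g)\in Q$ is Turing below $g$.

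The main obstacle will be the case $n\geq 2^t$ in item~(1): since $\Gamma$ may route its output to an arbitrarily high tape, I need to invoke Turing independence for the \emph{particular} choice that inserts the a~priori unknown $f$ at coordinate $n$ alongside the known coordinates of $g^*$. This is exactly the strength of Theorem~\ref{thm:JS2}: Turing independence is guaranteed for \emph{every} choice in $\prod_k P_k$, not just for one fixed choice, and this is what allows us to interpolate $f$ into the family and conclude.
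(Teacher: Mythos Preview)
Your argument for item~(1) is correct and proceeds along the same lines as the paper: both apply Theorem~\ref{thm:NRMP:tielim3} to the Jockusch--Soare sequence from Theorem~\ref{thm:JS2}, using the fact that independence holds for \emph{every} choice to rule out outputs landing in any $P_n$ with $n\geq 2^t$. The paper routes the inequality through the recursive-meet intermediaries of Corollary~\ref{thm:2:construction-tl-w}, namely the chain $[\bcls]_nP_n\leq^1_1\cmeet_t(P_0\cls\dots\cls P_t)<^{<\omega}_\omega\cmeet_tP_t\leq^1_1\binf_nP_n$, whereas you embed $P_0\cls\dots\cls P_{2^t-1}$ directly into $[\bcls]_nP_n$ and invoke Theorem~\ref{thm:NRMP:tielim3} with empty $\rho$. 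This is slightly more direct but not materially different.

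There is, however, a genuine gap in item~(2): your $P=[\bcls]_nP_n$ and $Q=\binf_nP_n$ are \emph{not} $\Pi^0_1$ subsets of $2^\nn$ under any ``standard binary encoding''. For instance, the obvious map $\lrangle{n}\fr f\mapsto 0^n1\fr f$ sends $\binf_nP_n$ to a set whose closure in $2^\nn$ contains the extraneous point $0^\nn$, so the image is not closed; similarly $[\bcls]_nP_n$ lives in the non-compact space $(\nn\times 2)^\nn$ and has no closed embedding into $2^\nn$. The paper addresses exactly this point: it also takes $P=[\bcls]_nP_n$, sets $Q=\cmeet_tP_t$, and then appeals to Part~I for the fact that both are $(1,1)$-equivalent to genuine $\Pi^0_1$ subsets of $2^\nn$ (and for the Muchnik equivalence $P\equiv^\omega_1 Q$). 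Your argument for $\equiv^\omega_1$ is fine; you simply need to pass to $(1,1)$-equivalent $\Pi^0_1$ representatives rather than asserting that the sets themselves are literally $\Pi^0_1$ in $2^\nn$.
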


\begin{proof}\upshape
(1) By Corollary \ref{thm:2:construction-tl-w}. the condition $\left[\bcls\right]_nP_n\leq^1_1\cmeet_t(P_0\cls\dots\cls P_t)<^{<\omega}_{\omega}\cmeet_t P_t\leq^{1}_{1}\binf_nP_n$ is satisfied.
(2) Put $P=\left[\bcls\right]_nP_n\leq^1_1\cmeet_t(P_0\cls\dots\cls P_t)$ and $Q=\cmeet_t P_t$.
Then $P$ and $Q$ are $(1,1)$-equivalent to $\Pi^0_1$ subsets as seen in Part I \cite{HK_PartI}.
By Theorem \ref{thm:2:construction-tl-w}, $P<^{<\omega}_{\omega}Q$, and $Q\equiv^{\omega}_{1}P$ as seen in Part I \cite{HK_PartI}.
\end{proof}

\section{Contiguous Degrees and Dynamic Infinitary Disjunctions}

\subsection{When the Hierarchy Collapses}

We have already observed the following hierarchy, for pairwise independent $\Pi^0_1$ sets $P,Q\subseteq 2^\nn$.
\[P\linf Q>^{1}_{1}P\cup Q>^{1}_{1}P\tie Q>^{1}_{<\omega}P\lcm Q>^{1}_{\omega|<\omega}P\cls Q\equiv^{<\omega}_{1}P\linf Q.\]

{\em Homogeneity} is an opposite notion of antichain (and independence).
Recall that $S\subseteq\nn^\nn$ is {\em homogeneous} if $S=\prod_nS_n$ for some $S_n\subseteq\nn$, $n\in\nn$.
\index{Pi01 set@$\Pi^0_1$ set!homogeneous}%
Every antichain is degree-non-isomorphic everywhere.
On the other hand, every homogeneous set $S$ is {\em degree-isomorphic everywhere}, that is to say, $S\cap C$ is degree-isomorphic to $S\cap D$ for any clopen sets $C,D\subseteq\nn^\nn$ with $S\cap C\not=\emptyset$ and with $S\cap D\not=\emptyset$\footnote{The anonymous referee pointed out that the notion of degree-isomorphic everywhere is related to the notion of {\em fractal} in the study of Weihrauch degrees \cite{BGM,lRP13}. The (reverse) lattice embedding $d$ of the Medvedev degrees into the Weihrauch degrees has the property that a subset $P$ of Baire space is degree-isomorphic everywhere if and only if $d(P)$ is a fractal.}.
\index{Pi01 set@$\Pi^0_1$ set!degree-isomorphic everywhere}%

The next observation is that every finite-piecewise computable method of solving a homogeneous $\Pi^0_1$ mass problem can be refined by a finite-$(\Pi^0_1)_2$-piecewise computable method.
That is to say, our hierarchy between $\leq^1_{<\omega}$ and $\leq^{<\omega}_1$ collapses for homogeneous $\Pi^0_1$ sets, modulo the $(1,<\omega)$-equivalence.

\begin{theorem}\label{thm:contig:b-bl}
For every homogeneous $\Pi^0_1$ set $S\subseteq\nn^\nn$ and for any set $Q\subseteq\nn^\nn$, if $S\leq^{<\omega}_{1}Q$ then $S\leq^{1}_{<\omega}Q$.
\end{theorem}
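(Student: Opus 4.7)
The plan is to upgrade the $(<\omega,1)$-reduction to a learner with bounded mind changes, using homogeneity to reduce an extendibility test to independent coordinate-wise conditions. Let $T$ be a computable tree with $[T]=S=\prod_n S_n$. Because $S$ is homogeneous, a node $\sigma\in T$ is extendible if and only if $\sigma(k)\in S_k$ for every $k<|\sigma|$, so $T^{ext}$ is $\Pi^0_1$ and admits a uniform decreasing computable approximation $\{T^{ext,s}\}_s$ arising from the c.e.\ enumerations of the coordinate-complements $\nn\setminus S_k$. Suppose $S\leq^{<\omega}_{1}Q$ is witnessed by partial computable $\Phi_0,\dots,\Phi_{b-1}$, so that for every $g\in Q$ at least one $\Phi_i(g)\in S$.

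First I would define, for each $i<b$, a Turing functional $\Xi_i$ by recursion on coordinates: given $\Xi_i(g;k)$ for $k<n$, let $\Xi_i(g;n)$ be the first value $v$ discovered by dovetailing over stages $s$ and, inside each stage, over indices $j$ in the cyclic priority order $i,i+1,\dots,i-1$, subject to $\Phi_j(g\res s;n)[s]\downarrow=v$ and $(\Xi_i(g;0),\dots,\Xi_i(g;n-1),v)\in T^{ext,s}$. The learner $\Psi$ is then defined by $\Psi(g\res n)=$ a canonical index of $\Xi_{i(g,n)}$, where $i(g,n)$ is the least $i<b$ such that no prefix of $\Xi_i(g)$ observable by stage $n$ has been enumerated out of $T^{ext}$ by that stage.

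The bounded-mind-change condition falls out immediately: because being enumerated out of $T^{ext}$ is a monotone c.e.\ event, $i(g,n)$ is non-decreasing in $n$ with values in $\{0,\dots,b-1\}$, so $\Psi$ switches guess at most $b-1$ times. The substantive task is then to verify that the limit $i^\dagger=\lim_n i(g,n)$ yields $\Xi_{i^\dagger}(g)\in S$. Fixing any $i_0$ with $\Phi_{i_0}(g)\in S$ and using homogeneity to decompose the test $(\sigma,v)\in T^{ext}$ into independent coordinate-wise $\Pi^0_1$ conditions, one argues inductively on $n$: (i) since $i^\dagger$ is never discredited, every prefix of $\Xi_{i^\dagger}(g)$ survives every approximation $T^{ext,s}$ and hence lies in $T^{ext}=\bigcap_s T^{ext,s}$; (ii) the search defining $\Xi_{i^\dagger}(g;n)$ must terminate, since the $j=i_0$ branch of the priority loop eventually contributes a value in $S_n$ that safely extends the true prefix inside $T^{ext}$. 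Combined, these give $\Xi_{i^\dagger}(g)\in[T^{ext}]=S$, completing the reduction $S\leq^{1}_{<\omega}Q$.

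The main obstacle is verifying (i)--(ii): the interaction between priority-based fast convergence inside each $\Xi_i$ and the dynamically shrinking approximations $T^{ext,s}$ is delicate, because a $\Phi_j$ later in priority may converge first with a value only provisionally in $T^{ext,s}$ that is pruned later. Homogeneity is essential precisely here; it forces any bad value to register as a purely coordinate-wise c.e.\ violation of $T^{ext}$, so no persistent bad output of $\Xi_{i^\dagger}$ can escape the learner's detection and trigger a further mind change. This decoupling into independent coordinate conditions is not available for an arbitrary $\Pi^0_1$ set, and is exactly the structural feature that makes the collapse between the $(1,<\omega)$- and $(<\omega,1)$-degree structures possible on homogeneous sets.
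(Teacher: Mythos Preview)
Your construction has a genuine gap: the $b$ functionals $\Xi_0,\dots,\Xi_{b-1}$ can all be discredited simultaneously, leaving the learner with no guess.  The only difference between $\Xi_i$ and $\Xi_{i'}$ is the cyclic order in which indices $j$ are scanned \emph{inside a fixed stage} $s$, but the outer loop still advances stage-by-stage.  Consider $b=2$ with $\Phi_1(g)\in S$ and $\Phi_0(g;0)=v_0$ converging at some stage $s_0$ strictly before $\Phi_1(g;0)$ converges, where $v_0\in S_{0,s_0}$ but $v_0\notin S_0$.  Then in the search for coordinate $0$, both $\Xi_0$ and $\Xi_1$ reach the pair $(s_0,j=0)$ before any pair with $j=1$ succeeds, so $\Xi_0(g;0)=\Xi_1(g;0)=v_0$.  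Once $v_0$ is enumerated out of $S_0$, both are discredited and $i(g,n)$ becomes undefined.  More generally, a single fast-converging bad index can pollute every $\Xi_i$ at the same coordinate, regardless of the cyclic priority.

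The missing idea is that the learner must \emph{identify and permanently eliminate} the specific index responsible for a bad coordinate value, rather than merely rotating the priority.  The paper's proof does exactly this: it maintains a shrinking set $A_s\subseteq\{0,\dots,b-1\}$ of still-alive indices, and on challenge $s{+}1$ builds a single functional $\Delta_{s+1}$ that at each coordinate $x$ uses the first $e(x)\in A_s$ to converge.  When some $\Delta_{s+1}(g;x)$ is seen to leave $F_x$, the offending $e(x)$ is removed from $A_s$ and the learner starts challenge $s{+}2$.  Since every $e$ with $\Phi_e(g)\in S$ stays in $\bigcap_s A_s$, each $\Delta_{s+1}$ is total and the process terminates in at most $b$ challenges.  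Your coordinate-wise mixing is the right instinct, but the cyclic priority is not a substitute for this targeted elimination.
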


\begin{proof}\upshape
Let $S=\prod_xF_x$ for some $\Pi^0_1$ sets $F_x\subseteq\nn$.
Assume $S\leq^{<\omega}_{1}Q$ via the bound $b$.
That is, for every $g\in Q$ there exists an index $e<b$ such that $\Phi_e(g)\in S$.
Let us begin defining a learner $\Psi$ who changes his mind at most finitely often.
Fix $g\in Q$.
The learner $\Psi$ first sets $A_0=\{e\in\nn:e<b\}$.
By our assumption, we have $\Phi_e(g)\in S$ for some $e\in A_0$.
Then the learner $\Psi$ challenges to predict the solution algorithm $e<b$ such that $\Phi_e(g)\in S$ by using an observation $g\in Q$.
He begins the $1$-st challenge.
On {\em the $(s+1)$-th challenge of $\Psi$}, inductively assume that, the learner have already defined a set $A_s\subseteq A_0$.
Let $v$ be a stage at which the $s+1$-th challenge of $\Psi$ on $g$ begins.
In this challenge, the learner $\Psi$ uses the two following computable functionals $\Gamma$ and $\Delta$.
\begin{itemize}
\item For a given argument $x$, $\Gamma(x,s+1)$ outputs the least $\lrangle{e(x),t(x)}$ such that $e(x)\in A_s$ and $\Phi_{e(x)}(g\res t(x);x)\downarrow$ if such $\lrangle{e(x),t(x)}$ exists.
\item If $\Gamma(x,s+1)=\lrangle{e(x),t(x)}$, then $\Delta(g;x,s+1)=\Phi_{e(x)}(g\res t(x);x)$.
\end{itemize}

Set $\Delta_{s+1}(g;x)=\Delta(g;x,s+1)$.
Clearly, an index $d(s+1)$ of $\Delta_{s+1}$ is calculated from $s+1$.
Then the learner $\Psi(g\res v)$ outputs $d(s+1)$ on the $(s+1)$-th challenge.
Hence $\Phi_{\Psi(g\res v)}(g;x)=\Phi_{d(s+1)}(g;x)=\Phi_{e(x)}(g\res t(x);x)$ for any $x$.
He does not change his mind until the beginning stage $v'$ of the next challenge, i.e., $\Phi_{\Psi(g\res v'')}(g)=\Phi_{\Psi(g\res k)}(g)$ for $k\leq v''<v'$.
{\em The next challenge} might begin when it turns out that $\Psi$'s prediction on his $(s+1)$-th challenge is incorrect, namely:
\begin{itemize}
\item $\Phi_{\Psi(g\res v)}(g\res u)\res n\not\in T_{S,u}$ for some $n<u$ at some stage $u>v$.
\end{itemize}
Here $T_S$ is a corresponding computable tree of $S$.
For each $x\in\nn$, fix a decreasing approximation $\{F_{x,s}\}_{s\in\nn}$ of a $\Pi^0_1$ set $F_x\subseteq\{0,1\}$, uniformly in $x$.
In this case, there exists $x<n$ such that the following condition holds.
\[\Phi_{\Psi(g\res v)}(g\res u;x)=\Delta_{s+1}(g;x)=\Phi_{e(x)}(g;x)\not\in F_{x,s}.\]

For such a least $x$, the learner removes $e(x)$ from $A_s$, that is, let $A_{s+1}=A_s\setminus\{e(x)\}$.
If $A_{s+1}\not=\emptyset$ then the learner $\Psi$ begins {\em the $(s+2)$-th challenge} at the current stage $u$.
The construction of the learner $\Psi$ is completed.
An important point of this construction is that the learner never uses an index rejected on some challenge.
This makes the prediction on $g\in Q$ of the learner $\Psi$ converge.

\begin{claim}
$\Psi$ changes his mind at most $b$ times.
\end{claim}

Whenever $\Psi$ changes, $A_s$ must decrease.
However $\# A_0=b$.

\begin{claim}
For every $g\in Q$ it holds that $\Phi_{\lim_s\Psi(g\res s)}(g)\in S$.
\end{claim}

For $g\in Q$, let $B^g\subseteq A_0$ be the set of all $e\in A_0$ such that $\Phi_e(g)\in S=\prod_xF_x$.
By the definition of $A_0$, clearly $B^g$ is not empty.
Moreover, $B^g\subseteq\bigcap_sA_s$ holds, since $e$ is removed from $\bigcap_sA_s$ only when $\Phi_e(g;x)\not\in F_x$ for some $x$.
Thus, $\Phi_{\Psi(g\res v)}(g):\nn\to\nn$ is total for every stage $v$.
This means that, if $\Phi_{\Psi(g\res v)}(g)\not\in S$, then the learner $\Psi$ will know his mistake at some stage $u$, i.e., $\Phi_{\Psi(g\res v)}(g\res u;x)\not\in F_{x,u}$ for some $x<u$.
Then some index is removed from $\bigcap_sA_s$.
However, this occurs at most $b$ times.
Thus, $\Phi_{\lim_s\Psi(g\res s)}(g)\in S$.
\end{proof}

Let $\alpha,\beta,\gamma\in\{1,<\omega,\omega\}$.
We say that a $(\alpha,\beta|\gamma)$-degree $\dg{a}$ of a nonempty $\Pi^0_1$ subset of $2^\nn$ is {\em $(\alpha,\beta|\gamma)$-complete} if $\dg{b}\leq\dg{a}$ for every $(\alpha,\beta|\gamma)$-degree $\dg{b}$ of a nonempty $\Pi^0_1$ subset of $2^\nn$.
If a $\Pi^0_1$ set $P\subseteq 2^\nn$ has an $(\alpha,\beta|\gamma)$-complete $(\alpha,\beta|\gamma)$-degree, then it is also called {\em $(\alpha,\beta|\gamma)$-complete}.
\index{complete!$(\alpha,\beta\mid\gamma)$-}%

\begin{cor}
A $\Pi^0_1$ subset of $2^\nn$ is $(1,<\omega)$-complete if and only if it is $(1,\omega|<\omega)$-complete if and only if it is $(<\omega,1)$-complete.
\end{cor}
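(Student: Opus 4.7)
My plan is to observe that the forward chain of implications is essentially free, and that the only nontrivial direction is closing the loop from $(<\omega,1)$-completeness back to $(1,<\omega)$-completeness, for which Theorem \ref{thm:contig:b-bl} is the key tool.

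First I would note that the class inclusions $[\mathfrak{C}_T]^1_{<\omega} \subseteq [\mathfrak{C}_T]^1_{\omega|<\omega} \subseteq [\mathfrak{C}_T]^{<\omega}_1$ (a learner with at most $b$ mind changes produces at most $b+1$ distinct indices, and a learner that outputs at most $b$ distinct indices yields a $b$-wise computable function) imply $Q \leq^1_{<\omega} P \Rightarrow Q \leq^1_{\omega|<\omega} P \Rightarrow Q \leq^{<\omega}_1 P$ for every $Q$. Hence any $(1,<\omega)$-complete $\Pi^0_1$ subset of $2^\nn$ is automatically $(1,\omega|<\omega)$-complete, and any $(1,\omega|<\omega)$-complete set is automatically $(<\omega,1)$-complete.

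For the nontrivial direction, suppose $P$ is $(<\omega,1)$-complete. The plan is to exhibit a fixed \emph{homogeneous} $\Pi^0_1$ subset $S \subseteq 2^\nn$ that is Medvedev complete among nonempty $\Pi^0_1$ subsets of $2^\nn$; the set $\mathsf{DNR}_2$ does the job, because $\mathsf{DNR}_2 = \prod_n F_n$ with $F_n = \{i < 2 : i \neq \Phi_n(n)\}$ (so homogeneous), and $\mathsf{DNR}_2 \equiv^1_1 \mathsf{CPA}$ (so Medvedev complete). By hypothesis $S \leq^{<\omega}_1 P$, and then Theorem \ref{thm:contig:b-bl}, applied to the homogeneous set $S$, upgrades this to $S \leq^1_{<\omega} P$.

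To conclude, for any nonempty $\Pi^0_1$ set $Q \subseteq 2^\nn$, Medvedev completeness of $S$ yields $Q \leq^1_1 S$, and composing a computable reduction $Q \to S$ with a $(1,<\omega)$-reduction $S \to P$ produces a $(1,<\omega)$-reduction $Q \to P$ (since $[\mathfrak{C}_T]^1_1 \cdot [\mathfrak{C}_T]^1_{<\omega} \subseteq [\mathfrak{C}_T]^1_{<\omega}$ as a monoid action). Hence $Q \leq^1_{<\omega} P$, and $P$ is $(1,<\omega)$-complete, closing the cycle. The only nontrivial ingredient is Theorem \ref{thm:contig:b-bl}; the rest is verifying that a homogeneous Medvedev-complete witness $S \subseteq 2^\nn$ exists and that reductions compose in the expected way, both of which are straightforward.
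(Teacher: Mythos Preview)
Your proof is correct and follows essentially the same approach as the paper: both use that ${\sf DNR}_2$ is a homogeneous $(1,1)$-complete $\Pi^0_1$ set, apply Theorem~\ref{thm:contig:b-bl} to upgrade $(<\omega,1)$-reducibility to $(1,<\omega)$-reducibility, and close the cycle via the easy inclusions. Your write-up simply spells out the forward implications and the composition step more explicitly than the paper's terse version.
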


\begin{proof}\upshape
Let ${\sf DNR}_2$ denote the set of all two-valued diagonally noncomputable functions, where a function $f:\nn\to 2$ is {\em diagonally noncomputable} if $f(e)\not=\Phi_e(e)$ for any index $e$.
This set is clearly homogeneous, and $\Pi^0_1$.
\index{diagonally noncomputable}%
Moreover, it is $(1,1)$-complete (hence $(\alpha,\beta|\gamma)$-complete for any $\alpha,\beta,\gamma\in\{1,<\omega,\omega\}$).
Therefore, we can apply Theorem \ref{thm:contig:b-bl} with $S={\sf DNR}_2$.
\end{proof}

\begin{cor}
There are $\Pi^0_1$ sets $P,Q\subseteq 2^\nn$ such that $P\linf Q\equiv^{1}_{<\omega}P\cls Q$.
Indeed, if $P$ is homogeneous and $Q\equiv^1_1P$, then $P\linf Q\equiv^{1}_{<\omega}P\cls Q$ is satisfied.
\end{cor}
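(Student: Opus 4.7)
The plan is to reduce the claim to Theorem \ref{thm:contig:b-bl} via a short chain of computable reductions, and then witness the existential assertion by any concrete homogeneous $\Pi^0_1$ set, e.g. ${\sf DNR}_2$.

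First I would record the easy direction $P\cls Q\leq^{1}_{1}P\linf Q$, which holds without hypotheses: given $z\in P\linf Q$, test the first bit $z(0)$ and, on each of the two cases $z=\lrangle{0}\fr x$ and $z=\lrangle{1}\fr y$, output ${\tt write}(0,x)$ or ${\tt write}(1,y)$ respectively. Both outputs lie in $P\cls Q$, and the procedure is plainly computable.

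For the other direction I would exploit the hypothesis $Q\equiv^{1}_{1}P$ to first observe $P\linf Q\equiv^{1}_{1}P$: the inequality $P\leq^{1}_{1}P\linf Q$ is witnessed by splitting on the first bit of the input and, in the case $z(0)=1$, applying the fixed computable reduction $Q\to P$; the converse $P\linf Q\leq^{1}_{1}P$ is immediate by prepending $\lrangle{0}$. Next I would verify the generic $(<\!\omega,1)$-inequality $P\linf Q\leq^{<\omega}_{1}P\cls Q$ with bound $2$: given $z\in P\cls Q$, algorithm $0$ outputs $\lrangle{0}\fr{\tt pr}_0(z)$ and algorithm $1$ outputs $\lrangle{1}\fr{\tt pr}_1(z)$, and by the definition of $\cls$ at least one of the two is total and falls into $P\linf Q$. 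Composing then yields
\[
P\;\leq^{1}_{1}\;P\linf Q\;\leq^{<\omega}_{1}\;P\cls Q,
\]
so $P\leq^{<\omega}_{1}P\cls Q$.

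At this point the homogeneity of $P$ triggers Theorem \ref{thm:contig:b-bl}, which upgrades $P\leq^{<\omega}_{1}P\cls Q$ to $P\leq^{1}_{<\omega}P\cls Q$. Reapplying the equivalence $P\equiv^{1}_{1}P\linf Q$ gives $P\linf Q\leq^{1}_{<\omega}P\cls Q$; combined with the easy direction above this is the desired $(1,<\omega)$-equivalence. The existence part of the corollary is then immediate by choosing $P=Q={\sf DNR}_2$, which is a nonempty homogeneous $\Pi^0_1$ subset of $2^\nn$ since ${\sf DNR}_2=\prod_{e\in\nn}F_e$ with $F_e=\{i<2:\Phi_e(e)\!\downarrow\,\Rightarrow\,i\neq\Phi_e(e)\}$ uniformly $\Pi^0_1$.

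There is no serious obstacle beyond invoking Theorem \ref{thm:contig:b-bl}; the real content of the corollary is that homogeneity lets the mind-change-bounded learner of that theorem absorb the team of two algorithms witnessing $\leq^{<\omega}_{1}$, and the rest of the argument is a bookkeeping chain of $(1,1)$-reductions built from the coding $\linf$ and from $Q\equiv^{1}_{1}P$.
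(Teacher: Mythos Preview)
Your proof is correct and follows essentially the same route as the paper: both arguments invoke Theorem \ref{thm:contig:b-bl} to upgrade the generic $(<\omega,1)$-reduction $P\linf Q\leq^{<\omega}_1 P\cls Q$ to a $(1,<\omega)$-reduction via homogeneity. The only cosmetic difference is that the paper applies the theorem to the homogeneous set $S=P\oplus P$ directly (noting that $P\oplus P$ is homogeneous when $P$ is) and then transfers along $Q\equiv^1_1 P$ at the end, whereas you first pass through $P\linf Q\equiv^1_1 P$ and apply the theorem with $S=P$; neither choice buys anything over the other.
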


\begin{proof}\upshape
Let $P$ be any homogeneous $\Pi^0_1$ subset of $2^\nn$.
Then $P\oplus P$ is also homogeneous.
As seen in Part I \cite[Section 4]{HK_PartI}, there is a $(2,1)$-computable function from $P\cls P$ to $P\oplus P$, hence $P\oplus P\leq^{<\omega}_1P\cls P$.
Thus, by Theorem \ref{thm:contig:b-bl}, $P\oplus P\leq^1_{<\omega}P\cls P$.
Recall from Part I that $Q\equiv^1_1P$ implies $P\cls P\equiv^1_1P\cls Q$.
Hence, $Q\equiv^1_1P$ implies $P\oplus Q\equiv^1_1P\oplus P\leq^1_{<\omega}P\cls P\equiv^1_1P\cls Q$.
\end{proof}

It is natural to ask whether our hierarchy of disjunctive notions for homogeneous $\Pi^0_1$ sets also collapses {\em modulo the $(1,1)$-equivalence}.
The answer is {\em negative}.
We say that a homogeneous set $\prod_nF_n$ is {\em computably bounded} if there is a computable function $l:\nn\to\nn$ such that $F_n\subseteq\{0,\dots,l(n)\}$ for any $n\in\nn$.
\index{computably bounded}%
Clearly, every homogeneous subset of Cantor space $2^\nn$ is computably bounded.
Cenzer-Kihara-Weber-Wu \cite{CKWW} introduced the notion of immunity for closed sets.
A closed subset $P$ of Cantor space $2^\nn$ is {\em immune} if $T_P^{ext}$ has no infinite computable subset.
\index{immune}%

\begin{theorem}\label{thm:chc}
Let $P\subseteq 2^\nn$ be a non-immune $\Pi^0_1$ set, and $S_0,S_1,\dots,S_m\subseteq\nn^\nn$ be special computably bounded homogeneous $\Pi^0_1$ sets.
Then $\bigcup_{i\leq m}S_i\not\leq^{1}_{1}P$.
\end{theorem}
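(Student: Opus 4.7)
The plan is to reduce the general case to $m=0$ by decomposing $P$ according to which $S_i$ the reduction targets. Suppose for contradiction that a computable $\Phi$ witnesses $\bigcup_{i \leq m} S_i \leq^{1}_{1} P$, and set $P_i := \Phi^{-1}(S_i) \cap P$ for each $i \leq m$. Each $P_i$ is a $\Pi^0_1$ subset of $P$, witnessed by the computable tree
\[
T_{P_i} := \{\sigma \in T_P : \Phi(\sigma) \res (k+1) \in T_{S_i} \text{ for every } k < |\Phi(\sigma)|\},
\]
and $P = \bigcup_{i \leq m} P_i$ since $\Phi$ takes values in $\bigcup_i S_i$. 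Using $T_P^{ext} = \bigcup_i T_{P_i}^{ext}$ (because $[\sigma]\cap P \neq \emptyset$ iff $[\sigma]\cap P_i \neq \emptyset$ for some $i$) together with the fact that each $T_{P_i}^{ext}$ is $\Pi^0_1$ (extendibility in a computable finitely-branching tree is $\Pi^0_1$ by K\"onig's lemma), I would deduce non-immunity of some $P_{i^*}$ via an effective set-theoretic lemma.

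That lemma is: if $W \subseteq \bigcup_{i \leq m} A_i$ is an infinite c.e.\ set with each $A_i$ co-c.e., then some $A_i$ contains an infinite c.e.\ subset. I would prove it by induction on $m$. Consider $W \setminus A_0 = W \cap A_0^{c}$, which is c.e.\ as the intersection of two c.e.\ sets. If $W \setminus A_0$ is finite, then $W \setminus A_0$ is in particular computable, so $W \cap A_0 = W \setminus (W \setminus A_0)$ is an infinite c.e.\ subset of $A_0$. Otherwise $W \setminus A_0$ is itself an infinite c.e.\ subset of $\bigcup_{i \geq 1} A_i$, and the inductive hypothesis finishes. Applied to any infinite c.e.\ $W \subseteq T_P^{ext}$ witnessing non-immunity of $P$, this produces some $i^*$ with $T_{P_{i^*}}^{ext}$ containing an infinite c.e.\ subset, so $P_{i^*}$ is non-immune.

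For the base case, I would argue directly: if $Q$ is a non-immune $\Pi^0_1$ set and $S = \prod_n F_n$ is special, homogeneous, and computably bounded, then $S \not\leq^{1}_{1} Q$. Given an alleged reduction $\Psi \colon Q \to S$ and, without loss of generality (by taking a strictly increasing sub-enumeration), an infinite computable $U \subseteq T_Q^{ext}$, I would define $y(n)$ to be $\Psi(\sigma;n)$ for the first $\sigma \in U$ found in a dovetailed search with $\Psi(\sigma;n)\downarrow$. Such $\sigma$ exists: K\"onig's lemma on the prefix closure $\bar U$ produces some $x \in [\bar U] \subseteq Q$, whence $\Psi(x;n)\downarrow$ at some finite use $u$, and any sufficiently long $\sigma \in U$ extending $x\res u$ (which exists because $x\res u \in \bar U$) satisfies $\Psi(\sigma;n) = \Psi(x;n) \in F_n$ by monotonicity of $\Psi$ combined with $\Psi(x) \in S = \prod_n F_n$. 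Homogeneity then forces the computable function $y$ to lie in $\prod_n F_n = S$, contradicting speciality. Applied to $Q := P_{i^*}$ and $S := S_{i^*}$ with the reduction $\Phi \res P_{i^*}$, this closes the argument.

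The main obstacle is the decomposition lemma above: pigeonhole immediately yields some $T_{P_i}^{ext}$ infinite, but boosting this to an \emph{infinite c.e.}\ subset of some $T_{P_i}^{ext}$ is subtle because $W \cap A_i$ (c.e.\ intersected with co-c.e.) is only $\Delta^0_2$ in general. The crucial move is to work with $W \setminus A_0$ instead of $W \cap A_0$, since the former \emph{is} genuinely c.e., and to case-split on its finiteness; this is what makes the whole reduction effective.
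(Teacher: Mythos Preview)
Your proof is correct and takes a cleaner, more modular route than the paper's. The paper works directly with a single infinite c.e.\ set $V_0\subseteq T_P^{ext}$ and iteratively refines it: at stage $i$ it considers the set $L_i^\Phi$ of ``exit nodes'' of $S_i^{ext}$ that $\Phi$ reaches from $V_i$, and splits into three cases --- if $L_i^\Phi$ is infinite, or if $\Phi(V_i)$ eventually stays inside $S_i^{ext}$, it extracts a computable element of $S_i$ from homogeneity; otherwise pigeonhole on the finite set $L_i^\Phi$ yields a single exit node $\rho_i$ and one sets $V_{i+1}=\{\sigma\in V_i:\Phi(\sigma)\supseteq\rho_i\}$, an infinite c.e.\ set whose $\Phi$-image avoids $S_0,\dots,S_i$. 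Your approach instead absorbs the whole iteration into the standalone lemma ``an infinite c.e.\ set contained in a finite union of co-c.e.\ sets has an infinite c.e.\ subset inside one of them,'' applied to $W\subseteq\bigcup_i T_{P_i}^{ext}$; this isolates the combinatorial core and reduces everything to the transparent case $m=0$. What you gain is modularity and a reusable lemma; what the paper's approach gains is that it never needs to check that $T_{P_i}^{ext}$ is co-c.e.\ or that $T_P^{ext}=\bigcup_i T_{P_i}^{ext}$, since it works with $V_i$ directly.

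One small point to tighten in your base case: you define $y(n)=\Psi(\sigma;n)$ for the \emph{first} $\sigma\in U$ found with $\Psi(\sigma;n)\downarrow$, but your correctness argument (``$\Psi(\sigma;n)=\Psi(x;n)\in F_n$'') only treats the particular $\sigma$ extending $x\res u$. The fix is immediate --- whichever $\sigma\in U\subseteq T_Q^{ext}$ is found first extends to some $x'\in Q$, and monotonicity gives $\Psi(\sigma;n)=\Psi(x';n)\in F_n$ --- but as written this step is elided.
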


\begin{proof}\upshape
Let $V_0$ be an infinite c.e. subtree of $T_P^{ext}$.
Assume that $\bigcup_{i\leq m}\prod_nF_n^i\leq^{1}_{1}P$ via a computable functional $\Phi$, where, for each $i<m$, $\{F_n^i\}_{n\in\omega}$ is a uniformly $\Pi^0_1$ sequence of subsets of $\{0,1,\dots l_i\}$.
Let $S_i^{ext}$ denotes the corresponding $\Pi^0_1$ tree of $\prod_nF_n^i$, and let $L_i=\{\rho:(\exists\tau\in S_i^{ext})(\exists i)\;\rho=\tau\fr\lrangle{i}\not\in S_i^{ext}\}$, for each $i$.
Note that $L_i$ {\em differs} from the set of leaves of the corresponding {\em computable} tree of $\prod_nF_n^i$.
We first consider the set $L_i^\Phi=\{\rho\in L_i:(\exists\sigma\in V_i)\;\Phi(\sigma)\supseteq\rho\}$, where $V_i$ for $0<i\leq m$ will be defined in the below construction.
Note that $L_0^\Phi$ is computably enumerable.
There are three cases:
\begin{enumerate}
\item $L^\Phi_0$ is infinite;
\item $L^\Phi_0$ is finite, hence $\Phi([V_0])$ is a subset of $\prod_nF_n^0$;
\item otherwise.
\end{enumerate}

(Case 1): For any $n$, there exists $\rho\in L_0^\Phi$ of height $>n+1$, and $\rho(n)\in F_n^0$.
From any computable enumeration of $L_0^\Phi$ we can calculate a computable path of $\prod_nF_n^0$.
This contradicts the specialness of $S_0=\prod_nF_n^0$.

(Case 2): There exists a finite number $k$ such that, for every string $\sigma\in V_0$ of height $>k$, $\Phi(\sigma)$ belongs to $S_i^{ext}$.
This also contradicts the specialness of $S_0=\prod_nF_n^0$.

(Case 3): There exists infinitely many strings $\sigma\in V_0$ such that $\Phi(\sigma)$ extends some string of $L_0^\Phi$.
Since $L_0^\Phi$ is finite, by the pigeon hole principle, there exists $\rho_0\in L_0^\Phi$ such that $\Phi(\sigma)$ extends $\rho$ for infinitely many $\sigma\in V_0$.
Fix such $\rho_0$, and let $V_1=\{\sigma\in V_0:\Phi(\sigma)\supseteq\rho\}$.
Then the downward closure of $V_1$ is an infinite c.e. subtree of $T_P^{ext}$, and $\Phi([V_1])\cap S_0=\emptyset$.

By iterating this procedure, we win the either of the cases 1 or 2 for some $i\leq m$.
The reason is that, if the case 3 occurs for $j$, then $V_{j+1}$ is defined as an infinite c.e. subtree of $T_P^{ext}$ such that $\Phi([V_1])\cap(\bigcup_{i\leq j}S_i)=\emptyset$. 
Since $\bigcup_{i\leq m}\prod_nF_n^i\leq^{1}_{1}P\leq[V_m]$, i.e., $\Phi([V_m])\subseteq\bigcup_{i\leq m}S_i$, the case 3 does not occur for $m$.
\end{proof}

\begin{cor}\label{cor:NRMP:chc}
Let $P,Q$ be any nonempty $\Pi^0_1$ subsets of $2^\nn$, and $S,T$ be computably bounded homogeneous $\Pi^0_1$ sets.
Then $S\cup T\not\leq^{1}_{1}P\tie Q$.
\end{cor}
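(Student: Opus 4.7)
The plan is to reduce to Theorem~\ref{thm:chc}, which precisely handles reductions from a finite union of special computably bounded homogeneous $\Pi^0_1$ sets into a non-immune $\Pi^0_1$ set. Taking there $m=1$, $S_0=S$ and $S_1=T$, the only ingredient left to verify is that the $\Pi^0_1$ set $P\tie Q$ (viewed as a subset of $2^\nn$) is non-immune; once this is done, Theorem~\ref{thm:chc} immediately yields $S\cup T\not\leq^{1}_{1}P\tie Q$.

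For the non-immunity step I would use the concatenation structure directly. Unfolding the definition, $P\tie Q=(P\ntie Q)\oplus(Q\ntie P)$, and the corresponding computable tree satisfies $T_{P\ntie Q}=T_P\cup\bigcup_{\rho\in L_P}\rho\fr T_Q$, with the symmetric expression for $T_{Q\ntie P}$. Since $L_P$ is a computable subset of the computable tree $T_P$, its downward closure $W_P=\{\sigma\in T_P:(\exists\rho\in L_P)\;\sigma\subseteq\rho\}$ is c.e., and every $\sigma\in W_P$ is extendible in $T_{P\ntie Q}$: given $\sigma\subseteq\rho\in L_P$ and any $f\in Q$, the path $\rho\fr f$ lies in $P\ntie Q$ and extends $\sigma$. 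The analogous $W_Q$ is a c.e.\ subset of $T_{Q\ntie P}^{ext}$. Pairing the two via $\oplus$ at matching lengths produces a c.e.\ infinite family inside $T_{P\tie Q}^{ext}$, which witnesses non-immunity of $P\tie Q$.

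The main obstacle is the degenerate situation in which $L_P$ or $L_Q$ is small under the given index (finite, or even empty): then $W_P$ or $W_Q$ is finite and the counting argument above collapses. The intended remedy is to pass to Medvedev-equivalent representatives of $P$ and $Q$ whose computable trees have infinitely many leaves, for instance by computably inserting ``dummy'' dead-end children at cofinal levels without altering the set of infinite paths. Since the statement $S\cup T\not\leq^{1}_{1}P\tie Q$ is invariant under this padding, the substitution is harmless. With the non-immunity of $P\tie Q$ thereby secured, Theorem~\ref{thm:chc} applied to $S_0=S$ and $S_1=T$ closes the corollary.
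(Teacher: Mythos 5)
Your overall plan matches the paper's: apply Theorem~\ref{thm:chc} with $m=1$, $S_0=S$, $S_1=T$, once $P\tie Q$ is known to be non-immune. (You also correctly treat $S,T$ as \emph{special}, which Theorem~\ref{thm:chc} requires even though the corollary's wording omits it.) The gap is in the non-immunity step. Restricting to $W_P$, the downward closure of $L_P$, is what creates the degenerate case you worry about: $W_P$ is finite or empty whenever $T_P$ has few or no dead ends. And the padding you offer is not actually licit. Since $P\ntie Q=P\cup\bigcup_{\rho\in L_P}\rho\fr Q$ depends on the chosen tree through $L_P$, inserting dummy leaves changes $P\tie Q$ \emph{as a set}, to a strictly larger set $R$. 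The inclusion $P\tie Q\subseteq R$ yields only $R\leq^1_1 P\tie Q$, which is the wrong direction: proving $S\cup T\not\leq^1_1 R$ does not give $S\cup T\not\leq^1_1 P\tie Q$ unless you also exhibit a computable map from $R$ into $P\tie Q$, and from a path $\rho\fr f$ through a dummy leaf $\rho$ nothing visibly computes a genuine member of $P\ntie Q$. So the asserted ``invariance under padding'' is precisely the unproved crux.

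The repair is to drop the restriction to $W_P$ altogether: the \emph{whole} computable tree $T_P$ already lies in $T_{P\ntie Q}^{ext}$. For any $\sigma\in T_P$, either $T_P\cap[\sigma]$ is infinite, so $\sigma\in T_P^{ext}\subseteq T_{P\ntie Q}^{ext}$ because $P\subseteq P\ntie Q$; or it is finite, so $\sigma$ extends to some $\rho\in L_P$, and then $\rho\fr f\in P\ntie Q$ for any $f\in Q\neq\emptyset$. Symmetrically $T_Q\subseteq T_{Q\ntie P}^{ext}$, and your pairing via $\oplus$ then produces an infinite computable subset of $T_{P\tie Q}^{ext}$ with no case split on leaves and no padding. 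This is exactly the observation the paper relies on (compare the proof of Corollary~\ref{cor:2:separ2}, where $T_P\subseteq T_{P\fr P}^{ext}$ is cited for the same purpose); with it in hand, your application of Theorem~\ref{thm:chc} closes the corollary.
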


\begin{proof}\upshape
Clearly $P\tie Q$ is not immune.
Thus, Theorem \ref{thm:chc} implies $S\cup T\not\leq^{1}_{1}P\tie Q$.
\end{proof}

To understand degrees of difficulty of disjunctive notions, and to discover new {\em easier} (possibly infinitary) disjunctive notions, it is interesting to discuss {\em contiguous degrees}.

\begin{definition}
Let $(\alpha,\beta,\gamma),(\alpha^*,\beta^*,\gamma^*)\in\{1,<\omega,\omega\}^3$, and assume that $\leq^{\alpha}_{\beta|\gamma}$ is not finer than $\leq^{\alpha^*}_{\beta^*|\gamma^*}$.
An $(\alpha,\beta|\gamma)$-degree $\dg{a}^{\alpha}_{\beta|\gamma}$ is {\em $(\alpha^*,\beta^*|\gamma^*)$-contiguous} if $\dg{a}^{\alpha}_{\beta|\gamma}$ contains at most one $(\alpha^*,\beta^*|\gamma^*)$-degree, that is to say, for any representatives $A,B\in\dg{a}^\alpha_{\beta|\gamma}$, we have that $A$ is $(\alpha^*,\beta^*|\gamma^*)$-equivalent to $B$.
\index{contiguous!$(\alpha^*,\beta^*\mid\gamma^*)$-}%
\end{definition}

\begin{cor}~
\begin{enumerate}
\item There is a $(1,<\omega)$-contiguous $(<\omega,1)$-degree of $\Pi^0_1$ sets of $2^\nn$.
\item Every $(1,<\omega)$-degree which contains a homogeneous $\Pi^0_1$ set or a $\Pi^0_1$ antichain is not $(1,1)$-contiguous.
\item Every $(1,\omega|<\omega)$-degree of $\Pi^0_1$ antichains is not $(1,<\omega)$-contiguous.
\item Every $(<\omega,1)$-degree of $\Pi^0_1$ antichains is not $(1,\omega)$-contiguous (hence, is not $(1,\omega|<\omega)$-contiguous).
\end{enumerate}
\end{cor}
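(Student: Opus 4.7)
I treat the four parts in sequence. For (1), take $R={\sf DNR}_2\subseteq 2^\nn$, which is homogeneous and $(1,1)$-complete in $\Pi^0_1(2^\nn)$. For any nonempty $\Pi^0_1$ set $A\equiv^{<\omega}_1 R$, the $(1,1)$-completeness of $R$ gives $A\leq^1_1 R$, hence $A\leq^1_{<\omega} R$; conversely $R\leq^{<\omega}_1 A$ combined with the homogeneity of $R$ yields $R\leq^1_{<\omega} A$ via Theorem~\ref{thm:contig:b-bl}. Thus $A\equiv^1_{<\omega} R$, so the $(<\omega,1)$-degree of $R$ contains exactly one $(1,<\omega)$-degree.

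For (2), let $R$ be a special homogeneous $\Pi^0_1$ subset of $2^\nn$ and set $R'=R\tie R$; or let $R$ be a $\Pi^0_1$ antichain with $|R|\geq 2$ (hence automatically special) and set $R'=R\fr R$. In either case $R'\leq^1_1 R$ is immediate (via $f\mapsto f\oplus f$ or the identity), and $R\leq^1_{<\omega} R'$ is witnessed by a learner that first guesses the input is already a solution of $R$ and, upon detecting departure from $T_R$, switches to stripping the unique preceding leaf---a single mind change---so $R\equiv^1_{<\omega} R'$. For $R\not\leq^1_1 R'$: in the homogeneous case, Corollary~\ref{cor:NRMP:chc} applied with $S=T=R$ and $P=Q=R$ gives $R\equiv^1_1 R\cup R\not\leq^1_1 R\tie R$; in the antichain case, Lemma~\ref{lem:NRMP:tiecup} applied with $P_0=P_1=Q=R$ and any two distinct $f,g\in R$ gives $R\not\leq^1_1 R\fr R$, using that the singletons $\{f\}=R^{\leq_T f}$ and $\{g\}=R^{\leq_T g}$ are separated by disjoint open sets in $2^\nn$.

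For (3) and (4), let $R$ be a $\Pi^0_1$ antichain with $|R|\geq 2$. Pick a clopen $C$ making $R_0=R\cap C$ and $R_1=R\setminus C$ both nonempty; these are antichains, hence everywhere $(\omega,1)$-incomparable, and the clopen decomposition gives $R\equiv^1_1 R_0\linf R_1$. For (3), pair $R$ with $Q=\cmeet_n(R_0\tie_n R_1)$: Corollary~\ref{cor:2:bl-bel}(2) provides $Q\equiv^1_{\omega|<\omega} R_0\linf R_1\equiv^1_1 R$ while $Q<^1_{<\omega} R_0\linf R_1$, placing $R$ and $Q$ in distinct $(1,<\omega)$-degrees inside the common $(1,\omega|<\omega)$-degree. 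For (4), pair $R$ with $Q=R_0\cls R_1$: the reduction $Q\leq^1_1 R_0\linf R_1$ is the standard tape-writing map, and $R_0\linf R_1\leq^{<\omega}_1 Q$ is given by the two tagged projections $\Phi_i(g)=\lrangle{i}\fr{\tt pr}_i(g)$, so $R\equiv^{<\omega}_1 Q$.

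The main obstacle is showing $R\not\leq^1_\omega Q$ for (4), since Theorem~\ref{thm:NRMP:tielim2} as stated only rules out $R_0\oplus R_1\leq^1_\omega R_0\cls R_1$ (and in fact $R_0\oplus R_1$ is strictly $(<\omega,1)$-above $R$ under incomparability, so that statement does not transfer to the $\linf$ version). I plan to refit the construction of Theorem~\ref{thm:NRMP:tielim} to track the tag-bit of outputs in $R_0\linf R_1$: suppose $\Gamma$ is $(1,\omega)$-computable from $R_0\cls R_1$ to $R_0\linf R_1$ via a learner $\Psi$, and fix witnesses $f_j\in R_j$. The antichain property yields $R_j^{\leq_T f_j}=\{f_j\}$ and $R_{1-j}^{\leq_T f_j}=\emptyset$, so whenever $g\equiv_T f_j$ the only possibility is $\Gamma(g)=\lrangle{j}\fr f_j$. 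Build an alternating sequence $g_0\prec g_1\prec\cdots$ in $R_0\cls R_1$ with ${\tt pr}_{j_l}(g_l)=f_{j_l}$ and $j_{l+1}=1-j_l$, waiting each time for a stage $s_l$ at which $\Phi_{\Psi(g_l\res s_l)}(g_l\res s_l;0)=j_l$; since a first-bit computation that converges by stage $s_l$ cannot be overruled by any extension, $\Psi$ must switch its index between $s_l$ and $s_{l+1}$. This forces infinitely many mind-changes on the limit $g\in R_0\cls R_1$ (with ${\tt pr}_i(g)=f_i$ for $i=0,1$), contradicting identification of $\Gamma$ on $g$.
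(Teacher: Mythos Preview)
Your arguments for parts (1)--(3) are correct and essentially identical to the paper's proof. For part (4) your construction is also correct, but the ``main obstacle'' you flag is illusory: in this paper's notation $P\oplus Q$ \emph{is} the tagged coproduct $P\linf Q=\lrangle{0}\fr P\cup\lrangle{1}\fr Q$ (the macro \texttt{\textbackslash linf} expands to $\oplus$), not the product $P\lsup Q=P\otimes Q$. Thus Theorem~\ref{thm:NRMP:tielim2} already yields $R_0\linf R_1\not\leq^1_\omega R_0\cls R_1$ directly, and your claim that ``$R_0\oplus R_1$ is strictly $(<\omega,1)$-above $R$'' is false under the intended reading. The paper simply invokes Corollary~\ref{cor:2:l-b} (which packages Theorem~\ref{thm:NRMP:tielim2}) at this point. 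Your hand-rolled forcing argument is sound---it is precisely the construction underlying Theorem~\ref{thm:NRMP:tielim}, specialized so that each $s_l$ is guaranteed to exist because $g_l\equiv_T f_{j_l}$ forces $\Gamma(g_l)=\lrangle{j_l}\fr f_{j_l}$---but it duplicates work already done in the paper.

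Two minor quibbles: in (2), the map $f\mapsto f\oplus f$ does not land in $R\tie R$ (which is a tagged union of concatenations, not a product); the intended reduction is simply $f\mapsto\lrangle{0}\fr f$ using $R\subseteq R\fr R$. And in (4), the symbol $\prec$ for your sequence is ambiguous since each $g_l$ is infinite; you mean the finite prefixes $g_l\res s_l$ form an increasing chain whose union is the witness $g$.
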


\begin{proof}\upshape
(1) This follows from Theorem \ref{thm:contig:b-bl}.

(2) If $\dg{d}$ is a $(1,<\omega)$-degree of a homogeneous $\Pi^0_1$ set $S$, then $\dg{d}$ contains $S$ and $S\tie S$, since $S\equiv^{1}_{<\omega}S\tie S$.
However, $S\tie S<^{1}_{1}S\cup S=S$ by Corollary \ref{cor:NRMP:chc}.
If $\dg{d}$ is a $(1,<\omega)$-degree of a $\Pi^0_1$ antichain $P$, then $\dg{d}$ contains $(P\times 2^\nn)\cup(2^\nn\times P)$ and $P\tie P$, since $P\equiv^{1}_{<\omega}(P\times 2^\nn)\cup(2^\nn\times P)$.
However, $P\tie P<^{1}_{1}(P\times 2^\nn)\cup(2^\nn\times P)$ holds by Lemma \ref{lem:NRMP:tiecup}.

(3) Note that, for any $\Pi^0_1$ set $P$ and any clopen set $C$, it holds that $(P\cap C)\linf(P\setminus C)\equiv^{1}_{1}P$.
Let $\dg{d}$ be a $(1,\omega|<\omega)$-degree of a $\Pi^0_1$ antichain $P$.
Fix a clopen set $C$ such that $P_0=P\cap C\not=\emptyset$, and $P_1=P\setminus C\not=\emptyset$.
Then $\dg{d}$ contains $P_0\linf P_1$ and $\binf_n^\rightharpoonup(P_0\tie_{n}P_1)$, since $P_0\linf P_1\equiv^{1}_{\omega|<\omega}\binf_n^\rightharpoonup(P_0\tie_{n}P_1)$.
However, $\binf_n^\rightharpoonup(P_0\tie_{n}P_1)<^{1}_{<\omega}P_0\linf P_1$ holds by Corollary \ref{cor:2:bl-bel}.

(4) Let $\dg{d}$ be a $(<\omega,1)$-degree of a $\Pi^0_1$ antichain $P$.
Fix a clopen set $C$ such that $P_0=P\cap C\not=\emptyset$, and $P_1=P\setminus C\not=\emptyset$.
Then $\dg{d}$ contains $P_0\linf P_1$ and $P_0\tie_{\omega}P_1$, since $P_0\linf P_1\equiv^{<\omega}_{1}P_0\tie_{\omega}P_1$.
However, $P_0\tie_\omega P_1<_{l}P_0\linf P_1$ holds by Corollary \ref{cor:2:l-b}.
\end{proof}




\subsection{Concatenation, Dynamic Disjunctions, and Contiguous Degrees}

We next show the non-existence of nonzero $(1,1)$-contiguous $(1,<\omega)$-degree, that is, we will see the LEVEL 4 separation between $[\mathfrak{C}_T]^1_1$ and $[\mathfrak{C}_T]^1_{<\omega}$.
Indeed, we show the strong anti-cupping result for $(1,1)$-degrees inside every nonzero $(1,<\omega)$-degree via the concatenation operation.
The following theorem is one of the most important and nontrivial results in this paper.

\begin{theorem}\label{thm:Hig}
For any nonempty $\Pi^0_1$ sets $P,Q\subseteq 2^\nn$, $Q\fr P$ does not $(1,1)$-cup to $P$.
That is to say, for any $R\subseteq\nn^\nn$, if $P\leq^{1}_{1} (Q\fr P)\otimes R$ then $P\leq^{1}_{1} R$.
\end{theorem}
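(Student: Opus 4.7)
The plan is to extract a computable reduction $\Psi\colon R\to P$ from the hypothesized $\Phi\colon (Q\fr P)\otimes R\to P$, exploiting the structural fact that past any leaf $\rho_0\in L_Q$ of the corresponding tree $T_Q$, a copy of $P$ is grafted into $Q\fr P$: concretely, $(\rho_0\fr f)\oplus r$ lies in $(Q\fr P)\otimes R$ whenever $f\in P$ and $r\in R$, so $\Phi$ already yields a computable self-reducer $F_r(f):=\Phi((\rho_0\fr f)\oplus r)$ of $P$, uniformly in $r$.

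First I would dispose of the degenerate case $L_Q=\emptyset$. Here the computable tree $T_Q$ is pruned (every node extendible), so its leftmost branch is a computable member $q_0\in Q$. Since $Q\fr P=Q$ in this case, $\Psi(r):=\Phi(q_0\oplus r)$ already gives a computable reduction from $R$ into $P$, and we are done.

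Assuming now $L_Q\neq\emptyset$, I would fix $\rho_0\in L_Q$ and apply Kleene's second recursion theorem to the computable index transformation $e\mapsto F(e)$, where $F(e)$ is an index of the functional $r\mapsto\Phi((\rho_0\fr\Phi_e(r))\oplus r)$. This yields an index $e^*$ with
\[
\Phi_{e^*}(r)\;=\;\Phi\bigl((\rho_0\fr\Phi_{e^*}(r))\oplus r\bigr)\qquad\text{for all }r.
\]
Set $\Psi:=\Phi_{e^*}$. The remaining task is to verify that $\Psi(r)$ is total and belongs to $P$ whenever $r\in R$. I would do this by induction on $n$, showing $\Psi(r)\res n\in T_P^{ext}$: if $\Psi(r)\res n\in T_P^{ext}$, pick any $f\in P$ extending $\Psi(r)\res n$; then $(\rho_0\fr f)\oplus r\in(Q\fr P)\otimes R$, so $\Phi((\rho_0\fr f)\oplus r)\in P$, and the use principle matches $\Phi((\rho_0\fr f)\oplus r)\res(n+1)$ with $\Psi(r)\res(n+1)$, placing the latter in $T_P^{ext}$.

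The main obstacle is making the self-referential recursion genuinely well-founded, namely, ensuring that the computation of $\Psi(r;n)$ queries only values $\Psi(r;k)$ for $k<n$; a priori the oracle-use of $\Phi(y;n)$ could look at positions $\geq n$ of $y$, producing circularity. The standard remedy is to speed up $\Phi$ by padding so that its use at position $n$ is bounded by $n$ while preserving the reduction property $(Q\fr P)\otimes R\to P$; with this in place the induction closes at each stage and delivers a computable $\Psi\colon R\to P$, hence $P\leq^{1}_{1}R$ as desired. Carrying out this speed-up compatibly with the reduction is the delicate point, and if needed a priority argument that simultaneously controls the use-function and the value of $\Psi$ handles it.
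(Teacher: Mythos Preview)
Your diagnosis of the engine---that $f\mapsto\Phi((\rho_0\fr f)\oplus r)$ is a computable self-reducer of $P$ uniformly in $r$---is correct, and the degenerate case $L_Q=\emptyset$ is handled properly. But the recursion-theorem argument has a real gap at exactly the point you yourself flag as ``the delicate point,'' and neither of your proposed fixes works. You cannot in general pad $\Phi$ so that its use at output position $n$ on the $(Q\fr P)$-coordinate is at most $|\rho_0|+n$: the use function of $\Phi$ is fixed by the hypothesis, and you have no freedom to shrink it without altering $P$, $Q\fr P$, or the reduction. Concretely, if computing $\Phi((\rho_0\fr f)\oplus r;0)$ already queries $f(0),\dots,f(100)$, your induction never gets off the ground: the fixed point the recursion theorem hands you is the nowhere-defined function, and no priority construction changes this, since the use of $\Phi$ is a datum you do not build.

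The paper sidesteps the use issue entirely by iterating the self-reducer at the level of \emph{trees} rather than paths. For $g\in R$ it sets $E^g_0=T_Q\otimes\{g\}$, $D^g_0=\Phi(E^g_0)$, and then $E^g_{i+1}=(T_Q\fr D^g_i)\otimes\{g\}$, $D^g_{i+1}=\Phi(E^g_{i+1})$. One checks that each $D^g_i$ is a $g$-computable subtree of $T_P^{ext}$ (since every node of $E^g_i$ is extendible in $(Q\fr P)\otimes\{g\}$), and that every leaf of $D^g_i$ acquires infinitely many proper extensions in $D^g_{i+1}$; hence $D^g=\bigcup_iD^g_i$ is a $g$-c.e.\ subtree of $T_P^{ext}$ with no dead ends, and its leftmost path is a $g$-computable element of $P$. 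The growth from $D^g_i$ to $D^g_{i+1}$ comes from feeding $\Phi$ arbitrarily long extendible inputs (nodes deep in the infinite tree $T_Q\fr D^g_i$), so compactness of $2^\nn$---not a use bound---is what forces the output to lengthen at every stage.
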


\begin{proof}\upshape
Fix a computable tree $V_P$ (resp. $V_Q$) such that $[V_P]=P$ (resp. $[V_Q]=Q$), and let $L_P$ (resp. $L_Q$) denote all leaves of $V_P$ (resp. $V_Q$).
For a tree $T\subseteq 2^{<\nn}$ and $g\in\nn^\nn$, we write $T\lsup\{g\}$ for $\{\sigma\oplus\tau:\sigma\in V_P\;\&\;\tau\subset g\;\&\;|\sigma|=|\tau|\}$.
For computable trees $S$ and $T$, we also write $S\ntie T$ for $S\cup\bigcup_{\rho\in L_S}\rho\fr T$, where $L_S$ denotes the set of all leaves of $S$.
Assume $P\leq^{1}_{1}(Q\ntie P)\otimes R$ via a computable functional $\Phi$.
We construct a computable functional $\Psi$ witnessing $P\leq^{1}_{1} R$.
Fix $g\in R$.
Then we will find a $g$-c.e. tree $D^g\subseteq V_P$ without dead ends.
To this end, we inductively construct a uniformly $g$-computable sequences $\{D^g_i\}_{i\in\omega},\{E^g_i\}_{i\in\omega}$ of $g$-computable trees, as follows.
\begin{align*}
E^g_0&=V_Q\otimes\{g\}; & D^g_0&=\Phi(E^g_0).\\
E^g_{i+1}&=(V_Q\ntie D^g_i)\otimes\{g\}; & D^g_{i+1}&=\Phi(E^g_{i+1}).
\end{align*}

Here $\Phi(E^g_i)$ denotes the image of $E^g_i$ under a functional $\Phi$, namely, $\Phi(E^g_i)=\{\tau\subseteq 2^{<\nn}:(\exists\sigma\in E^g_i)\;\tau\subseteq\Phi(\sigma)\}$.
Finally, we define a $g$-c.e. tree $D^g=\bigcup_nD^g_n$.
Now, we let $W$ be the tree $V_Q\ntie V_P$, and then we observe $[W]=Q\ntie P$ and $V_Q\subseteq W^{ext}$.

\begin{lemma}\label{lem:Hig_ext}
For any $i$, $D^g_i\subseteq V_P^{ext}$ and $E_i^g\subseteq W^{ext}\otimes\{g\}$.
\end{lemma}

\begin{proof}\upshape
This lemma is proved by induction.
First, our assumption $V_Q\subseteq W^{ext}$ ensures $E_0^g=V_Q\otimes\{g\}\subseteq W^{ext}\otimes\{g\}$, and we also have $D_0^g=\Phi(E_0^g)\subseteq V_P^{ext}$ since $\Phi((Q\ntie P)\otimes R)\subseteq [V_P]$ implies $\Phi(W^{ext}\otimes\{g\})\subseteq V_P^{ext}$ for $g\in R$.
Assume the lemma holds for each $j\leq i$.
We now show that the lemma also holds for $i+1$.
By assumption, $V_Q\ntie D^g_i\subseteq V_Q\ntie V_P^{ext}=W^{ext}$.
So by definition of $E^g_{i+1}$, we get $E_{i+1}^g\subseteq W^{ext}\otimes\{g\}$.
Furthermore, we observe $D_{i+1}^g=\Phi(E_{i+1}^g)\subseteq\Phi(W^{ext}\otimes\{g\})\subseteq V_P^{ext}$.
\end{proof}

\begin{lemma}
There is a computable function $\Gamma$ mapping each $g\in R$ to a $g$-computable sequence $\Gamma(g)=\{D^g_n\}_{n\in\omega}$ of $g$-computable trees.
\end{lemma}

\begin{proof}\upshape
We inductively show this lemma.
It suffices to show that we can construct $D^g_i$ from $E^g_i$ by uniformly $g$-computable way, since clearly $E^g_0$ is computable in $g$, and $D^g_i\mapsto E^g_{i+1}$ is uniformly $g$-computable.
Assume that $E_i^g\subseteq 2^{<\nn}\otimes\{g\}$ is given.
For each $\sigma\in 2^\nn$, if $\sigma\oplus(g\res|\sigma|)\in E^g_i$, then put $l(\sigma)=|\Phi(\sigma\oplus(g\res|\sigma|))|$.
If $\sigma\oplus(g\res|\sigma|)\not\in E^g_i$, then put $l(\sigma)=\infty$.
Note that $l:2^{<\nn}\to\nn\cup\{\infty\}$ is $g$-computable, since the notation $\Phi(\sigma)$ just means the computation of $\Phi$ restricted to step $|\sigma|$ with the oracle $\sigma$.
By Lemma \ref{lem:Hig_ext}, $\lim_nl(f\res n)=\infty$ for any $f\in 2^\nn$.
Because, for any $f$ with $f\oplus g\in[E_i^g]$, we have $\Phi(f\oplus g)\in[\Phi(E_i^g)]\subseteq\Phi([W]\otimes\{g\})\subseteq\Phi((Q\fr P)\otimes R)\subseteq P$, hence, $f\oplus g\in{\rm dom}(\Phi)$.
Therefore, by compactness, for each $n\in\nn$, there is $h_n\in\nn$ such that $l(\sigma)\geq n$ for each $\sigma\in 2^{<\nn}$ of length $h_n$.
We can compute $h^g_i(n)=h_n$ with the oracle $g$, since $l$ is $g$-computable.
Here, we can compute a $g$-computable index of $h^g_i$ from an index of $E_i^g$, uniformly in $i\in\nn$ and $g\in\nn^\nn$.
Thus, the relation $\tau\in D_i^g$ is equivalent to the $g$-computable condition that $\tau\subseteq\Phi(\sigma)$ for some $\sigma\in E_i^g$ of length $h^g_i(|\tau|)$, uniformly in $i\in\nn$ and $g\in\nn^\nn$.
Formally, the set $\{(\tau,i,g)\in 2^{<\nn}\times\nn\times\nn^\nn:\tau\in D^g_i\}$ is computable.
\end{proof}

Define $L_{D_i}$ as the set of all leaves of the tree $D_i^g$, and define $L_{E_i}$ as the set of all leaves of the tree $E_i^g$ for each $i$.

\begin{lemma}
Let $X$ be $D$ or $E$, and $i$ be any natural number.
For any $\rho\in L_{X_i}^g$, there are infinitely many nodes $\tau\in L_{X_{i+1}}^g$ which are extensions of $\rho$.
\end{lemma}

\begin{proof}\upshape
This lemma is proved by induction.
First we pick $\rho\in L_{E_0}=L_Q\lsup\{g\}=\{\sigma\oplus\tau:\sigma\in L_Q\;\&\;\tau\subset g\;\&\;|\sigma|=|\tau|\}$.
We note that $V_P$ is an infinite tree since $P$ is special.
By using our assumption $P\leq^{1}_{1}(Q\ntie P)\otimes R$ via $\Phi$ and the property $[V_Q]\otimes\{g\}\subseteq(Q\ntie P)\otimes R$, the tree $D_0^g=\Phi(E_0^g)$ has a path, i.e., it is infinite.
By definition, we have $E_1^g=V_P\ntie D_0^g\supseteq \rho\fr D_0^g$, and so $E_1^g$ has infinitely many extensions of $\rho$.
Now, we assume this lemma for $E$ and any $j\leq i$.
For a given $\rho\in L_{D_i}$, there is a node $\sigma\in E_i^g$ such that $\Phi(\sigma)=\rho$ by our definition of $D_i^g=\Phi(E_i^g)$.
Without loss of generality, we can pick $\sigma$ as a leaf of $E_i^g$.
By induction hypothesis, $\sigma$ has infinitely many extensions in $E_{i+1}^g$.
By lemma \ref{lem:Hig_ext}, we know $E_{i+1}^g\subseteq W^{ext}\otimes\{g\}$.
This implies that $\Phi(E_{i+1}^g(\supseteq\sigma))$ must be infinite whenever $E_{i+1}^g(\supseteq\sigma)$ is infinite.
We now remark that, for any $\sigma'\in\Phi(E_{i+1}^g(\supseteq\sigma))$, $\Phi(\sigma')\supseteq\Phi(\sigma)=\rho$.
Thus, $\Phi(E_{i+1}^g(\supseteq\sigma))$ gives infinitely many extensions of $\rho$, and our definition $D_{i+1}^g=\Phi(E_{i+1}^g)$ clearly implies the lemma for $D$ and $i$.
Now, we will show the lemma for $E$ and $i+1$.
By our definition of $E_{i+1}^g=(V_Q\ntie D_i^g)\otimes\{g\}$, every $\rho\in L_{E_{i+1}}$ must be of form $\rho=(\sigma\fr\tau)\oplus(g\res|\sigma\fr\tau|)$ for some $\sigma\in L_Q$ and $\tau\in L_{D_i}$.
So if $\tau\in L_{D_i}$ has infinitely many extensions in $L_{D_{i+1}}$ then $\rho=(\sigma\fr\tau)\oplus(g\res|\sigma\fr\tau|)$ has infinitely many extensions in $L_{E_{i+2}}$.
Thus, we have established the lemma for $E$ and $i+1$.
Now, the lemma follows by induction.
\end{proof}

\begin{lemma}
$D^g$ is a $g$-c.e. subtree of $V_P$ without dead ends for any $g\in P$.
Hence, $D^g$ has a $g$-computable path of $V_P$.
\end{lemma}

\begin{proof}\upshape
For a given $g$-c.e. tree $D^g=\{\delta_s\}_{s\in\omega}$, we first compute $\Delta(D^g)[0]=\delta_0$.
If $\Delta(D^g)[t]$ has already been defined, then we wait for stage $s$ with $\delta_s\supseteq\Delta(D^g)[t]$.
At this point, we extend $\Delta(D^g)[t+1]$ to $\sigma_s$.
We eventually reach this stage since $V_P$ has no dead ends.
This procedure is $g$-computable and $\Delta(D^g)=\lim_t\Delta(D^g)[t]$ must be a $g$-computable path of $D^g$.
\end{proof}

Then we define a computable functional $\Psi$ as $\Psi(g)=\Delta(\bigcup_n\Gamma(g))$ for any $g\in\nn^\nn$.
This witnesses $P\leq^{1}_{1}R$ as desired.
\end{proof}

\begin{cor}\label{cor:3a:HigHig}
For every special $\Pi^0_1$ set $P\subseteq 2^\nn$, there exists a $\Pi^0_1$ set $Q\subseteq 2^\nn$ such that $Q<^1_1P$ and $Q\equiv^1_{<\omega}P$.
\end{cor}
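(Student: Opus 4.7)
The plan is to take $Q := P \fr P$. This is a nonempty $\Pi^0_1$ subset of $2^\nn$ (its corresponding tree $T_P \cup \bigcup_{\rho\in L_P}\rho\fr T_P$ is computable), and the inclusion $P\subseteq Q$ immediately furnishes $Q \leq^1_1 P$ via the identity reduction.

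To establish the strict inequality $Q <^1_1 P$, I would invoke Theorem \ref{thm:Hig} with both of the theorem's ``$P$'' and ``$Q$'' instantiated to the present $P$, and with $R$ chosen to be a computable nonempty subset of $2^\nn$, say $\{0^\nn\}$. Assuming toward contradiction that $P \leq^1_1 P\fr P$, one composes with the trivial first-projection reduction $P\fr P \leq^1_1 (P\fr P)\otimes R$ to obtain $P \leq^1_1 (P\fr P)\otimes R$; Theorem \ref{thm:Hig} then concludes $P\leq^1_1 R$, but any computable reduction from $R=\{0^\nn\}$ to $P$ produces a computable element of $P$, contradicting the specialness of $P$.

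For the reverse direction $P \leq^1_{<\omega} Q$, I would exhibit a learner $\Psi$ with mind-change bound $1$ that identifies a total reduction $P\fr P \to P$. The key dichotomy is that every $g\in P\fr P$ satisfies exactly one of (i) $g\in P$, equivalent to $g\res n\notin L_P$ for all $n$, or (ii) $g=\rho\fr h$ for a unique $\rho\in L_P$ and $h\in P$, in which case $\rho$ is the unique initial segment of $g$ lying in $L_P$. Since $L_P$ is computable, my learner reads initial segments of its input $\sigma$ and, while seeing no initial segment of $\sigma$ in $L_P$, outputs a fixed index for the identity functional; once some $\rho\subseteq\sigma$ lies in $L_P$, the learner switches, once and for all, to an index for the functional $g\mapsto g^{*}$ that strips the first $|\rho|$ symbols of its input. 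This strategy incurs at most one mind change on every $g\in P\fr P$, and the stabilizing index identifies an element of $P$, yielding $P\leq^1_{<\omega} P\fr P$.

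All of the genuinely nontrivial content has already been absorbed into Theorem \ref{thm:Hig}; once its anticupping conclusion is available, the principal hurdle is cleared and the corollary reduces to the verifications sketched above. No further obstacle arises: the $\Pi^0_1$-ness of $Q$ is immediate from the construction, the computability of $L_P$ makes the one-mind-change learner effective, and the appeal to Theorem \ref{thm:Hig} is direct.
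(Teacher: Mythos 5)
Your proposal is correct and takes the same route as the paper: set $Q = P\fr P$, observe that $P\subseteq P\fr P$ gives $Q\leq^1_1 P$, invoke Theorem~\ref{thm:Hig} (with a trivial $R$) together with specialness to rule out $P\leq^1_1 P\fr P$, and use the computability of $L_P$ to build a one-mind-change learner witnessing $P\leq^1_{<\omega} P\fr P$. The paper states the corollary as an immediate consequence of Theorem~\ref{thm:Hig} plus the Part~I fact $P\equiv^1_{<\omega}P\fr P$; you have simply unfolded those two ingredients.
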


\begin{proof}\upshape
By Theorem \ref{thm:Hig}, we have $P\fr P<^1_1P\equiv^1_{<\omega}P$.
\end{proof}

\begin{definition}
Fix $\alpha,\beta,\gamma\in\{1,<\omega,\omega\}$.
An $(\alpha,\beta|\gamma)$-degree $\dg{a}\in\mathcal{P}^\alpha_{\beta|\gamma}$ has {\em the strong anticupping property} if there is a nonzero $(\alpha,\beta|\gamma)$-degree $\dg{b}\in\mathcal{P}^\alpha_{\beta|\gamma}$ such that, for any $(\alpha,\beta|\gamma)$-degree $\dg{c}$, if $\dg{a}\leq\dg{b}\vee\dg{c}$, then $\dg{a}\leq\dg{c}$.
\index{strong anticupping property}%
\end{definition}

\begin{cor}
Every nonzero $\dg{a}\in\mathcal{P}^1_1$ has the strong anticupping property.
\end{cor}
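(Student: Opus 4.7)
The plan is to derive this corollary directly from Theorem \ref{thm:Hig} by taking the witness degree $\dg{b}$ to be $[P\fr P]^1_1$, where $P$ is a $\Pi^0_1$ representative of $\dg{a}$. So fix any nonempty $\Pi^0_1$ set $P\subseteq 2^\nn$ with $[P]^1_1=\dg{a}$. Since $\dg{a}\neq 0$, the set $P$ is special, i.e., contains no computable point. Form the concatenation $Q=P\fr P$ (a nonempty $\Pi^0_1$ subset of $2^\nn$) and put $\dg{b}=[P\fr P]^1_1\in\mathcal{P}^1_1$.

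First I would verify that $\dg{b}$ is nonzero and sits below $\dg{a}$. By definition, $P\fr P=P\cup\bigcup_{\rho\in L_P}\rho\fr P$, so every element of $P\fr P$ is either in $P$ or is of the form $\rho\fr f$ with $\rho$ a finite string and $f\in P$. Since $P$ is special, no such element is computable, and hence $\dg{b}$ is nonzero. Also $P\subseteq P\fr P$, so the identity witnesses $P\fr P\leq^1_1 P$; equivalently $\dg{b}\leq\dg{a}$ (in fact this inequality is precisely the content of Corollary \ref{cor:3a:HigHig}, modulo checking nontriviality).

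The anticupping clause is then a direct invocation of Theorem \ref{thm:Hig}. Let $\dg{c}\in\mathcal{P}^1_1$ be arbitrary, represented by a nonempty $\Pi^0_1$ set $R\subseteq 2^\nn$, and suppose $\dg{a}\leq\dg{b}\vee\dg{c}$. Unpacking this in terms of representatives, it means $P\leq^1_1(P\fr P)\otimes R$. Applying Theorem \ref{thm:Hig} with the instance $Q=P$ yields at once $P\leq^1_1 R$, i.e., $\dg{a}\leq\dg{c}$, as required.

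There is essentially no obstacle at this level of the argument; the entire nontrivial content lives inside Theorem \ref{thm:Hig}, whose proof performs the delicate iterated $g$-c.e.\ tree construction $\{D^g_i,E^g_i\}$ inside the concatenated tree. The corollary is then just the translation of that non-cupping result into degree-theoretic language, once one checks that $[P\fr P]^1_1$ is a genuinely nonzero degree below $\dg{a}$.
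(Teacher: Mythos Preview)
Your proof is correct and follows exactly the same approach as the paper: take $\dg{b}$ to be the $(1,1)$-degree of $P\fr P$ and invoke Theorem~\ref{thm:Hig}. You add the verifications that $\dg{b}$ is nonzero and below $\dg{a}$, which the paper leaves implicit, but otherwise the arguments coincide.
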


\begin{proof}\upshape
Fix $P\in\dg{a}$.
Let $\dg{b}$ be the $(1,1)$-degree of $P\fr P$.
Then, by Theorem \ref{thm:Hig}, for any $(1,1)$-degree $\dg{c}$, if $\dg{a}\leq\dg{b}\vee\dg{c}$, then $\dg{a}\leq\dg{c}$.
\end{proof}

For $\Pi^0_1$ sets, if $P$ and $Q$ are disjoint, then $P\oplus Q$ is equivalent to $P\cup Q$ modulo the $(1,1)$-equivalence, since $\mathcal{P}^1_1=\mathcal{P}/{\rm dec}_{\rm p}^{<\omega}[\Pi^0_1]$.
However, if $P$ and $Q$ are not $\Pi^0_1$, the above claim is false, in general.

\begin{prop}\label{prop:5:refref2}
For any special $\Pi^0_1$ set $P\subseteq 2^\nn$, there exists a $(\Pi^0_1)_2$ set $Q\subseteq 2^\nn$ such that $P\cap Q=\emptyset$ but $P\cup Q<^1_1P\oplus Q$.
\end{prop}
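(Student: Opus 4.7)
The plan is to take
\[
Q \;=\; (P \fr P) \setminus P \;=\; \bigcup_{\rho \in L_P} \rho \fr P,
\]
assuming (by choosing a suitable index for $P$) the standard convention that the corresponding tree $T_P$ has nonempty leaf set $L_P$; since $P$ is special we have $P \neq 2^\nn$, so such a presentation is always available. Then $Q$ is $(\Pi^0_1)_2$ as the difference of the two $\Pi^0_1$ sets $P \fr P$ and $P$, noting $P \subseteq P \fr P$. By construction $P \cap Q = \emptyset$ (any $g \in Q$ factors as $\rho \fr h$ with $\rho \in L_P$, so $g \res (|\rho|+1) \notin T_P$, hence $g \notin P$), and $P \cup Q = P \fr P$.

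The next step is to establish $Q \equiv^1_1 P$. For $P \leq^1_1 Q$ I fix any $\rho_0 \in L_P$ and use the computable reduction $h \mapsto \rho_0 \fr h$. For $Q \leq^1_1 P$, given any $g \in Q$ I search (computably, using $T_P$) for the least $n$ with $g \res n \notin T_P$; then $\rho := g \res (n-1) \in L_P$ and the tail $h(k) := g(n-1+k)$ lies in $P$. Uniqueness of this decomposition makes the reduction well-defined and total on $Q$. Consequently $P \oplus Q \equiv^1_1 P \oplus P \equiv^1_1 P$.

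The crucial step is then to invoke $P \not\leq^1_1 P \fr P$, which follows directly from Corollary \ref{cor:3a:HigHig} (itself an application of the noncupping Theorem \ref{thm:Hig} to the specialness of $P$). Combining this with $P \oplus Q \equiv^1_1 P$ and $P \cup Q = P \fr P$, I conclude $P \oplus Q \not\leq^1_1 P \cup Q$. The converse $P \cup Q \leq^1_1 P \oplus Q$ is immediate, since given $f \oplus g \in P \oplus Q$ we can just output $f \in P \subseteq P \cup Q$. Hence $P \cup Q <^1_1 P \oplus Q$ as required.

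The only delicate technical point is the uniform extraction of the $P$-part from an arbitrary $g \in Q$, which rests on $T_P$ being computable together with the unique decomposition provided by the leaves. The strict inequality ultimately stems from Theorem \ref{thm:Hig}: the $\Pi^0_1$ hull $P \fr P$ of $Q$ is $(1,1)$-strictly weaker than $P$ itself, so passage from the union $P \cup Q$ to the product $P \oplus Q$ genuinely increases the Medvedev degree. The $(\Pi^0_1)_2$ complexity of $Q$ is precisely what obstructs the Part~I equivalence $P \cup Q \equiv^1_1 P \oplus Q$ (valid for disjoint $\Pi^0_1$ sets) from collapsing this gap.
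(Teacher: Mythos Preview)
Your proof is correct and follows essentially the same approach as the paper: set $Q=(P\fr P)\setminus P$, observe $P\cup Q=P\fr P$ and $P\oplus Q\equiv^1_1 P$, and invoke Theorem~\ref{thm:Hig} (via Corollary~\ref{cor:3a:HigHig}) for the strict inequality. One minor slip: you have the labels on the two reductions for $Q\equiv^1_1 P$ reversed---in the paper's convention $P\leq^1_1 Q$ means there is a computable map $Q\to P$, so your map $h\mapsto\rho_0\fr h$ actually witnesses $Q\leq^1_1 P$, and your tail-extraction map witnesses $P\leq^1_1 Q$---but since you establish both directions this does not affect the argument.
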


\begin{proof}\upshape
For the item $1$, we assume that $\Gamma_i:Q_i\to P$ is computable for each $i<m$ with a finite $\Pi^0_1$ partition $\{Q_i\}_{i<m}$ of $Q$.
Then we can extend each function $\Gamma_i$ to a total computable function $\Gamma^*_i$.
For a given $g\in Q$, we wait for $g\not\in Q_j$ for $m-1$ many $j$'s.
By this way, we can calculate such $i<m$ such that $g\not\in Q_j$ for each $j\not=i$.
Then $\Gamma_i(g)\in Q$.
For the item $2$, put $Q=(P\tie P)\setminus P$.
For any $g\in Q$, there is a leaf $\rho\in L_P$ such that $\rho\subset g$.
So we wait for such a leaf $\rho\in L_P$.
Then $g^{\shft |\rho|}$ belongs to $P$.
Hence, $P\leq^1_1Q$.
Thus, we have $P\leq^1_1P\oplus Q$, while $P\cup Q=P\tie P<^1_1P$ by Theorem \ref{thm:Hig}.
\end{proof}

\begin{definition}
The operation $\tie:\mathcal{P}(\nn^\nn)\to\mathcal{P}(\nn^\nn)$ is defined as the map sending $P$ to $P^\tie={\sf CPA}\fr P$, where recall that {\sf CPA} denotes the set of all complete consistent extensions of Peano Arithmetic, and it is a $(1,1)$-complete $\Pi^0_1$ subset of $2^\nn$.
\index{$P^\tie$}%
\end{definition}

By the previous theorem, the derived set $P^\tie$ does not $(1,1)$-cup to $P$ whenever $P$ is $\Pi^0_1$.
In particular, we have $P^\tie<^{1}_{1}P$.
Recall from Part I \cite[Proposition 38]{HK_PartI} that the operator $\tie:\mathcal{P}^1_1\to\mathcal{P}^1_1$ introduced by $(\deg^1_1(P))^\tie=\deg^1_1(P^\tie)$ is well-defined.
Moreover, $\mathcal{P}^1_1(\leq\dg{1}^\tie)=\{\dg{a}\in\mathcal{P}^1_1:\dg{a}\leq\dg{1}^\tie\}$ is a principal prime ideal consisting of tree-immune-free Medvedev degrees \cite{CKWW}.
Here, recall that a $\Pi^0_1$ set $P\subseteq 2^\nn$ is tree-immune if $T_P^{ext}$ contains no infinite computable subtree.
\index{tree-immune}%
Then, we also observe the following.

\begin{prop}\label{prop:3a:injective-pre}
Fix $\Pi^0_1$ sets $P_0,P_1,Q_0,Q_1\subseteq 2^\nn$, and assume that $P_0\fr P_1\leq^1_1Q_0\fr Q_1$.
Then, either $P_0\leq^1_1Q_0$ or $P_1\leq^1_1Q_1$ holds.
Moreover, if $P_0$ is tree-immune and $Q_0$ is nonempty, then $P_1\leq^1_1Q_1$.
\end{prop}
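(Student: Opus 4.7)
My plan is to fix a computable reduction $\Phi:Q_0\fr Q_1\to P_0\fr P_1$ and argue by a dichotomy on $\Phi$'s behaviour on $Q_0\subseteq Q_0\fr Q_1$. Either (A) $\Phi(Q_0)\subseteq P_0$, in which case $\Phi|_{Q_0}$ will directly witness $P_0\leq^1_1 Q_0$; or (B) some $f\in Q_0$ satisfies $\Phi(f)\in\rho\fr P_1$ for a leaf $\rho\in L_{P_0}$. In case (B), I will take $\sigma=f\res n$ to be the shortest initial segment of $f$ with $\Phi(\sigma)\supseteq\rho$; since $f\supset\sigma$, $\sigma$ is extendible in $T_{Q_0}$.

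I will then split on whether $\sigma$ admits a leaf extension in $T_{Q_0}$. If some $\tau\in L_{Q_0}$ extends $\sigma$, then for every $g\in Q_1$ we have $\tau\fr g\in Q_0\fr Q_1$ and monotonicity of $\Phi$ forces $\Phi(\tau\fr g)\supseteq\Phi(\sigma)\supseteq\rho$, hence $\Phi(\tau\fr g)\in\rho\fr P_1$; the map $g\mapsto\Phi(\tau\fr g)^{\shft|\rho|}$ is then a computable reduction $Q_1\to P_1$ witnessing $P_1\leq^1_1 Q_1$. Otherwise the subtree $\{\tau\in T_{Q_0}:\tau\supseteq\sigma\}$ is computable, binary, and has no dead ends, so its leftmost infinite path is a computable $h\in Q_0$ above $\sigma$; monotonicity again gives $\Phi(h)\in\rho\fr P_1$, and $\Phi(h)^{\shft|\rho|}$ will be a computable element of $P_1$, so $P_1\leq^1_1 Q_1$ holds trivially. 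This establishes the first claim.

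For the moreover clause, I assume $P_0$ is tree-immune and $Q_0\neq\emptyset$. Case (B) immediately yields $P_1\leq^1_1 Q_1$, so I suppose case (A) and extract structural consequences. Tree-immunity of $P_0$ forbids $P_0$ from containing a computable element (its initial segments would give an infinite computable subtree of $T_{P_0}^{ext}$); consequently $Q_0$ cannot contain a computable element (else $\Phi(g)\in P_0$ would be computable), and in turn $L_{Q_0}\neq\emptyset$ (otherwise the leftmost algorithm on the dead-end-free computable tree $T_{Q_0}$ would produce a computable element of $Q_0$). I then pick $\tau\in L_{Q_0}$ and rerun the (A)/(B) dichotomy on $\Phi_\tau:Q_1\to P_0\fr P_1$, $\Phi_\tau(g)=\Phi(\tau\fr g)$: the (B)-branch for $\Phi_\tau$ yields $P_1\leq^1_1 Q_1$ via the same leaf/no-leaf split applied in $T_{Q_1}$, while the (A)-branch combined with (A) for $\Phi$ gives $\Phi:Q_0\fr Q_1\to P_0$, and tree-immunity again rules out computable elements of $Q_1$, after which a final leaf analysis of $T_{Q_1}$ completes the argument. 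The main technical obstacle will be the analogue of the leaf-extension sub-case for $\Phi_\tau$: a leaf extension of the commitment point $\sigma'$ inside $T_{Q_1}$ only parametrises $Q_1\cap\{g:g\supseteq\sigma'\}$ rather than all of $Q_1$, because $Q_0\fr Q_1$ has only two concatenation levels; I plan to overcome this by iterating the choice of $\tau\in L_{Q_0}$ and using tree-immunity to force $\Phi(\tau)$ itself to extend some $\rho\in L_{P_0}$, making the resulting reduction total on $Q_1$.
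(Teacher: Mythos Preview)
Your argument for the first disjunction is correct and essentially the same as the paper's: you split on whether $\Phi(Q_0)\subseteq P_0$, while the paper splits on whether $\Phi(\tau)\in T_{P_0}^{ext}$ for every leaf $\tau\in L_{Q_0}$; the two dichotomies lead to the same conclusions, and your leaf/no-leaf sub-split is a clean way to handle the case where $T_{Q_0}$ has no dead ends above the commitment point $\sigma$.

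For the moreover clause, however, your plan takes a long detour and leaves the decisive step unjustified. The only consequence of tree-immunity you actually invoke is that $P_0$ has no computable element; everything after that (rerunning the dichotomy on $\Phi_\tau$, analyzing the (A$'$)/(B$'$) sub-branches, worrying that a commitment point in $T_{Q_1}$ only parametrises a cone of $Q_1$) is an attempt to squeeze the conclusion out of this weak consequence. That cannot succeed in general: ``no computable element'' is strictly weaker than tree-immunity, so any argument that uses only the former will not be able to rule out case~(A). Your final sentence --- ``use tree-immunity to force $\Phi(\tau)$ itself to extend some $\rho\in L_{P_0}$'' --- is exactly the right target, but you never explain the mechanism.

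The paper's argument is a single line that makes the whole iteration unnecessary: if \emph{every} leaf $\tau\in L_{Q_0}$ satisfied $\Phi(\tau)\in T_{P_0}^{ext}$, then (combining this with your case~(A), which gives $\Phi(T_{Q_0}^{ext})\subseteq T_{P_0}^{ext}$) the image $\Phi(T_{Q_0})$ would generate an infinite computable subtree of $T_{P_0}^{ext}$, directly contradicting tree-immunity. Hence some leaf $\tau\in L_{Q_0}$ already has $\Phi(\tau)\notin T_{P_0}^{ext}$; then only finitely many nodes of $T_{P_0}$ extend $\Phi(\tau)$, so $(P_0\fr P_1)\cap[\Phi(\tau)]$ is a finite union of copies of $P_1$, and $g\mapsto\Phi(\tau\fr g)$ witnesses $P_1\leq^1_1 Q_1$. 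Replace your iteration with this observation and the proof is complete.
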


\begin{proof}\upshape
Assume that $P_0\fr P_1\leq^1_1Q_0\fr Q_1$ via a computable function $\Phi$.
If $\Phi(\rho)\in T_{P_0}^{ext}$ for any leaf $\rho\in L_{Q_0}$, then $\Phi(g)\in[T_{P_0}]$ for any $g\in[Q_0]$, i.e., $P_0\leq^1_1Q_0$.
If $\Phi(\rho)\not\in T_{P_0}^{\rm ext}$ for some leaf $\rho\in L_{Q_0}$, then there are only finitely many strings of $T_{P_0}$ extending $\Phi(\rho)$.
Thus, $[T_{P_0}\ntie T_{P_1}]\cap[\Phi(\rho)]$ is essentially a sum of finitely many $P_1$'s, hence it is $(1,1)$-equivalent to $P_1$.
Since a computable functional $\Phi$ maps $\rho\fr Q_1$ to the above class, obviously, $P_1\leq^1_1Q_1$.
If $P_0$ is tree-immune, then $\Phi(\rho)\not\in T_{P_0}^{\rm ext}$ for some leaf $\rho\in L_{Q_0}$, since otherwise the image of $T_{Q_0}$ under $\Phi$ is included in $T_{P_0}$, and clearly it is infinite and computable.
Therefore, we must have $P_1\leq^1_1Q_1$.
\end{proof}

\begin{cor}\label{prop:3a:injective}
The operator $\tie:\dg{a}\mapsto\dg{a}^\tie$ is injective.
Hence, $\tie$ provides an order-preserving self-embedding of the $(1,1)$-degrees $\mathcal{P}^1_1$ of nonempty $\Pi^0_1$ subsets of $2^\nn$.
\end{cor}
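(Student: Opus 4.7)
The plan is to deduce injectivity directly from Proposition \ref{prop:3a:injective-pre}, crucially using that $\mathsf{CPA}$ is tree-immune; this is a standard property of PA-complete $\Pi^0_1$ sets and is implicitly invoked in the preceding paragraph, which identifies $\mathcal{P}^1_1(\leq \dg{1}^\tie)$ with the principal ideal of tree-immune-free Medvedev degrees and places $\dg{1}$ strictly above $\dg{1}^\tie$.

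Suppose $\dg{a}^\tie=\dg{b}^\tie$, and fix representatives $P_0\in\dg{a}$, $P_1\in\dg{b}$, so that $\mathsf{CPA}\fr P_0\equiv^1_1\mathsf{CPA}\fr P_1$. I would then apply the ``moreover'' clause of Proposition \ref{prop:3a:injective-pre} to the reduction $\mathsf{CPA}\fr P_0\leq^1_1\mathsf{CPA}\fr P_1$, with the tree-immune $\mathsf{CPA}$ in the role of the proposition's $P_0$ and the nonempty $\mathsf{CPA}$ in the role of its $Q_0$; this yields $P_0\leq^1_1 P_1$. The symmetric application to the reverse reduction yields $P_1\leq^1_1 P_0$, so $\dg{a}=\dg{b}$, which is precisely the injectivity of $\tie$. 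Without the ``moreover'' clause the plain disjunctive conclusion is useless here, since $\mathsf{CPA}\leq^1_1\mathsf{CPA}$ is trivially true and carries no information; tree-immunity is what forces the second disjunct to hold as well.

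For the ``order-preserving self-embedding'' assertion I also need monotonicity, namely $P\leq^1_1 Q\Rightarrow\mathsf{CPA}\fr P\leq^1_1\mathsf{CPA}\fr Q$. I would exhibit the obvious computable functional: on input $g\in\mathsf{CPA}\fr Q$, copy $g$ bit-by-bit as long as the initial segment seen so far lies in $T_\mathsf{CPA}$; if at some stage the read initial segment leaves $T_\mathsf{CPA}$, then necessarily $g=\rho\fr h$ for some leaf $\rho\in L_\mathsf{CPA}$ and some $h\in Q$, in which case thereafter append $\Phi(g^{\shft|\rho|})=\Phi(h)$, where $\Phi$ witnesses $P\leq^1_1 Q$. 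In the non-exit case the output equals $g\in\mathsf{CPA}\subseteq\mathsf{CPA}\fr P$; in the exit case it equals $\rho\fr\Phi(h)\in\rho\fr P\subseteq\mathsf{CPA}\fr P$. Combined with the well-definedness of $\tie$ from Part I \cite{HK_PartI} and the injectivity just established, this delivers the claimed order-preserving self-embedding.

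The only nontrivial ingredient is the tree-immunity of $\mathsf{CPA}$, which is where I would expect a careful reader to pause; but since this is classical and is already being appealed to in the discussion of the tree-immune-free principal ideal just above the statement, the proof reduces to a one-line application of the preceding proposition, and no further argument is required.
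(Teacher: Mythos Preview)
Your proof is correct and follows essentially the same approach as the paper: invoke the tree-immunity of $\mathsf{CPA}$ (the paper cites Cenzer--Kihara--Weber--Wu \cite{CKWW} for this) and then apply the ``moreover'' clause of Proposition~\ref{prop:3a:injective-pre} to conclude that $Q^\tie\leq^1_1 P^\tie$ implies $Q\leq^1_1 P$. The paper's proof is more terse and leaves the monotonicity direction implicit, whereas you spell it out explicitly; but the substance is identical.
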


\begin{proof}\upshape
By Cenzer-Kihara-Weber-Wu \cite{CKWW}, ${\sf CPA}$ is tree-immune.
Therefore, by Proposition \ref{prop:3a:injective-pre}, ${\sf CPA}\fr Q=Q^\tie\leq^1_1P^\tie={\sf CPA}\fr P$ implies $Q\leq^1_1P$.
\end{proof}

It is natural to ask whether the image of $\mathcal{P}^1_1$ under the operator is exactly $\mathcal{P}^1_1(\leq\dg{1}^\tie)$.
Unfortunately, it turns out to be false.

\begin{prop}
There exists a non-tree-immune $\Pi^0_1$ set $Q\subseteq 2^\nn$ such that no nonempty $\Pi^0_1$ sets $P_0,P_1\subseteq 2^\nn$ satisfy $Q\equiv^1_1P_0\fr P_1$.
In particular, the operator $\tie:\mathcal{P}^1_1\to\mathcal{P}^1_1(\leq\dg{1}^\tie)$ is not surjective.
\end{prop}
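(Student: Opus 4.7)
My plan is to construct $Q$ via a finite-injury priority argument that diagonalizes against every potential decomposition. Enumerate all quadruples $(i,j,e,k)$ where $P_i, P_j$ are indices of nonempty $\Pi^0_1$ subsets of $2^\nn$ and $\Phi_e, \Phi_k$ are Turing functionals; each quadruple gives a requirement
\[
\mathcal{R}_{i,j,e,k}:\quad \Phi_e[Q]\not\subseteq P_i\fr P_j\ \text{ or }\ \Phi_k[P_i\fr P_j]\not\subseteq Q,
\]
whose global satisfaction forces $Q\not\equiv^1_1 P_0\fr P_1$ for any $P_0,P_1$. Fix in advance an infinite computable tree $T^\ast\subseteq 2^{<\nn}$ (for instance a perfect tree whose paths give $Q$ a nontrivial Medvedev degree) and maintain $T^\ast \subseteq T_Q^{ext}$ throughout; this guarantees that the final $Q$ is non-tree-immune.

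The construction proceeds in stages, each attacking the highest-priority unsatisfied requirement $\mathcal{R}_{i,j,e,k}$. To attack it, we search for a string $\sigma\in T_Q\setminus T^\ast$ (in the ``free region'') such that committing $Q$ above $\sigma$ forces either some $g\in Q$ with $\Phi_e(g)\notin P_i\fr P_j$, or some $h\in P_i\fr P_j$ with $\Phi_k(h)\notin Q$. The concatenation $P_i\fr P_j$ decomposes into the ``body'' $P_i$ together with the ``tail'' copies $\rho\fr P_j$ attached at leaves $\rho\in L_{P_i}$, and any candidate reduction $\Phi_k$ must handle these two regimes by disjoint computational behavior. Theorem \ref{thm:Hig} (strong anticupping of $\fr$) and Proposition \ref{prop:3a:injective-pre} (rigidity of $\fr$) severely constrain how coherently $\Phi_k$ can combine these regimes to land in a $Q$ that contains $T^\ast$, giving us enough slack to diagonalize inside $T_Q\setminus T^\ast$ without touching the backbone $T^\ast$.

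The main obstacle is coordinating an infinite priority diagonalization with the permanent preservation of $T^\ast$ and with $Q$ remaining an honest $\Pi^0_1$ set (rather than merely a limit of approximations). Standard finite-injury bookkeeping handles this, since each $\mathcal{R}_{i,j,e,k}$ only needs to reserve finitely many free branches per stage; the rigidity supplied by Proposition \ref{prop:3a:injective-pre} ensures that a safe witness region disjoint from $T^\ast$ always exists. Verifying at the end that $Q$ is nonempty, $\Pi^0_1$, non-tree-immune (since $[T^\ast]\subseteq Q$), and defeats every quadruple then proceeds by the usual injury accounting, and together with Corollary \ref{prop:3a:injective} yields the final sentence of the proposition.
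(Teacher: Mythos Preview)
Your proposal is a plan for a priority construction, not a proof; the central claim---that each requirement $\mathcal{R}_{i,j,e,k}$ can be satisfied by acting only in a ``free region'' disjoint from $T^\ast$---is asserted but never argued. The references to Theorem~\ref{thm:Hig} and Proposition~\ref{prop:3a:injective-pre} do not supply this: those results constrain how reductions between concatenations behave, but they give no mechanism for locating diagonalizing witnesses outside a fixed backbone. Here is a concrete obstacle your sketch does not address. Nothing prevents $[T^\ast]$ itself from being $(1,1)$-equivalent to some $P_0\fr P_1$; indeed, ruling this out for a non-tree-immune class is precisely the content of the proposition you are trying to prove. In that situation a reduction $\Phi_k:P_0\fr P_1\to[T^\ast]\subseteq Q$ lands entirely inside the region you have vowed never to prune, so you cannot kill $\Phi_k$; you must instead kill every candidate $\Phi_e:Q\to P_0\fr P_1$ by planting a witness $g\in Q$ with $\Phi_e(g)\notin P_0\fr P_1$ in the free region and keeping it extendible forever. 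You give no indication of how to do this, let alone how to coordinate it across all requirements while keeping $Q$ special and $\Pi^0_1$.

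The paper's argument is completely different and avoids priority entirely. One takes a computable sequence $\{Q_n\}_{n\in\nn}$ of nonempty $\Pi^0_1$ subsets of $2^\nn$ whose join $\bigoplus_n Q_n$ is a Turing antichain, and sets $Q=Q_0\fr\{Q_{n+1}\}_{n\in\nn}$. The antichain property forces any putative pair of reductions $\Phi:Q\to P_0\fr P_1$ and $\Psi:P_0\fr P_1\to Q$ to satisfy $\Psi\circ\Phi=\mathrm{id}$ on $Q$, after which a short case split on whether $\Phi(Q)\subseteq P_0$ derives a contradiction in a few lines. If you wish to pursue a priority route, you would need to exhibit an actual strategy for each $\mathcal{R}_{i,j,e,k}$ and verify it succeeds; as written, the proposal is only a statement of intent.
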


\begin{proof}
Let $\{Q_n\}_{n\in\nn}$ be a computable sequence of nonempty $\Pi^0_1$ subsets of $2^\nn$ such that $\bigoplus_{n\in\nn}Q_n$ forms a Turing antichain.
Define $Q=Q_0\fr\{Q_{n+1}\}_{n\in\nn}$.
Suppose that there exist nonempty $\Pi^0_1$ sets $P_0,P_1\subseteq 2^\nn$ with $Q\equiv^1_1P_0\fr
 P_1$.
Choose computable functions $\Phi:Q\to P_0\fr P_1$ and $\Psi:P_0\fr P_1\to Q$.
Since $\{Q_n\}_{n\in\nn}$ forms a Turing antichain, $\Psi\circ\Phi$ is an identity function on $Q$.
Consider two cases.

The first case is that $\Phi(Q)\subseteq P_0$.
In this case, $\Psi(P_0)=Q$ since $\Psi\circ\Phi$ is identity.
Thus, every string in $T_Q^{ext}$ is extended by some string in $\Psi(T_{P_0})$.
Moreover, he condition $T_{P_0}\subseteq T_{P_0\fr P_1}^{ext}$ implies $\Psi(T_{P_0})\subseteq T_Q^{ext}$.
Therefore, $\Psi(T_{P_0})=T_Q^{ext}$.
Hence $T_Q^{ext}$ is a computable tree without leaves.
But this is impossible since $Q$ contains no computable elements.

The second case is that $\Phi(Q)\not\subseteq P_0$, that is, there exists $f\in Q$ such that $\Phi(f)\in\rho\fr P_1$, where $\rho$ is a leaf of $T_{P_0}$.
We have $f\equiv_T\Phi(f)$ since $\Psi\circ\Phi$ is identity.
Note that we may assume that $f=\rho_k\fr f_k$ for some leaf $\rho_k\in T_{Q_0}$ and $f_k\in Q_k$, since even if $f\in Q_0$ the string $\Phi(f\res n)$ extends $\rho$ for sufficiently large $n$, and replace $f$ with a string extending $f\res n$ which is removed from $Q_0$.
On the one hand, $f$ is the only element in $Q$ computable in $f$.
On the other hand, every $\sigma\in T_{P_0}$ always extends to an element of $P$ which is Turing equivalent to $f$.
Thus, for every $\sigma\in T_{P_0}$, the string $\Phi(\sigma)$ must be compatible with $\rho_k$.
Hence, $\Psi(P)\subseteq\rho_k\fr Q_k$.
This contradicts the property that $\Psi\circ\Phi(Q)=Q$.
\end{proof}

Recall from Part I that $P^{(a)}$ is the $a$-th derivative of $P$, i.e., the $a$-th iterated concatenation starting from $P$.
\index{derivative}\index{$P^{(a)}$}%

\begin{prop}\label{prop:wfHig}
For any special $\Pi^0_1$ set $P\subseteq 2^\nn$, if $a,b\in\mathcal{O}$ and $a<_{\mathcal{O}}b$, then $P^{(b)}$ does not $(1,1)$-cup to $P^{(a)}$, i.e., for any set $R\subseteq\nn^\nn$, if $P^{(a)}\leq^1_1P^{(b)}\otimes R$ then $P^{(a)}\leq^1_1R$.
\end{prop}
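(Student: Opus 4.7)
The plan is to proceed by transfinite induction on $b \in \mathcal{O}$, with Theorem \ref{thm:Hig} providing the essential successor engine and a natural monotonicity $P^{(b)} \leq^1_1 P^{(b')}$ (for $b' <_\mathcal{O} b$) providing the step that feeds the induction. The inductive hypothesis at $b$ is that for every $b' <_\mathcal{O} b$, every $a' <_\mathcal{O} b'$, and every $R \subseteq \nn^\nn$, $P^{(a')} \leq^1_1 P^{(b')} \otimes R$ implies $P^{(a')} \leq^1_1 R$.

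For the successor step $b = c^+$, the Part I convention gives $P^{(b)} = P \fr P^{(c)}$. If $a = c$, then $P^{(b)}$ already has the form $Q \fr P^{(a)}$ required by Theorem \ref{thm:Hig} (with the role of $P$ there played by our $P^{(a)}$ and $Q = P$), and the conclusion $P^{(a)} \leq^1_1 R$ is immediate. If instead $a <_\mathcal{O} c$, there is a canonical computable reduction $P^{(b)} \leq^1_1 P^{(c)}$ sending $p \in P^{(c)}$ to $\rho \fr p \in P^{(b)}$ for a fixed computable leaf $\rho \in L_P$; composing with the assumed reduction yields $P^{(a)} \leq^1_1 P^{(c)} \otimes R$, and the inductive hypothesis at $c$ concludes.

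For the limit step $b = 3 \cdot 5^e$, the fundamental sequence $\{b_n\}_{n \in \nn}$ is enumerated by $\Phi_e$, and $P^{(b)}$ is realized (per Part I) via iterated concatenation along the canonical well-founded tree of $b$, equivalently $\cmeet_n P^{(b_n)} = {\sf CPA} \ntie \{P^{(b_n)}\}_{n \in \nn}$. From $a <_\mathcal{O} b$ we obtain $n$ with $a \leq_\mathcal{O} b_n$; if $a = b_n$, replace $n$ by $n+1$, using strict monotonicity of the fundamental sequence, to arrange $a <_\mathcal{O} b_n$. The canonical computable reduction $P^{(b)} \leq^1_1 P^{(b_n)}$, sending $p \in P^{(b_n)}$ to $\rho_n \fr p$ where $\rho_n$ is the $n$-th leaf of $T_{\sf CPA}$, composes with $\Phi$ to give $P^{(a)} \leq^1_1 P^{(b_n)} \otimes R$, and the inductive hypothesis at $b_n$ yields $P^{(a)} \leq^1_1 R$.

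The main obstacle will be verifying that the Part I definition of $P^{(b)}$ at limits genuinely supplies the uniform computable reductions $P^{(b)} \leq^1_1 P^{(b_n)}$ on which the whole induction pivots; granted this monotonicity together with the successor factorization $P^{(c^+)} = P \fr P^{(c)}$, everything else is a mechanical bookkeeping application of Theorem \ref{thm:Hig}. Should the limit convention differ from the $\cmeet$ presentation, the cleanest remedy is to reprove Theorem \ref{thm:Hig} directly in the required setting by running the $E^g_i, D^g_i$ feedback construction from the proof of that theorem along the canonical well-founded tree encoding of $b$ rather than a single concatenation step, i.e., at each stage expanding $E^g_i$ by plugging the current $g$-c.e. approximation $D^g_i \subseteq T_{P^{(a)}}^{ext}$ into every position in the tree skeleton of $P^{(b)}$ at which a copy of $P^{(a)}$ appears; the compactness verification that $\bigcup_i D^g_i$ has no dead ends transfers mutatis mutandis.
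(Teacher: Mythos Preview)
Your transfinite induction is correct, but the paper's argument is a two-liner that avoids induction entirely. The observation you are missing is that $a<_\mathcal{O}b$ already forces $2^a\leq_\mathcal{O}b$ (the successor of $a$ in Kleene's $\mathcal{O}$). Granting the monotonicity $c\leq_\mathcal{O}d\Rightarrow P^{(d)}\leq^1_1P^{(c)}$ (this is exactly the Part~I fact behind all of your ``canonical computable reductions''), the hypothesis $P^{(a)}\leq^1_1P^{(b)}\otimes R$ yields $P^{(a)}\leq^1_1P^{(2^a)}\otimes R$ in one step, and a \emph{single} application of Theorem~\ref{thm:Hig} to $P^{(2^a)}$, which factors as $Q\fr P^{(a)}$ for a nonempty $\Pi^0_1$ set $Q$, finishes.

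What your induction is really doing is re-deriving that monotonicity lemma stage by stage while dragging Theorem~\ref{thm:Hig} along for the ride: your successor case with $a<_\mathcal{O}c$ and your entire limit case never invoke Theorem~\ref{thm:Hig} at all, only the reductions $P^{(b)}\leq^1_1P^{(c)}$ and $P^{(b)}\leq^1_1P^{(b_n)}$. Once monotonicity is isolated as a standalone fact, there is nothing left to induct on, and your last-paragraph worries about the exact limit presentation of $P^{(b)}$ evaporate: the only property of $P^{(b)}$ the argument needs is monotonicity, not any particular concatenation factorization at limit notations. The fallback plan of rerunning the $E^g_i,D^g_i$ construction along the well-founded tree for $b$ is therefore unnecessary.
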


\begin{proof}\upshape
The assumption $a<_\mathcal{O}b$ implies $2^a\leq_\mathcal{O}b$.
Therefore, we have $P^{(b)}\leq^1_1P^{(2^a)}$.
By Theorem \ref{thm:Hig}, $P^{(2^a)}$ does not $(1,1)$-cup to $P^{(a)}$.
Thus, $P^{(b)}$ does not $(1,1)$-cup to $P^{(a)}$.
\end{proof}

Fix any notation ${\tt omega}\in\mathcal{O}$ such that $|\Phi_{\tt omega}(n)|_\mathcal{O}=n$ for each $n\in \nn$.
Note that $|{\tt omega}|_\mathcal{O}=\omega$.

\begin{prop}\label{prop:breductiona}
Let $P$ be a special $\Pi^0_1$ subset of $2^\nn$.
For any $\Pi^0_1$ set $R\subseteq 2^\nn$, if $P\leq^1_{<\omega}P^{({\tt omega})}\otimes R$, then $P\leq^1_{<\omega}R$.
\end{prop}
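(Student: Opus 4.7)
The strategy is to upgrade the $(1,1)$-anticupping result of Proposition \ref{prop:wfHig} to the $(1,<\omega)$-setting by trading bounded mind changes for additional depth in the iterated concatenation, following the mind-change-to-concatenation correspondence developed in Part I (in particular, the equivalence $\mathcal{P}^1_{<\omega}=\mathcal{P}/{\rm dec}^{<\omega}_{\rm d}[\Pi^0_1]$).

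\emph{First I would} fix a learner $\Psi$ with mind-change bound $k\in\nn$ that identifies a $(1,<\omega)$-computable $\Gamma:P^{({\tt omega})}\otimes R\to P$, and convert this into a single $(1,1)$-computable function $\Gamma':P^{({\tt omega})}\otimes R\to P^{(k+1)}$. The idea is to simulate $\Psi$ on the input $f\oplus g$ and, at each detected mind change (a $\Sigma^0_1$ event because $P$ is $\Pi^0_1$: the current guess $\Phi_{\Psi(\cdot)}(f\oplus g)$ escapes $T_P^{ext}$), attach a leaf of $T_P$ and continue with the new guess. Since $\Psi$ uses at most $k$ mind changes, the resulting output traverses at most $k$ concatenation points and hence lies in $P^{(k+1)}=P\fr\dots\fr P$ ($k+1$ factors).

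\emph{Next I would} apply Proposition \ref{prop:wfHig}. Fix $a\in\mathcal{O}$ with $|a|_\mathcal{O}=k+1$; by the defining property of ${\tt omega}$ we have $a<_\mathcal{O}{\tt omega}$, and the $(1,1)$-reduction $P^{(a)}\equiv^1_1 P^{(k+1)}\leq^1_1 P^{({\tt omega})}\otimes R$ (obtained via $\Gamma'$) yields $P^{(k+1)}\leq^1_1 R$. The general fact $P\equiv^1_{<\omega}P^{(n)}$ (iterating Corollary \ref{cor:3a:HigHig}) then gives $P\leq^1_{<\omega}P^{(k+1)}\leq^1_1 R$, so $P\leq^1_{<\omega}R$ as required.

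\emph{The main obstacle} is the first step. When $\Psi$ changes its mind, the current path $\Phi_{\Psi(\sigma)}(f\oplus g)$ may escape $T_P$ at a node that is not literally a dead end in $L_P$: it may lie in $T_P^{ext}$ with some children outside $T_P$ but others still inside. Appending a new concatenation at such a position produces a string that is not in the canonical tree of $P^{(k+1)}$. The natural remedy is to wait until the simulation can be extended computably to a genuine leaf of $T_P$ (using the fact that $T_P$ is a computable subtree of $2^{<\nn}$ together with the effective pruning ideas from the proof of Theorem \ref{thm:Hig}). If this alignment proves too delicate to perform in one pass, an alternative is to mimic the iterated tree construction of Theorem \ref{thm:Hig} phase-by-phase: in each of the $k+1$ phases, build a $g$-c.e.\ subtree $D^g_j\subseteq T_P$ via successive pull-backs of the hypothesized reduction, and treat dead ends of $D^g_j$ as triggers for a mind change in the constructed learner on $R$. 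Either route terminates within the $k$-mind-change budget and yields the desired $(1,<\omega)$-reduction.
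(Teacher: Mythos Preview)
Your strategy is the paper's: convert the mind-change bound into concatenation depth so that a $(1,1)$-reduction lands in some finite iterate $P^{(m)}$, then invoke the anticupping property to drop the $P^{({\tt omega})}$ factor. The difference is in how the first step is implemented. The paper does not track the learner's mind changes at all; it first invokes the Part~I result that for $\Pi^0_1$ sets $P\leq^1_{<\omega}Q$ implies $P\leq^1_{tt,<\omega}Q$, obtaining $n$ \emph{total} functionals $\Theta_0,\dots,\Theta_{n-1}$, and then restricts the domain to the column $\rho_{n+1}\fr P^{(n+1)}\subseteq P^{({\tt omega})}$. Totality is the point: each incorrect $\Theta_m$ must eventually leave $T_P$ through a leaf, so $\Phi(\sigma)=\concat_{m\leq z(\sigma)}\gamma(m,\sigma)$ (where $\gamma(m,\sigma)$ is $\Theta_m(\sigma)$ if still in $T_P$, or the leaf it passed otherwise) is automatically a $(1,1)$-reduction witnessing $P^{(n)}\leq^1_1 P^{(n+1)}\otimes R$, and a single application of Theorem~\ref{thm:Hig} yields $P^{(n)}\leq^1_1 R$.

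Your direct route has the obstacle you flag, though the diagnosis conflates two different events: a mind change of $\Psi$ is not the same as the current output escaping $T_P$, and ``escapes $T_P^{ext}$'' is not a $\Sigma^0_1$ event. The actual issue is that at a mind change the partial output $\tau$ may sit properly inside $T_P$ with no leaf in sight. This is repairable precisely because $P$ is \emph{special}: the leftmost path in the computable tree $T_P$ from any node is computable and cannot be infinite (else it would be a computable element of $P$), so one can always extend $\tau$ to some $\rho\in L_P$ before appending the next guess. With this supplied, your construction gives $P^{(k+1)}\leq^1_1 P^{({\tt omega})}\otimes R$ and Proposition~\ref{prop:wfHig} finishes as you outline. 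The paper's truth-table detour buys this alignment for free.
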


\begin{proof}\upshape
As seen in Part I \cite{HK_PartI}, for every $\Pi^0_1$ sets $P,Q\subseteq 2^\nn$, $P\leq^1_{<\omega}$ implies $P\leq^1_{tt,<\omega}Q$.
Since $P^{({\tt omega})}\otimes R$ is $\Pi^0_1$, $P\leq^1_{<\omega}P^{({\tt omega})}\otimes R$ implies $P\leq^1_{tt,<\omega}P^{({\tt omega})}\otimes R$, and then there is a $(1,n)$-truth-table function $\Gamma:P^{({\tt omega})}\otimes R\to P$ for some $n\in\nn$.
In particular, $\Gamma:(\rho_{n+1}\fr P^{(\Phi_{\tt omega}(n+1))})\otimes R\to P$, where $\rho_{n+1}$ is the $(n+1)$-th leaf of $T_P$.
By modifying $\Gamma$, we can easily construct a $(1,n)$-truth-table function $\Theta:P^{(n+1)}\otimes R\to P$.

Assume that $\Theta$ is $(1,n)$-truth-table via $n$ many total computable functions $\Theta_0,\dots,\Theta_{n-1}$.
We define a computable function $\gamma:n\times 2^{<\nn}\to 2^{<\nn}$ as follows.
If $\Theta_m(\sigma)\in T_P$, then put $\gamma(m,\sigma)=\Theta_m(\sigma)$.
If $\Theta_m(\sigma)\supsetneq\rho$ for some $\rho\in L_P$, then we define $\gamma(m,\sigma)$ to be such $\rho$.
Let $z(\sigma)=\min\{m<n:\Theta_m(\sigma)\in T_P\}$.
Then, for $\sigma\in 2^{<\nn}$, the value $\Phi(\sigma)$ is defined by $\concat_{m\leq z(\sigma)}\gamma(m,\sigma)$.
Then $\Phi$ ensures that $P^{(n)}\leq^{1}_{1}P^{(n+1)}\otimes R$.
By Theorem \ref{thm:Hig}, we have $P^{(n)}\leq^{1}_{1}R$.
Consequently, $P\leq^1_{<\omega}R$.
\end{proof}


\begin{cor}
For every $a\in\mathcal{O}$ there exists a computable function $g$ such that, for any $\Pi^0_1$ index $e$, if $P_e$ is special then the following properties hold.
\begin{enumerate}
\item $P_{g(e,b)}<^1_1P_{g(e,c)}$ holds for every $c<_\mathcal{O}b<_\mathcal{O}a$, indeed, $P_{g(e,b)}$ does not $(1,1)$-cup to $P_{g(e,c)}$.
\item $P_{g(e,b)}\equiv^1_\omega P_{g(e,c)}$ for every $b,c<_\mathcal{O}a$.
\end{enumerate}
\end{cor}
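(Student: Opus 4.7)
The plan is to define $g(e,b)$ by effective transfinite recursion on Kleene's $\mathcal{O}$, producing a $\Pi^0_1$ index for the iterated derivative $P_e^{(b)}$: take $g(e,1)=e$, at a successor notation $b=2^c$ apply Part~I's uniform concatenation procedure to $g(e,c)$, and at a limit notation $b=3\cdot 5^d$ apply the infinitary concatenation along the fundamental sequence $\{g(e,\Phi_d(n))\}_{n\in\nn}$, exactly as in the construction of $P^{({\tt omega})}$. The effective recursion theorem provides a total computable $g$, and correctness ($P_{g(e,b)}=P_e^{(b)}$) follows from the effective uniformity of the concatenation operations from Part~I.

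For part~(1), fix $c<_\mathcal{O}b<_\mathcal{O}a$. The non-cupping assertion is precisely the content of Proposition~\ref{prop:wfHig}. Monotonicity of the derivative along $\mathcal{O}$ (iterate the reduction $P^\tie\leq^1_1 P$ via a computable leaf of ${\sf CPA}$) yields $P_e^{(b)}\leq^1_1 P_e^{(c)}$, and $P_e^{(c)}$ inherits specialness from $P_e$ since prepending finite strings during concatenation cannot create computable infinite paths. Now if we had $P_e^{(c)}\leq^1_1 P_e^{(b)}$, then for any Medvedev-zero $R\subseteq 2^\nn$ (say, a computable singleton) we would have $P_e^{(c)}\leq^1_1 P_e^{(b)}\otimes R$, whence Proposition~\ref{prop:wfHig} would force $P_e^{(c)}\leq^1_1 R$, contradicting specialness. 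Hence $P_{g(e,b)}<^1_1 P_{g(e,c)}$.

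For part~(2), by transitivity it suffices to show $P_e^{(b)}\equiv^1_\omega P_e$ for every $b<_\mathcal{O}a$. One direction is immediate from $P_e^{(b)}\leq^1_1 P_e$. For $P_e\leq^1_\omega P_e^{(b)}$, invoke the Part~I characterization $\mathcal{P}^1_\omega=\mathcal{P}/{\rm dec}^\omega_{\rm p}[\Delta^0_2]$ and decompose $P_e^{(b)}$ into countably many $\Delta^0_2$-pieces indexed by the (well-founded, since $b\in\mathcal{O}$) sequence of leaves that a point $x\in P_e^{(b)}$ traverses through the nested concatenation structure: each piece is cut out by $\Sigma^0_1$ conditions (the listed leaves have been traversed) conjoined with $\Pi^0_1$ conditions (no further leaf is traversed), so it is $\Delta^0_2$; on each piece, the partial computable shift by the cumulative leaf-length sends $x$ into $P_e$, while on the residual $\Pi^0_1$-piece $P_e\subseteq P_e^{(b)}$ the identity works. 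The main obstacle is verifying that this $\Delta^0_2$-decomposition is uniformly definable in the notation $b$ through limit stages, which is handled by carrying a uniform $\Delta^0_2$-index alongside $g$ through the transfinite recursion, relying on the effective uniformity of Part~I's concatenation operations.
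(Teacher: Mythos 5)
Your proposal matches the paper's approach: define $g(e,b)$ as an index of the iterated derivative $P_e^{(b)}$ and invoke Proposition~\ref{prop:wfHig} for the noncupping claim. The paper's proof is a single line that cites only Proposition~\ref{prop:wfHig}, which in fact only yields item~(1); your additional work for item~(2) — decomposing $P_e^{(b)}$ into $\Delta^0_2$-pieces indexed by finite leaf-traversal histories to witness $P_e\leq^1_\omega P_e^{(b)}$ — correctly supplies the $(1,\omega)$-equivalence that the paper is implicitly importing from Part~I, so you are fleshing out the same argument rather than taking a different one. (One minor slip: monotonicity $P_e^{(b)}\leq^1_1 P_e^{(c)}$ for $c<_\mathcal{O}b$ is simply the inclusion $P_e^{(c)}\subseteq P_e^{(b)}$ at successor steps, not an iteration of $P^\tie\leq^1_1 P$, which concerns ${\sf CPA}\fr P$ rather than the self-concatenation used in the derivative.)
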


\begin{proof}\upshape
Let $g(e,b)$ be an index of $P_e^{(b)}$.
Then, the desired conditions follow from Proposition \ref{prop:wfHig}.
\end{proof}

For any reducibility notion $r$, and any ordered set $(I,\leq_I)$, a sequence $\{\dg{a}_i\}_{i\in I}$ of $r$-degrees is {\em $r$-noncupping} if, for any $i<_Ij$, the condition $\dg{a}_i\leq_r\dg{b}$ must be satisfied whenever $\dg{a}_i\leq_r\dg{a}_j\vee\dg{b}$, for any $r$-degree $\dg{b}$.
\index{noncupping}%
In particular, any $r$-noncupping sequence is strictly decreasing, in the sense of $r$-degrees.

\begin{cor}
For any nonzero $(1,\omega)$-degree $\dg{a}\in\mathcal{P}^1_\omega$, there is a $(1,1)$-noncupping computable sequence of $(1,1)$-degrees inside $\dg{a}$ of arbitrary length $\alpha<\omega_1^{CK}$.
\qed
\end{cor}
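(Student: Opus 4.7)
The plan is to derive this from the preceding corollary by plugging in a suitably chosen computable $<_\mathcal{O}$-chain of notations of length $\alpha$. Fix a nonzero $\dg{a}\in\mathcal{P}^1_\omega$ and a $\Pi^0_1$ representative $P\in\dg{a}$; since $\dg{a}$ is nonzero, $P$ has no computable member, so $P$ is special, and the preceding corollary applies to any $\Pi^0_1$ index $e$ of $P$.

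Given $\alpha<\omega_1^{CK}$, I would first pick a notation $a\in\mathcal{O}$ with $|a|_\mathcal{O}>\alpha$ and then build a strictly $<_\mathcal{O}$-increasing computable sequence $\{c_i\}_{i<\alpha}$ of notations below $a$. Such a sequence is obtained by standard effective manipulation of $\mathcal{O}$: take any computable well-order $\prec$ of $\nn$ of order type $\alpha$, and use the recursion theorem to produce, inductively along $\prec$, notations for the successive initial segments, each witnessed below $a$. Applying the preceding corollary with this $a$ and an index $e$ of $P$ yields a computable function $g$ so that $\{P_{g(e,c_i)}\}_{i<\alpha}$ is a uniformly computable family of $\Pi^0_1$ indices.

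Setting $\dg{a}_i=\deg^1_1(P_{g(e,c_i)})$, I would verify the two required properties separately. For $i<j$ we have $c_i<_\mathcal{O}c_j$ by construction, so item (1) of the preceding corollary says that $P_{g(e,c_j)}$ does not $(1,1)$-cup to $P_{g(e,c_i)}$: explicitly, $P_{g(e,c_i)}\leq^1_1 P_{g(e,c_j)}\otimes R$ implies $P_{g(e,c_i)}\leq^1_1 R$. Translating to $(1,1)$-degrees, this is precisely the condition that $\dg{a}_i\leq\dg{a}_j\vee\dg{b}$ implies $\dg{a}_i\leq\dg{b}$ whenever $i<j$, so $\{\dg{a}_i\}_{i<\alpha}$ is $(1,1)$-noncupping. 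Item (2) of the corollary ensures $P_{g(e,c_i)}\equiv^1_\omega P_{g(e,c_j)}$ for all $i,j<\alpha$; combining this with the routine equivalences $P^{(b)}\leq^1_1 P$ (from $P\subseteq P^{(b)}$) and $P\leq^1_\omega P^{(b)}$ (extract an element of $P$ from any $f\in P^{(b)}$ by descending along the well-founded notation $b$) places each $\dg{a}_i$ inside $\dg{a}$.

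The main technical point is the construction in the second paragraph of a computable $<_\mathcal{O}$-increasing chain of length $\alpha$ below a chosen notation, particularly when $\alpha$ is itself a limit ordinal. However, this is a well-known exercise in effective descriptive set theory: choosing $a$ as a notation for, say, $\omega\cdot(\alpha+1)$ gives ample room, and the recursion theorem delivers a uniform $\nn$-indexing of the chain. Everything else is a direct bookkeeping translation of the preceding corollary into the language of the noncupping definition.
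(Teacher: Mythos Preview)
Your proposal is correct and follows exactly the route the paper intends; the paper itself gives no proof beyond the \qed\ symbol, treating the statement as immediate from the preceding corollary and Proposition~\ref{prop:wfHig}, and you have simply filled in the straightforward bookkeeping (choosing a computable $<_\mathcal{O}$-chain of notations and translating the noncupping conclusion into degree language).
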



\subsection{Infinitary Disjunctions along the Straight Line}

We next see the LEVEL 4 separation between $[\mathfrak{C}_T]^{1}_{\omega|<\omega}$ and $[\mathfrak{C}_T]^1_{\omega}$.
Indeed, we show the non-existence of a $(<\omega,1)$-contiguous $(1,\omega)$-degree.
We introduce the {\sf LCM} disjunctions of $\{P_i\}_{i\in\nn}$ as $\btie_{n\in\nn}P_n=\bigcup_{n\in\nn}(P_0\ntie\dots\ntie P_n)$.
\index{LCM disjunction@{\sf LCM} disjunction}\index{$\btie_{n\in\nn}P_n$}%
This is a straightforward infinitary iteration of the concatenations.
If $P_n=P$ for all $n\in\nn$, we write $\btie P$ instead of $\btie_nP_n$.
\index{$\btie P$}%

\begin{prop}
Let $\{P_i\}_{i\in\nn}$ be a computable collection of nonempty $\Pi^0_1$ subsets of $2^\nn$.
Then $\btie_nP_n$ is $(1,1)$-equivalent to a dense $\Sigma^0_2$ set in Cantor space $2^\nn$.
\end{prop}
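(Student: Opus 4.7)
My plan is to exhibit an explicit dense $\Sigma^0_2$ subset $S^* \subseteq 2^\nn$ that is $(1,1)$-equivalent to $\btie_n P_n$. The first step is to observe that $\btie_n P_n = \bigcup_n A_n$, where $A_n := P_0 \ntie \cdots \ntie P_n$ is uniformly $\Pi^0_1$ in $n$ with $A_n \subseteq A_{n+1}$; hence $\btie_n P_n$ is $\Sigma^0_2$ as a countable union of $\Pi^0_1$ sets. As the candidate dense set I take the shift envelope
\[
S^* := \bigcup_{\sigma \in 2^{<\nn}} \sigma \fr \btie_n P_n \;=\; \bigcup_{\sigma \in 2^{<\nn},\, m \in \nn} \sigma \fr A_m.
\]
Each $\sigma \fr A_m$ is $\Pi^0_1$, so $S^*$ is $\Sigma^0_2$; and $S^*$ is dense in $2^\nn$, since for every cylinder $[\tau]$ the nonempty set $\tau \fr A_0$ is contained in $[\tau] \cap S^*$.

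For the equivalence, the direction $S^* \leq^1_1 \btie_n P_n$ is immediate via the identity functional, since $\btie_n P_n \subseteq S^*$ (take $\sigma = \lrangle{}$, $m$ arbitrary). The nontrivial direction is $\btie_n P_n \leq^1_1 S^*$, which requires a Turing functional $\Phi \colon S^* \to \btie_n P_n$. Given $g = \sigma \fr f \in S^*$ with $f \in \btie_n P_n$, the naive candidate $\Phi(g) := g^{\shft |\sigma|} = f$ is not a Turing functional, since $|\sigma|$ is only $\Sigma^0_2$-definable from $g$. To obtain a uniform computable reduction, I would construct $\Phi$ as a \emph{tree navigator}: using the computable approximations ``$\tau$ has an extension of length $s$ in $T_{A_n}$'' of the $\Pi^0_1$ predicate ``$\tau$ is extendible in $T_{A_n}$'', $\Phi$ maintains at stage $s$ a current level $n_s$ and a partial output $\sigma^*_s \in T_{A_{n_s}}^{ext}$ (by approximation), consuming bits of $g$ to resolve genuine branchings of $T_{A_{n_s}}$ at $\sigma^*_s$, and escalating $n_s \to n_s + 1$ only when $\sigma^*_s$ turns out to be a leaf of $T_{A_{n_s}}$. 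The nestedness $A_{n_s} \subseteq A_{n_s+1}$ guarantees that escalation never gets stuck, because $T_{A_{n_s+1}}$ attaches a copy of $T_{P_{n_s+1}}$ at every leaf of $T_{A_{n_s}}$.

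The main obstacle is showing that $\Phi(g)$ actually lands inside some $A_m \subseteq \btie_n P_n$ rather than escalating through levels without bound (in which case the output would lie in the boundary $\overline{\btie_n P_n} \setminus \btie_n P_n$ that consists of limit paths of $\bigcup_n T_{A_n}$ missing every individual $A_m$). The crucial input is that $g = \sigma \fr f$ with $f \in A_{n_0}$ for a fixed $n_0$, so the suffix $f$ of $g$ traces a genuine infinite path through $T_{A_{n_0}}^{ext}$; once $\Phi$ has consumed the prefix $\sigma$, the remaining bits of $g$ should force the level $n_s$ to stabilize at some value $\leq n_0$. Making this alignment precise -- in particular, controlling the ``warm-up'' phase during which $\Phi$ reads the unstructured bits of $\sigma$ without committing to an incompatible choice, and converting the $\Pi^0_1$ extendibility predicate into a safe computable selection rule -- is the technical heart of the argument.
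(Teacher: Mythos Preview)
The paper takes a different and much simpler route. Rather than the shift envelope in $2^\nn$, it passes to the three-symbol space $\{0,1,\sharp\}^\nn$ (computably homeomorphic to $2^\nn$) and uses
\[
S=\{g:(\exists n)\;{\tt count}(g)=n\ \&\ {\tt tail}(g)\in P_n\},
\]
where ${\tt count}(g)$ is the number of $\sharp$'s and ${\tt tail}(g)$ is the binary tail after the last~$\sharp$. Density is one line: $\sigma\fr\lrangle{\sharp}\fr h\in S$ for every $\sigma$ and every $h\in P_{{\tt count}(\sigma)+1}$. The whole point of the extra symbol is that it renders the hard direction $\btie_nP_n\leq^1_1 S$ genuinely easy: reading $g\in S$, one navigates $T_{P_c}$ (with correction) on $0/1$ bits without ever changing~$c$, and on each $\sharp$ one follows the leftmost path of $T_{P_c}$ above the current node---either this reaches a leaf (advance $c$ by exactly one, reset to the root of $T_{P_{c+1}}$) or it is infinite, in which case the output already lies in $P_c\subseteq\btie_mP_m$. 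After the $n$-th and last $\sharp$ one sits at the root of $T_{P_n}$, and copying ${\tt tail}(g)\in P_n$ stays inside $T_{P_n}$ forever.

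Your shift envelope $S^*$ has no such out-of-band marker, and the stabilization claim you isolate as ``the technical heart'' is not merely unproved but, for the navigator you sketch, unjustified. The bits of $f\in A_{n_0}$ encode a walk through $T_{A_{n_0}}$ \emph{from the root}; after consuming the arbitrary prefix $\sigma$ your $\Phi$ sits at some unrelated node of some $T_{A_c}$, and there is no reason the subsequent bits of $f$ should align with the branching structure there or bound further escalation by~$n_0$. The ``warm-up phase without committing to an incompatible choice'' is incompatible with the monotonicity required of a $(1,1)$-reduction. The paper's explicit $\sharp$-transitions are precisely the device that synchronizes the output level with the input's level, and this is what your construction is missing.
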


\begin{proof}\upshape
Let $S$ denote the set $\{g\in(\nn\cup\{\sharp\})^\nn:(\exists n\in\nn)\;({\tt count}(g)=n\;\&\;{\tt tail}(g)\in P_n)\}$, where ${\tt count}(g)=\#\{n\in\nn:g(n)=\sharp\}$.
Then, $S$ is clearly a $\Sigma^0_2$ subset of $\{0,1,\sharp\}^\nn$.
For any $\sigma\in\{0,1,\sharp\}^{<\nn}$, we have $\sigma\fr\lrangle{\sharp}\fr h\in S$ for any $h\in P_{{\tt count}(\sigma)+1}$
Thus, $S$ intersects with any clopen set.
\end{proof}

\begin{example}
Let ${\sf MLR}$ denote the set of all Martin-L\"of random reals.
\index{${\sf MLR}$}%
Then ${\sf MLR}\equiv^1_1\btie P$ for any nonempty $\Pi^0_1$ set $P\subseteq{\sf MLR}$, by Ku\u{c}era-G\'acs Theorem (see \cite{NieR}), while ${\sf MLR}<^1_1P$ for any $\Pi^0_1$ set $P\subseteq{\sf MLR}$ as follows.
\end{example}

\begin{prop}[Lewis-Shore-Sorbi \cite{LSS}]
No somewhere dense set in Baire space $(1,1)$-cup to a closed set in Baire space.
In other words, for any somewhere dense set $D\subseteq\nn^\nn$, any closed set $C\subseteq\nn^\nn$, and any set $R\subseteq\nn^\nn$, if $C\leq^{1}_{1}D\otimes R$ then $C\leq^1_1R$.\qed
\end{prop}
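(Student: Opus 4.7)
The plan is to assume $C\leq^{1}_{1}D\otimes R$ via a computable functional $\Phi$ and to build a single computable functional $\Psi$ witnessing $C\leq^{1}_{1}R$. Fix $\tau\in\nn^{<\nn}$ such that $D$ is dense in $[\tau]$, equivalently $D\cap[\sigma]\neq\emptyset$ for every $\sigma\supseteq\tau$. The idea is that, given oracle $g\in R$, we $g$-computably build an $f_g\in[\tau]$ --- not itself required to lie in $D$ --- such that $\Phi(f_g\oplus g)$ is total and automatically lies in $C$, and then set $\Psi(g)=\Phi(f_g\oplus g)$.

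For the construction, given $g\in R$, inductively define a strictly increasing sequence $\sigma_0\subsetneq\sigma_1\subsetneq\cdots$ in $\nn^{<\nn}$ as follows. Put $\sigma_0=\tau$. At stage $n+1$, search $g$-computably for the length-lexicographically least $\sigma\supsetneq\sigma_n$ such that the bounded computation $\Phi(\sigma\oplus(g\res|\sigma|))$ produces output of length at least $n+1$, and set $\sigma_{n+1}=\sigma$. Then $f_g=\bigcup_n\sigma_n\in[\tau]$ and $\Psi(g)=\bigcup_n\Phi(\sigma_n\oplus(g\res|\sigma_n|))$ is defined uniformly computably in $g$.

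Two things need verification. First, the stage-$n+1$ search always terminates: by density of $D$ in $[\tau]$, pick any $f\in D\cap[\sigma_n]$; then $\Phi(f\oplus g)\in C$ is total, so for all sufficiently large $k\geq|\sigma_n|$ the bounded computation $\Phi((f\res k)\oplus(g\res k))$ outputs at least $n+1$ values, and any such $f\res k$ witnesses the search. Second, $\Psi(g)\in C$: for each $n$, pick $f^*\in D\cap[\sigma_n]$ so that $\Phi(f^*\oplus g)\in C$, and observe by monotonicity of Turing computations that $\Phi(\sigma_n\oplus(g\res|\sigma_n|))$ is an initial segment of $\Phi(f^*\oplus g)$, hence belongs to the tree $T_C$ of $C$. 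Thus every initial segment of $\Psi(g)$ lies in $T_C$, and by closedness of $C$, $\Psi(g)\in[T_C]=C$.

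The only subtle point --- and really the main conceptual observation --- is the use of density to force the limit into $C$ without ever effectively recognizing $T_C$. Density of $D$ in $[\tau]$ plays two distinct roles: it guarantees that the $g$-computable search halts at each stage, and, via monotonicity of computations, it forces each finite approximation of the output to be extendible inside $C$, so that the closedness of $C$ does the rest. Note that neither $D$ nor $C$ needs to be effectively presented, and $R$ may be arbitrary, since $\Psi$ is constructed uniformly in the oracle $g$ alone.
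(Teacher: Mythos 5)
Your proof is correct. The paper does not actually give an argument for this proposition --- it cites Lewis--Shore--Sorbi \cite{LSS} and the statement is immediately followed by \qed{} --- so there is no in-paper proof to compare against. Your construction is the standard one: fix $\tau$ with $D$ dense in $[\tau]$, and then for $g\in R$ grow a path $f_g\supset\tau$ by a bounded-computation search that is guaranteed to succeed at each stage because some genuine $f\in D\cap[\sigma_n]$ would succeed; monotonicity of $\Phi$ ensures that each finite approximation $\Phi(\sigma_n\oplus(g\res|\sigma_n|))$ is an initial segment of the (total) value $\Phi(f^*\oplus g)\in C$ for $f^*\in D\cap[\sigma_n]$, so the limit $\Psi(g)$ is an accumulation point of $C$ and hence, by closedness, in $C$. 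The uniformity in $g$ gives a single functional $\Psi$, and no effectiveness of $D$, $C$, or $R$ is needed. The argument is sound as written.
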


\begin{prop}
For any somewhere dense set $D\subseteq\nn^\nn$ and any special closed set $C\subseteq\nn^\nn$, we have $C\not\leq^{<\omega}_1D$.
\end{prop}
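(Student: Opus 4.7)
The plan is to reduce the hypothesized $(<\omega,1)$-reduction to an honest $(1,1)$-reduction from $C$ into a somewhere-dense subset of $D$, after which the previous Lewis--Shore--Sorbi proposition will force $C$ to contain a computable point, contradicting specialness.

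Suppose toward contradiction that $C\leq^{<\omega}_{1}D$ via partial computable functionals $\Phi_{0},\dots,\Phi_{b-1}$, so that for every $g\in D$ some $\Phi_{e}(g)$ lies in $C$. Set $D_{e}=\{g\in D:\Phi_{e}(g)\in C\}$, giving a finite cover $D=\bigcup_{e<b}D_{e}$. Since $D$ is somewhere dense, fix $\sigma$ with $\overline{D\cap[\sigma]}=[\sigma]$; then, using that closure distributes over finite unions,
\[
[\sigma]=\overline{\bigcup_{e<b}(D_{e}\cap[\sigma])}=\bigcup_{e<b}\overline{D_{e}\cap[\sigma]}.
\]
Were every $\overline{D_{e}\cap[\sigma]}$ to have empty interior in $[\sigma]$, the complementary sets $[\sigma]\setminus\overline{D_{e}\cap[\sigma]}$ would form $b$ many dense open subsets of $[\sigma]$ whose finite intersection is still dense, hence non-empty, contradicting the fact that their union covers $[\sigma]$. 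Thus there exist $e_{0}<b$ and $\tau\supseteq\sigma$ with $[\tau]\subseteq\overline{D_{e_{0}}\cap[\sigma]}$, which exhibits $D_{e_{0}}$ as somewhere dense.

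By the definition of $D_{e_{0}}$, the partial computable functional $\Phi_{e_{0}}$ is defined on all of $D_{e_{0}}$ with image in $C$, so $C\leq^{1}_{1}D_{e_{0}}$; padding with a constant coordinate gives $C\leq^{1}_{1}D_{e_{0}}\otimes\{0^{\nn}\}$. Applying the preceding Lewis--Shore--Sorbi proposition with the somewhere dense set $D_{e_{0}}$, the closed set $C$, and $R=\{0^{\nn}\}$ yields $C\leq^{1}_{1}\{0^{\nn}\}$, so $C$ contains a computable point, contradicting its specialness. The only nontrivial ingredient is the finite-cover topological step isolating a somewhere-dense piece $D_{e_{0}}$ from the cover; once this is in hand, the conclusion is immediate from the previous proposition.
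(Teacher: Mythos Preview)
Your proof is correct. Both arguments cover $D$ by the sets $D_e=\{g\in D:\Phi_e(g)\in C\}$ and then isolate one topologically substantial piece; the difference lies in what ``substantial'' means and how the contradiction is extracted. The paper only needs some $\overline{D_i}$ to contain a computable point $r$ (immediate, since the closures cover the clopen $[\sigma]$, so any computable point of $[\sigma]$ lands in some $\overline{D_i}$) and then argues directly that $\Phi_{e(i)}$ sends $r$ into $C$, bypassing the Lewis--Shore--Sorbi proposition altogether. You instead establish the stronger fact that some $D_{e_0}$ is itself somewhere dense---via the standard observation that in a finite closed cover of $[\sigma]$ at least one member has nonempty interior---and then feed $C\leq^1_1 D_{e_0}$ into the Lewis--Shore--Sorbi noncupping result with the trivial $R=\{0^{\nn}\}$. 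Your route is a clean reduction to the already-cited proposition; it also sidesteps the question of whether the partial functional $\Phi_{e(i)}$ is necessarily total at a limit point $r\in\overline{D_i}\setminus D_i$, a point the paper's direct argument uses without further comment.
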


\begin{proof}\upshape
If $\{D_i\}_{i<b}$ is a finite partition of $D$, then $\bigcup_{i<b}{\rm Cl}_{\nn^\nn}(D_i)={\rm Cl}_{\nn^\nn}(D)$, where the topological closure of $D$, in the standard Baire topology on $\nn^\nn$, is denoted by ${\rm Cl}_{\nn^\nn}(D)$.
To show this claim, for every $x\in{\rm Cl}_{\nn^\nn}(D)$ we have a sequence $\{x_k\}_{k\in\nn}\subseteq D$ converging to $x$.
By pigeonhole principle, there is $i<b$ such that there are infinitely many $k$ such that $x_k\in D_i$.
For such $i$, clearly $x\in{\rm Cl}_{\nn^\nn}(D_i)$.
However, since the somewhere density of $D$ implies that ${\rm Cl}_{\nn^\nn}(D)$ contains some clopen set, and hence ${\rm Cl}_{\nn^\nn}(D_i)$ contains a computable element $r$ for some $i$.
Additionally, ${\rm Cl}_{\nn^\nn}(C)=C$ since $C$ is closed.
If $C\leq^{<\omega}_{1}D\otimes R$, then there is a finite partition $\{D_i\}_{i<b}$ of $D$ such that $C\leq^{1}_{1}D_i$ via a computable function $\Phi_{e(i)}$.
Fix $i$ such that ${\rm Cl}_{\nn^\nn}(D_i)$ contains a computable element.
Therefore, $C={\rm Cl}_{\nn^\nn}(C)\leq^{1}_{1}{\rm Cl}_{\nn^\nn}(D_i)\supseteq\{r\}$ via $\Phi^f_{e(i)}$.
Hence, $C$ contains a computable element.
\end{proof}

Especially, if $P$ is a special $\Pi^0_1$ set, then there is no nonzero $(<\omega,1)$-degree of $\Pi^0_1$ subsets of $2^\nn$ below the $(<\omega,1)$-degree of $\btie P$.
We will see that the set $\btie P$ has a stronger property.

\begin{definition}
A sequence $\lrangle{t_n}_{n\in\nn}$ of finite strings is {\em a timekeeper} if there is a uniformly c.e.\ collection of finite sets, $\{V_n\}_{n\in\nn}$, such that, for any $n\in\nn$, $|t_n|=|V_n|$ and $t_n(i)$ is given as the stage at which the $i$-th element is enumerated into $V_n$, for each $i<|t_n|$.
\index{timekeeper}%
\end{definition}

\begin{definition}
For a finite string $\tau\in\nn^{<\nn}$, {\em the $\tau$-delayed $(|\tau|+1)$-derivative} $P^{(\tau)}$ is inductively defined as follows:
\index{derivative!delayed}\index{$P^{(\tau)}$}%
\begin{align*}
P^{(\tau\res 0)}=P;& &P^{(\tau\res i+1)}=\bigcup\{\sigma\fr P:\sigma\in L_{P^{(\tau\res i)}}\;\&\;|\sigma|\geq \tau(i)\}\mbox{ for each }i<|\tau|.
\end{align*}
\end{definition}

\begin{prop}
If $\tau(m)=0$ for each $m<|\tau|$, then $P^{(\tau)}=P^{(|\tau|+1)}$.
\end{prop}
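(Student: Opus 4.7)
The plan is a direct induction on $i \leq |\tau|$ showing $P^{(\tau\res i)} = P^{(i+1)}$ in the ordinary-derivative notation (recalling the Part I convention under which $P^{(1)}=P$, as is forced here by the stipulation $P^{(\tau\res 0)}=P$ together with the statement $P^{(\tau)}=P^{(|\tau|+1)}$ specialized to $\tau=\lrangle{}$). Setting $i=|\tau|$ at the end yields the proposition.

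The key observation is that the delay parameter only filters which leaves are selected for extension. In the recursive clause
\[P^{(\tau\res i+1)}=\bigcup\{\sigma\fr P : \sigma\in L_{P^{(\tau\res i)}}\;\&\;|\sigma|\geq\tau(i)\},\]
the length constraint $|\sigma|\geq\tau(i)$ is the only feature distinguishing the delayed derivative from the one-step iterated concatenation operation that sends an ordinary derivative to the next one. Under the hypothesis $\tau(m)=0$ for every $m<|\tau|$, the constraint reduces to $|\sigma|\geq 0$, which is vacuous, so every leaf of $P^{(\tau\res i)}$ is extended by a fresh copy of $P$.

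The base case is immediate from the defining stipulation $P^{(\tau\res 0)}=P=P^{(1)}$. For the inductive step, granting $P^{(\tau\res i)}=P^{(i+1)}$, the corresponding computable trees agree, so their leaf sets $L_{P^{(\tau\res i)}}$ and $L_{P^{(i+1)}}$ coincide; attaching a copy of $T_P$ to each such leaf produces, by definition of the $\fr$-iteration used to generate ordinary derivatives in Part I, the tree for $P^{(i+2)}$. Passing to infinite paths gives $P^{(\tau\res i+1)}=P^{(i+2)}$, completing the induction.

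There is essentially no obstacle: the proposition is a pure definitional unfolding, and the only thing to be careful about is the indexing convention — the ``$+1$'' in $P^{(|\tau|+1)}$ reflects that the base of the recursion is $P=P^{(1)}$ rather than $P^{(0)}$. No combinatorial or computability-theoretic input beyond this bookkeeping is required.
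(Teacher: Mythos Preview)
Your proposal is correct and matches the paper's approach: the paper's proof is simply ``Straightforward from the definition,'' and you have spelled out exactly that unfolding---the hypothesis $\tau(i)=0$ renders the length constraint $|\sigma|\geq\tau(i)$ vacuous, so the recursion for $P^{(\tau\res i)}$ collapses to the ordinary iterated concatenation defining $P^{(i+1)}$. Your care with the indexing convention ($P^{(\tau\res 0)}=P=P^{(1)}$) is appropriate and the induction is sound.
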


\begin{proof}\upshape
Straightforward from the definition.
\end{proof}

\begin{lemma}
For any timekeeper $\lrangle{t_n}_{n\in\nn}$, the following conditions hold.
\begin{enumerate}
\item $P^{(t_n)}\subseteq P^{(|t_n|+1)}$.
Hence, $P\fr\{P^{(t_n)}\}_{n\in\nn}\subset\btie P$.
\item $P^{(t_n)}$ is $\Pi^0_1$, uniformly in $n$.
Hence, $P\fr\{P^{(t_n)}\}_{n\in\nn}$ is $\Pi^0_1$.
\end{enumerate}
\end{lemma}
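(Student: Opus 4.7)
My plan is to prove the two items separately: item (1) by a direct inductive comparison against the undelayed derivatives, and item (2) by constructing, uniformly in $n$, a computable tree whose paths equal $P^{(t_n)}$.

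For item (1), I will show by induction on $i\leq|\tau|$ that $P^{(\tau\res i)}\subseteq P^{(i+1)}$, where $P^{(i+1)}$ denotes the undelayed $(i+1)$-st iterated derivative. The base case is $P^{(\tau\res 0)}=P=P^{(1)}$. For the inductive step, the length condition $|\sigma|\geq\tau(i)$ in the construction of $P^{(\tau\res i+1)}$ merely restricts which leaves of $P^{(\tau\res i)}$ get extended, so $L_{P^{(\tau\res i)}}\subseteq L_{P^{(i+1)}}$, whence
\[
P^{(\tau\res i+1)}=\bigcup\{\sigma\fr P:\sigma\in L_{P^{(\tau\res i)}},\,|\sigma|\geq\tau(i)\}\subseteq\bigcup\{\sigma\fr P:\sigma\in L_{P^{(i+1)}}\}=P^{(i+2)}.
\]
Setting $i=|t_n|$ gives $P^{(t_n)}\subseteq P^{(|t_n|+1)}$. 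The second inclusion in item (1) then follows from
\[
P\fr\{P^{(t_n)}\}_{n\in\nn}=P\cup\bigcup_n\rho_n\fr P^{(t_n)}\subseteq P\cup\bigcup_n\rho_n\fr P^{(|t_n|+1)}\subseteq\btie P,
\]
since $\rho_n\in L_P$ forces $\rho_n\fr P^{(k)}\subseteq P^{(k+1)}$ and $\btie P=\bigcup_a P^{(a)}$.

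For item (2), I will build a computable tree $T^{(n)}$ with $[T^{(n)}]=P^{(t_n)}$, uniformly in $n$, mirroring the inductive definition of $P^{(\tau)}$. Starting from $T_P$, each time the $i$-th element of $V_n$ enters the c.e.\ enumeration at stage $s=t_n(i)$, the construction attaches a fresh copy of $T_P$ at every leaf of the current tree of length at least $s$. To decide whether $\sigma\in T^{(n)}$, one simulates the enumeration of $V_n$ for $|\sigma|$ stages and carries out the resulting attachments locally around $\sigma$. The key observation is that any length threshold $t_n(i)$ that could still affect $\sigma$ must satisfy $t_n(i)\leq|\sigma|$, so the $i$-th element of $V_n$ has already been enumerated by stage $|\sigma|$, and no later enumeration can alter the decision. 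Hence $T^{(n)}$ is uniformly computable in $n$, and $P^{(t_n)}=[T^{(n)}]$ is uniformly $\Pi^0_1$. The second claim of item (2) is then immediate: the corresponding tree of $P\fr\{P^{(t_n)}\}_{n\in\nn}$ is $T_P\cup\bigcup_n\rho_n\fr T^{(n)}$, a uniformly computable union.

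The point I anticipate will require most care is verifying that the paths of the constructed tree $T^{(n)}$ coincide with $P^{(t_n)}$ exactly, rather than with the strictly larger set $P^{(|t_n|+1)}$. The delicacy is to arrange that only attachments triggered by actual elements of $V_n$ contribute infinite paths to $T^{(n)}$, so that the ``correct number of leaves'' behavior is inherited from the iterative construction. Since $V_n$ is finite, the construction stabilizes after finitely many stages, and I plan to exploit this together with the strictly increasing nature of the enumeration $t_n(0)<t_n(1)<\cdots$ to match level $i+1$ of $T^{(n)}$ with the prefix $t_n\res(i+1)$ of $t_n$, thereby faithfully reproducing the set-theoretic recursion defining $P^{(t_n)}$.
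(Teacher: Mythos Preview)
Your proposal is correct and takes essentially the same approach as the paper. The paper dismisses item (1) as ``Straightforward,'' and for item (2) gives the declarative form of your procedural construction: decompose $\sigma=\rho_0\fr\cdots\fr\rho_k\fr\tau$ with each $\rho_m\in L_P$ and $\tau\in T_P$, and declare $\sigma\in T^{(t_n)}$ iff the $k$-th element of $V_n$ has been enumerated by stage $|\rho_0\fr\cdots\fr\rho_k|$---which is exactly your ``simulate the enumeration of $V_n$ for $|\sigma|$ stages'' test, resting on the same key observation that thresholds $t_n(i)>|\sigma|$ cannot affect membership of $\sigma$.
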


\begin{proof}\upshape
(1) Straightforward.
(2) We construct a computable tree $T^{(t_n)}$ corresponding to $P^{(t_n)}$.
Each $\sigma\in 2^\nn$ can be represented as $\sigma=\rho_0\fr\rho_1\fr\dots\fr\rho_k\fr\tau$, where $\rho_m\in L_P$ for any $m\leq k$, and $\lrangle{}\not=\tau\in T_{P}$.
Then $\sigma\in T^{(t_n)}$ if and only if $t_n(k)$ holds by stage $|\rho_0\fr\rho_1\fr\dots\fr\rho_k|$.
Then $T^{(t_n)}$ is a computable tree, and clearly $P^{(t_n)}=[T^{(t_n)}]$.
\end{proof}

\begin{remark}
Timekeeper arises because of {\em finite injury priority argument}.
The delayed derivative construction is useful to bound the complexity of the set, since the recursive meet $P\fr\{P^{(|t_n|+1)}\}_{n\in\nn}$ of the standard derivatives along a timekeeper $\{t_n\}_{n\in\nn}$ is only assured to be $\Pi^{0,\emptyset'}_1$.
\end{remark}

\begin{theorem}\label{thm:contig:b-l}
Let $P$ be any $\Pi^0_1$ subset of $2^\nn$.
Then, for every special $\Pi^0_1$ set $Q\subseteq 2^\nn$, there exists a $\Pi^0_1$ set $\widehat{P}\subseteq\btie P$ such that $Q\not\leq^1_{<\omega}\widehat{P}$.
\end{theorem}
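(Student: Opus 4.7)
The plan is to construct $\widehat{P}$ by a finite-injury priority construction producing a set of the form $\widehat{P} = P \fr \{P^{(t_n)}\}_{n\in\nn}$, with the timekeeper $\{t_n\}_{n\in\nn}$ assembled online. By the lemma just above, any such $\widehat{P}$ is automatically a $\Pi^0_1$ subset of $\btie P$, so the real work lies in choosing the $t_n$ to defeat every candidate $(1,<\omega)$-reduction of $Q$ into $\widehat{P}$. Introduce one requirement $R_n = R_{\langle e,b\rangle}$ per pair $(e,b) \in \nn^2$ asserting that the learner $\Psi_e$ with mind-change budget $b$ does not witness $Q \leq^1_{<\omega} \widehat{P}$, and assign $R_n$ to the $n$-th leaf $\rho_n \in L_P$, confining it to the private column $\rho_n \fr P^{(t_n)}$. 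Distinct leaves give disjoint cones past $T_P$, so different requirements act essentially independently and interact only through the global stage counter driving the timekeeper.

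The strategy for $R_{\langle e,b\rangle}$ runs in at most $b+1$ rounds inside its column, adapting the inductive tree construction from the proofs of Theorem \ref{thm:Hig} and Proposition \ref{prop:breductiona} locally. Starting from $t_n = \lrangle{}$ I simulate $\Psi_e$ along extendible nodes of $\rho_n \fr T_{P^{(t_n)}}$; whenever the learner's present guess $i$ keeps the images $\Phi_i(\sigma)$ inside $T_Q^{ext}$ on every live node up to the current stage, I enumerate a fresh element into the auxiliary c.e.\ set $V_n$, which simultaneously appends the current stage as the next entry of $t_n$ and grafts another copy of $P$ onto each current leaf of the column. Inside the enlarged column, a local adaptation of the tree construction in the proof of Theorem \ref{thm:Hig} exhibits a path $g$ with $\Phi_i(g) \notin Q$, forcing $\Psi_e$ to revise her commitment. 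After at most $b$ such forced revisions the budget is spent; a compactness argument on the surviving extendible branches then yields a limit path $g \in \widehat{P}$ on which either $\Phi_{\lim_m \Psi_e(g\res m)}(g) \notin Q$ or $g \in {\rm AP}_{\Psi_e}$, and by the trichotomy remark either alternative satisfies $R_{\langle e,b\rangle}$.

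The principal obstacle I expect is twofold. First, the online extension of $t_n$ must remain consistent with the timekeeper convention, so that each new entry of $t_n$ records exactly the stage at which a fresh element enters $V_n$ and $\widehat{P}$ lands in $\Pi^0_1$ rather than one level higher in the arithmetical hierarchy; this is precisely what the delayed-derivative framework is engineered to deliver. Second, and more substantially, each grafted layer must really force a mind change rather than be absorbed harmlessly into the learner's current commitment, and this is where the heart of the argument lies: it is the specialness of $Q$ together with a local invocation of the strong anticupping theorem that prevents a single $(1,1)$-map from carrying a full additional layer of $P$ into $Q$ without allowing some branch of the enlarged column to escape.
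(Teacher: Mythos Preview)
Your framework (building $\widehat{P}=P\fr\{P^{(t_n)}\}_{n\in\nn}$ via a timekeeper, one requirement per column) matches the paper, but the mechanism you propose inside each column has a gap. Your claim that grafting one copy of $P$ and invoking Theorem~\ref{thm:Hig} ``exhibits a path $g$ with $\Phi_i(g)\notin Q$'' is not justified: Theorem~\ref{thm:Hig} concerns when the \emph{target} $P$ is computable from a concatenation, not how to find a path in the \emph{domain} on which a fixed functional fails to hit $Q$. Concretely, if the learner's current guess $i$ happens to satisfy $Q\leq^1_1 P$ via $g\mapsto\Phi_i(\sigma\fr g)$ for the relevant prefix $\sigma$, then grafting a fresh copy of $P$ does nothing --- $\Phi_i$ still lands in $Q$ on every path of the enlarged column and no mind change is forced. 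Hence neither the bound of $b+1$ rounds nor the appeal to anticupping stands.

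The paper's argument is simpler and different in spirit: it drops the learner viewpoint and enumerates reductions so that the $n$-th one uses only $\{\Phi_e:e<n\}$ (legitimate via $[\mathfrak{C}_T]^1_{<\omega}={\rm dec}^{<\omega}_{\rm d}[\Pi^0_1]$). For each $n$ it builds a single \emph{computable} finite string $\tau_n$ in the $n$-th column greedily: whenever some $\Phi_e$, $e<n$, makes fresh progress into $T_Q$ along $\tau_n[s]\fr\rho$ with $\rho\in T_P$, extend $\rho$ to a leaf of $T_P$, append it, and lengthen $t_n$. This must halt, since otherwise by pigeonhole some fixed $\Phi_e$ would compute an infinite element of $Q$ along the computable path $\bigcup_s\tau_n[s]$, contradicting specialness of $Q$. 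Once halted, every $\Phi_e$ with $e<n$ is partial on every $g\in\widehat{P}$ extending $\tau_n$. Termination comes solely from specialness of $Q$; the number of graftings is not bounded a priori by any mind-change budget, and no use of Theorem~\ref{thm:Hig} or compactness on ``surviving branches'' is needed.
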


\begin{proof}\upshape
Let $Q$ be a special $\Pi^0_1$ set, and $P$ be a given $\Pi^0_1$ set.
By a uniformly computable procedure, from $P$, we will construct a timekeeper $\{t_n\}_{n\in\nn}$.
The desired class $\widehat{P}$ will be given by $\widehat{P}=P\fr\{P^{(t_n)}\}_{n\in\nn}$.

\begin{req}\upshape
We need to ensure, for all $n\in\nn$, the following:
\[R_n\;:\;Q\leq^1_{<\omega}\widehat{P}\mbox{ via }n\;\rightarrow\;(\exists\Delta_n)\;\Delta_n\in Q.\]
\end{req}

\noindent
{\bf Action of an $R_n$-strategy.}
Fix an effective enumeration $\{\rho_n:n\in\nn\}$ of all leaves of $T_P$.
An $R_n$-strategy uses nodes extending the $n$-th leaf $\rho_n$ of $T_P$, and it constructs a finite sequence $t_n[s]$, a sequence $\tau_n[s]$ of strings, and a computable functional $\Delta_n$.
For any $n$, put $t_n[0]=\lrangle{}$, and $\tau_n[0]=\rho_n$ at stage $0$.
{\em An $R_n$-strategy acts at stage $s+1$} if the following condition holds:
\[(\exists\rho\in T_P^s)(\exists e<n)\ \Phi_e(\tau_n[s]\fr\rho)\in T_Q\ \&\ \Phi_e(\tau_n[s]\fr\rho)\supsetneq\Phi_e(\tau_n[s]).\]
If an $R_n$-strategy acts at stage $s+1$ then, for a witness $\rho\in T_P^s$, we pick $\rho^*\in L_P$ extending $\rho$.
Then let us define $\tau_n[s+1]=\tau_n[s]\fr\rho^*$, $t_n[s+1]=t_n[s]\fr\lrangle{|\tau_n[s+1]|}$, and $\Delta_{e,n}\res l=\Phi_e(\tau_n[s+1])$, where $l$ is the length of $\Phi_e(\tau_n[s+1])$.
Otherwise, $t_n[s+1]=t_n[s]$, $\tau_i[s+1]=\tau_i[s]$.
Note that the mapping $(n,m)\mapsto\tau_n(m)$ is partial computable.
At the end of the construction, set $t_n=\bigcup_st_n[s]$.
As mentioned above, $\widehat{P}$ is defined by $\widehat{P}=P\fr\{P^{(t_n)}\}_{n\in\nn}$.

\begin{claim}
An $R_n$-strategy acts at most finitely often for each $n$.
\end{claim}

Clearly $\tau_n=\bigcup_s\tau_n[s]$ is a computable string.
If $R_n$ acts infinitely often, then $\Delta_{e,n}=\Phi_e(\tau_n)\in Q$ for some $e<n$ by our choice of $\tau_n$.
Since $\Phi_e(\tau_n)$ is computable, $Q$ contains a computable element.
However, this contradicts our assumption that $Q$ is special.
Therefore, we concludes the claim.
As a corollary, $\lrangle{t_n}_{n\in\nn}$ is a timekeeper.

\begin{claim}
$P\not\leq^1_{<\omega}\widehat{P}$.
\end{claim}

Let $\tau_n=\bigcup_s\tau_n[s]$.
By induction we show that $\tau_n\in\rho_n\fr T^{ext}_{P^{(t_n)}}$.
First we have the following observation:
\[\tau_n[0]=\rho_n\in\rho_n\fr T^{ext}_{P}=\rho_n\fr T^{ext}_{P^{(t_n\res 0)}}\subseteq\rho_n\fr T^{ext}_{P^{(t_n[0])}}.\]
Assume $\tau_n[s]\in\rho_n\fr T^{ext}_{P^{(t_n[s])}}$.
If $\tau_n[s+1]=\tau_n[s]\fr\rho^*$ for $\rho^*\in L_P$ then $t_n[s+1]=t_n[s]\fr\lrangle{|\tau_n[s+1]|}$.
In particular $\tau_n[s+1]\in\rho_n\fr L_{P^{(t_n[s+1]\res |t_n[s]|)}}$ and $|\tau_n[s+1]|\geq t_n[s+1](|t_n[s]|)$.
Hence, by the definition of $P^{(t_n[s+1])}$, it is easy to see that $\tau_n[s+1]\fr P\subseteq\rho_n\fr P^{(t_n[s+1])}$.
Thus, $\tau_n[s+1]\in\rho_n\fr T^{ext}_{P^{(t_n[s+1])}}$.
So we obtain $\tau_n\in\rho_n\fr T^{ext}_{P^{(t_n)}}$ and by our construction of $\tau_n$ there is no $\rho\in P$ and $e<n$ such that $\Phi_e(\tau_n\fr\rho)\supsetneq\Phi_e(\tau_n)$.
Since $\Phi_e(\tau_n)$ is a finite string, for any $g\in\rho_n\fr T^{ext}_{P^{(t_n)}}\subset\hat{P}$ extending $\tau_n$, $\Phi_e(g)$ is also a finite string.
Consequently, this $g$ witnesses that $P\not\leq^1_{<\omega}\widehat{P}$.
\end{proof}

\begin{cor}\label{cor:5:l-bref}
\begin{enumerate}
\item For every special $\Pi^0_1$ set $P\subseteq 2^\nn$, we have $\btie P<^{<\omega}_1P\equiv^1_\omega\btie P$.
\item For every special $\Pi^0_1$ set $P\subseteq 2^\nn$ there exists a $\Pi^0_1$ set $Q\subseteq 2^\nn$ with $Q<^{<\omega}_1P\equiv^1_\omega Q$.
\end{enumerate}
\end{cor}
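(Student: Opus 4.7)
My plan is to deduce both items from Theorem \ref{thm:contig:b-l} together with two elementary observations about $\btie P$.

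For item $(1)$, I first check the trivial half $\btie P\leq^1_1 P$: by definition $\btie P=\bigcup_n(P\ntie\cdots\ntie P)$ contains $P$ at its $n=0$ stage, so the identity is a computable function $P\to\btie P$, and in particular $\btie P\leq^{<\omega}_1P$. For $P\leq^1_\omega\btie P$, observe that every $g\in\btie P$ decomposes uniquely as $g=\rho_1\fr\cdots\fr\rho_k\fr f$ with each $\rho_i\in L_P$ and $f\in P$. A learner who reads $g$ level by level, maintains a shift index $l_s$ equal to the total length of leaves of $T_P$ already detected as initial segments of $g$, and outputs at each stage an index computing the map $g\mapsto g^{\shft l_s}$, will revise its guess exactly $k$ times and then stabilize with final output $f\in P$; this uses that $L_P$ is computable and that the final tail stays in $T_P^{ext}$ forever. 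To finish item $(1)$, apply Theorem \ref{thm:contig:b-l} with $Q:=P$ (special by hypothesis); this yields a $\Pi^0_1$ set $\widehat P\subseteq\btie P$ with $P\not\leq^{<\omega}_1\widehat P$. The inclusion $\widehat P\subseteq\btie P$ gives $\btie P\leq^1_1\widehat P$, so by transitivity $P\leq^{<\omega}_1\btie P$ would force $P\leq^{<\omega}_1\widehat P$, a contradiction.

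For item $(2)$, take $Q:=\widehat P$ from the previous application of Theorem \ref{thm:contig:b-l}. Unwinding the construction in that proof, $\widehat P=P\ntie\{P^{(t_n)}\}_{n\in\nn}\supseteq P$, so the identity yields $Q\leq^1_1 P$, hence $Q\leq^{<\omega}_1 P$ and $Q\leq^1_\omega P$. The reverse direction $P\leq^1_\omega Q$ follows from the chain $P\leq^1_\omega\btie P\leq^1_1\widehat P=Q$ established in item $(1)$. Combined with $P\not\leq^{<\omega}_1\widehat P=Q$ given by the theorem, we conclude $Q<^{<\omega}_1 P\equiv^1_\omega Q$, and $Q$ is $\Pi^0_1$ by construction.

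The only non-routine point is the learning argument for $P\leq^1_\omega\btie P$. Both the computable detection of fresh leaves of $T_P$ and the stabilization of the learner's guess after finitely many mind changes are immediate from the definitions, but the precise bookkeeping of the shift index $l_s$ — in particular, that once all $k$ leaves of the decomposition of $g$ have been absorbed, no further prefix of the tail $f\in P$ can coincide with a leaf of $T_P$ — deserves an explicit verification in the full write-up.
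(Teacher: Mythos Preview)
Your argument is correct and follows the same route as the paper: apply Theorem~\ref{thm:contig:b-l} with $Q=P$ to obtain $\widehat P\subseteq\btie P$, then combine with the easy observations $\btie P\leq^1_1 P$ (identity) and $P\leq^1_\omega\btie P$ (the shift-learner you describe). Your choice $Q=\widehat P$ for item~(2) is slightly more direct than the paper's $Q=P\oplus\widehat P$, but both work for the same reason, namely $P\subseteq\widehat P\subseteq\btie P$.

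One technical point worth flagging: you invoke the theorem as giving $P\not\leq^{<\omega}_1\widehat P$, whereas its \emph{statement} only asserts $P\not\leq^{1}_{<\omega}\widehat P$, which is formally weaker (since $[\mathfrak{C}_T]^1_{<\omega}\subseteq[\mathfrak{C}_T]^{<\omega}_1$). The paper's own proof of the corollary makes exactly the same leap. The resolution is that the \emph{proof} of Theorem~\ref{thm:contig:b-l} actually establishes the stronger conclusion: the construction produces, for every $n$, an element $g\in\widehat P$ extending $\tau_n$ on which $\Phi_e(g)$ is partial for all $e<n$, which rules out any finite team of functionals and hence yields $P\not\leq^{<\omega}_1\widehat P$. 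In a full write-up you should either cite the proof rather than the stated conclusion, or note explicitly that the theorem's argument delivers this stronger non-reducibility.

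Your learning argument for $P\leq^1_\omega\btie P$ is correct; the ``non-routine'' point you flag is in fact routine: if $f\in P$ then every initial segment of $f$ lies in $T_P$ and is extendible (by $f$ itself), hence is never a leaf of $T_P$, so the shift index $l_s$ stabilizes after the $k$ genuine leaves have been absorbed.
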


\begin{proof}\upshape
By applying Theorem \ref{thm:contig:b-l} to $Q=P$, we have $P\not\leq^1_{<\omega}\widehat{P}\geq^1_{<\omega}\btie P$.
Moreover, $P\oplus\widehat{P}<^{<\omega}_1P\equiv^1_{\omega}P\oplus\widehat{P}$.
\end{proof}


\subsection{Infinitary Disjunctions along ill-Founded Trees}

We next show the LEVEL 4 separation between $[\mathfrak{C}_T]^1_\omega$ and $[\mathfrak{C}_T]^{<\omega}_{\omega}$.
The following theorem concerning the hyperconcatenation $\htie$ and the $(1,\omega)$-reducibility $\leq^{1}_{\omega}$ is a counterpart of Theorem \ref{thm:Hig} concerning the concatenation $\tie$ and the $(1,1)$-reducibility $\leq^{1}_{1}$.

\begin{theorem}\label{thm:hyperHig}
For every special $\Pi^0_1$ sets $P,Q\subseteq 2^\nn$, and for any $R$, if $P\leq^{1}_{\omega}(Q\htie P)\lsup R$ then $P\leq^{1}_{\omega}R$ holds.
\end{theorem}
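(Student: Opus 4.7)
The plan is to adapt the argument of Theorem \ref{thm:Hig}, replacing the computable reduction $\Phi$ by the learner-mediated \emph{tentative image} $\tilde\Phi(\sigma\oplus\tau):=\Phi_{\Psi(\sigma\oplus\tau)}(\sigma\oplus\tau)$, where $\Psi$ is a learner identifying $\Gamma:(Q\htie P)\lsup R\to P$. For every $h\oplus g\in (Q\htie P)\lsup R$, the outputs $\tilde\Phi((h\oplus g)\res n)$ eventually extend initial segments of $\Gamma(h\oplus g)\in P$, because $\Psi$ converges on $h\oplus g$. Fix $g\in R$; the aim is to build, $g$-effectively and uniformly in $g$, a $g$-c.e.\ subtree $D^g\subseteq V_P$ without dead ends, and then to package this construction as a single learner $\Psi'$ that witnesses $P\leq^{1}_{\omega}R$.

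I would mimic the tree iteration of Theorem \ref{thm:Hig}. Start with $E^g_0=V_P\otimes\{g\}\subseteq T_{Q\htie P}^{ext}\otimes\{g\}$ (the root $T_P$-block sits inside $T_{Q\htie P}^{ext}$), and define $D^g_0$ as the \emph{limit-image} of $E^g_0$ under $\tilde\Phi$: the $g$-c.e.\ set of $\tau\in V_P$ such that cofinally many $\sigma\in E^g_0$ satisfy $\tilde\Phi(\sigma\oplus (g\res|\sigma|))\supseteq\tau$. An analogue of Lemma \ref{lem:Hig_ext} should show $D^g_0\subseteq V_P^{ext}$, since every infinite path of $E^g_0$ yields an $h\oplus g\in (Q\htie P)\lsup R$ on which $\tilde\Phi$'s outputs converge into $P$. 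Inductively, $E^g_{i+1}$ is obtained by attaching a $T_Q$-decision followed by a fresh $T_P$-copy at each newly reached $L_P$-leaf in $E^g_i$, exploiting precisely the recursive structure defining $T_{Q\htie P}^{ext}$, and $D^g_{i+1}$ is the corresponding limit-image, containing $D^g_i$. Setting $D^g=\bigcup_iD^g_i$ yields a $g$-c.e.\ subtree of $V_P$ without dead ends, whose leftmost path is a $g$-computable member of $P$; uniformising this construction in $g$ gives the required learner $\Psi'$, whose successive guesses encode indices for the partial constructions of $D^g$ and stabilise once $\Psi$ stabilises on the path of $\bigcup_i E^g_i$.

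The hard part is the loss of uniformity introduced by the learner: unlike $\Phi$ in Theorem \ref{thm:Hig}, $\tilde\Phi$ may transiently emit strings outside $V_P^{ext}$, so $D^g_i$ must be defined as a stabilised limit rather than a direct image. Proving the analogue of Lemma \ref{lem:Hig_ext} — that the limit-image lies in $V_P^{ext}$ and that each of its nodes has infinitely many extensions at the next stage — requires carefully matching the learner's finitely many mind-changes against the unbounded depth available in $T_{Q\htie P}^{ext}$: each $T_Q$-extension provides a ``restart point'' at which the tentative image can be re-established after a mind-change of $\Psi$, and the fact that $\Psi$ converges on every path of $(Q\htie P)\lsup R$ ensures that only finitely many such restarts occur along the path ultimately built, so the construction stabilises in the limit.
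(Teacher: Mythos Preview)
Your approach has a genuine gap at the step where you define $D^g_0$ as ``the $g$-c.e.\ set of $\tau\in V_P$ such that cofinally many $\sigma\in E^g_0$ satisfy $\tilde\Phi(\sigma\oplus(g\res|\sigma|))\supseteq\tau$.'' Cofinality is a $\Pi^0_1$ (indeed $\Pi^0_2$) condition, not a $\Sigma^0_1$ one, so this set is not $g$-c.e.; and if you drop cofinality and use the naive image, then $D^g_0$ is $g$-c.e.\ but no longer contained in $V_P^{ext}$, exactly because $\tilde\Phi$ is non-monotone. This is not a technicality you can patch by taking limits: the learner $\Psi$ stabilises only on \emph{individual paths} of $(Q\htie P)\lsup R$, not uniformly over the whole tree $E^g_i$, so there is no single stage after which the image of $E^g_i$ under $\tilde\Phi$ settles down. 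Your final sentence (``stabilise once $\Psi$ stabilises on the path of $\bigcup_i E^g_i$'') presupposes a global stabilisation that simply does not occur.

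The paper's proof supplies precisely the missing idea: a \emph{locking sequence}. For each fixed $g\in R$, one first argues (Lemma~\ref{lem:3b:hconc1}) that there is some $\rho$ in the heart $T_{\htie}^\heartsuit$ above which $\Psi$ never changes its mind along any extension in $T_{\htie}^\heartsuit$; this holds because otherwise one could build a single path of $(Q\htie P)\lsup R$ on which $\Psi$ diverges. On the cone above such $\rho$, the map $\tilde\Phi$ \emph{is} a fixed computable functional $\Phi_{\Psi(\rho\oplus(g\res|\rho|))}$, so the machinery of Theorem~\ref{thm:Hig} applies verbatim to the subproblem $P\leq^1_1(\rho\fr P\fr\lrangle{m}\fr P)\lsup\{g\}$. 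The condition ``$\rho$ is a locking sequence with $\tau\fr\lrangle{m}\in T_Q^{ext}$'' is $\Pi^0_1$ in $(g,\rho,m)$, so the new learner $\Delta$ simply enumerates candidate pairs $(\rho,m)$, runs the Theorem~\ref{thm:Hig} construction assuming the current candidate is correct, and moves to the next candidate when the $\Pi^0_1$ condition is refuted. The locking-sequence step is what converts the global non-monotonicity problem into a local one where your intended adaptation actually works.
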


\begin{proof}\upshape
Let $T_{\htie}$ denote the corresponding computable tree for $Q\htie P$.
{\em The heart of $T_{\htie}$}, $T_{\htie}^\heartsuit$, is the set of all strings $\gamma\in T_{\htie}$ such that $\gamma\subseteq\concat_{i<n}(\sigma_i\fr\lrangle{\tau(i)})$ for some $\{\sigma_i\}_{i<n}\subseteq L_P$, and $\tau\in T_Q^{ext}$.
\index{heart!of $T_{\htie}$}\index{$T_{\htie}^\heartsuit$}%
Now we assume $P\leq^{1}_{\omega}(Q\htie P)\lsup R$ via a learner $\Psi$.
To show the theorem it is needed to construct a new learner $\Delta$ witnessing $P\leq^{1}_{\omega}R$.
Fix $g\in R$.

\begin{lemma}\label{lem:3b:hconc1}
There exists a string $\rho\in T_{\htie}^\heartsuit$ such that, for every $\tau\in T_{\htie}^\heartsuit$ extending $\rho$, we have $\Psi(\rho\oplus(g\res|\rho|))=\Psi(\gamma)$ for any $\gamma$ with $\rho\oplus(g\res|\rho|)\subseteq\gamma\subseteq\tau\oplus(g\res|\tau|)$.
\end{lemma}

\begin{proof}\upshape
If Lemma \ref{lem:3b:hconc1} is false, we can inductively define an increasing sequence $\{\tau_i\}_{i\in\omega}$ of strings.
First let $\tau_0=\lrangle{}$, and $\tau_{i+1}$ be the least $\tau\supsetneq\tau_i$ such that $\tau\in T_{\htie}^\heartsuit$ and $\Psi(\tau\oplus(g\res(|\tau|+i)))\not=\Psi(\rho\oplus(g\res(|\rho|+j)))$ for some $i,j<2$.
Since $\bigcup_i\tau_i\in Q\htie P$, clearly $(\bigcup_i\tau_i)\oplus g\in (Q\htie P)\lsup R$.
However, based on the observation $(\bigcup_i\tau_i)\oplus g$, the learner $\Psi$ changes his mind infinitely often.
This means that his prediction $\lim_n\Psi((\bigcup_i\tau_i)\oplus g)$ diverges.
This contradicts our assumption that $P\leq^{1}_{\omega}(Q\htie P)\lsup R$ via the learner $\Psi$.
Thus, our claim is verified.
\end{proof}

Lemma \ref{lem:3b:hconc1} can be seen as an analogy of an observation of Blum-Blum \cite{BlBl} in the theory of inductive inference for total computable functions on $\nn$.
Such $\rho$ is sometimes called {\em a locking sequence}.
\index{locking sequence}%

\begin{lemma}\label{lem:3b:hconc2}
There exist an effective procedure $\Theta:\nn^\nn\times 2^{<\nn}\times 2\to\nn^\nn$ and a $\Pi^0_1$ condition $\varphi$ such that, for any $g\in Q$, $\varphi(g,\rho,m)$ holds for some $\rho\in 2^{<\nn}$, and $m<2$, and that for any $\rho\in 2^{<\nn}$ and $m\in\nn$, if $\varphi(g,\rho,m)$ holds, then $\Theta(g,\rho,m)\in P$.
\end{lemma}

\begin{proof}\upshape
The desired condition $\varphi(g,\rho,m)$ is given by the conjunction of the following three conditions.
\begin{enumerate}
\item $\rho$ is of the form $\concat_{i<n}(\sigma_i\fr\lrangle{\tau(i)})$ for some $\{\sigma_i\}_{i<n}\subseteq L_P$, $\tau\in T_Q^{ext}$, and $|\tau|=n$.
\item $\tau\fr\lrangle{m}\in T_Q^{ext}$.
\item $\Psi(\rho\oplus(g\res|\rho|))=\Psi(\gamma)$ for any $\gamma\in(\rho\fr T_P\fr\lrangle{m}\fr T_P)\lsup\{g\}$.
\end{enumerate}

The first two conditions are clearly $\Pi^0_1$, and since $\Psi$ is total computable, the last condition is also $\Pi^0_1$.
Consequently, $\varphi$ is $\Pi^0_1$.
We first show that $\varphi(g,\rho,m)$ holds for some $\rho\in 2^{<\nn}$ and $m\in\nn$.
Pick a locking sequence $\rho\in T_{\htie}^\heartsuit$ forcing to stop changing the mind of $\Psi$, as in the previous claim.
Without loss of generality, we can assume that $\rho$ satisfies the condition (1).
Since $\tau\in T_Q^{ext}$, there exists $m\in\omega$ such that $\tau\fr\lrangle{m}\in T_Q^{ext}$, and this $m$ satisfies the condition (2).
From conditions (1) and (2), we conclude that $\rho\fr P\fr\lrangle{m}\fr P=(\rho\fr P)\cup(\rho\fr\bigcup_{\sigma\in L_P}\sigma\fr\lrangle{m}\fr P)\subseteq T_{\htie}^\heartsuit$, and so condition (3) is satisfied.
Since we assume that $P\leq^{1}_{\omega}(Q\htie P)\lsup\{g\}$ via the learner $\Psi$, if $\varphi(g,\rho,m)$ is satisfied, then the following holds.
\[P\leq^{1}_{1}(\rho\fr P\fr\lrangle{m}\fr P)\lsup\{g\}\mbox{ via }\Phi_{\Psi(g\res|\rho|\oplus\rho)}.\]

Our proof process in Theorem \ref{thm:Hig} is effective with respect to $g$, $m$, and an index of $\Phi_{\Psi(g\res|\rho|\oplus\rho)}$ which are calculated from $g$, $\rho$, and an index of $\Psi$.
To see this, recall our proof in Theorem \ref{thm:Hig}.
Define $V_P^m=T_P\cup\{\rho\fr\lrangle{m}:\rho\in L_P\}$.
\begin{align*}
E^{g,\rho,m}_0&=V_P^m\otimes\{g\}; & D^{g,\rho,m}_0&=\Phi_{\Psi(g\res|\rho|\oplus\rho)}(E^{g,\rho,m}_0).\\
E^{g,\rho,m}_{i+1}&=(V_P^m\ntie D^{g,\rho,m}_i)\otimes\{g\}; & D^{g,\rho,m}_{i+1}&=\Phi_{\Psi(g\res|\rho|\oplus\rho)}(E^{g,\rho,m}_{i+1}).
\end{align*}

Then, as in the proof of Theorem \ref{thm:Hig}, $D^{g,\rho,m}=\bigcup_{i\in\nn}D^{g,\rho,m}_{i+1}$ is a subtree of $V_P$, and it has no dead ends.
Moreover, this construction is clearly c.e.\ uniformly in $g$, $\rho$, and $m$.
Therefore, we can effectively choose an element $\Theta(g,\rho,m)\in[D^{g,\rho,m}]\subseteq P$, uniformly in $g$, $\rho$, and $m$.
\end{proof}

Now, a procedure to get $P\leq^{1}_{\omega}R$ is follows.
For given $g\in Q$, on {\em the $i$-th challenge} of a learner $\Delta$, the learner $\Delta$ chooses the lexicographically $i$-th least pair $\lrangle{\rho,m}\in 2^{<\nn}\times\nn$, and $\Delta$ calculates an index $e(\rho,m)$ of the computable functional $g\mapsto\Theta(g,\rho,m)$, that is to say, $\Delta(g\res s)=e(\rho,m)$ at the current stage $s$.
At each stage in the $i$-th challenge, the learner $\Delta$ tests whether the $\Pi^0_1$ condition $\varphi(g,\rho,m)$ is refuted.
When $\varphi(g,\rho,m)$ is refuted, $\Delta$ changes his mind, and goes to the $(i+1)$-th challenge.
Clearly $\lim_s\Delta(g\res s)$ converges, and $\Phi_{\lim_s\Delta(g\res s)}(g)\in P$ holds.
\end{proof}

\begin{cor}\label{cor:5:l-tlref}
For every special $\Pi^0_1$ set $P\subseteq 2^\nn$ there exists a $\Pi^0_1$ set $Q\subseteq 2^\nn$ with $Q<^{1}_{\omega}P\equiv^{<\omega}_{\omega}Q$.
\end{cor}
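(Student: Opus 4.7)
The plan is to take $Q := P\htie P$ as the witness. As a hyperconcatenation of $\Pi^0_1$ sets, $Q$ is itself a nonempty $\Pi^0_1$ subset of $2^\nn$, and instantiating $\tau=\lrangle{}$ in the definition
\[P\htie P=\left[\bigcup_{\tau\in T_P}\left(\concat_{i<|\tau|}T_P\fr\lrangle{\tau(i)}\right)\fr T_P\right]\]
shows $P\subseteq Q$. In particular, the identity map gives $Q\leq^1_1 P$, and hence $Q\leq^1_\omega P$ and $Q\leq^{<\omega}_\omega P$ for free.

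The strictness $P\not\leq^1_\omega Q$ will follow from Theorem \ref{thm:hyperHig}. Instantiating its $Q$ by $P$ and its $R$ by a trivially computable singleton such as $\{0^\nn\}$, and observing that $(P\htie P)\lsup\{0^\nn\}\equiv^1_1 P\htie P$, a hypothetical $(1,\omega)$-reduction $P\leq^1_\omega P\htie P$ would upgrade to $P\leq^1_\omega(P\htie P)\lsup\{0^\nn\}$ and hence, by Theorem \ref{thm:hyperHig}, to $P\leq^1_\omega\{0^\nn\}$, contradicting the specialness of $P$.

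For the reverse reduction $P\leq^{<\omega}_\omega Q$, I will exhibit a team of two learners exploiting the dichotomy on paths of the hyperconcatenation tree: every $g\in P\htie P$ either settles after finitely many ``coordinate transitions'' between nested copies of $T_P$ into a single infinite copy of $P$ shifted by some offset, or else the transitions accumulate indefinitely, so that the extracted sequence of transition coordinates is itself an element of $P$. Learner $\Psi_0$ outputs a fixed index for the computable functional reading off the sequence of transition coordinates, and therefore succeeds in the second case. Learner $\Psi_1$ continually revises its hypothesis to the index of the shift-functional corresponding to the most recently observed transition position, and therefore stabilizes and succeeds in the first case. Combining the two directions gives $Q<^1_\omega P\equiv^{<\omega}_\omega Q$ as required.

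The main obstacle is the bookkeeping inside the team-of-learners reduction: one must canonically parse an initial segment of $g$ into its tree position inside $P\htie P$ so that the ``most recent transition'' index is uniformly computable from observed prefixes, and one must verify the dichotomy on infinite paths directly from the definition of the hyperconcatenation tree. Once this is pinned down, Theorem \ref{thm:hyperHig} supplies the non-reducibility essentially for free.
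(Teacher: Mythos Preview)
Your proposal is correct and follows essentially the same route as the paper: set $Q=P\htie P$, invoke Theorem \ref{thm:hyperHig} with a trivial $R$ (you use $\{0^\nn\}$, the paper uses $2^\nn$) to get $P\not\leq^1_\omega Q$, and use the two-learner dichotomy on hyperconcatenation paths for $P\leq^{<\omega}_\omega Q$. The only cosmetic difference is that the paper simply cites Part I \cite[Section 4]{HK_PartI} for the $(<\omega,\omega)$-reduction, whereas you sketch the two learners explicitly; your sketch of that team (one extracting the transition coordinates, one tracking the last transition offset) is exactly the intended argument.
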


\begin{proof}\upshape
By Theorem \ref{thm:hyperHig}, if $P\leq^1_\omega(P\htie P)\otimes 2^\nn\equiv^1_1 P\htie P$, then $P\leq^1_\omega 2^\nn$, i.e., $P$ contains a computable element.
As $P$ is special, we must have $P\not\leq^1_\omega P\htie P$.
As seen in Part I \cite[Section 4]{HK_PartI}, $P\leq^{<\omega}_\omega P\htie P$.
Therefore, for $Q=P\htie P$, we have $Q<^{1}_{\omega}P\equiv^{<\omega}_{\omega}Q$.
\end{proof}

\begin{cor}
Every nonzero $\dg{a}\in\mathcal{P}^1_\omega$ has the strong anticupping property.
\end{cor}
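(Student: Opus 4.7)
The plan is to mirror the proof just given that every nonzero $\dg{a}\in\mathcal{P}^1_1$ has the strong anticupping property, substituting the hyperconcatenation $\htie$ for the concatenation $\fr$ and invoking Theorem \ref{thm:hyperHig} in place of Theorem \ref{thm:Hig}.

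Fix any nonzero $\dg{a}\in\mathcal{P}^1_\omega$ and a representative $P\in\dg{a}$; since $\dg{a}\neq\dg{0}$, the set $P$ is a special nonempty $\Pi^0_1$ subset of $2^\nn$. I would take $\dg{b}$ to be the $(1,\omega)$-degree of $P\htie P$. For any $(1,\omega)$-degree $\dg{c}$ and representative $R\in\dg{c}$, the join $\dg{b}\vee\dg{c}$ is represented by $(P\htie P)\lsup R$, so $\dg{a}\leq\dg{b}\vee\dg{c}$ unfolds to $P\leq^1_\omega(P\htie P)\lsup R$. Theorem \ref{thm:hyperHig} applied with $Q:=P$ then yields $P\leq^1_\omega R$ at once, i.e., $\dg{a}\leq\dg{c}$, as required.

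The only remaining point is to verify that $\dg{b}$ is itself nonzero, i.e., that $P\htie P$ contains no computable element. Any infinite path $g$ through the corresponding tree of $P\htie P$ parses, using the computable tree $T_P$ and its computable leaf set $L_P$, as $g=\rho_0\fr\lrangle{b_0}\fr\rho_1\fr\lrangle{b_1}\fr\cdots$, with each $\rho_i\in L_P$ in the marker-index range. Either only finitely many markers occur, in which case the tail of $g$ is an infinite path through $T_P$ that is Turing-equivalent to $g$; or infinitely many occur, in which case the extracted marker sequence $\lrangle{b_0,b_1,\ldots}$ is an element of $Q=P$ computable in $g$. In both cases a computable $g$ would yield a computable member of $P$, contradicting specialness. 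The parsing is itself computable from $g$ because $T_P$ and $L_P$ are, so no priority argument or additional machinery is needed.

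Since the reduction half is a one-line invocation of Theorem \ref{thm:hyperHig}, the main substantive check is precisely the specialness of $P\htie P$, which I do not expect to present any serious obstacle beyond the parsing observation above. Together these two steps establish the strong anticupping property for every nonzero $\dg{a}\in\mathcal{P}^1_\omega$.
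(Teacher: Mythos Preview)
Your proof is correct and follows exactly the paper's approach: take $\dg{b}=\deg^1_\omega(P\htie P)$ and invoke Theorem~\ref{thm:hyperHig}. The paper omits your extra check that $\dg{b}\neq\dg{0}$; your parsing argument for this is slightly imprecise (the blocks in the defining formula for $\htie$ range over $T_P$, not $L_P$, so a greedy leaf-based decomposition need not match the tree), but the desired conclusion that $P\htie P$ is special follows immediately from $P\equiv^{<\omega}_\omega P\htie P$ (Part~I, used in Corollary~\ref{cor:5:l-tlref}).
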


\begin{proof}\upshape
Fix $P\in\dg{a}$.
Let $\dg{b}$ be the $(1,\omega)$-degree of $P\htie P$.
Then, by Theorem \ref{thm:hyperHig}, for any $(1,\omega)$-degree $\dg{c}$, if $\dg{a}\leq\dg{b}\vee\dg{c}$, then $\dg{a}\leq\dg{c}$. 
\end{proof}

The primary motivation of the second author behind introducing the notions of learnability reduction was to attack an open problem on $\Pi^0_1$ subsets of $2^\nn$.
The problem (see Simpson \cite{Sim}) is whether the Muchnik degrees ($(\omega,1)$-degrees) of $\Pi^0_1$ classes are dense.
Cenzer-Hinman \cite{CH1} showed that the Medvedev degrees ($(1,1)$-degrees) of $\Pi^0_1$ classes are dense.
One can easily apply their priority construction to prove densities of $(1,<\omega)$-degrees and $(<\omega,1)$-degrees.
The reason is that the arithmetical complexity of $A^\alpha_\beta=\{(i,j)\in\nn^2:P_i\leq^\alpha_\beta P_j\}$ is $\Sigma^0_3$ for $(\alpha,\beta)\in\{(1,1),(1,<\omega),(<\omega,1)\}$, where $\{P_e\}_{e\in\nn}$ is an effective enumeration of all $\Pi^0_1$ subsets of $2^\nn$.
It enables us to use priority argument directly.
However, for other reductions $(\alpha,\beta)$, the complexity of $A^\alpha_\beta$ seems to be $\Pi^1_1$.
This observation hinders us from using priority arguments.
Hence it seems to be a hard task to prove densities of such $(\alpha,\beta)$-degrees.
Nevertheless, our disjunctive notions turn out to be useful to obtain some partial results.

\begin{theorem}[Weak Density]
For nonempty $\Pi^0_1$ sets $P,Q\subseteq 2^\nn$, if $P<^1_{\omega}Q$ and $P<^{<\omega}_{\omega}Q$ then there exists a $\Pi^0_1$ set $R\subseteq 2^\nn$ such that $P<^1_{\omega}R<^1_{\omega}Q$.
\end{theorem}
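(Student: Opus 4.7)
My plan is to take $R = (Q \htie Q) \lsup P$ and verify that it serves as the required intermediate $\Pi^0_1$ set. The $\Pi^0_1$-ness of $R$ is immediate, since the hyperconcatenation $Q \htie Q$ is $\Pi^0_1$ by Part I and $\lsup$ preserves $\Pi^0_1$-ness on Cantor space. Before anything else I would note that $Q$ must be special: if $Q$ had a computable point, then $Q$ would sit at the bottom of $\mathcal{P}^1_1$, forcing $Q \leq^1_\omega P$ and contradicting $P <^1_\omega Q$. This lets me invoke both Theorem~\ref{thm:hyperHig} (with both special $\Pi^0_1$ slots instantiated as $Q$) and the equivalence $Q \equiv^{<\omega}_\omega Q \htie Q$ coming from its corollary.

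For the two easy reductions I would first exhibit $R \leq^1_\omega Q$ via the map $y \mapsto y \lsup \Gamma(y)$, where $\Gamma$ is a learner realizing $P \leq^1_\omega Q$; the first coordinate lies in $Q \htie Q$ because $Q \subseteq Q \htie Q$ (the initial $T_Q$-layer in the defining tree of $Q \htie Q$). The reduction $P \leq^1_\omega R$ is a mere $(1,1)$-projection onto the second coordinate.

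The two strict inequalities form the heart of the argument. To rule out $Q \leq^1_\omega R = (Q \htie Q) \lsup P$, I would apply Theorem~\ref{thm:hyperHig} with both special parameters set to $Q$ and the free parameter set to $P$; it delivers $Q \leq^1_\omega P$, contradicting the first hypothesis. To rule out $R \leq^1_\omega P$, I would first project onto the first coordinate to obtain $Q \htie Q \leq^1_1 R \leq^1_\omega P$, hence $Q \htie Q \leq^{<\omega}_\omega P$, and then compose with $Q \leq^{<\omega}_\omega Q \htie Q$ to arrive at $Q \leq^{<\omega}_\omega P$, contradicting the second hypothesis.

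The main conceptual point, and the step I would be most careful to present clearly, is the complementary use of the two hypotheses: $P <^1_\omega Q$ is what the strong anticupping property of the hyperconcatenation operator consumes to yield the upper strictness $R <^1_\omega Q$, while $P <^{<\omega}_\omega Q$ is exactly what is needed to keep the $(<\omega,\omega)$-collapse $Q \equiv^{<\omega}_\omega Q \htie Q$ from absorbing the lower strictness into equality. No construction beyond a single application of the $\htie$ operator from Part I should be required.
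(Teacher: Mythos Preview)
Your proposal is correct and follows exactly the same route as the paper: take $R=(Q\htie Q)\lsup P$, get the easy reductions from $Q\subseteq Q\htie Q$ and projection, then use Theorem~\ref{thm:hyperHig} for $R<^1_\omega Q$ and the $(<\omega,\omega)$-equivalence $Q\equiv^{<\omega}_\omega Q\htie Q$ for $P<^1_\omega R$. You are in fact slightly more careful than the paper, since you explicitly note that $Q$ must be special (needed to invoke Theorem~\ref{thm:hyperHig}) and spell out the reducing maps, which the paper leaves implicit.
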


\begin{proof}\upshape
Assume $P<^1_{\omega}Q$ and $P<^{<\omega}_{\omega}Q$.
Let $R=(Q\htie Q)\lsup P$.
Then $P\leq^1_{\omega}R\leq^1_{\omega}Q$.
Moreover $Q\not\leq^1_{\omega}P$ implies $Q\not\leq^1_{\omega}R=(Q\htie Q)\lsup P$, by non-cupping property of $\htie$.
On the other hand, $R=(Q\htie Q)\lsup P\not\leq^1_{\omega}P$ since $Q\htie Q\equiv^{<\omega}_{\omega}Q\not\leq^{<\omega}_{\omega}P$.
Consequently, $P<^1_{\omega}R=(Q\htie Q)\lsup P<^1_{\omega}Q$.
\end{proof}

One can introduce a transfinite iteration $P^{\hjump{a}}$ of hyperconcatenation along $a\in\mathcal{O}$.
\index{$P^{\hjump{a}}$}%

\begin{prop}\label{prop:wfitera-hyperHig}
For any special $\Pi^0_1$ set $P\subseteq 2^\nn$, if $a,b\in\mathcal{O}$ and $a<_{\mathcal{O}}b$, then $P^{\hjump{b}}$ does not $(1,\omega)$-cup to $P^{\hjump{a}}$, i.e., for any set $R\subseteq\nn^\nn$, if $P^{\hjump{a}}\leq^1_\omega P^{\hjump{b}}\otimes R$ then $P^{\hjump{a}}\leq^1_\omega R$.
\end{prop}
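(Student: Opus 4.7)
The plan is to mirror the proof of Proposition~\ref{prop:wfHig}, swapping the concatenation $\fr$ for the hyperconcatenation $\htie$, the $(1,1)$-reducibility for $(1,\omega)$, and Theorem~\ref{thm:Hig} for Theorem~\ref{thm:hyperHig}. The argument splits into three steps, all parallel to the concatenation case.

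First, I would invoke the standard fact about Kleene's $\mathcal{O}$ that $a<_\mathcal{O}b$ implies $2^a\leq_\mathcal{O}b$. Next, by a straightforward transfinite induction along $\mathcal{O}$, the transfinite iterate $c\mapsto P^{\hjump{c}}$ is monotone in the sense that $c\leq_\mathcal{O}d$ implies $P^{\hjump{d}}\leq^1_\omega P^{\hjump{c}}$. At the successor stage this is witnessed by a computable map sending a path of $P^{\hjump{c}}$ to the corresponding path of $P\htie P^{\hjump{c}}=P^{\hjump{2^c}}$; at limit notations $a=3\cdot 5^e$ with $P^{\hjump{a}}=\cmeet_nP^{\hjump{\Phi_e(n)}}$, the reduction is witnessed by the uniform computable indexing of the meet. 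In particular, $P^{\hjump{b}}\leq^1_\omega P^{\hjump{2^a}}$.

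Second, I would note that $P^{\hjump{a}}$ is special whenever $P$ is: an easy induction along $\mathcal{O}$ shows that every infinite path of $P^{\hjump{a}}$ (indeed, in the heart of its corresponding tree) eventually carries a $P$-tail, and $P$ has no computable element. Hence Theorem~\ref{thm:hyperHig} applies with its two parameters instantiated as $Q:=P$ and $P:=P^{\hjump{a}}$, yielding that $P\htie P^{\hjump{a}}=P^{\hjump{2^a}}$ does not $(1,\omega)$-cup to $P^{\hjump{a}}$. Combining this with the first step: if $P^{\hjump{a}}\leq^1_\omega P^{\hjump{b}}\otimes R$, then composing with $P^{\hjump{b}}\leq^1_\omega P^{\hjump{2^a}}$ gives $P^{\hjump{a}}\leq^1_\omega P^{\hjump{2^a}}\otimes R$, and Theorem~\ref{thm:hyperHig} then forces $P^{\hjump{a}}\leq^1_\omega R$, as required.

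The main obstacle is purely bookkeeping: one must confirm from the precise definition of $P^{\hjump{a}}$ fixed in Part~I that the successor stage takes the form $P^{\hjump{2^c}}\equiv^1_\omega P\htie P^{\hjump{c}}$ (so that Theorem~\ref{thm:hyperHig} applies directly with the target $P^{\hjump{a}}$ in the right-hand slot of $\htie$), that specialness of $P^{\hjump{a}}$ propagates through limits, and that the monotonicity $c\leq_\mathcal{O}d\Rightarrow P^{\hjump{d}}\leq^1_\omega P^{\hjump{c}}$ passes through limit notations. Once these routine verifications are in place, the proof collapses to a three-line invocation of Theorem~\ref{thm:hyperHig}, exactly as in the concatenation analogue Proposition~\ref{prop:wfHig}.
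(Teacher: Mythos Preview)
Your proposal is correct and follows essentially the same three-line route as the paper: use $a<_\mathcal{O}b\Rightarrow 2^a\leq_\mathcal{O}b$, then monotonicity to get $P^{\hjump{b}}\leq^1_\omega P^{\hjump{2^a}}$, then Theorem~\ref{thm:hyperHig} to conclude that $P^{\hjump{2^a}}$ does not $(1,\omega)$-cup to $P^{\hjump{a}}$. The paper's proof is in fact terser than yours and simply asserts these steps without the bookkeeping checks you flag (specialness of $P^{\hjump{a}}$, the precise successor form, monotonicity through limits), all of which it implicitly defers to the Part~I definition.
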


\begin{proof}\upshape
The assumption $a<_\mathcal{O}b$ implies $2^a\leq_\mathcal{O}b$.
Therefore, we have $P^{\hjump{b}}\leq^1_\omega P^{\hjump{2^a}}$.
By Theorem \ref{thm:hyperHig}, $P^{\hjump{2^a}}$ does not $(1,\omega)$-cup to $P^{\hjump{a}}$.
Thus, $P^{\hjump{b}}$ does not $(1,\omega)$-cup to $P^{\hjump{a}}$.
\end{proof}

Fix again any notation ${\tt omega}\in\mathcal{O}$ such that $|\Phi_{\tt omega}(n)|_\mathcal{O}=n$ for each $n\in \nn$.
Recall from Part I that a learner $\Psi$ is {\em eventually-Popperian} if, for every $f\in\nn^\nn$, $\Phi_{\lim_s\Psi(f\res s)}(f)$ is total whenever $\lim_s\Psi(f\res s)$ converges.
\index{learner!eventually-Popperian}%

\begin{prop}\label{prop:breduction}
Let $P$ be a special $\Pi^0_1$ subset of $2^\nn$.
For any set $R\subseteq\nn^\nn$, if $P^{\hjump{a}}\leq^{<\omega}_{\omega}P^{\hjump{{\tt omega}}}\otimes R$ by a team of eventually-Popperian learners, then $P\leq^{<\omega}_\omega R$.
\end{prop}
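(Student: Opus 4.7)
The plan is to parallel Proposition \ref{prop:breductiona}, with $\htie$ replacing $\fr$, $\leq^{<\omega}_\omega$ replacing $\leq^1_{<\omega}$, eventually-Popperian teams replacing truth-table functions, and Theorem \ref{thm:hyperHig} replacing Theorem \ref{thm:Hig}. Since $P\equiv^{<\omega}_\omega P^{\hjump{a}}$ (by iterating $P\equiv^{<\omega}_\omega P\htie P$ established in Part I), it suffices to prove: $P\leq^{<\omega}_\omega P^{\hjump{\tt omega}}\otimes R$ by an eventually-Popperian team of size $t$ implies $P\leq^{<\omega}_\omega R$.

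The argument proceeds in three stages. Stage (i), localization: $P^{\hjump{\tt omega}}$ is $(1,1)$-equivalent to the recursive meet $\cmeet_n P^{\hjump{\Phi_{\tt omega}(n)}}=\cmeet_n P^{\hjump{n}}={\sf CPA}\ntie\{P^{\hjump{n}}\}_n$, so restricting the given team to the subtree rooted at the $(n+1)$-th leaf of $T_{\sf CPA}$ yields, for every $n\in\nn$, a team of $t$ eventually-Popperian learners witnessing $P\leq^{<\omega}_\omega P^{\hjump{n+1}}\otimes R$. Stage (ii), team combining: fix $n$ large relative to $t$ and combine the team into a single $(1,\omega)$-reduction. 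On input $\sigma\otimes g\in P^{\hjump{n+1}}\otimes R$, run the $t$ learners in parallel and tentatively follow $\Psi_0$'s current guess; by the eventually-Popperian hypothesis, if $\Psi_0$'s limit index is incorrect then its total output must eventually visit a position outside $T_P$, i.e.\ pass through some leaf $\rho_0\in L_P$ in finite time. When this is observed, concatenate $\rho_0$ and switch to $\Psi_1$'s guess; continue. Since at least one team member wins on $\sigma\otimes g$, the procedure halts after at most $t-1$ concatenations, outputting an element of $P\fr P\fr\dots\fr P\subseteq P^{(t)}$. This yields $P^{(t)}\leq^1_\omega P^{\hjump{n+1}}\otimes R$, and composing with $P\leq^1_{<\omega}P^{(t)}$ (established in Part I) gives $P\leq^1_\omega P^{\hjump{n+1}}\otimes R$. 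Stage (iii), peel-off: taking $n$ large, $P^{\hjump{n+1}}$ has the form $Q\htie P$ for an appropriate $Q$ (up to the conventions of Part I), so Theorem \ref{thm:hyperHig} (or Proposition \ref{prop:wfitera-hyperHig} applied iteratively) strips off the hyperconcatenation factor, delivering $P\leq^1_\omega R$ and hence $P\leq^{<\omega}_\omega R$.

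The main obstacle is stage (ii). Unlike the truth-table combination in Proposition \ref{prop:breductiona}, which concatenated genuinely total computable functions, here the learners' outputs are total only in the limit, and one has no a priori handle on when a learner's guess has stabilized. The eventually-Popperian hypothesis is exactly what rescues the construction: any stabilized incorrect guess produces a total function whose output visibly exits $T_P$ in finite time, supplying the finite witness needed to pass control to the next learner through a leaf of $T_P$. Coordinating the mind-changes of the $t$ learners with the dynamic concatenation, so that the combined procedure remains a well-defined single $(1,\omega)$-learner whose eventual limit lies in $P^{(t)}$, is the delicate point of the argument.
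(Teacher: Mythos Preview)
Your overall plan mirrors the paper's, and stages (i) and (iii) are fine. The gap is in stage (ii). You write that ``if $\Psi_0$'s limit index is incorrect then its total output must eventually visit a position outside $T_P$'', and then switch to $\Psi_1$. But the eventually-Popperian hypothesis only says: \emph{if} $\lim_s\Psi_i(x\res s)$ converges, \emph{then} $\Phi_{\lim_s\Psi_i(x\res s)}(x)$ is total. It says nothing when $\Psi_i$ diverges. In a $(<\omega,\omega)$-team only \emph{one} member is guaranteed to converge; the others may change their minds infinitely often. If $\Psi_0$ diverges on $x$, your combined learner $\Theta$---which tracks $\Psi_0$'s current guess and only switches when the output visibly exits $T_P$---will itself change its mind infinitely often (each time $\Psi_0$ does) and never reach $\Psi_1$. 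So $\Theta$ fails to be a valid $(1,\omega)$-learner, and the claimed reduction $P^{(t)}\leq^1_\omega P^{\hjump{n+1}}\otimes R$ does not follow.

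This is exactly why the paper combines the team into $P^{\hjump{n}}$ rather than into the finite concatenation $P^{(t)}$: the $n$-fold hyperconcatenation has enough room to absorb unbounded mind-changes of each of the $n$ learners into its iterated tree structure, so that one obtains a genuine $(1,1)$-reduction $P^{\hjump{n}}\leq^1_1 P^{\hjump{n+1}}\otimes R$ (no limit needed on the outside). One then applies Theorem~\ref{thm:hyperHig} once, with $P$ replaced by $P^{\hjump{n}}$, to obtain $P^{\hjump{n}}\leq^1_\omega R$, and hence $P\leq^{<\omega}_\omega R$. Your ordinary-concatenation target $P^{(t)}$ allows only $t$ pieces and so cannot encode a divergent learner; that is the missing idea.
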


\begin{proof}\upshape
If $P\leq^{<\omega}_{\omega}P^{\hjump{\tt omega}}\otimes R$ via a team of eventually-Popperian learners, then this reduction is also witnessed by a team of $n$ eventually-Popperian learners, for some $n\in\nn$.
In particular, by modifying this reduction, we can easily construct a team of $n$ eventually-Popperian learners witnessing $P\leq^{<\omega}_{\omega}P^{\hjump{n+1}}\otimes R$.
In this case, it is not hard to show $P^{\hjump{n}}\leq^1_1P^{\hjump{n+1}}\otimes R$.
By Theorem \ref{thm:hyperHig}, $P^{\hjump{n}}\leq^1_\omega R$.
Hence, $P\leq^{<\omega}_{\omega}R$ is witnessed by a team of $n$ learners, as seen in Part I.
\end{proof}

\begin{cor}
For every $a\in\mathcal{O}$ there exists a computable function $g$ such that, for any $\Pi^0_1$ index $e$, if $P_e$ is special then the following properties hold.
\begin{enumerate}
\item $P_{g(e,b)}<^1_\omega P_{g(e,c)}$ holds for every $c<_\mathcal{O}b<_\mathcal{O}a$, indeed, $P_{g(e,b)}$ does not $(1,\omega)$-cup to $P_{g(e,c)}$.
\item $P_{g(e,b)}\equiv^\omega_1P_{g(e,c)}$ for every $b,c<_\mathcal{O}a$.
\end{enumerate}
\end{cor}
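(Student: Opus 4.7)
The plan is to parallel the one-line proof of the preceding corollary for $P^{(b)}$: let $g(e,b)$ be a $\Pi^0_1$-index for $P_e^{\hjump{b}}$, produced uniformly in $e\in\nn$ and $b\in\mathcal{O}$ by transfinite recursion along $\mathcal{O}$. The effective framework for the transfinite $\htie$-iterates was set up in Part~I, so such a computable $g$ exists; the one subtlety to verify is $\Pi^0_1$-preservation at limit notations $b$, which should follow because the limit iteration reduces to an infinitary concatenation over a recursive meet, each of which preserves $\Pi^0_1$-ness.

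Item~(1) then drops out of Proposition~\ref{prop:wfitera-hyperHig}. Fix $c<_\mathcal{O}b<_\mathcal{O}a$. Monotonicity of $\hjump$ along $\mathcal{O}$ (coming ultimately from $P\subseteq Q\htie P$) gives $P_e^{\hjump{b}}\leq^1_\omega P_e^{\hjump{c}}$, and Proposition~\ref{prop:wfitera-hyperHig} asserts that $P_e^{\hjump{b}}$ does not $(1,\omega)$-cup to $P_e^{\hjump{c}}$. To extract strictness $P_e^{\hjump{b}}<^1_\omega P_e^{\hjump{c}}$, I would instantiate the noncupping statement at $R=2^\nn$: if $P_e^{\hjump{c}}\leq^1_\omega P_e^{\hjump{b}}$ held, then $P_e^{\hjump{c}}\leq^1_\omega P_e^{\hjump{b}}\otimes 2^\nn$, so by the proposition $P_e^{\hjump{c}}\leq^1_\omega 2^\nn$, producing a computable point in $P_e^{\hjump{c}}$ and contradicting specialness. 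Specialness of every iterate is a side induction: any computable point of $Q\htie P$ would Turing-compute an element of $P$ (as a $T_P$-tail) or an element of $Q$ (as the transition-bit sequence along $T_Q$), neither of which is possible when $P,Q$ are special.

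For item~(2), I would prove the stronger claim $P_e^{\hjump{b}}\equiv^\omega_1 P_e$ for every $b<_\mathcal{O}a$; the corollary's conclusion $P_{g(e,b)}\equiv^\omega_1 P_{g(e,c)}$ then follows by transitivity. The direction $P_e^{\hjump{b}}\leq^\omega_1 P_e$ is immediate from $P_e\subseteq P_e^{\hjump{b}}$ (the empty-$\tau$ case of the hyperconcatenation). For the reverse, the base observation is that any $g\in Q\htie P$ Turing-computes a point of $P\cup Q$: either $g$ has an infinite $T_P$-tail, which is a $\leq_T g$ element of $P$, or its transition bits form an infinite path through $T_Q$, which is a $\leq_T g$ element of $Q$. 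Applied at successor stages with $Q=P_e^{\hjump{b^-}}$ and combined with the inductive hypothesis $P_e\leq^\omega_1 P_e^{\hjump{b^-}}$, this yields $P_e\leq^\omega_1 P_e^{\hjump{b}}$; limit notations are handled by the same infinitary-concatenation presentation used to define $g$. The main obstacle is just the bookkeeping required to make the transfinite $\hjump$-iteration uniformly $\Pi^0_1$ in $b\in\mathcal{O}$ and coherent at limits; all essential content is already present in Part~I and in Proposition~\ref{prop:wfitera-hyperHig}.
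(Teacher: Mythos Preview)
Your proposal is correct and takes essentially the same approach as the paper: the paper's proof is the one-liner ``Let $g(e,b)$ be an index of $P_e^{\hjump{b}}$; then the desired conditions follow from Proposition~\ref{prop:wfitera-hyperHig},'' and you do exactly this while spelling out the details (strictness via $R=2^\nn$, specialness of the iterates, and the Muchnik equivalence in item~(2)) that the paper leaves implicit or defers to Part~I.
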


\begin{proof}\upshape
Let $g(e,b)$ be an index of $P_e^{\hjump{b}}$.
Then the desired conditions follow from Proposition \ref{prop:wfitera-hyperHig}.
\end{proof}

\begin{cor}
For any nonzero $(\omega,1)$-degree $\dg{a}\in\mathcal{P}^\omega_1$, there is a $(1,\omega)$-noncupping computable sequence of $(1,\omega)$-degrees inside $\dg{a}$ of arbitrary length $\alpha<\omega_1^{CK}$.
\qed
\end{cor}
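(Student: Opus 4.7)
The plan is to derive this corollary directly from the preceding one (the computable-family version for hyperconcatenation along $\mathcal{O}$) by pulling its conclusion back along an appropriate computable well-ordered chain inside Kleene's $\mathcal{O}$.

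First I would fix a nonzero $\dg{a} \in \mathcal{P}^\omega_1$ and a $\Pi^0_1$ representative $P \in \dg{a}$ of index $e$; since $\dg{a}$ is nonzero, $P$ is special. Given $\alpha < \omega_1^{CK}$, I would pick some notation $a \in \mathcal{O}$ with $|a|_\mathcal{O} > \alpha$ and, using the standard fact that every computable well-ordering of order type $\alpha$ embeds order-preservingly into $\{b \in \mathcal{O} : b <_\mathcal{O} a\}$, produce a strictly $<_\mathcal{O}$-increasing computable sequence $\{a_i\}_{i < \alpha}$ below $a$.

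Next I would apply the preceding corollary to $e$ and $a$ to obtain the computable function $g$, and define $Q_i = P_{g(e, a_i)}$ for $i < \alpha$. Computability of $i \mapsto Q_i$ is immediate from the computability of $g$ and of $i \mapsto a_i$. Item $1$ of the preceding corollary says that $Q_j$ does not $(1,\omega)$-cup to $Q_i$ whenever $a_i <_\mathcal{O} a_j$, which in our indexing is exactly $i < j$; this is precisely the $(1,\omega)$-noncupping condition. Item $2$ gives that the $Q_i$ share a common $(1,\omega)$-degree, and since $Q_i = P^{\hjump{a_i}} \equiv^\omega_1 P$ (which is exactly the reasoning that already places iterated hyperconcatenates in the Muchnik degree of $P$, following from Proposition \ref{prop:wfitera-hyperHig} and the Muchnik-reduction half of the hyperconcatenation construction recalled from Part I), this common $(1,\omega)$-degree lies inside $\dg{a}$.

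I expect no genuine obstacle beyond this bookkeeping: the preceding corollary packages the substantive content, namely Proposition \ref{prop:wfitera-hyperHig} applied uniformly along $\mathcal{O}$, and what remains here is simply to transport its conclusion from the partial order $<_\mathcal{O}$ to the ordinal $\alpha$ via a computable well-ordered embedding, together with the routine check that membership in $\dg{a}$ survives the iteration. This is the same pattern as the analogous $(1,1)$-noncupping statement inside $(1,\omega)$-degrees proved earlier via concatenation, with $\htie$ and $\leq^1_\omega$ replacing $\fr$ and $\leq^1_1$.
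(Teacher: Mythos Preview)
Your approach is correct and matches the paper's (which simply writes \qed after the preceding corollary): pick a computable $<_\mathcal{O}$-increasing chain of length $\alpha$ below some $a\in\mathcal{O}$ and feed it through the function $g$ of the preceding corollary.

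One small slip worth fixing: you write that ``Item 2 gives that the $Q_i$ share a common $(1,\omega)$-degree.'' Item 2 is $P_{g(e,b)}\equiv^\omega_1 P_{g(e,c)}$, i.e., a common $(\omega,1)$-degree (Muchnik degree), not $(1,\omega)$-degree; indeed the $(1,\omega)$-degrees of the $Q_i$ are pairwise distinct by Item 1. This is clearly just a notational transposition on your part, since the rest of the sentence correctly argues $Q_i\equiv^\omega_1 P$ to place the sequence inside $\dg{a}$.
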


\subsection{Infinitary Disjunctions along Infinite Complete Graphs}

The following is the last LEVEL 4 separation result, which reveals a difference between $[\mathfrak{C}_T]^{<\omega}_\omega$ and $[\mathfrak{C}_T]^\omega_1$.

\begin{theorem}\label{thm:NRMP:contig:tl-w}
For every special $\Pi^0_1$ set $P,Q\subseteq 2^\nn$ there exists a $\Pi^0_1$ set $\widehat{P}\subseteq 2^\nn$ such that $Q\not\leq^{<\omega}_{\omega}\widehat{P}$ and $\widehat{P}$ is $(\omega,1)$-equivalent to $P$.
\end{theorem}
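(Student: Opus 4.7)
The plan is to construct $\widehat{P}$ by a finite-injury priority argument, realizing the hint from the introduction that the witnessing map is essentially $P \mapsto \widehat{\rm Deg}(P)$ but rendered as a genuine $\Pi^0_1$ class. The basic shape will be that $\widehat{P} \subseteq 2^\nn$ is built as $[\widehat T]$ for a computable tree $\widehat T \subseteq 2^{<\nn}$ whose paths are pairings $f \oplus h$, where $f$ traces a path of a copy of $P$ and $h$ supplies ``free'' coordinates used to diagonalize against every finite team of learners. The reduction $\widehat{P} \leq^\omega_1 P$ will come from extracting the $f$-coordinate, while $P \leq^\omega_1 \widehat{P}$ will come from the fact that, given any path of $P$, we can computably fill in an $h$ so that $f \oplus h \in \widehat{P}$.

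The requirements are indexed by finite tuples $\vec e = \langle e_0, \ldots, e_{n-1}\rangle$, coding teams of learners:
\[R_{\vec e}:\ \text{some } g \in \widehat{P} \text{ defeats every learner in } \{\Psi_{e_0}, \ldots, \Psi_{e_{n-1}}\},\]
meaning that, for each $i < n$, either $\lim_s \Psi_{e_i}(g \res s)$ diverges or its convergent index $d$ satisfies $\Phi_d(g) \notin Q$. To keep the strategies from interfering, I allocate pairwise-disjoint clopen zones (say, initial segments $\sigma_{\vec e}$ of the $h$-coordinate taken along pairwise-incomparable nodes) to the various $R_{\vec e}$; each strategy acts only inside its assigned zone, and the positive coding requirements are arranged so that $\widehat T$ still has a path in every zone.

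Inside the zone for $R_{\vec e}$, the diagonalization mirrors the locking-sequence argument used in Theorem \ref{thm:hyperHig} (Lemma \ref{lem:3b:hconc1}). For each learner $\Psi_{e_i}$, in order of priority, I search for a locking sequence: a node $\tau$ inside the zone such that $\Psi_{e_i}$ keeps its prediction constant under every extension with $f$-coordinate in $T_P^{ext}$ and $h$-coordinate in the zone. If no locking sequence exists, then by a standard tree argument there is an extending path on which $\Psi_{e_i}$ changes its mind infinitely often, and this already defeats $\Psi_{e_i}$. If $\tau$ exists with stabilized index $d$, then I use the specialness of $Q$ together with the branching available in $T_P^{ext}$ and in the zone to extend $\tau$ to force $\Phi_d(f \oplus h)$ off $T_Q^{ext}$; otherwise $\Phi_d$ would eventually yield a computable element of $Q$, contradicting that $Q$ is special. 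Handling $i < n$ in turn and nesting the resulting restrictions, I arrive at a single path $g \in \widehat{P}$ simultaneously defeating the entire team.

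The main obstacle is to reconcile the diagonalization with the Muchnik equivalence $\widehat{P} \equiv^\omega_1 P$. Pushing $\Phi_d$ off $T_Q^{ext}$ may dictate finite fragments of the $f$-coordinate as well as of $h$, and these commitments must never exhaust the $T_P^{ext}$-branching available in the zone, nor obstruct the uniform computable extraction of an element of $P$ from the final path. The resolution rests on two points: since $P$ is special, $T_P^{ext}$ is perfect, so only finitely many fragments are ever forbidden within a single zone at each stage of the finite-injury argument; and since every team $\vec e$ is finite, each zone sees only finitely many additional constraints across the whole construction. Standard finite-injury bookkeeping then produces a computable tree $\widehat T$ whose $\Pi^0_1$ class $\widehat P$ enjoys the two Muchnik reductions yet defeats every $(<\omega,\omega)$-reduction from $Q$.
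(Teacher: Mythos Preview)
Your high-level plan (zones indexed by teams, diagonalize inside each zone, maintain Muchnik equivalence) matches the paper's, but the core diagonalization step has a real gap.

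The sequential locking-sequence argument does not compose across a team. In the ``no locking sequence'' case you produce a single path on which $\Psi_{e_i}$ changes infinitely often; that is not a clopen (or even closed) restriction inside which you can rerun the argument for $\Psi_{e_{i+1}}$. Conversely, in the ``locking sequence found'' case your justification that ``otherwise $\Phi_d$ would eventually yield a computable element of $Q$'' is not available in your setup: every $g=f\oplus h$ in your $\widehat{P}$ has $f\in P$, hence $g$ is noncomputable, and $\Phi_d$ applied to noncomputable inputs need not produce anything computable. So you cannot force $\Phi_d(g)\notin Q$ from specialness of $Q$ alone. (A minor point: you also swapped the directions of the two Muchnik reductions in your description.)

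The paper fixes both problems simultaneously by making the construction \emph{dynamic} rather than a static locking-sequence search. It builds $\widehat{P}$ stage by stage via nodes $\gamma_e(\alpha,s)$ indexed by $\alpha\in T_P$, and maintains a partition $\{M_e(\beta,s)\}_{\beta\subseteq\alpha}$ of the team recording, for each learner, how many mind-changes it has already made along the current approximation. A learner ``requires attention'' when it either changes its mind \emph{or} increases its length-of-agreement with $T_Q$; when it acts, the construction extends $\gamma_e(\beta,s)$ and injures the coding below. The key lemma is that $\lim_s\gamma_e(\alpha,s)$ converges: otherwise some learner acts infinitely often along a \emph{computable} sequence of finite strings, and the length-of-agreement condition then manufactures a computable element of $Q$. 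This is exactly the step your static argument cannot reach. Once convergence is established, every path $g_e^f$ defeats the whole team at once (each learner either never locks, or locks with bounded agreement), and the nonuniform decoding $f\leq_T g_e^f$ is read off from the eventual stabilization of the $M_e(\beta,s)$. Your sketch is missing precisely this length-of-agreement mechanism and the tree-indexed bookkeeping that lets all $n$ learners be handled on the \emph{same} path.
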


\begin{proof}\upshape
We construct a $\Pi^0_1$ set $P\subseteq 2^\nn$ by priority argument with infinitely many requirements $\{\mathcal{P}_e,\mathcal{G}_e\}_{e\in\nn}$.
Each preservation ($\mathcal{P}_e$-)strategy will injure our coding ($\mathcal{G}$-)strategy of $P$ into $\widehat{P}$ infinitely often.
In other words, for each $\mathcal{P}_e$-requirement, $\widehat{P}$ contains an element $g_e^f$ which is a counterpart of each $f\in P$, but each $g_e^f$ has infinitely many noises.
Indeed, to satisfy the $\mathcal{P}$-requirements, we need to ensure that there is no uniformly team-learnable way to extract the information of $f\in P$ from its code $g_e^f\in\widehat{P}$.
Nevertheless, the global ($\mathcal{G}$-)requirement must guarantee that $f\in P$ is computable in $g_e^f\in\widehat{P}$ via a non-uniform way.
Let $\{\Psi^e_i\}_{i<{b(e)}}$ be the $e$-th team of learners, where $b=b(e)$ is the number of members of the $e$-th team.
\begin{req}
It suffices to construct a $\Pi^0_1$ set $\widehat{P}\subseteq 2^\nn$ satisfying the following requirements.
\begin{align*}
\mathcal{P}_e&\mbox{ : }(\exists g_e\in\widehat{P})(\forall i<b)\;\left(\lim_s\Psi^e_i(g_e\res s)\downarrow\;\rightarrow\;\Phi_{\lim_s\Psi^e_i(g_e\res s)}(g_e)\not\in Q\right).\\
\mathcal{G}_e&\mbox{ : }(\forall f\in P)\;f\leq_Tg_e^f.
\end{align*}
Here, the desired $\Pi^0_1$ set $\widehat{P}\subseteq 2^\nn$ will be of form $P\cup\{g_e^f:e\in\nn\;\&\;f\in P\}$.
\end{req}

\begin{construction}
We will construct a computable sequence of computable trees $\{T_s\}_{s\in\nn}$, and a computable sequence of natural numbers $\{h_s\}_{s\in\nn}$.
The desired set $\widehat{P}$ is defined as $[\bigcup_sT_s]$, and $h_s$ is called {\em active height at stage $s$}.
We will ensure that the tree $T_s$ consists of strings of length $\leq h_s$.
The strategy for the $\mathcal{P}_e$-requirement acts on some string extending the $e$-th leaf $\rho_e$ of $T_{\sf CPA}$.

We will inductively define a string $\gamma_e(\alpha,s)\in T_s$ extending $\rho_e$ for each $s\in\nn$ and $\alpha\in T_P$ of height $\leq s$.
The map $\alpha\mapsto\lim_s\gamma_e(\alpha,s)$ restricted to $T_P^{ext}$ will provide a tree-isomorphism between $T_P^{ext}$ and $(\bigcup_sT_s)^{ext}$, i.e., $\widehat{P}\cap[\rho_e]$ will be constructed as the set of all infinite paths of the tree generated by $\{\lim_s\gamma_e(\alpha,s):\alpha\in T_P\}$.
In other words, $g_e^f$ is defined by $\bigcup_{\alpha\subset f}\lim_s\gamma_e(\alpha,s)$, and each string $\gamma_e(\alpha,s)$ is an approximation of $g_e\in\widehat{P}$ witnessing to satisfy the $\mathcal{P}_e$ requirements.

We will also define a finite set $M_e(\alpha,s)\subseteq b$ for each $s\in\nn$ and $\alpha\in T_P$ of height $\leq s$.
Intuitively, $M_e(\alpha,s)$ contains any index of the learner who have been already changed his mind $|\alpha|$ times along any string extending $\alpha$ of length $s$, and the string $\gamma_e(\alpha,s)$ also plays the role of an active node for learners in $M_e(\alpha,s)$.
To satisfy the $\mathcal{P}_e$-requirement, each learner in $M_e(\alpha,s)$ can act on $\gamma_e(\alpha,s)$ at stage $s+1$, and then he extends $\gamma_e(\alpha,s)$ to some new string $\gamma_e(\alpha,s+1)$ of length $h_s$, and {\em injures} all constructions of $\gamma_e(\beta,s+1)$ for $\beta\supsetneq\alpha$.
We assume that, for any $\alpha\in T_P$ of length $s$, $\{M_e(\beta,s)\}_{\beta\subseteq\alpha}$ is a partition of $\{i\in\nn:i<b\}$.

\medskip

\noindent
{\bf Stage $0$.}
At first, put $T_s=\{\lrangle{}\}$, $h_s=0$, $M_e(\lrangle{},0)=\{i\in\nn:i<b\}$, and $\gamma_e(\lrangle{},0)=\rho_e$.

\medskip

\noindent
{\bf Stage $s+1$.}
At the beginning of each stage $s+1$, assume that $T_s$ and $h_s$ are given, and that $M_e(\beta,s)$ and $\gamma_e(\beta,s)$ have been already defined for each $s\in\nn$ and $\beta\in T_P$ of height $\leq s$.
For each $i,e\in\nn$ and each $\tau\in 2^\nn$, {\em the length-of-agreement function} $l^i_e(\tau)$ is the maximal $l\in\nn$ such that $\Phi_{\Psi^e_i(\tau)}(\tau;x)\downarrow$ for each $x<l$, and $\Phi_{\Psi^e_i(\tau)}(\tau)\in T_Q$.

Fix a string $\alpha\in T_P$ of length $s$, and then each $i$ belongs to some $M_e(\beta,s)$ for $\beta\subseteq\alpha$.
In this case, the learner $\Psi^e_i$ can act on $\gamma_e(\beta,s)$.
Then, we say that {\em the learner $\Psi^e_i$ requires attention along $\alpha$ at stage $s+1$} if there exists $\tau\in T_s$ of length $h_s$ extending $\gamma_e(\beta,s)$ such that either of the following conditions are satisfied.
\begin{enumerate}
\item $\Psi^e_i$ {\em changes on $(\gamma_e(\beta,s),\tau]$}, i.e., there is a string $\sigma$ such that $\gamma_e(\beta,s)\subsetneq\sigma\subseteq\tau$ and $\Psi^e_i(\sigma^-)\not=\Psi^e_i(\sigma)$.
\item or, $l^i_e(\tau)>\max\{l^i_e(\sigma):\sigma\subseteq\gamma_e(\beta,s)\}$.
\end{enumerate}

Let $R_s$ be the set of all $\alpha\in T_P$ of length $s$ such that some learner requires attention along $\alpha$ at stage $s+1$.
For $\alpha\in R_s$, let $m(\alpha)$ be the least $m$ such that there is a string $\beta\subseteq\alpha$ of length $m$ and an index $i\in M_e(\beta,s)$ such that $\Psi^e_i$ requires attention along $\alpha$ at stage $s+1$.
That is to say, some learner $\Psi^e_i$ who has already changed his mind $m(\alpha)$ times requires attention.

\begin{claim}
For any $\alpha,\beta\in R_s$, we have that $\alpha\res m(\alpha)=\beta\res m(\beta)$ holds or $\alpha\res m(\alpha)$ is incomparable with $\beta\res m(\beta)$.
\end{claim}

Put $R_s^*=\{\alpha\res m(\alpha):\alpha\in R_s\}$.
Then, for $\beta\in R_s^*$, let $i(\beta)$ be the least $i\in M(m(\alpha),s)$ such that $\Psi^e_i$ requires attention along some $\alpha\supseteq\beta$ of length $s$ at stage $s+1$.
For $\beta\in R_s^*$, we say that {\em $\Psi^e_{i(\beta)}$ acts at stage $s+1$}.
Moreover, for $\beta\in R_s^*$, let $\tau(\beta)$ be the lexicographically least string $\tau\in T_s$ of length $h_s$ extending $\gamma_e(\beta,s)$ such that $\tau$ witnesses that the learner $\Psi^e_{i(\beta)}$ requires attention along some $\alpha\supseteq\beta$ of length $s$ at stage $s+1$.
Then $R_s^{**}\subseteq R_s^*$ is defined as the set of all $\beta\in R_s^*$ such that $\Psi^e_{i(\beta)}$ changes on $(\gamma_e(\beta,s),\tau(\beta)]$.

For each $\beta\in R_s^{**}$, put $M_e(\beta,s+1)=M_e(\beta,s)\setminus\{i(\beta)\}$, and put $M_e(\beta\fr i,s+1)=M_e(\beta,s)\cup\{i(\beta)\}$ for $\beta\fr i\in T_P$.
For any $\beta\not\in R_s^{**}$, put $M_e(\beta,s+1)=M_e(\beta,s)$.
For each $\beta\in R_s^*$, if $\beta\fr\sigma\in T_P$ is length $\leq s$ for some $\sigma\in 2^{<\nn}$, then put $\gamma_e(\beta\fr\sigma,s+1)=\tau(\beta)\fr\sigma$.
If $\alpha\in T_P$ of length $\leq s$ has no substring $\beta\in R_s^*$, then put $\gamma_e(\alpha,s+1)=\gamma_e(\alpha,s)$.
For each $\alpha\in T_P$ of length $s$, if $|\gamma_e(\alpha,s+1)|<h_s$ then pick the lexicographically least node $\gamma^*_e(\alpha,s+1)\in T_s$ such that $|\gamma^*_e(\alpha,s+1)|=h_s$ and $\gamma^*_e(\alpha,s+1)\supseteq\gamma_e(\alpha,s+1)$.
Otherwise put $\gamma^*_e(\alpha,s+1)=\gamma_e(\alpha,s+1)$.
Then, for each $\alpha\fr i\in T_P$ of length $s$, put $\gamma_e(\alpha\fr i,s+1)=\gamma^*_e(\alpha,s+1)\fr i$.
Put $h_{s+1}=\max\{|\gamma_e(\alpha,s+1)|:\alpha\in T_P\;\&\;|\alpha|=s+1\}$.
Then we define the approximation of $\widehat{P}$ at stage $s+1$ as follows.
\[T_{s+1}=T_s\cup\{\sigma\subseteq\gamma_e(\alpha,s+1)\fr 0^{h_{s+1}-|\gamma_e(\alpha,s+1)|}:\alpha\in T_P\;\&\;|\alpha|=s+1\;\&\;e\in\nn\}.\]

Finally, we set $\widehat{P}=[\bigcup_{s\in\nn}T_s]$.
Clearly, $\widehat{P}$ is a nonempty $\Pi^0_1$ subset of $2^\nn$.
\end{construction}

\begin{lemma}
$\lim_s\gamma_e(\alpha,s)$ converges for any $e\in\nn$ and $\alpha\in T_P$.
\end{lemma}

\begin{proof}\upshape
Note that $\gamma_e(\alpha,s)$ is incomparable with $\gamma_e(\beta,s)$ whenever $\alpha$ is incomparable with $\beta$.
Therefore, $\gamma_e(\alpha,s)$ changes only when some learner in $M_e(\beta,s)$ acts for some $\beta\subseteq\alpha$.
Assume that $\gamma_e(\alpha,s)$ changes infinitely often.
Then there is $\beta\subseteq\alpha$, $t\in\nn$ and $i\in M_e(\beta,t)$ such that $i\in M_e(\beta,s)$ for any $s\geq t$, and $\Psi^e_{i(\beta)}$ acts infinitely often.
However, by our construction, $g_e^\alpha=\lim_s\gamma_e(\alpha,s)$ is computable.
Additionally, since $i\in M_e(\beta,s)$ for any $s\geq t$, $\lim_n\Psi^e_{i(\beta)}(g_e^\alpha\res n)$ exists, and $\Phi_{\lim_n\Psi^e_{i(\beta)}(g_e^\alpha\res n)}(g_e^\alpha)\in Q$.
This contradicts our assumption that $Q$ is special.
\end{proof}

For $f\in P$, put $g_e^f=\bigcup_{\alpha\subset f}\lim_s\gamma_e(\alpha,s)$.
By this lemma, such $g_e^f$ exists, and we observe that $\widehat{P}$ can be represented as $\widehat{P}=P\cup\{g_e^f:e\in\nn\;\&\;f\in P\}$.
For each $e\in\nn$ and $\alpha\in T_P$, we pick $t(e,\alpha)\in\nn$ such that $\gamma_e(\alpha,s)=\gamma_e(\alpha,t)$ for any $s,t\geq t(e,\alpha)$.

\begin{lemma}\label{lem:3b:P}
The $\mathcal{P}$-requirements are satisfied.
\end{lemma}

\begin{proof}\upshape
Assume that $P\leq^{<\omega}_{\omega}\widehat{P}$ via the $e$-th team $\{\Psi_i\}_{i<b}$ of learners.
Then, for any $f\in P$, there is $i<b$ such that $\lim_n\Psi_i(g_e^f\res n)$ exists and $\Phi_{\lim_n\Psi_i(g_e^f\res n)}(g_e^f)\in Q$.
Since $\lim_n\Psi_i(g_e^f\res n)$ exists, there exists $\alpha\subset f$ such that $i\in M_e(\alpha,t(e,\alpha))$.
However, by the previous claim, no learner in $\bigcup_{\beta\subseteq\alpha}M_e(\beta,t(e,\alpha))$ requires attention after stage $t(e,\alpha)$.
This implies $\lim_nl_e^i(g_e^f\res n)<\infty$.
In other words, $\Phi_{\lim_n\Psi_i(g_e^f\res n)}(g_e^f)\not\in Q$.
This contradicts our assumption.
\end{proof}

\begin{lemma}\label{lem:3b:G}
The $\mathcal{G}$-requirements are satisfied.
\end{lemma}

\begin{proof}\upshape
It suffices to show that $f\leq_Tg_e^f$ for any $e\in\nn$ and $f\in P$.
Assume that $\{\Psi_i\}_{i<b}$ is the $e$-th team of learners.
Let $H_e^f$ denote the set of all $i<b$ such that $\lim_n\Psi_i(g_e^f\res n)$ converges.
By our construction and the first claim, if $i\in H_e^f$ then $i\in M_e(\alpha_i,t(e,\alpha_i))$ for some $\alpha_i\subset f$.
If $i\not\in H_e^f$ then for any $\alpha\subset f$ there exists $s$ such that $i\in M_e(\alpha,s)$.
Set $l=\max_{i\in H_e^f}|\alpha_i|$, and $u=\max_{i\in H_e^f}t(e,\alpha_i)$.
For $n>l$, to compute $f(n)$, we wait for stage $v(n)>u$ such that, for every $i\not\in H_e^f$, $i\in M_e(f\res m,v(n))$ for some $m\geq n+1$.
By our construction, it is easy to see that we can extract $f(n)$ from $\gamma_e(f\res n+1,v(n))$, by a uniformly computable procedure in $n$.
\end{proof}

Thus, we have $Q\not\leq^{<\omega}_\omega\widehat{P}$ by Lemma \ref{lem:3b:P}, and $P\subseteq\widehat{P}\subseteq\widehat{\rm Deg}(P)$ by Lemma \ref{lem:3b:G}.
Thus, $\widehat{P}$ is a $\Pi^0_1$ set satisfying $Q\not\leq^{<\omega}_\omega\widehat{P}\equiv^\omega_1P$.
This concludes the proof.
\end{proof}

\begin{cor}
For any nonempty $\Pi^0_1$ sets $P,Q\subseteq 2^\nn$, if $Q\leq^{<\omega}_{\omega}\widehat{\rm Deg}(P)$ then $Q$ contains a computable element.
\end{cor}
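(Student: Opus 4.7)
The plan is to argue by contraposition: assuming $Q$ contains no computable element, I will derive that $Q\not\leq^{<\omega}_\omega\widehat{\rm Deg}(P)$. Under this assumption, since $Q$ is a nonempty $\Pi^0_1$ subset of $2^\nn$ with no computable member, $Q$ is special in the sense fixed earlier in the paper.

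First I would dispense with the case where $P$ is not special. If $P$ contains a computable point $f_0$, then every $g\in\nn^\nn$ satisfies $f_0\leq_T g$, so $\widehat{\rm Deg}(P)=\nn^\nn$. Any team $\{\Psi_i\}_{i<b}$ witnessing $Q\leq^{<\omega}_\omega\widehat{\rm Deg}(P)$, applied to the computable input $0^\nn$, yields a converging guess producing some $\Phi_e(0^\nn)\in Q$, and this would be a computable element of $Q$, contradicting the standing assumption. Hence $P$ must also be special.

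Now both $P$ and $Q$ are special $\Pi^0_1$ subsets of $2^\nn$, so Theorem \ref{thm:NRMP:contig:tl-w} applies and yields a $\Pi^0_1$ set $\widehat{P}\subseteq 2^\nn$ with $\widehat{P}\equiv^\omega_1P$ and $Q\not\leq^{<\omega}_\omega\widehat{P}$. The crucial observation is that the Turing upward closure depends only on the $(\omega,1)$-degree, so $\widehat{\rm Deg}(\widehat{P})=\widehat{\rm Deg}(P)$. Because $\widehat{P}\subseteq\widehat{\rm Deg}(\widehat{P})=\widehat{\rm Deg}(P)$, any team reduction witnessing $Q\leq^{<\omega}_\omega\widehat{\rm Deg}(P)$ restricts to a team reduction witnessing $Q\leq^{<\omega}_\omega\widehat{P}$, contradicting the conclusion of Theorem \ref{thm:NRMP:contig:tl-w}. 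Therefore $Q\not\leq^{<\omega}_\omega\widehat{\rm Deg}(P)$, completing the contraposition.

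The proof is essentially a packaging of the preceding theorem, so there is no real combinatorial obstacle; the only thing to be careful about is the opening reduction to the case that $P$ is special, and the straightforward verification that $\widehat{\rm Deg}$ is invariant under $(\omega,1)$-equivalence, which is immediate from the definition of $\widehat{\rm Deg}(\cdot)$ as the Turing upward closure.
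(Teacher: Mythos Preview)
Your proof is correct and follows essentially the same route as the paper: assume $Q$ is special, invoke Theorem~\ref{thm:NRMP:contig:tl-w} to obtain $\widehat{P}$ with $Q\not\leq^{<\omega}_\omega\widehat{P}$ and $\widehat{P}\equiv^\omega_1 P$, then use $\widehat{P}\subseteq\widehat{\rm Deg}(P)$ to derive a contradiction from $Q\leq^{<\omega}_\omega\widehat{\rm Deg}(P)$. Your argument is in fact slightly more careful than the paper's, since you explicitly dispose of the case where $P$ is not special (which the theorem's hypotheses formally require), while the paper's proof applies the theorem without comment; your handling of that case is correct and the detour through $\widehat{\rm Deg}(\widehat{P})=\widehat{\rm Deg}(P)$, though a bit more than is needed, is also fine.
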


\begin{proof}\upshape
Assume that $Q\leq^{<\omega}_{\omega}\widehat{\rm Deg}(P)$ is satisfied.
Suppose that $Q$ has no computable element.
Then, for $P,Q\subseteq 2^\nn$, we obtain $Q\not\leq^{<\omega}_\omega\widehat{P}\equiv^\omega_1 P$ by Theorem \ref{thm:NRMP:contig:tl-w}.
Note that the condition $\widehat{P}\equiv^\omega_1P$ implies $\widehat{P}\subseteq\widehat{\rm Deg}(P)$.
Then, $Q\leq^{<\omega}_{\omega}\widehat{\rm Deg}(P)\leq^1_1\widehat{P}$.
It involves a contradiction.
\end{proof}

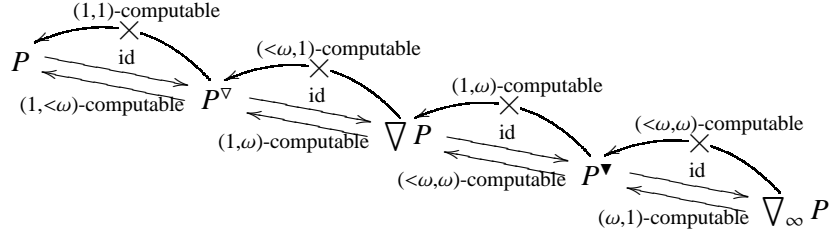
\begin{figure}[t]\centering
\begin{center}
\[
	\begin{xy}
	(0,20) *{P\ \ }="A", (25,15) *{\ \ P^\tie\ \ }="B", (50,10) *{\ \ \btie P\ \ }="C", (75,5) *{\ \ P^\htie\ \ }="D", (100,0) *{\ \ \bcls P}="E",
	\ar @<1mm> "A";"B"^{{\rm id}}
	\ar @<1mm> "B";"A"^{(1,<\omega)\text{-computable}}
	\ar @/_1pc/ (24,17.5);(1,22.5)|{\varprod}_{(1,1)\text{-computable}}
	\ar @<1mm> "B";"C"^{{\rm id}}
	\ar @<1mm> "C";"B"^{(1,\omega)\text{-computable}}
	\ar @/_1pc/ (49,12.5);(26,17.5)|{\varprod}_{(<\omega,1)\text{-computable}}
	\ar @<1mm> "C";"D"^{{\rm id}}
	\ar @<1mm> "D";"C"^{(<\omega,\omega)\text{-computable}}
	\ar @/_1pc/ (74,7.5);(51,12.5)|{\varprod}_{(1,\omega)\text{-computable}}
	\ar @<1mm> "D";"E"^{{\rm id}}
	\ar @<1mm> "E";"D"^{(\omega,1)\text{-computable}}
	\ar @/_1pc/ (99,2.5);(76,7.5)|{\varprod}_{(<\omega,\omega)\text{-computable}}
	\end{xy}
\]
\end{center}
 \vspace{-0.5em}
\caption{The dynamic proof model for a special $\Pi^0_1$ set $P\subseteq 2^\nn$.}
  \label{fig:arrow2}
\end{figure}

\section{Applications and Questions}

\subsection{Diagonally Noncomputable Functions}

A total function $f:\nn\to\nn$ is a {\em $k$-valued diagonally noncomputable function} if $f(n)<k$ for any $n\in\nn$ and $f(e)\not=\Phi_e(e)$ whenever $\Phi_e(e)$ converges.
\index{diagonally noncomputable!$k$-valued}%
Let ${\sf DNR}_k$ denote the set of all $k$-valued diagonally noncomputable functions.
\index{${\sf DNR}_k$}%
Jockusch \cite{Joc} showed that every ${\sf DNR}_k$ function computes a ${\sf DNR}_2$ function.
However, he also noted that there is no uniformly computable algorithm finding a ${\sf DNR}_2$ function from any ${\sf DNR}_k$ function.

\begin{theorem}[Jockusch \cite{Joc}]~
\begin{enumerate}
\item ${\sf DNR}_{k}>^1_1{\sf DNR}_{k+1}$ for any $k\in\nn$.
\item ${\sf DNR}_2\equiv^{\omega}_{1}{\sf DNR}_k$ for any $k\in\nn$.
\end{enumerate}
\end{theorem}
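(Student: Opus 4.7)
The first statement I would prove by a recursion-theorem diagonalization on the tree $T_{{\sf DNR}_{k+1}}$. Suppose toward contradiction that $\Gamma$ is a computable functional with $\Gamma(f)\in{\sf DNR}_k$ for every $f\in{\sf DNR}_{k+1}$. I would apply Kleene's recursion theorem to produce an index $e$ such that $\Phi_e(e)$ searches (in stages) for a finite string $\sigma$ in the $\Sigma^0_1$-approximation of $T_{{\sf DNR}_{k+1}}$ with $|\Gamma(\sigma)|>e$, and outputs the value $v=\Gamma(\sigma)(e)$.  Since ${\sf DNR}_{k+1}$ is nonempty and $\Gamma$ is total on its paths, such a $\sigma$ is eventually found, so $\Phi_e(e)\downarrow=v$.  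Extending $\sigma$ to an actual path $f\in{\sf DNR}_{k+1}$ (possible because $T_{{\sf DNR}_{k+1}}$ is $(k{+}1)$-branching with at most one excluded child at each level) gives $\Gamma(f)(e)=v=\Phi_e(e)$, contradicting $\Gamma(f)\in{\sf DNR}_k$.

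For the second statement, the direction ${\sf DNR}_k\leq^\omega_1{\sf DNR}_2$ follows immediately from the inclusion ${\sf DNR}_2\subseteq{\sf DNR}_k$ (after regarding $\{0,1\}\subseteq\{0,\ldots,k-1\}$).  For the converse ${\sf DNR}_2\leq^\omega_1{\sf DNR}_k$, I would induct on $k\geq 2$; the base case $k=2$ is trivial.  For the inductive step, given $f\in{\sf DNR}_{k+1}$ with $k\geq 2$, I want $g\leq_T f$ with $g\in{\sf DNR}_k$.  For each $n$, I would invoke the recursion theorem uniformly in $n$ to produce indices $h_j(n)$ ($j=0,\ldots,k$) with $\Phi_{h_j(n)}(h_j(n))=j$, so that the $(k{+}1)$-tuple $(f(h_0(n)),\ldots,f(h_k(n)))$ is $f$-computable and each coordinate satisfies $f(h_j(n))\neq j$.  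An additional recursion-theorem index $h_*(n)$ with $\Phi_{h_*(n)}(h_*(n))=\Phi_n(n)$ (waiting for convergence) ensures $f(h_*(n))\neq \Phi_n(n)$ whenever $\Phi_n(n)\downarrow$.  A finite case analysis $f$-computable in the observed profile then selects $g(n)\in\{0,\ldots,k-1\}\setminus\{\Phi_n(n)\}$, and applying the induction hypothesis to $g\leq_T f$ yields the desired member of ${\sf DNR}_2$.

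The main obstacle in the first claim is the $\Pi^0_1$-nature of $T_{{\sf DNR}_{k+1}}$: the $\sigma$ produced by the search inside $\Phi_e(e)$ may later be refuted (if some $\Phi_i(i)$ converges against $\sigma$), potentially breaking extendibility to an actual ${\sf DNR}_{k+1}$-path; this is resolved by a stage-based variant in which the recursion-theorem index $e$ is chosen fresh and $\Phi_e(e)$ commits to its output only after witnessing a $\Gamma$-convergent, approximation-consistent extension whose $e$-th position is reserved to avoid $v$.  The main obstacle in the second claim is the combinatorial case analysis -- verifying that for every possible $f$-profile of values at the recursion-theorem-produced indices, some choice of $g(n)$ in $\{0,\ldots,k-1\}\setminus\{\Phi_n(n)\}$ is always $f$-computably available.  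This is precisely where the nonuniformity of the $\leq^\omega_1$-reduction enters, which by claim (1) cannot be made uniform.
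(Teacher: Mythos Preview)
The paper cites this result from Jockusch without proof, though its later argument for ${\sf DNR}_k\htie{\sf DNR}_k\leq^1_1{\sf DNR}_{k^2}$ essentially reproduces Jockusch's method for part~(2), so your approach can be compared against that.

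For part~(1), your sketch has a real gap at the extendibility step. You locate $\sigma$ in the stage-$s$ computable approximation of $T_{{\sf DNR}_{k+1}}$ and then claim that $\sigma$ extends to a path $f\in{\sf DNR}_{k+1}$ because the tree is ``$(k{+}1)$-branching with at most one excluded child at each level.'' But that branching property holds for the $\Pi^0_1$ tree $T_{{\sf DNR}_{k+1}}^{ext}$ of extendible nodes, not for the computable approximation: a node $\sigma$ in the approximation can die when some $\Phi_i(i)$ with $i<|\sigma|$ and $i\neq e$ converges to $\sigma(i)$ at a stage later than $|\sigma|$. Your proposed fix (reserve position $e$, e.g.\ by taking $\sigma(e)=k$ so that automatically $\sigma(e)\neq v<k$) handles only the single coordinate $e$ and leaves all the other coordinates unprotected, so the gap persists. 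Closing it requires a further combinatorial argument exploiting the $k$-bushiness of $T^{ext}$ against the frontier on which $\Gamma(\cdot)(e)$ first converges, which your sketch does not supply.

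For part~(2), the per-$n$ case analysis does not go through as written. From the data $f(h_j(n))\neq j$ (for $j\leq k$) and $f(h_*(n))\neq\Phi_n(n)$ you cannot $f$-computably select a value in $\{0,\dots,k-1\}\setminus\{\Phi_n(n)\}$: when $f(h_*(n))=k$, the values $f(h_j(n))$ carry no information about $\Phi_n(n)$ whatsoever (the indices $h_j(n)$ with $\Phi_{h_j(n)}(h_j(n))=j$ do not mention $\Phi_n(n)$), and a legitimate $f\in{\sf DNR}_{k+1}$ can return $k$ to every such query. Jockusch's argument---and the paper's---takes a different route: it reduces ${\sf DNR}_{k^2}$ to ${\sf DNR}_k$ rather than ${\sf DNR}_{k+1}$ to ${\sf DNR}_k$, via a pairing $k\times k\to k^2$ and an index function $z$ with $\Phi_{z(v,u)}(z(v,u))=\langle\Phi_v(v),\Phi_u(u)\rangle$, splitting on the single \emph{global} $\Sigma^0_2(f)$ question ``$(\exists v)(\forall u)\,g_1(z(v,u))\neq\Phi_u(u)$'' rather than any per-input analysis. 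One then reaches ${\sf DNR}_2$ from an arbitrary ${\sf DNR}_m$ by first passing (via inclusion) to some ${\sf DNR}_{2^{2^n}}\supseteq{\sf DNR}_m$ and iterating the squaring reduction $n$ times.
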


\begin{prop}\label{prop:5:DNR1}~
\begin{enumerate}
\item If a $(1,\omega)$-degree $\dg{d}^1_{\omega}$ of subsets of $\nn^\nn$ contains a $(1,1)$-degree $\dg{h}^1_1$ of homogeneous sets, then $\dg{h}^1_1$ is the greatest $(1,1)$-degree inside $\dg{d}^1_{\omega}$.
\item If an $(<\omega,1)$-degree $\dg{d}^{<\omega}_1$ of $\Pi^0_1$ subsets of $2^\nn$ contains a $(1,<\omega)$-degree $\dg{h}^1_{<\omega}$ of homogeneous $\Pi^0_1$ sets, then $\dg{h}^1_{<\omega}$ is the least $(1,<\omega)$-degree inside $\dg{d}^{<\omega}_1$.
\item Every $(<\omega,1)$-degree of $\Pi^0_1$ subsets of $2^\nn$ contains at most one $(1,1)$-degree of homogeneous $\Pi^0_1$ sets.
\end{enumerate}
\end{prop}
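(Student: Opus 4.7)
For (2), I would invoke Theorem \ref{thm:contig:b-bl} directly: if $P\equiv^{<\omega}_1 H$ with $H$ homogeneous $\Pi^0_1$, then $H\leq^{<\omega}_1 P$, which Theorem \ref{thm:contig:b-bl} upgrades to $H\leq^1_{<\omega}P$. Hence every $(1,<\omega)$-degree inside $\dg{d}^{<\omega}_1$ lies at or above $\dg{h}^1_{<\omega}$, as required.

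The main work is in (1), which I would attack via Baire category combined with the shift symmetry afforded by homogeneity. Write $H=\prod_n F_n$ with $F_n\subseteq\nn$; since $\nn$ is discrete, $H$ is closed in Baire space, hence Polish in the subspace topology. Given $P\leq^1_\omega H$ witnessed by a (total) learner $\Psi$, consider the stabilization sets
\[
  H_e^s = \{h\in H : (\forall n\geq s)\ \Psi(h\res n)=e\},
\]
each of which is closed in $H$. Because $\Psi$ converges on every $h\in H$, the sets $H_e^s$ cover $H$, so by the Baire category theorem some $H_{e^*}^{s^*}$ is non-meager; being closed in $H$ it must contain a nonempty relative open set $H\cap[\sigma]$, and every $h$ therein satisfies $\Phi_{e^*}(h)\in P$. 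Homogeneity then upgrades this local reduction to a global one: since $H\cap[\sigma]\neq\emptyset$ we have $\sigma(n)\in F_n$ for all $n<|\sigma|$, so the substitution
\[
  h\mapsto h^\sigma,\quad h^\sigma(n):=\sigma(n)\ \text{for}\ n<|\sigma|,\quad h^\sigma(n):=h(n)\ \text{for}\ n\geq|\sigma|,
\]
defines a computable total map $H\to H\cap[\sigma]$. Composing with $\Phi_{e^*}$ witnesses $P\leq^1_1 H$, whence $\dg{h}^1_1$ is the maximum $(1,1)$-degree in $\dg{d}^1_\omega$.

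For (3), my plan is to combine the previous two parts. Given homogeneous $\Pi^0_1$ sets $H_1,H_2\subseteq 2^\nn$ with $H_1\equiv^{<\omega}_1 H_2$, applying Theorem \ref{thm:contig:b-bl} in each direction yields $H_1\equiv^1_{<\omega}H_2$; since $\leq^1_{<\omega}$ refines $\leq^1_\omega$, this gives $H_1\equiv^1_\omega H_2$, so that $H_1$ and $H_2$ are both homogeneous representatives of a common $(1,\omega)$-degree. Part (1) applied to either then forces $H_1\equiv^1_1 H_2$. The main obstacle of the plan is the Baire-category step in (1): one must verify that the stabilization sets $H_e^s$ are genuinely closed in $H$ (which uses totality of the learner), so that a non-meager such set has nonempty interior relative to $H$; after this, the homogeneity-based concatenation trick painlessly converts the local uniform reduction into a global one.
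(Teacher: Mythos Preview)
Your proof is correct and essentially matches the paper's: item (2) directly invokes Theorem~\ref{thm:contig:b-bl}, and item (3) chains (2) and (1) through $\equiv^{<\omega}_1\Rightarrow\equiv^1_{<\omega}\Rightarrow\equiv^1_\omega\Rightarrow\equiv^1_1$, exactly as the paper does. For item (1) the paper reaches the stabilization node $\sigma$ by a direct locking-sequence argument (if no such $\sigma$ exists, inductively extend inside the closed set $H$ to force infinitely many mind-changes, contradicting convergence of $\Psi$), whereas you obtain the same $\sigma$ via Baire category on the closed cover $\{H_e^s\}$; after that, both arguments finish identically with the homogeneity shift $H\equiv^1_1 H\cap[\sigma]$.
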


\begin{proof}\upshape
For the item $1$, we can see that, for any $P\subseteq\nn^\nn$ and any closed set $Q\subseteq\nn^\nn$, if $P\leq^1_{\omega} Q$ then there is a node $\sigma$ such that $Q\cap[\sigma]$ is nonempty and $P\leq^1_1 Q\cap[\sigma]$.
That is, $\sigma$ is a locking sequence.
If $Q$ is homogeneous, then $P\leq^1_1Q\equiv^1_1Q\cap[\sigma]$.
The item $2$ follows from Theorem \ref{thm:contig:b-bl}.
By combining the item $1$ and $2$, we see that every $(<\omega,1)$-degree of $\Pi^0_1$ subsets of $2^\nn$ contains at most one $(1,<\omega)$-degree of homogeneous $\Pi^0_1$ sets which contains at most $(1,1)$-degree of homogeneous $\Pi^0_1$ sets.
\end{proof}

\begin{cor}\label{cor:5:DNR2}
${\sf DNR}_3<^{<\omega}_1{\sf DNR}_2$, and ${\sf DNR}_3<^1_{\omega}{\sf DNR}_2$.
\end{cor}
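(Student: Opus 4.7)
The plan is to deduce both strict inequalities from Jockusch's theorem ${\sf DNR}_2>^1_1{\sf DNR}_3$ together with the rigidity of homogeneous sets recorded in Proposition \ref{prop:5:DNR1}.

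First I would observe that ${\sf DNR}_3\leq^1_1{\sf DNR}_2$ is immediate from the set-inclusion ${\sf DNR}_2\subseteq{\sf DNR}_3$, which makes the identity a computable reduction. Since $\leq^1_1$ implies both $\leq^{<\omega}_1$ and $\leq^1_\omega$, this yields the easy halves ${\sf DNR}_3\leq^{<\omega}_1{\sf DNR}_2$ and ${\sf DNR}_3\leq^1_\omega{\sf DNR}_2$, so everything reduces to ruling out the reverse reductions.

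For the $(1,\omega)$-strictness I would argue by contradiction. Assume ${\sf DNR}_2\leq^1_\omega{\sf DNR}_3$; combined with the inequality just established, this gives ${\sf DNR}_2\equiv^1_\omega{\sf DNR}_3$, so both sets lie in a single $(1,\omega)$-degree $\dg{d}$. Each is homogeneous as a product $\prod_n F_n$ of finite subsets of $\nn$ (and in particular closed in Baire space), so Proposition \ref{prop:5:DNR1}(1) tells us that the $(1,1)$-degree of any homogeneous representative of $\dg{d}$ is the greatest $(1,1)$-degree inside $\dg{d}$. Applied to both ${\sf DNR}_2$ and ${\sf DNR}_3$, this forces their $(1,1)$-degrees to coincide, i.e.\ ${\sf DNR}_2\equiv^1_1{\sf DNR}_3$, contradicting Jockusch.

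For the $(<\omega,1)$-strictness I would run the analogous argument with Proposition \ref{prop:5:DNR1}(3). Suppose ${\sf DNR}_2\leq^{<\omega}_1{\sf DNR}_3$; then ${\sf DNR}_2\equiv^{<\omega}_1{\sf DNR}_3$. After replacing ${\sf DNR}_3$ by a $(1,1)$-equivalent homogeneous $\Pi^0_1$ subset of $2^\nn$ obtained via a standard $2$-bit block encoding of the values $\{0,1,2\}$, both sides are homogeneous $\Pi^0_1$ subsets of $2^\nn$ lying in a single $(<\omega,1)$-degree. Since by Proposition \ref{prop:5:DNR1}(3) such a degree contains at most one $(1,1)$-degree of homogeneous $\Pi^0_1$ sets, we again obtain ${\sf DNR}_2\equiv^1_1{\sf DNR}_3$, contradicting Jockusch. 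The only bookkeeping step with any content is the binary encoding of ${\sf DNR}_3$, which is entirely standard and preserves both homogeneity and the relevant degree identifications, so no genuine obstacle arises.
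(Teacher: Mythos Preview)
Your overall strategy---use Jockusch's $(1,1)$-separation together with the rigidity of homogeneous sets from Proposition~\ref{prop:5:DNR1}---is exactly the paper's approach, and your argument for the $(1,\omega)$ case is correct as written.

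There is, however, a genuine gap in the $(<\omega,1)$ case. Your claim that a ``standard $2$-bit block encoding'' of ${\sf DNR}_3$ yields a \emph{homogeneous} $\Pi^0_1$ subset of $2^\nn$ is false. Under any injective map $\{0,1,2\}\hookrightarrow\{0,1\}^m$, the image of a three-element fibre $F_n=\{0,1,2\}$ has cardinality $3$, but every product set $\prod_{i<m}G_i\subseteq\{0,1\}^m$ has cardinality a power of $2$; likewise the two-element fibre $\{1,2\}$ typically encodes to a non-product pair such as $\{01,10\}$. So the encoded set is not of the form $\prod_n F'_n$, and you cannot invoke Proposition~\ref{prop:5:DNR1}(3) on it.

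The fix is that you do not need to move into $2^\nn$ at all. Proposition~\ref{prop:5:DNR1}(3) is proved by combining items (1) and (2), and item (2) rests on Theorem~\ref{thm:contig:b-bl}, which is stated for homogeneous $\Pi^0_1$ sets $S\subseteq\nn^\nn$. Thus if ${\sf DNR}_2\leq^{<\omega}_1{\sf DNR}_3$, Theorem~\ref{thm:contig:b-bl} gives ${\sf DNR}_2\leq^1_{<\omega}{\sf DNR}_3$, hence ${\sf DNR}_2\leq^1_\omega{\sf DNR}_3$, and now your (correct) argument via item (1) finishes the job. This is how the paper's one-line proof is meant to be read.
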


\begin{proof}\upshape
By Jockusch \cite{Joc}, we have ${\sf DNR}_3<^1_1{\sf DNR}_2$.
Thus, Proposition \ref{prop:5:DNR1} implies the desired condition.
\end{proof}

By analyzing Jockusch's proof \cite{Joc} of the Muchnik equivalence of ${\sf DNR}_2$ and ${\sf DNR}_k$ for any $k\geq 2$, we can directly establish the $(<\omega,\omega)$-equivalence of ${\sf DNR}_2$ and ${\sf DNR}_k$ for any $k\geq 2$.
However, one may find that Jockusch's proof \cite{Joc} is essentially based on the $\Sigma^0_2$ law of excluded middle.
Therefore, the fine analysis of this proof structure establishes the following theorem.

\begin{theorem}\label{thm:5:DNR3}
${\sf DNR}_k\htie{\sf DNR}_k<^1_1{\sf DNR}_{k^2}$ for any $k$.
\end{theorem}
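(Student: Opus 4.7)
The plan is to prove the two directions separately. For the easier direction $\mathsf{DNR}_k \htie \mathsf{DNR}_k \leq^1_1 \mathsf{DNR}_{k^2}$, I will construct a uniform computable reduction using the $k^2 \cong k \times k$ projection structure, and for the strict separation $\mathsf{DNR}_{k^2} \not\leq^1_1 \mathsf{DNR}_k \htie \mathsf{DNR}_k$, I will invoke the homogeneity of $\mathsf{DNR}_{k^2}$ together with the structural results already established.

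For the reduction, fix enumerations so that $\mathsf{DNR}_{k^2}$ is defined with respect to an enumeration $\Phi$ of partial computable $k^2$-valued functions, and both the inner and outer copies of $\mathsf{DNR}_k$ in $\mathsf{DNR}_k \htie \mathsf{DNR}_k$ are defined with respect to the projection enumeration $\Phi^{(0)}_n(n) = \pi_0 \Phi_n(n) \in k$ (the Medvedev degree of $\mathsf{DNR}_k$ is invariant under this choice). Given $f \in \mathsf{DNR}_{k^2}$, write $f(n) = (a(n), b(n)) \in k \times k$; the key observation is that whenever $\Phi_n(n){\downarrow} = (v_0, v_1)$ we have either $a(n) \neq v_0$ or $b(n) \neq v_1$. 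I will build the output $h$ stage-by-stage, maintaining a current block $\rho_j$ and a current source $c \in \{0,1\}$. The block $\rho_0$ is begun by copying the values $a(0), a(1), \ldots$. At each stage $s$, the construction monitors for the event that some $\Phi^{(0)}_m(m)$ converges at stage $s$ with $\Phi^{(0)}_m(m) = a(m)$ for an $m$ strictly less than the current block length; when this event is first witnessed, the current block (of length exactly $s$) is a leaf of $T_{\mathsf{DNR}_k}$ by the characterization of leaves as just-discovered violators. At that point we emit a transition symbol $\tau(j) \in k$ (chosen to be compliant in the outer tree --- always possible since $k\ge 2$ excludes at most one value), and begin the next block $\rho_{j+1}$ by copying $b$-values starting from position $m$, exploiting $b(m) \neq v_1$. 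The alternation between $a$ and $b$ is iterated, using the $\mathsf{DNR}_{k^2}$-property to guarantee that each successive block starts from a position where the new source is locally DNR-compliant; the resulting $h$ admits the factorization $\rho_0 \fr \tau(0) \fr \rho_1 \fr \tau(1) \fr \ldots$ witnessing membership in $\mathsf{DNR}_k \htie \mathsf{DNR}_k$.

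For the strict separation, the cleanest route uses Proposition 5.1(1): since $\mathsf{DNR}_{k^2}$ is a homogeneous $\Pi^0_1$ subset of $(k^2)^\nn$ (being a product of finite sets), its $(1,1)$-degree is the greatest $(1,1)$-degree inside its $(1,\omega)$-degree. Assuming $\mathsf{DNR}_{k^2} \leq^1_1 \mathsf{DNR}_k \htie \mathsf{DNR}_k$, together with the reduction from the first part, forces $(1,1)$-equivalence; combined with the corollaries of Theorem~\ref{thm:hyperHig} (in particular $\mathsf{DNR}_k \htie \mathsf{DNR}_k <^1_\omega \mathsf{DNR}_k$ and the non-learnability statement analogous to Corollary~\ref{cor:5:DNR2}), this forces $\mathsf{DNR}_k \htie \mathsf{DNR}_k$ to sit in the $(1,\omega)$-degree of $\mathsf{DNR}_{k^2}$, yielding a contradiction with the strict hierarchy between $\mathsf{DNR}_k$ and $\mathsf{DNR}_{k^2}$ established via $\Sigma^0_2$-LEM-type diagonalization in Jockusch's theorem. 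If this indirect route does not work cleanly, a direct diagonal argument against any putative uniform reduction $\Gamma\colon \mathsf{DNR}_k \htie \mathsf{DNR}_k \to \mathsf{DNR}_{k^2}$ can be mounted by constructing an element $h$ with highly oscillating block structure, for which the values $\Gamma(h)(n)$ are forced into finitely many alternatives depending on a short prefix and can be diagonalized against by choosing later blocks appropriately.

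The main obstacle is the first part: the choreography of blocks and transitions must respect the leaf structure of $T_{\mathsf{DNR}_k}$, which requires care about the precise stage at which a violation is first enumerated versus the index $m$ at which it occurs. In particular, some buffering may be needed so that the length of the current block coincides with the stage of first witnessing, and the chain of transitions must itself form an extendible node of the outer $T_{\mathsf{DNR}_k}$ (not merely a valid single step), which demands foresight when choosing each $\tau(j)$. The delicate accounting of which coordinate of $f$ is safe to use for each new block, after a potentially long history of switches, is where most of the technical work lies.
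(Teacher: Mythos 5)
Both halves of your argument differ substantially from the paper's proof, and each has a genuine gap.

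For the reduction $\mathsf{DNR}_k\htie\mathsf{DNR}_k\leq^1_1\mathsf{DNR}_{k^2}$, your block construction has an index-alignment problem that the paper avoids with a repacking trick. You fix all copies of $\mathsf{DNR}_k$ (inner and outer) to be DNR with respect to the single enumeration $\Phi^{(0)}_n(n)=\pi_0\Phi_n(n)$, and the first block copies $a(0),a(1),\dots$, which does avoid $\Phi^{(0)}_n(n)$ until the first witnessed violation at some $m$. At that point you start a new inner block with $\rho_{j+1}(i)=b(m+i)$. But membership of $\rho_{j+1}$ in $T_{\mathsf{DNR}_k}$ requires $\rho_{j+1}(i)\neq\Phi^{(0)}_i(i)=\pi_0\Phi_i(i)$; what you actually know from $f\in\mathsf{DNR}_{k^2}$ is a disjunction about the pair $(a(m+i),b(m+i))$ versus $\Phi_{m+i}(m+i)$ -- the wrong projection \emph{and} the wrong index. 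Nothing forces $b(m+i)\neq\pi_0\Phi_i(i)$, so $\rho_{j+1}$ can fall off $T_{\mathsf{DNR}_k}$ at its very first coordinate. The paper circumvents this by choosing a computable $z\colon\nn^2\to\nn$ with $\Phi_{z(v,u)}(z(v,u))=\lrangle{\Phi_v(v),\Phi_u(u)}$, so that $u\mapsto g_1(z(v,u))$ already has the correct index alignment to be a candidate $k$-valued DNR function, and when it fails at some $u_v$ the value $g_0(z(v,u_v))$ is automatically correct \emph{at index $v$}. This yields a function computable strictly along the $\Pi^0_1$ sets $S_v=\{g:(\forall u)\,\Phi_u(u){\downarrow}\rightarrow g_1(z(v,u))\neq\Phi_u(u)\}$, which by the Part I characterization gives $\mathsf{DNR}_k\htie\mathsf{DNR}_k\leq^1_1\mathsf{DNR}_{k^2}$. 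You would need some analogue of the $z$-repacking before your block/leaf choreography could be made to land in $T_{\mathsf{DNR}_k}$.

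For the strict separation $\mathsf{DNR}_{k^2}\not\leq^1_1\mathsf{DNR}_k\htie\mathsf{DNR}_k$, your indirect argument does not close. Assuming $(1,1)$-equivalence places $\mathsf{DNR}_k\htie\mathsf{DNR}_k$ in the $(1,\omega)$-degree of $\mathsf{DNR}_{k^2}$, but that is not in tension with Corollary~\ref{cor:5:DNR2} or Theorem~\ref{thm:hyperHig}: both $\mathsf{DNR}_{k^2}$ and $\mathsf{DNR}_k\htie\mathsf{DNR}_k$ already lie strictly $(1,\omega)$-below $\mathsf{DNR}_k$, so ``sitting in the $(1,\omega)$-degree of $\mathsf{DNR}_{k^2}$'' produces no contradiction, and Proposition~\ref{prop:5:DNR1}(1) only tells you $\mathsf{DNR}_{k^2}$ is the $(1,1)$-top of that $(1,\omega)$-degree, which is consistent with the assumed equivalence. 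The paper's route is quite different and much shorter: $T_{\mathsf{DNR}_k}\subseteq T^{ext}_{\mathsf{DNR}_k\htie\mathsf{DNR}_k}$ is an infinite computable subtree, so $\mathsf{DNR}_k\htie\mathsf{DNR}_k$ is not tree-immune, and by the Cenzer--Kihara--Weber--Wu theorem a non-tree-immune $\Pi^0_1$ class does not $(1,1)$-cup to a generalized separating class such as $\mathsf{DNR}_{k^2}$; in particular $\mathsf{DNR}_{k^2}\not\leq^1_1\mathsf{DNR}_k\htie\mathsf{DNR}_k$. Your fallback ``direct diagonalization'' sketch might ultimately be made to work, but as written it contains no mechanism ensuring the diagonalized-against witness stays inside $\mathsf{DNR}_k\htie\mathsf{DNR}_k$, which is exactly the nontrivial constraint.
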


\begin{proof}\upshape
As Jockusch \cite{Joc}, fix a computable function $z:\nn^2\to\nn$ such that $\Phi_{z(v,u)}(z(v,u))$
$=\lrangle{\Phi_v(v),\Phi_u(u)}$ for any $v,u\in\nn$.
Note that every $g\in(k^2)^\nn$ determines two functions $g_0\in k^\nn$ and $g_1\in k^\nn$ such that $g(n)=\lrangle{g_0(n),g_1(n)}$ for any $n\in\nn$.
We define a uniform sequence $\{\Gamma_v\}_{v\in\nn},\Delta$ of computable functions as $\Gamma_v(g;u)=g_1(z(v,u))$, and $\Delta(g;v)=g_0(z(v,u_v))$, where $u_v=\min\{u\in\nn:g_1(z(v,u))=\Phi_u(u)\downarrow\}$.
Fix $g\in{\sf DNR}_{k^2}$.
Since $\lrangle{g_0(z(v,u)),g_1(z(v,u))}=g(z(v,u))\not=\lrangle{\Phi_v(v),\Phi_u(u)}$, either $g_0(z(v,u))\not=\Phi_v(v)$ or $g_1(z(v,u))\not=\Phi_u(u)$ holds for any $v,u\in\nn$.
We consider the following $\Sigma^0_2$ sentence:
\[(\exists v)(\forall u)\;(\Phi_u(u)\downarrow\;\rightarrow\;g_1(z(v,u))\not=\Phi_u(u)).\]
Let $\theta(g,v)$ denote the $\Pi^0_1$ sentence $(\forall u)\;(\Phi_u(u)\downarrow\;\rightarrow\;g_1(z(v,u))\not=\Phi_u(u))$.
If $\theta(g,v)$ holds, then $\Gamma_v(g;u)=g_1(z(v,u))\not=\Phi_u(u)$ for any $u\in\nn$.
Hence, $\Gamma_v(g)\in{\sf DNR}_k$.
If $\neg\theta(g,v)$ holds, then $u_v$ is defined.
Therefore, $\Delta(g;v)=g_0(z(v,u_v))\downarrow\not=\Phi_v(v)$, since $g_1(z(v,u_v))=\Phi_{u_v}(u_v))\downarrow$.
Thus, $\Delta(g;v)$ is extendible to a function in ${\sf DNR}_k$.
This procedure shows that there is a function $\Gamma:{\sf DNR}_{k^2}\to{\sf DNR}_k$ that is computable strictly along $\Pi^0_1$ sets $\{S_v\}_{v\in\nn}$ via $\Delta$ and $\{\Gamma_v\}_{v\in\nn}$, where $S_v=\{g:\theta(g,v)\}$.
Consequently, ${\sf DNR}_k\htie{\sf DNR}_k\leq^1_1{\sf DNR}_{k^2}$ as seen in Part I \cite{HK_PartI}.

To see ${\sf DNR}_k\htie{\sf DNR}_k\not\geq^1_1{\sf DNR}_{k^2}$, we note that ${\sf DNR}_k\htie{\sf DNR}_k$ is not tree-immune.
By Cenzer-Kihara-Weber-Wu \cite{CKWW}, ${\sf DNR}_k\htie{\sf DNR}_k$ does not cup to the generalized separating class ${\sf DNR}_{k^2}$.
\end{proof}

\begin{cor}\label{cor:5:DNR4}
${\sf DNR}_k\equiv^{<\omega}_{\omega}{\sf DNR}_2$ for any $k\geq 2$. Indeed, for any $k\in\nn$, the direction ${\sf DNR}_k\leq^{<\omega}_{\omega}{\sf DNR}_{k^2}$ is witnessed by a team of a confident learner and a eventually-Popperian learner.
In particular, ${\sf DNR}_k\equiv^{\omega}_{1}{\sf DNR}_2$ for any $k\geq 2$.
\end{cor}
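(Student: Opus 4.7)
The plan is to combine Theorem~\ref{thm:5:DNR3}, which gives ${\sf DNR}_k\htie{\sf DNR}_k\leq^1_1{\sf DNR}_{k^2}$, with the two-learner decomposition underlying $P\leq^{<\omega}_\omega P\htie P$ from Part I (invoked also in Corollary~\ref{cor:5:l-tlref}), and then to iterate to bridge all values of $k\geq 2$.

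First I would exhibit the two-learner team that realizes ${\sf DNR}_k\leq^{<\omega}_\omega{\sf DNR}_k\htie{\sf DNR}_k$. An element $g\in P\htie P$ falls into exactly one of two cases according to whether its outer path through $T_P$ is infinite or terminates at a leaf. The first learner $\Psi_A$ always outputs a single fixed index computing the projection of $g$ onto the outer coordinates; it is trivially confident (its index is constant, hence convergent on every input), and when the outer path of $g$ is infinite the induced function lies in $P$. The second learner $\Psi_B$ outputs the index of the function returning the tail of $g$ past the most recently observed outer leaf, updating its guess each time a new outer leaf appears. The index of $\Psi_B$ converges precisely when only finitely many outer leaves occur, and in that case the induced total function picks out the final block, which lies in $P$. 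Hence $\Psi_B$ is eventually-Popperian. Composing both $\Psi_A$ and $\Psi_B$ with the computable reduction $\Phi$ of Theorem~\ref{thm:5:DNR3} preserves confidence and eventual-Popperianity respectively, giving ${\sf DNR}_k\leq^{<\omega}_\omega{\sf DNR}_{k^2}$ by the claimed team.

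Next I would extend this to ${\sf DNR}_k\equiv^{<\omega}_\omega{\sf DNR}_2$. The direction ${\sf DNR}_k\leq^1_1{\sf DNR}_2$ is immediate from the inclusion ${\sf DNR}_2\subseteq{\sf DNR}_k$ (any $2$-valued DNC function is also $k$-valued). For the converse ${\sf DNR}_2\leq^{<\omega}_\omega{\sf DNR}_k$, I would iterate the one-step reduction:
\[{\sf DNR}_2\leq^{<\omega}_\omega{\sf DNR}_4\leq^{<\omega}_\omega{\sf DNR}_{16}\leq^{<\omega}_\omega\cdots\leq^{<\omega}_\omega{\sf DNR}_{2^{2^n}},\]
using that $\leq^{<\omega}_\omega$ is transitive (composing two team-learnable reductions produces a team-learnable reduction of product team size). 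Choosing $n$ with $k\leq 2^{2^n}$ and composing with the trivial inclusion ${\sf DNR}_{2^{2^n}}\leq^1_1{\sf DNR}_k$ finishes this direction. The concluding assertion ${\sf DNR}_k\equiv^\omega_1{\sf DNR}_2$ then follows immediately from ${\sf DNR}_k\equiv^{<\omega}_\omega{\sf DNR}_2$ together with $[\mathfrak{C}_T]^{<\omega}_\omega\subseteq[\mathfrak{C}_T]^\omega_1$.

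The main technical point will be confirming that the team from Part I witnessing $P\leq^{<\omega}_\omega P\htie P$ genuinely decomposes as confident $+$ eventually-Popperian rather than as a more general pair. This reduces to verifying that the projection strategy can be implemented by a single fixed index (so its learner is constant and convergent on every input, whence confident) and that the tail-after-last-outer-leaf strategy produces a total function precisely on those inputs where its index stabilizes (whence eventually-Popperian). Once this bookkeeping is pinned down, the rest — composition, transitivity, and the elementary chain of inclusions ${\sf DNR}_2\subseteq{\sf DNR}_4\subseteq\cdots$ — is routine.
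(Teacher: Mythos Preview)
Your proposal is correct and follows essentially the same route as the paper: invoke Theorem~\ref{thm:5:DNR3} to get ${\sf DNR}_k\htie{\sf DNR}_k\leq^1_1{\sf DNR}_{k^2}$, then apply the Part~I fact that $P\leq^{<\omega}_\omega P\htie P$ is witnessed by a confident learner together with an eventually-Popperian learner, and compose. The paper's proof is terser (it simply cites Part~I and Theorem~\ref{thm:5:DNR3}), whereas you spell out the two learners and the iteration down to ${\sf DNR}_2$; your ``outer'' terminology is slightly ambiguous, but the roles you assign (constant-index projection learner $=$ confident; last-block learner $=$ eventually-Popperian) are the right ones.
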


\begin{proof}\upshape
As seen in Part I \cite{HK_PartI}, $P\leq^{<\omega}_\omega P\htie P$ is witnessed by a team of a confident learner and a eventually-Popperian learner.
Thus, Theorem \ref{thm:5:DNR3} implies the desired condition.
\end{proof}

\begin{cor}
There is an $(<\omega,\omega)$-degree which contains infinitely many $(1,1)$-degrees of homogeneous $\Pi^0_1$ sets.
\end{cor}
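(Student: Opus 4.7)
The plan is to use the family $\{{\sf DNR}_k\}_{k\geq 2}$ as witnesses. Each ${\sf DNR}_k$ is manifestly a homogeneous $\Pi^0_1$ set: writing $F_n^k=\{j<k:j\neq\Phi_n(n)\}$ whenever $\Phi_n(n)\downarrow$ and $F_n^k=\{0,1,\dots,k-1\}$ otherwise, we have ${\sf DNR}_k=\prod_n F_n^k$ with each $F_n^k$ uniformly $\Pi^0_1$ in $n$, so ${\sf DNR}_k$ is a (computably bounded) homogeneous $\Pi^0_1$ set.

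By Corollary \ref{cor:5:DNR4} one has ${\sf DNR}_k\equiv^{<\omega}_{\omega}{\sf DNR}_2$ for every $k\geq 2$, so the single $(<\omega,\omega)$-degree $\dg{d}=[{\sf DNR}_2]^{<\omega}_{\omega}$ contains the $(1,1)$-degree $[{\sf DNR}_k]^1_1$ for each $k\geq 2$. Combined with Jockusch's strict hierarchy ${\sf DNR}_k>^1_1{\sf DNR}_{k+1}$ (stated at the opening of this subsection), the sequence $\{[{\sf DNR}_k]^1_1\}_{k\geq 2}$ yields infinitely many pairwise distinct $(1,1)$-degrees of homogeneous $\Pi^0_1$ sets inside $\dg{d}$, which is exactly what the corollary asserts.

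No real obstacle arises at this stage: the substantive content was already carried out in Theorem \ref{thm:5:DNR3} (the $(1,1)$-reduction ${\sf DNR}_k\htie{\sf DNR}_k\leq^1_1{\sf DNR}_{k^2}$ extracted from a $\Sigma^0_2$ refinement of Jockusch's argument) and in Corollary \ref{cor:5:DNR4} (the confident/eventually-Popperian two-learner team giving $P\leq^{<\omega}_\omega P\htie P$). The present statement merely bundles those facts together with Jockusch's original $(1,1)$-separation to furnish a single explicit witness.
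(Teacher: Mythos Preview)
Your proof is correct and follows essentially the same approach as the paper: both use Corollary \ref{cor:5:DNR4} to place all ${\sf DNR}_k$ in the single $(<\omega,\omega)$-degree of ${\sf DNR}_2$, and Jockusch's strict hierarchy ${\sf DNR}_k>^1_1{\sf DNR}_{k+1}$ to separate their $(1,1)$-degrees.
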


\begin{proof}\upshape
By Corollary \ref{cor:5:DNR4}, the $(<\omega,\omega)$-degree of ${\sf DNR}_2$ contains ${\sf DNR}_k$ for any $k\in\nn$, while ${\sf DNR}_k\not\equiv^1_1{\sf DNR}_l$ for $k\not=l$.
\end{proof}

\subsection{Simpson's Embedding Lemma}

For a pointclass $\Gamma$ in a space $X$, we say that {\em ${\rm SEL}(\Gamma,X)$ holds for $(\alpha,\beta)$-degrees} holds for $(\alpha,\beta)$-degrees if, for every $\Gamma$ set $S\subseteq X$ and for every nonempty $\Pi^0_1$ set $Q\subseteq 2^\nn$, there exists a $\Pi^0_1$ set $P\subseteq 2^\nn$ such that $P\equiv^\alpha_\beta S\cup Q$.
\index{${\rm SEL}(\Gamma,X)$}%
Jockusch-Soare \cite{JS2} indicates that ${\rm SEL}(\Pi^0_2,\nn^\nn)$ holds for $(\omega,1)$-degrees, and points out that ${\rm SEL}(\Pi^0_3,2^\nn)$ does not hold for $(\omega,1)$-degrees, since the set of all noncomputable elements in $2^\nn$ is $\Pi^0_3$.
Simpson's Embedding Lemma \cite{Sim2} determines the limit of ${\rm SEL}(\Gamma,X)$ for $(\omega,1)$-degrees.

\begin{theorem}[Simpson \cite{Sim2}]
${\rm SEL}(\Sigma^0_3,\nn^\nn)$ holds for $(\omega,1)$-degrees.\qed
\end{theorem}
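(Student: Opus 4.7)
The plan is to reduce the $\Sigma^0_3$ case to a uniform sequence of $\Pi^0_2$ cases, to handle each $\Pi^0_2$ subset of Baire space by a coding into Cantor space that is bounded by the Skolem witness, and finally to amalgamate the resulting $\Pi^0_1$ classes via the recursive meet $\cmeet$ (so as to stay inside $\Pi^0_1(2^\nn)$).

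First, I would write $S=\bigcup_{n\in\nn}S_n$ where $\{S_n\}_{n\in\nn}$ is a uniformly $\Pi^0_2$ family of subsets of $\nn^\nn$. By uniform Skolemization there is a computable relation $R_n$ such that $f\in S_n$ iff there exists $h\in\nn^\nn$ with $R_n(f\res k,h\res k)$ for every $k\in\nn$; the displayed condition on the pair $(f,h)$ is $\Pi^0_1$ in Baire space, uniformly in $n$.

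Second, for each $n$ I would construct a $\Pi^0_1$ set $P_n\subseteq 2^\nn$ with $P_n\equiv^\omega_1 S_n\cup Q$. Fix a computable block coding $c:\nn^\nn\times\nn^\nn\to 2^\nn$ by first replacing $h$ by $\max\{f,h\}+1$ pointwise and then sending $(f,h)$ to $1^{h(0)+1}0\langle f(0)\rangle_{h(0)}1^{h(1)+1}0\langle f(1)\rangle_{h(1)}\cdots$, where $\langle k\rangle_b$ denotes a binary representation of $k\le b$ of length $\lceil\log(b+2)\rceil$. Since the tree of admissible initial segments is computable and every infinite path decodes back to a genuine pair in $\nn^\nn\times\nn^\nn$, the set $C_n\subseteq 2^\nn$ of paths coding some $(f,h)$ satisfying the $\Pi^0_1$ Skolem condition for $S_n$ is a $\Pi^0_1$ class. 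Set
\[
P_n=(\lrangle{0}\fr\iota(Q))\cup(\lrangle{1}\fr C_n),
\]
where $\iota:T_Q\hookrightarrow 2^{<\nn}$ is the canonical embedding of the computable tree for $Q$. The equivalence $P_n\equiv^\omega_1 S_n\cup Q$ then follows: from $q\in Q$ one computes $\lrangle{0}\fr\iota(q)\in P_n$; from $f\in S_n$ one computes $\lrangle{1}\fr c(f,h)\in P_n$ using the Skolem witness $h$ uniformly computable from $f$ with $n$ as a parameter; and conversely every path in $P_n$ decodes either to an element of $Q$ or to a pair $(f,h)$ with $f\in S_n$, all Turing computable from the path.

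Third, since $\{P_n\}_{n\in\nn}$ is a uniform sequence of nonempty $\Pi^0_1$ subsets of $2^\nn$, the recursive meet $P=\cmeet_{n\in\nn}P_n={\sf CPA}\ntie\{P_n\}_{n\in\nn}$ is itself a $\Pi^0_1$ subset of $2^\nn$. Any element of $P$ is either a member of ${\sf CPA}$, which by Medvedev-completeness of ${\sf CPA}$ among $\Pi^0_1$ subsets of $2^\nn$ computes an element of $Q\subseteq S\cup Q$, or it extends a leaf $\rho_n\in L_{\sf CPA}$ and hence codes an element of $P_n\equiv^\omega_1 S_n\cup Q\subseteq S\cup Q$; this gives $S\cup Q\leq^\omega_1 P$. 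Conversely, any $q\in Q$ computes $\rho_0\fr\lrangle{0}\fr\iota(q)\in P$ and any $f\in S_n$ computes $\rho_n\fr\lrangle{1}\fr c(f,h)\in P$, so $P\leq^\omega_1 S\cup Q$, and the two reductions combine to give $P\equiv^\omega_1 S\cup Q$.

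The main obstacle is the second step: the block coding must be designed so that the tree of admissible initial segments is computable and so that no infinite branch corresponds to an \emph{illegitimate} object (for example an infinite tail of $0$'s failing to encode any natural number). Bounding $f$ pointwise by $h$, and letting $h$ simultaneously drive the block lengths and supply the Skolem witness for the $\Pi^0_2$ formula defining $S_n$, is what makes the coded set compact, hence $\Pi^0_1$, rather than merely $G_\delta$. Combining this with the $Q$-track cleanly and maintaining full uniformity in $n$ (so that the family $\{P_n\}$ can be amalgamated via $\cmeet$ into a single $\Pi^0_1$ class) is where the bulk of the technical care is required.
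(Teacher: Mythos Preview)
The paper does not supply a proof of this statement; it is quoted from Simpson with a terminal \qed, so there is no in-paper argument to compare against. Your overall scheme---Skolemize each $\Pi^0_2$ slice to a $\Pi^0_1$ subset of $\nn^\nn$, push it into $2^\nn$, then amalgamate via $\cmeet$---is the right architecture, but the second step has a real gap that you name as ``the main obstacle'' without actually closing. The block coding $c(f,h)=1^{h(0)+1}0\lrangle{f(0)}_{h(0)}1^{h(1)+1}0\cdots$ does \emph{not} have the property that every infinite branch decodes to a genuine pair: the string $1^k$ is an admissible prefix for every $k$, so $1^\omega$ (and every $\sigma\fr 1^\omega$ with $\sigma$ a complete-blocks prefix) lies in the prefix tree, and these paths are computable. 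Hence if $C_n$ excludes them it is only $\Pi^0_2$; if $C_n$ includes them, $P_n$ acquires computable members and $P_n\equiv^\omega_1 2^\nn\not\equiv^\omega_1 S_n\cup Q$ whenever $S_n\cup Q$ is special. Bounding $f$ by $h$ controls only the finite-width value blocks $\lrangle{f(i)}_{h(i)}$; it does nothing about the unbounded unary encoding of $h(i)$ itself, which is exactly where the degenerate paths arise.

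The missing idea---the crux of Simpson's argument---is to let $Q$ \emph{absorb} those paths rather than sit beside the coding as a disjoint summand $\lrangle{0}\fr Q$. One replaces each unary run by a walk through $T_Q$ (hitting a leaf of $T_Q$ signals ``commit to this value''), so that a path which never commits traces out an element of $Q$ instead of a computable string; the resulting class is then genuinely $\Pi^0_1$ in $2^\nn$ and $(\omega,1)$-equivalent to $S_n\cup Q$. In this paper's vocabulary the construction is essentially the hyperconcatenation $\widehat{S}_n\htie Q$; compare the proof of item (4) in the theorem immediately following the one you are treating.
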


\begin{theorem}[Simpson's Embedding Lemma for other degree structures]~
\begin{enumerate}
\item ${\rm SEL}(\Sigma^0_2,2^\nn)$ does not hold for $(<\omega,1)$-degrees.
\item ${\rm SEL}(\Sigma^0_2,2^\nn)$ holds for $(1,\omega)$-degrees.
\item ${\rm SEL}(\Pi^0_2,2^\nn)$ does not hold for $(1,\omega)$-degrees.
\item ${\rm SEL}(\Pi^0_2,\nn^\nn)$ holds for $(<\omega,\omega)$-degrees.
\item ${\rm SEL}(\Sigma^0_3,2^\nn)$ does not hold for $(<\omega,\omega)$-degrees.
\end{enumerate}
\end{theorem}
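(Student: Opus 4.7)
The plan is to handle the five items using tools developed in the preceding sections; items (1), (2), and (5) follow fairly directly, item (4) adapts the Jockusch--Soare embedding, and item (3) is the main obstacle.

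For (1), take $Q$ a special $\Pi^0_1$ subset of $2^\nn$ and let $S=\btie Q$, a $\Sigma^0_2$ set; since every element of $\btie Q$ is Turing equivalent to an element of $Q$, the union $S\cup Q=\btie Q$ contains no computable element. By the ``Especially'' remark in Section~3.1, no nonzero $(<\omega,1)$-degree of $\Pi^0_1$ subsets of $2^\nn$ lies below the $(<\omega,1)$-degree of $\btie Q$; hence any $\Pi^0_1$ set $P'$ with $P'\equiv^{<\omega}_1\btie Q$ would have to contain a computable element, making $\btie Q$ do so as well---a contradiction. For (2), write $S=\bigcup_nS_n$ as a uniformly $\Pi^0_1$ union and put $P=\cmeet_{n\in\nn}(S_n\cup Q)={\sf CPA}\ntie\{S_n\cup Q\}_{n\in\nn}$, a $\Pi^0_1$ set. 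The reduction $P\leq^1_\omega S\cup Q$ sends $g\in S\cup Q$ to $\rho_n\fr g\in P$, where a learner uses $\emptyset'$ to identify an $n$ with $g\in S_n$ or else to verify $g\in Q$; conversely, $h\in P$ is mapped by shifting past a leaf $\rho_n$ of $T_{\sf CPA}$ (a $\Sigma^0_1$ event, at most one mind change), or, failing that, by invoking Medvedev-completeness of ${\sf CPA}$ to computably produce an element of $Q$. For (5), take $P$ a special $\Pi^0_1$ subset of $2^\nn$ and set $S=\widehat{\rm Deg}(P)$, $Q=P$, so $S\cup Q=\widehat{\rm Deg}(P)$ has no computable element. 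Were $P'$ a $\Pi^0_1$ set with $P'\equiv^{<\omega}_\omega\widehat{\rm Deg}(P)$, the corollary to Theorem~\ref{thm:NRMP:contig:tl-w} would force $P'$ to contain a computable element and hence $P'\equiv^{<\omega}_\omega\{0^\omega\}$; but $\widehat{\rm Deg}(P)\leq^{<\omega}_\omega\{0^\omega\}$ requires $\widehat{\rm Deg}(P)$ to contain a computable element, since any $(1,\omega)$-computable function applied to the computable input $0^\omega$ produces a computable output, contradicting specialness of $P$.

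For (4), adapt the Jockusch--Soare embedding. Writing $S=\{f\in\nn^\nn:(\forall n)(\exists m)R(n,m,f\restriction m)\}$ with $R$ computable, the minimum-witness function $g_{\min}(f)(n)=\min\{m:R(n,m,f\restriction m)\}$ is uniformly computable from $f\in S$. A suitable bounded tree coding of the pairs $(f,g_{\min}(f))$ as elements of $2^\nn$, combined disjointly with $Q$ via $\linf$, yields a $\Pi^0_1$ set $P\subseteq 2^\nn$. A team of two eventually-Popperian learners---one handling the $Q$-branch by identity, one handling the $S$-branch by computing $g_{\min}$ and assembling the encoded pair---upgrades the Jockusch--Soare Muchnik equivalence to $(<\omega,\omega)$-equivalence of $P$ with $S\cup Q$.

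The main obstacle is item (3). We must exhibit a $\Pi^0_2$ set $S\subseteq 2^\nn$ and a nonempty $\Pi^0_1$ $Q\subseteq 2^\nn$ with $S\cup Q$ at a $(1,\omega)$-degree unrealized by any $\Pi^0_1$ subset of $2^\nn$. The plan is to choose $S,Q$ so that $S\cup Q$ has no computable element---ruling out the trivial candidate $\{0^\omega\}$---while simultaneously preventing $(1,\omega)$-equivalence to any special $\Pi^0_1$ set: for instance, by inserting $S$ as a $\Pi^0_2$ intermediate between $P\htie P$ and $P$ for a special $\Pi^0_1$ $P$, invoking Corollary~\ref{cor:5:l-tlref} according to which $P\htie P<^1_\omega P$ but $P\htie P\equiv^{<\omega}_\omega P$, so that the hypotheses of the weak density theorem fail and a $\Pi^0_1$ intermediary is not automatically available. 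The crux will be a priority construction in the spirit of Theorem~\ref{thm:NRMP:contig:tl-w}, diagonalizing simultaneously against all candidate $\Pi^0_1$ sets and all candidate learnable reductions in both directions.
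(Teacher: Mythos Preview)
Your handling of items (1), (2), and (5) is essentially the paper's argument, and is fine.

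For item (4) you diverge from the paper. The paper does not build a team of learners directly from the minimum-witness encoding; instead it first passes from the $\Pi^0_2$ set $S\subseteq\nn^\nn$ to a $(1,1)$-equivalent $\Pi^0_1$ set $\widehat{S}\subseteq\nn^\nn$ (the standard Jockusch--Soare step), then forms the hyperconcatenation $\widehat{S}\htie Q$ and extracts a $\Pi^0_1$ subset $\widehat{P}\subseteq\widehat{S}\htie Q$ that is computably homeomorphic to a $\Pi^0_1$ subset of $2^\nn$ and satisfies $\widehat{P}\leq^1_1 S\cup Q$. The key reduction $S\cup Q\leq^{<\omega}_\omega\widehat{S}\htie Q$ then comes for free from the general machinery of Part~I. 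Your proposal to encode the pairs $(f,g_{\min}(f))$ directly into $2^\nn$ needs care: the naive unary encoding of $\nn^\nn$ lands only in a $\Pi^0_2$ subset of $2^\nn$, not a $\Pi^0_1$ one, so your ``bounded tree coding'' step is not obviously available. The hyperconcatenation route sidesteps this by using $Q\subseteq 2^\nn$ to absorb the unboundedness.

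The real gap is item (3). You identify it as the main obstacle and propose a priority construction diagonalizing against all $\Pi^0_1$ sets and all learners in both directions. This is far heavier than what is needed, and you do not actually carry it out. The paper's argument is short and direct: fix a special $\Pi^0_1$ set $P\subseteq 2^\nn$, invoke the Jockusch--Soare cone-avoidance theorem to obtain a noncomputable $\Sigma^0_1$ set $A$ such that $P$ contains no $A$-computable element, and take $S=P\oplus\{A\}$, which is $\Pi^0_2$ since $\{A\}$ is a $\Pi^0_2$ singleton. If some $\Pi^0_1$ set $Q$ were $(1,\omega)$-equivalent to $S$, then $Q$ would contain an $A$-computable element $\alpha$, and the learner witnessing $S\leq^1_\omega Q$ must send $\alpha$ to $1\fr A$ (it cannot land in $0\fr P$, as $P$ has no $\alpha$-computable element). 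Freezing the learner at a locking initial segment of $\alpha$ yields a special $\Pi^0_1$ set $Q^*\subseteq Q$ and a single computable functional mapping $Q^*$ into $\{1\fr A\}$, forcing $A$ to be computable --- a contradiction. The moral: the counterexample for (3) is a $\Pi^0_2$ \emph{singleton} joined with a $\Pi^0_1$ set, and the argument is a locking-sequence trick, not a priority construction.
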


\begin{proof}\upshape
(1)
For any $\Pi^0_1$ set $P\subseteq 2^\nn$, we note that $\btie P\subseteq 2^\nn$ is $\Sigma^0_2$.
By Theorem \ref{thm:contig:b-l}, there is no $\Pi^0_1$ set $2^\nn$ which is $(<\omega,1)$-below $\btie P$.
In particular, there is no $\Pi^0_1$ set $2^\nn$ which is $(<\omega,1)$-equivalent to $P\cup\btie P=\btie P$.

(2)
For a given $\Sigma^0_2$ set $S\subseteq 2^\nn$, there is a computable increasing sequence $\{P_i\}_{i\in\nn}$ of $\Pi^0_1$ classes such that $S=\bigcup_{i\in\nn}P_i$.
We need to show $\bigcup_{i\in\nn}P_i\equiv^1_{\omega}\binf_{i\in\nn}P_i$, since $\binf_{i\in\nn}P_i$ is $(1,<\omega)$-equivalent to the $\Pi^0_1$ class $\cmeet_iP_i$.
Then, it is easy to see $\bigcup_iP_i\leq^1_1\binf_iP_i$.
For given $f\in\bigcup_iP_i$, from each initial segment $f\res n$, a learner $\Psi$ guesses an index of a computable function $\Phi_{\Psi(f\res n)}(g)=i\fr g$ for the least number $i$ such that $f\res n\in T_{P_i}$ but $f\res n\not\in T_{P_{i-1}}$.
For any $f\in\bigcup_iP_i$, for the least $i$ such that $f\in P_i\setminus P_{i-1}$, $\lim_n\Psi(f\res n)$ converges to an index of $\Phi_{\lim_n\Psi(f\res n)}(g)=i\fr g$.
Thus, $\Phi_{\lim_n\Psi(f\res n)}(g)\in i\fr P_i$.
Consequently, $S=\bigcup_iP_i\leq^1_{\omega}\cmeet_iP_i$.

(3)
Fix any special $\Pi^0_1$ set $P\subseteq 2^\nn$.
By Jockusch-Soare \cite{JS2}, there is a noncomputable $\Sigma^0_1$ set $A\subseteq\nn$ such that $P$ has no $A$-computable element.
Then $\{A\}\subseteq 2^\nn$ is a $\Pi^0_2$ singleton, since $A$ is $\Sigma^0_1$.
Therefore, $P\oplus\{A\}$ is $\Pi^0_2$.
It suffices to show that there is no $\Pi^0_1$ set $Q\subseteq 2^\nn$ such that $Q\equiv^1_\omega P\oplus\{A\}$.
Assume that $Q\equiv^1_\omega P\oplus\{A\}$ is satisfied for some $\Pi^0_1$ set $Q\subseteq 2^\nn$.
Then $Q$ must have an $A$-computable element $\alpha\in Q$.
Fix a learner $\Psi$ witnessing $P\oplus\{A\}\leq^1_\omega Q$.
Then, we have $\Phi_{\lim_n\Psi(\alpha\res n)}(\alpha)=1\fr A$, since $P$ has no element computable in $\alpha\leq_TA$.
We wait for $s\in\nn$ such that $\Psi(\alpha\res t)=\Psi(\alpha\res s)$ for any $t\geq s$.
Then, fix $u\geq s$ with $\Phi_{\Psi(\alpha\res u)}(\alpha\res u;0)\downarrow=1$.
Consider the $\Pi^0_1$ set $Q^*=\{f\in Q\cap[\alpha\res u]:(\forall v\geq u)\;\Psi(f\res v)=\Psi(f\res u)\}$.
Then, for any $f\in Q^*$, $\Phi_{\lim_s\Psi(f\res s)}(f)=\Phi_{\Psi(\alpha\res u)}(f)$ must extends $\lrangle{1}$.
Thus, $\{1\fr A\}\leq^1_1Q^*$ via the computable function $\Phi_{\Psi(\alpha\res u)}$.
Since $Q^*$ is special $\Pi^0_1$ subset of $2^\nn$, this implies the computability of $1\fr A$ which contradicts our choice of $A$.

(4)
Fix a $\Pi^0_2$ set $S\subseteq\nn^\nn$.
As Simpson's proof, there is a $\Pi^0_1$ set $\widehat{S}\subseteq\nn^\nn$ such that $S\equiv^1_1\widehat{S}$.
We can find a $\Pi^0_1$ set $\widehat{P}\subseteq\widehat{S}\htie Q$ such that $\widehat{P}\leq^1_1 S\cup Q$, and $\widehat{P}$ is computably homeomorphic to a $\Pi^0_1$ set $P\subseteq 2^\nn$.
Since $S\cup Q\leq^{<\omega}_{\omega}\widehat{S}\htie Q$, we have $S\cup Q\equiv^{<\omega}_{\omega}P$.

(5)
For every $\Pi^0_1$ set $P\subseteq 2^\nn$, the Turing upward closure $\widehat{\rm Deg}(P)=\{g\in 2^\nn:(\exists f\in P)\;f\leq_Tg\}$ of $P$ is $\Sigma^0_3$, and $\widehat{\rm Deg}(P)$ has the least $(<\omega,\omega)$-degree inside $\deg^{\omega}_{1}(P)$.
By Theorem \ref{thm:NRMP:contig:tl-w}, there is no $\Pi^0_1$ subset of $2^\nn$ which is $(<\omega,\omega)$-equivalent to $\widehat{\rm Deg}(P)$.
\end{proof}

\subsection{Weihrauch Degrees}

The notion is piecewise computability could be interpreted as the computability relative to the principle of excluded middle in a certain sense.
Indeed, in Part I \cite[Section 6]{HK_PartI}, we have characterized the notions of piecewise computability as the computability relative to nonconstructive principles in the context of Weihrauch degrees.
Thus, one can rephrase our separation results in the context of Weihrauch degrees as follows.

\begin{theorem}
The symbols $P$, $Q$, and $R$ range over all special $\Pi^0_1$ subset of $2^\nn$, and $X$ ranges over all subsets of $\nn^\nn$.
\begin{enumerate}
\item There are $P$ and $Q\leq^1_1 P$ such that $P\leq_{\Sigma^0_1\text{\sf -LLPO}}Q$ but $P\not\leq^1_1Q$.
\item There are $P$ and $Q\leq^1_1 P$ such that $P\leq_{\Sigma^0_1\text{\sf -LEM}}Q$ but $P\not\leq_{\Sigma^0_1\text{\sf -LLPO}}Q$.
\item For every $P$, there exists $Q\leq^1_1 P$ such that $P\leq_{\Sigma^0_1\text{\sf -LEM}}Q$, whereas, for every $X$, if $P\leq_{\Sigma^0_1\text{\sf -DNE}}Q\otimes X$ then $P\leq^1_1X$.
\item There are $P$ and $Q\leq^1_1 P$ such that $P\leq_{\Delta^0_2\text{\sf -LEM}}Q$ but $P\not\leq_{\Sigma^0_1\text{\sf -LEM}}Q$.
\item There are $P$ and $Q\leq^1_1 P$ such that $P\leq_{\Sigma^0_2\text{\sf -LLPO}}Q$ but $P\not\leq_{\Delta^0_2\text{\sf -LEM}}Q$.
\item There is $P$ such that, for every $Q$, if $P\leq_{\Sigma^0_2\text{\sf -LLPO}}Q$, then $P\leq_{\Sigma^0_1\text{\sf -LEM}}Q$.
\item For every $P$ and $R$, there exists $Q\leq^1_1 P$ such that $P\leq_{\Sigma^0_2\text{\sf -DNE}}Q$ but $R\not\leq_{\Sigma^0_2\text{\sf -LLPO}}Q$.
\item For every $P$, there exists $Q\leq^1_1 P$ such that $P\leq_{\Sigma^0_2\text{\sf -LEM}}Q$, whereas, for every $X$, if $P\leq_{\Sigma^0_2\text{\sf -DNE}}Q\otimes X$ then $P\leq_{\Sigma^0_2\text{\sf -DNE}}X$.
\item For every $P$ and $R$, there exists $Q\leq P$ such that $P\leq_{\Sigma^0_3\text{\sf -DNE}}Q$ but $R\not\leq_{\Sigma^0_2\text{\sf -LEM}}Q$.
\end{enumerate}
\end{theorem}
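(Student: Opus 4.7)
The plan is to invoke the dictionary developed in Part I \cite[Section 6]{HK_PartI}, which identifies Weihrauch reducibility relative to each nonconstructive principle $\Gamma\text{\sf -L}$ with one of the piecewise computability reducibilities $\leq^\alpha_{\beta|\gamma}$ on subsets of Baire space. Under this correspondence, the nine items reduce to separation and anticupping statements already established in Sections 2 and 3. The intended matching ascends the strength ladder as follows: $\leq_{\Sigma^0_1\text{\sf -LLPO}}$ corresponds to $\leq^1_{<\omega}$, $\leq_{\Sigma^0_1\text{\sf -LEM}}$ to $\leq^{<\omega}_{1}$, $\leq_{\Sigma^0_1\text{\sf -DNE}}$ to $\leq^1_{\omega|<\omega}$, $\leq_{\Delta^0_2\text{\sf -LEM}}$ to $\leq^1_\omega$, and the $\Sigma^0_2$ and $\Sigma^0_3$ variants rise through $\leq^{<\omega}_\omega$ up to Muchnik reducibility $\leq^\omega_1$.

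Once the dictionary is in hand, each item is sourced from a previously established result. Items (1) and (4) follow from the LEVEL 3 separations of $\tie$ from $\lcm$ given in Corollary \ref{cor:2:bl-bel}; items (2) and (5) from the separations of $\lcm$ from $\cls$ given in Corollary \ref{cor:2:l-b} and Corollary \ref{cor:5:l-bref}; item (3) from the concatenation-based $(1,1)$-strong anticupping in Theorem \ref{thm:Hig}; item (6) from the collapse result for homogeneous $\Pi^0_1$ sets in Theorem \ref{thm:contig:b-bl}, applied for instance to ${\sf DNR}_2$; items (7) and (9) from the LEVEL 4 separations built via $\btie P$ and via the non-uniform coding $\widehat{P}$ in Theorems \ref{thm:contig:b-l} and \ref{thm:NRMP:contig:tl-w}; and item (8) from the hyperconcatenation-based $(1,\omega)$-strong anticupping in Theorem \ref{thm:hyperHig}. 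In each case the special $\Pi^0_1$ set $P$ is chosen to be the one produced by the corresponding section (typically a perfect antichain, a homogeneous set, or a Medvedev complete set, as appropriate), and $Q$ is taken to be the derived object ($P\ntie P$, $\btie P$, $P\htie P$, or $\widehat{P}$).

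The main obstacle is that Weihrauch reducibility is uniform, so it is necessary to verify that every witnessing reduction invoked is realized by a single computable functional that queries $\Gamma\text{\sf -L}$ as an oracle, rather than merely existing after the fact as an unnamed piecewise decomposition. For the positive halves this is immediate because each construction in Sections 2 and 3 is already presented as an explicit functional calling the corresponding nonconstructive principle, while for the negative halves the separation arguments diagonalize uniformly against all realizers of the relevant class, which is exactly what a Weihrauch-style lower bound demands. The delicate items are the strong anticupping clauses (3) and (8): there one must extract from the proofs of Theorems \ref{thm:Hig} and \ref{thm:hyperHig} a uniform rule transforming any $\Gamma\text{\sf -L}$-realizer of $P$ from $Q\otimes X$ into a $\Gamma\text{\sf -L}$-realizer of $P$ from $X$ alone, which is precisely what the $g$-c.e.\ tree $D^g$ construction (for concatenation) and its locking-sequence counterpart (for hyperconcatenation) implicitly provide.
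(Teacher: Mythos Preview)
Your overall strategy is exactly the paper's: invoke the Part I dictionary between the principles $\Gamma\text{\sf -L}$ and the reducibilities $\leq^\alpha_{\beta|\gamma}$, then read each item off a separation or anticupping result from Sections~2--3. Items (3), (6), (7), (8), (9) are routed correctly, and the paper cites essentially the same results (Corollary~\ref{cor:3a:HigHig}, Theorem~\ref{thm:contig:b-bl}, Corollary~\ref{cor:5:l-bref}, Corollary~\ref{cor:5:l-tlref}, Theorem~\ref{thm:NRMP:contig:tl-w}).

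The gap is that your dictionary at the $\Sigma^0_1$/$\Delta^0_2$ level is off by one rung. The correspondence from Part I is
\[
\Sigma^0_1\text{\sf -DNE}\;\leftrightarrow\;\leq^1_1,\quad
\Sigma^0_1\text{\sf -LLPO}\;\leftrightarrow\;\leq^{<\omega}_{tt,1},\quad
\Sigma^0_1\text{\sf -LEM}\;\leftrightarrow\;\leq^1_{<\omega},\quad
\Delta^0_2\text{\sf -LEM}\;\leftrightarrow\;\leq^1_{\omega|<\omega},
\]
not the shifted version you wrote (with $\Sigma^0_1\text{\sf -LEM}\leftrightarrow\leq^{<\omega}_1$, etc.). Under your alignment, item (3) would require an anticupping statement of the form ``$P\leq^1_{\omega|<\omega}(P\fr P)\otimes X\Rightarrow P\leq^1_1X$'', which Theorem~\ref{thm:Hig} does \emph{not} deliver; it only handles the hypothesis $P\leq^1_1(P\fr P)\otimes X$. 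The argument closes precisely because $\Sigma^0_1\text{\sf -DNE}$ collapses to $\leq^1_1$ when the target is a $\Pi^0_1$ subset of $2^\nn$.

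Consequently your sourcing of items (1) and (2) is wrong. Item (1) is the separation $\leq^{<\omega}_{tt,1}$ versus $\leq^1_1$ and comes from Corollary~\ref{cor:2:separ} (the $\linf$/$\cup$ argument with a perfect independent set), not from Corollary~\ref{cor:2:bl-bel}. Item (2) is the separation $\leq^1_{<\omega}$ versus $\leq^{<\omega}_{tt,1}$ and comes from Corollary~\ref{cor:2:separ2} (the immunity argument via Lemma~\ref{lem:NRMP:immuclosurett}), not from Corollary~\ref{cor:2:l-b}. Items (4) and (5) do land on Corollary~\ref{cor:2:bl-bel}(2) and Corollary~\ref{cor:2:l-b}(1) as in the paper, but only because the correct dictionary puts them there; your stated rationale would not justify the match.
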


\begin{proof}
See Part I \cite[Section 6]{HK_PartI} for the definitions of partial multivalued functions and their characterizations.

(1) By Corollary \ref{cor:2:separ}.
(2) By Corollary \ref{cor:2:separ2}.
(3) By Corollary \ref{cor:3a:HigHig}.
(4) By Corollary \ref{cor:2:bl-bel} (2).
(5) By Corollary \ref{cor:2:l-b} (1).
(6) By Theorem \ref{thm:contig:b-bl}.
(7) By Corollary \ref{cor:5:l-bref} (2).
(8) By Corollary \ref{cor:5:l-tlref}.
(9) By Theorem \ref{thm:NRMP:contig:tl-w}.
\end{proof}

\begin{definition}[Mylatz]
The {\em $\Sigma^0_1$ lessor limited principle of omniscience with $(m/k)$ wrong answers}, $\Sigma^0_1\text{-}{\sf LLPO}_{m/k}$, is the following multi-valued function.
\index{lessor limited principle of omniscience!with $(m/k)$ wrong answers}%
\index{$\Sigma^0_1\text{-}{\sf LLPO}_{m/k}$}%
\begin{align*}
&\Sigma^0_1\text{-}{\sf LLPO}_{m/k}:\subseteq\nn^\nn\rightrightarrows k,& & x\mapsto\{l<k:(\forall n\in\nn)\;x(kn+l)=0\}.
\end{align*}
Here, ${\rm dom}(\Sigma^0_1\text{-}{\sf LLPO}_{m/k})=\{x\in\nn^\nn:x(n)\not=0,\mbox{ for at most $m$ many }n\in\nn\}$.
\end{definition}

\begin{remark}
It is well-known that the parallelization of $\Sigma^0_1\text{-}{\sf LLPO}_{1/2}$ is equivalent to Weak K\"onig's Lemma, {\sf WKL} (hence, is Weihrauch equivalent to the closed choice for Cantor space, ${\sf C}_{2^\nn}$).
\end{remark}

\begin{definition}~
\begin{enumerate}
\item (Cenzer-Hinman \cite{CH2}) A set $P\subseteq k^\nn$ is {\em $(m,k)$-separating} if $P=\prod_{n\in\nn}F_n$ for some uniform sequence $\{F_n\}_{n\in\nn}$ of $\Pi^0_1$ sets $F_n\subseteq k$, where $\#(k\setminus F_n)\leq m$ for any $n\in\nn$.
\index{Pi01 set@$\Pi^0_1$ set!$(m,k)$-separating}%
\item A function $f:\nn^m\to k$ is {\em $k$-valued $m$-diagonally noncomputable in $\alpha\in\nn^\nn$} if the value $f(\lrangle{e_0,\dots,e_{m-1}})$ does not belong to $\{\Phi_{e_i}(\alpha;\lrangle{e_0,\dots,e_{m-1}}):i<m\}$ for each argument  $\lrangle{e_0,\dots,e_{m-1}}\in\nn^m$.
\index{diagonally noncomputable!$k$-valued $m$-}%
By ${\sf DNR}_{m/k}(\alpha)$\index{${\sf DNR}_{m/k}(\alpha)$}, we denote the set of all $k$-valued functions which are $m$-diagonally noncomputable in $\alpha$.
\item The {\em $(m/k)$ diagonally noncomputable operation} ${\sf DNR}_{m/k}:\nn^\nn\rightrightarrows k^\nn$ is the multi-valued function mapping $\alpha\in\nn^\nn$ to ${\sf DNR}_{m/k}(\alpha)$.
\index{diagonally noncomputable!$(m/k)$-operation}\index{${\sf DNR}_{m/k}$}%
\end{enumerate}
\end{definition}

\begin{remark}
Clearly ${\sf DNR}_{m/k}(\emptyset)$ is $(m,k)$-separating.
The structure of Medvedev degrees of $(m,k)$-separating sets have been studied by Cenzer-Hinman \cite{CH2}.
Diagonally noncomputable functions are extensively studied in connection with {\em algorithmic randomness}, for example, see Greenberg-Miller \cite{GM}.
\end{remark}

\begin{prop}\label{prop:5:DNR-LLPO}
${\sf DNR}_{m/k}$ is Weihrauch equivalent to $\widehat{\Sigma^0_1\text{-}{\sf LLPO}_{m/k}}$.
\end{prop}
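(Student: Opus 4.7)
The plan is to prove both Weihrauch reductions directly by exhibiting explicit pairs of computable maps.

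For $\widehat{\Sigma^0_1\text{-}{\sf LLPO}_{m/k}} \leq_W {\sf DNR}_{m/k}$, given a sequence of inputs $\{x_j\}_{j\in\nn}$, the idea is to use the $s$-$m$-$n$ theorem to produce, for each $j\in\nn$ and each $i<m$, an index $e^j_i$ for the partial computable function which scans $x_j(0),x_j(1),\dots$ for the $(i+1)$-th nonzero position, and, upon finding it at location $kq+l$, outputs $l$. Since $x_j$ has at most $m$ nonzero positions, at most $m$ of the $\Phi_{e^j_i}$ ever halt at any argument, and their collective outputs enumerate precisely the \emph{bad} columns of $x_j$, i.e.\ those $l<k$ for which $x_j(kn+l)\neq 0$ for some $n$. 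Setting $\vec{e}^j=\langle e^j_0,\dots,e^j_{m-1}\rangle$, the $m$-diagonal noncomputability of any output $f\in{\sf DNR}_{m/k}(\alpha)$ at $\vec{e}^j$ forces $f(\vec{e}^j)$ to avoid every converging $\Phi_{e^j_i}(\alpha;\vec{e}^j)$, hence every bad column of $x_j$, so $f(\vec{e}^j)$ solves $\Sigma^0_1\text{-}{\sf LLPO}_{m/k}(x_j)$. Mapping $j\mapsto\vec{e}^j$ is a uniformly computable backward functional.

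For the reverse reduction ${\sf DNR}_{m/k} \leq_W \widehat{\Sigma^0_1\text{-}{\sf LLPO}_{m/k}}$, given $\alpha\in\nn^\nn$ I will produce a sequence $\{x^{\vec{e}}\}_{\vec{e}\in\nn^m}$ of $\Sigma^0_1\text{-}{\sf LLPO}_{m/k}$-inputs as follows. For each tuple $\vec{e}=\langle e_0,\dots,e_{m-1}\rangle$ and each $i<m$, simulate $\Phi_{e_i}(\alpha;\vec{e})$; whenever one of these first converges to some value $l_i<k$, enumerate a single nonzero entry of $x^{\vec{e}}$ at position $k\cdot s(\vec{e},i)+l_i$, where $s(\vec{e},i)$ is a fresh stage chosen so that the positions put on for different $i$ are pairwise distinct. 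Divergent computations and converging values outside $\{0,\dots,k-1\}$ contribute nothing, so $x^{\vec{e}}$ has at most $m$ nonzero entries and lies in the domain of $\Sigma^0_1\text{-}{\sf LLPO}_{m/k}$. Moreover, the $l$-th column of $x^{\vec{e}}$ is identically zero iff $l$ does not arise as any converging value $\Phi_{e_i}(\alpha;\vec{e})$, so each answer $f(\vec{e})$ returned by the parallelization verifies the $m$-DNR condition at $\vec{e}$. Assembling these answers across $\vec{e}\in\nn^m$ (which is identified with $\nn$ by a fixed computable bijection) yields an element of ${\sf DNR}_{m/k}(\alpha)$.

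The only real obstacle is bookkeeping: one must verify the uniform computability of both transformations and, in the first direction, carefully invoke the $s$-$m$-$n$ theorem so that the indices $e^j_i$ depend only on $j$ and $i$ (and not, circularly, on $\vec{e}^j$). Conceptually the argument just effectivises the well-known correspondence between $(m,k)$-separating classes and $k$-valued $m$-diagonally noncomputable functions, combined with the fact that parallelising $\Sigma^0_1\text{-}{\sf LLPO}_{m/k}$ supplies exactly one column-avoidance per tuple, so no deeper technical difficulty is anticipated.
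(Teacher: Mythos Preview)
Your proposal is correct and follows essentially the same approach as the paper's proof: in both directions you build the same kind of indices/inputs, and the verification is identical in spirit. The only cosmetic difference is that in the direction $\widehat{\Sigma^0_1\text{-}{\sf LLPO}_{m/k}}\leq_W{\sf DNR}_{m/k}$ the paper's indices $e^j_t$ enumerate the $t$-th \emph{newly discovered bad column} of $x_j$, whereas you enumerate the column of the $(i+1)$-th \emph{nonzero position}; either works. One small point worth making explicit when you write it up: you should state that the forward functional sends $\langle x_j\rangle_{j\in\nn}$ to $\alpha=\bigoplus_j x_j$, so that your indices $e^j_i$ are for $\alpha$-partial computable functions (this is what makes them depend only on $j,i$ via the relativized $s$-$m$-$n$ theorem, as you note in your caveat).
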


\begin{proof}\upshape
To see $\widehat{\Sigma^0_1\text{-}{\sf LLPO}_{m/k}}\leq_W{\sf DNR}_{m/k}$, for given $(x_i:i\in\nn)$, let $e^i_t$ be an $\bigoplus_{i\in\nn}x_i$-computable index of an algorithm, for any argument, which returns $l$ at stage $s$ if $l\in L_{s+1}\setminus L_s$ and $\#L_s=t$, where $L_s=\{l^*<k:(\exists n)\;kn+l^*<s\;\&\;x_i(kn+l^*)\not=0\}$.
Clearly, $\{e^i_t:i\in\nn\;\&\;t<m\}$ is computable uniformly in $\bigoplus_{i\in\nn}x_i$.
For any $f\in{\sf DNR}_{m/k}(\bigoplus_{i\in\nn}x_i)$, the function $i\mapsto f(\lrangle{e^i_0,\dots,e^i_{m-1}})$ belongs to $\widehat{\Sigma^0_1\text{-}{\sf LLPO}_{m/k}}(\lrangle{x_i:i\in\nn})$.
Conversely, for given $x\in\nn^\nn$, for the $i$-th $m$-tuple $\lrangle{e_0,\dots,e_{m-1}}\in\nn^m$, we set $x_i(ks+l)=1$ if $\Phi_{e_t}(\lrangle{e_0,\dots,e_{m-1}})$ converges to $l<k$ at stage $s\in\nn$ for some $t<m$, and otherwise we set $x_i(ks+l)=0$.
Clearly $\{x_i:i\in\nn\}$ is uniformly computable in $x$.
Then, for any $\lrangle{l_i:i\in\nn}\in\widehat{\Sigma^0_1\text{-}{\sf LLPO}_{m/k}}(\lrangle{x_i:i\in\nn})\subseteq k^\nn$, we have $l_i\not\in\{\Phi_{e_t}(\lrangle{e_0,\dots,e_{m-1}}):t<m\}$ by our construction.
Hence, the $k$-valued function $i\mapsto l_i$ is $m$-diagonally noncomputable in $x$.
\end{proof}

Recall from Part I \cite[Section 6]{HK_PartI} that $\star$ is the operation on Weihrauch degrees such that is defined by $F\star G=\max\{F^*\circ G^*:F^*\leq_WF\;\&\;G^*\leq_WG\}$.
See \cite{lRP13} for more information on $\star$.

\begin{cor}\label{cor:5:DNR-LLPO}
Let $k\geq 2$ be any natural number.
\begin{enumerate}
\item $\widehat{\Sigma^0_1\text{-}{\sf LLPO}_{1/k}}\not\leq_W\Sigma^0_2\text{-}{\sf DNE}\star\widehat{\Sigma^0_1\text{-}{\sf LLPO}_{1/k+1}}$.
\item $\widehat{\Sigma^0_1\text{-}{\sf LLPO}_{1/k}}\not\leq_W\Sigma^0_2\text{-}{\sf LLPO}\star\widehat{\Sigma^0_1\text{-}{\sf LLPO}_{1/k+1}}$.
\item $\widehat{\Sigma^0_1\text{-}{\sf LLPO}_{1/k}}\leq_W\Sigma^0_2\text{-}{\sf LEM}\star\widehat{\Sigma^0_1\text{-}{\sf LLPO}_{1/k+1}}$.
\end{enumerate}
\end{cor}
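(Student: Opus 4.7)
The plan is to translate each Weihrauch-reducibility statement into the corresponding piecewise-computability statement between ${\sf DNR}_k$ and ${\sf DNR}_{k+1}$, and then appeal to the results on ${\sf DNR}_k$ already established in this section. By Proposition \ref{prop:5:DNR-LLPO}, $\widehat{\Sigma^0_1\text{-}{\sf LLPO}_{1/j}}\equiv_W{\sf DNR}_{1/j}$, which is Weihrauch equivalent to the $j$-valued diagonally noncomputable problem. Combining this with the characterizations from Part I \cite[Section 6]{HK_PartI} of the compositional products $\Sigma^0_2\text{-}\mathcal{F}\star(\cdot)$ --- which identify reductions of the form $A\leq_W\Sigma^0_2\text{-}{\sf DNE}\star B$, $A\leq_W\Sigma^0_2\text{-}{\sf LLPO}\star B$, and $A\leq_W\Sigma^0_2\text{-}{\sf LEM}\star B$ with $A\leq^1_\omega B$, $A\leq^{<\omega}_1 B$, and $A\leq^{<\omega}_\omega B$ respectively --- the three items of the corollary amount to
\begin{enumerate}
\item[(1$'$)] ${\sf DNR}_k\not\leq^1_\omega{\sf DNR}_{k+1}$,
\item[(2$'$)] ${\sf DNR}_k\not\leq^{<\omega}_1{\sf DNR}_{k+1}$,
\item[(3$'$)] ${\sf DNR}_k\leq^{<\omega}_\omega{\sf DNR}_{k+1}$.
\end{enumerate}

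For (3$'$) I would simply invoke Corollary \ref{cor:5:DNR4}, which gives ${\sf DNR}_k\equiv^{<\omega}_\omega{\sf DNR}_2\equiv^{<\omega}_\omega{\sf DNR}_{k+1}$ for every $k\geq 2$; in fact the construction behind Theorem \ref{thm:5:DNR3} already exhibits the required team of two learners.

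For (1$'$) and (2$'$) the key observations are that ${\sf DNR}_j$ is a homogeneous $\Pi^0_1$ set for every $j$ (being a product $\prod_n F_n$ with $F_n\subseteq\{0,\dots,j-1\}$), and that Jockusch's theorem gives ${\sf DNR}_{k+1}<^1_1{\sf DNR}_k$, in particular ${\sf DNR}_k\not\leq^1_1{\sf DNR}_{k+1}$ while ${\sf DNR}_{k+1}\leq^1_1{\sf DNR}_k$ holds trivially. Suppose (1$'$) fails; then combined with the trivial direction we obtain ${\sf DNR}_k\equiv^1_\omega{\sf DNR}_{k+1}$, so Proposition \ref{prop:5:DNR1}(1) forces both homogeneous representatives to realize the greatest $(1,1)$-degree inside their common $(1,\omega)$-degree, hence to be $(1,1)$-equivalent, a contradiction. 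Similarly, if (2$'$) fails we would have ${\sf DNR}_k\equiv^{<\omega}_1{\sf DNR}_{k+1}$, and Proposition \ref{prop:5:DNR1}(3) --- which asserts that every $(<\omega,1)$-degree contains at most one $(1,1)$-degree of homogeneous $\Pi^0_1$ sets --- again forces ${\sf DNR}_k\equiv^1_1{\sf DNR}_{k+1}$, contradicting Jockusch.

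The main obstacle is the careful bookkeeping required for the Weihrauch-to-piecewise translation: one must pass from ${\sf DNR}_{1/j}$ and its parallelization to the associated $\Pi^0_1$ mass problem in Cantor space and confirm that the compositional products with the $\Sigma^0_2$-principles capture exactly the reducibilities $\leq^1_\omega$, $\leq^{<\omega}_1$, and $\leq^{<\omega}_\omega$ under this identification. Once this translation is made precise through the results of Part I \cite[Section 6]{HK_PartI}, the arithmetical content is entirely supplied by Proposition \ref{prop:5:DNR1}, Jockusch's theorem, and Corollary \ref{cor:5:DNR4}, and no further combinatorial construction is required.
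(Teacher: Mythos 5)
Your proof is correct and takes essentially the same route as the paper: the authors also reduce via Proposition~\ref{prop:5:DNR-LLPO} and the Part~I correspondences to the statements ${\sf DNR}_k\not\leq^1_\omega{\sf DNR}_{k+1}$, ${\sf DNR}_k\not\leq^{<\omega}_1{\sf DNR}_{k+1}$, and ${\sf DNR}_k\leq^{<\omega}_\omega{\sf DNR}_{k+1}$, citing Corollary~\ref{cor:5:DNR2} for the first two and Theorem~\ref{thm:5:DNR3} for the third. The only difference is presentational: you inline and generalize the argument of Corollary~\ref{cor:5:DNR2} (homogeneity of ${\sf DNR}_j$ plus Proposition~\ref{prop:5:DNR1} and Jockusch's theorem) from the case $k=2$ to arbitrary $k\geq 2$, which is exactly the intended reading.
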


\begin{proof}\upshape
By Corollary \ref{cor:5:DNR2} and Proposition \ref{prop:5:DNR-LLPO}, the item (1) and (2) are satisfied.
It is not hard to show the item (3) by analyzing Theorem \ref{thm:5:DNR3}.
\end{proof}

\begin{remark}
By combining the results from Cenzer-Hinman \cite{CH2} and our previous results, we can actually show the following.
\begin{enumerate}
\item $\widehat{\Sigma^0_1\text{-}{\sf LLPO}_{n/l}}\not\leq_W\Sigma^0_2\text{-}{\sf DNE}\star\widehat{\Sigma^0_1\text{-}{\sf LLPO}_{m/k}}$, whenever $0<n<l<\lceil k/m\rceil$.
\item $\widehat{\Sigma^0_1\text{-}{\sf LLPO}_{n/l}}\not\leq_W\Sigma^0_2\text{-}{\sf LLPO}\star\widehat{\Sigma^0_1\text{-}{\sf LLPO}_{m/k}}$, whenever $0<n<l<\lceil k/m\rceil$.
\item $\widehat{\Sigma^0_1\text{-}{\sf LLPO}_{n/l}}\leq_W\Sigma^0_2\text{-}{\sf LEM}\star\widehat{\Sigma^0_1\text{-}{\sf LLPO}_{m/k}}$, whenever $0<n<l$ and $0<m<k$.
\end{enumerate}
These results suggest, within some constructive setting, that the $\Sigma^0_2$ law of excluded middle is sufficient to show the formula $\widehat{\Sigma^0_1\text{-}{\sf LLPO}_{m/k}}\rightarrow\widehat{\Sigma^0_1\text{-}{\sf LLPO}_{n/l}}$, whereas neither the $\Sigma^0_2$ double negation elimination nor the $\Sigma^0_2$ lessor limited principle of omniscience is sufficient.
\end{remark}

\begin{cor}
${\sf DNR}_2\leq_{\Sigma^0_2\text{\sf -LEM}}{\sf DNR}_3$; ${\sf DNR}_2\not\leq_{\Sigma^0_2\text{\sf -LLPO}}{\sf DNR}_3$; ${\sf DNR}_2\not\leq_{\Sigma^0_2\text{\sf -DNE}}{\sf DNR}_3$; ${\sf MLR}\leq_{\Sigma^0_2\text{\sf -LEM}}{\sf DNR}_3$; and ${\sf MLR}\not\leq_{\Sigma^0_2\text{\sf -DNE}}{\sf DNR}_3$.
Here, ${\sf MLR}$ denotes the set of all Martin-L\"of random reals.
\end{cor}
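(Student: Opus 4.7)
The plan is to use the correspondences from Part~I that were tabulated in the preceding theorem: the Weihrauch reducibility $\leq_{\Sigma^0_2\text{\sf -LEM}}$ coincides with the piecewise-computability reducibility $\leq^{<\omega}_\omega$, $\leq_{\Sigma^0_2\text{\sf -LLPO}}$ with $\leq^{<\omega}_1$, and $\leq_{\Sigma^0_2\text{\sf -DNE}}$ with $\leq^1_\omega$. Under these identifications the five clauses become, respectively, ${\sf DNR}_2\leq^{<\omega}_\omega{\sf DNR}_3$, ${\sf DNR}_2\not\leq^{<\omega}_1{\sf DNR}_3$, ${\sf DNR}_2\not\leq^1_\omega{\sf DNR}_3$, ${\sf MLR}\leq^{<\omega}_\omega{\sf DNR}_3$, and ${\sf MLR}\not\leq^1_\omega{\sf DNR}_3$, so the proof amounts to reassembling earlier results plus one classical fact from algorithmic randomness.

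I would discharge clauses~(1)--(3) essentially by citation. Corollary~\ref{cor:5:DNR4} supplies ${\sf DNR}_k\equiv^{<\omega}_\omega{\sf DNR}_2$ for all $k\geq 2$, from which clause~(1) is immediate. Corollary~\ref{cor:5:DNR2} supplies the strict separations ${\sf DNR}_3<^{<\omega}_1{\sf DNR}_2$ and ${\sf DNR}_3<^1_\omega{\sf DNR}_2$, whose ``$\not\leq$'' halves give clauses~(2) and (3).

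For clause~(4) I would use a three-link chain of reductions. Fix any nonempty $\Pi^0_1$ class $P\subseteq{\sf MLR}$ (for instance the complement of one level of a universal Martin-L\"of test). Then ${\sf MLR}\leq^1_1 P$ holds trivially by the inclusion $P\subseteq{\sf MLR}$, and the uniform Ku\v{c}era--G\'acs theorem delivers $P\leq^1_1{\sf DNR}_2$, so that ${\sf MLR}\leq^1_1{\sf DNR}_2$. Composing with ${\sf DNR}_2\leq^{<\omega}_\omega{\sf DNR}_3$ from Corollary~\ref{cor:5:DNR4} yields ${\sf MLR}\leq^{<\omega}_\omega{\sf DNR}_3$.

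The heart of the proof, and its only substantial obstacle, is clause~(5). I would first apply Proposition~\ref{prop:5:DNR1}(1): its locking-sequence argument shows that whenever the codomain is a homogeneous closed set, a $(1,\omega)$-reduction to it automatically upgrades to a $(1,1)$-reduction. Since ${\sf DNR}_3=\prod_n F_n$ is homogeneous and closed in $3^\nn$, a hypothetical ${\sf MLR}\leq^1_\omega{\sf DNR}_3$ would yield an extendible $\sigma\in T_{{\sf DNR}_3}^{ext}$ with ${\sf MLR}\leq^1_1{\sf DNR}_3\cap[\sigma]\equiv^1_1{\sf DNR}_3$, so it suffices to establish ${\sf MLR}\not\leq^1_1{\sf DNR}_3$. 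For this I would invoke the classical fact that ${\sf MLR}\leq^1_1{\sf DNR}_h$ holds precisely when $\sum_n 1/h(n)<\infty$, which fails for the constant bound $h\equiv 3$. The delicate point, which I expect to be the actual crux, is to match this classical characterization to our Medvedev-reducibility conventions and extract the direction ${\sf MLR}\not\leq^1_1{\sf DNR}_3$ in the form we need.
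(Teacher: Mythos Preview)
Your approach is essentially identical to the paper's: clauses (1)--(3) by the same citations, clause (4) via ${\sf MLR}\leq^1_1{\sf DNR}_2$ composed with clause (1), and clause (5) by using homogeneity through Proposition~\ref{prop:5:DNR1}(1) to reduce to ${\sf MLR}\not\leq^1_1{\sf DNR}_3$. The only difference is the external citation for this last fact: the paper invokes Downey--Greenberg--Jockusch--Milans directly for ${\sf MLR}\not\leq^1_1{\sf DNR}_3$, rather than the summability characterization you mention (your caution there is warranted, as that characterization is not quite in the form you state, but the specific instance you need is precisely the DGJM result).
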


\begin{proof}
For the first three statements, see Corollary \ref{cor:5:DNR2} and Theorem \ref{thm:5:DNR3}.
It is easy to see that ${\sf MLR}\leq^1_1{\sf DNR}_2\leq_{\Sigma^0_2\text{-}{\sf LEM}}{\sf DNR}_3$.
It is shown by Downey-Greenberg-Jockusch-Millans \cite{DGJM} that ${\sf MLR}\not\leq^1_1{\sf DNR}_3$.
By homogeneity of ${\sf DNR}_3$ and Proposition \ref{prop:5:DNR1}, we have ${\sf MLR}\not\leq_{\Sigma^0_2\text{-}{\sf DNE}}{\sf DNR}_3$.
\end{proof}

\subsection{Some Intermediate Lattices are Not Brouwerian}

Recall from Medvedev's Theorem \cite{Med}, Muchnik's Theorem \cite{Muc}, and Part I \cite{HK_PartI} that the degree structures $\mathcal{D}^1_1$, $\mathcal{D}^1_\omega$, and $\mathcal{D}^\omega_1$ are Browerian.
Indeed, we have already observed that one can generate $\mathcal{D}^1_\omega$ from a logical principle so called {the \em $\Sigma^0_2$-double negation elimination}.
Though $\mathcal{D}^1_{<\omega}$, $\mathcal{D}^1_{\omega|<\omega}$ and $\mathcal{D}^{<\omega}_1$ are also generated from certain logical principles over $\mathcal{D}^1_1$ as seen before, surprisingly, these degree structures are {\em not} Brouwerian.

\begin{theorem}\label{thm:2:nonBrouwer}
The degree structures $\mathcal{D}^1_{<\omega}$, $\mathcal{D}^1_{\omega|<\omega}$, $\mathcal{D}^{<\omega}_1$, $\mathcal{P}^1_{<\omega}$, $\mathcal{P}^1_{\omega|<\omega}$, and $\mathcal{P}^{<\omega}_1$ are not Brouwerian.
\end{theorem}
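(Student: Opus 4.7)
The plan is to exhibit, in each of the three $\mathcal{P}$-structures ($\mathcal{P}^1_{<\omega}$, $\mathcal{P}^1_{\omega|<\omega}$, $\mathcal{P}^{<\omega}_1$) a pair $(\dg{a},\dg{b})$ together with two further degrees $\dg{c}_0,\dg{c}_1$ satisfying $\dg{a}\vee\dg{c}_i\geq\dg{b}$ for each $i\in\{0,1\}$ while $\dg{a}\vee(\dg{c}_0\wedge\dg{c}_1)\not\geq\dg{b}$. Such a configuration rules out Brouwerianness: if a least element $\dg{c}^\ast$ of $\{\dg{c}:\dg{a}\vee\dg{c}\geq\dg{b}\}$ existed, then $\dg{c}^\ast\leq\dg{c}_i$ for both $i$ would force $\dg{c}^\ast\leq\dg{c}_0\wedge\dg{c}_1$, whence $\dg{b}\leq\dg{a}\vee\dg{c}^\ast\leq\dg{a}\vee(\dg{c}_0\wedge\dg{c}_1)$, contradicting the last clause. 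The meet of two nonempty $\Pi^0_1$ sets is realized by their set-theoretic union in every reducibility considered here, so the same quadruple $(\dg{a},\dg{b},\dg{c}_0,\dg{c}_1)$ lifted into the ambient $\mathcal{D}^\alpha_\beta$-structure witnesses non-Brouwerianness there as well: any $\dg{c}^\ast\in\mathcal{D}^\alpha_\beta$ (possibly outside $\mathcal{P}$) in the candidate set would still be dominated by the $\Pi^0_1$-representative of $\dg{c}_0\wedge\dg{c}_1$, and the same contradiction ensues.

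The quadruples will be built from a Jockusch--Soare antichain (Theorem \ref{thm:JS}). Pick two Turing-incomparable special $\Pi^0_1$ sets $Q_0,Q_1\subseteq 2^\nn$ from such an antichain, and let $P$ be a special $\Pi^0_1$ set coding "which side is being read"; in the Medvedev baseline one can take $\dg{a}=\deg(\mathrm{CPA}\fr(Q_0\linf Q_1))$, $\dg{b}=\deg(Q_0\lsup Q_1)$, and $\dg{c}_i=\deg(Q_i)$. Knowing the side index $i$ together with the auxiliary information in $\dg{a}$ and a solution in $\dg{c}_i$ suffices to uniformly produce a solution on the opposite coordinate of $\dg{b}$, whereas the union $Q_0\cup Q_1$ (the representative of $\dg{c}_0\wedge\dg{c}_1$) erases the side label, and by Turing incomparability of $Q_0,Q_1$ no reduction from $A\otimes(Q_0\cup Q_1)$ to $B$ can recover it. This template is upgraded to the three target structures by replacing the concatenation-style combination by the appropriate operation from the paper: the iteration along $\mathcal{O}$-notations (Proposition \ref{prop:wfHig}, Proposition \ref{prop:breductiona}) for $\mathcal{P}^1_{<\omega}$, the delayed-derivative construction (Theorem \ref{thm:contig:b-l}) for $\mathcal{P}^1_{\omega|<\omega}$, and the priority construction (Theorem \ref{thm:NRMP:contig:tl-w}) for $\mathcal{P}^{<\omega}_1$. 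Each supplies the strengthened anticupping input needed at the respective reducibility level.

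The main obstacle lies in clause (iii), the failure of joint cupping $\dg{a}\vee(\dg{c}_0\wedge\dg{c}_1)\not\geq\dg{b}$. One must exclude not only a single computable reduction but every reduction in the class $[\mathfrak{C}_T]^\alpha_{\beta|\gamma}$ from $A\otimes(C_0\cup C_1)$ to $B$. The argument threads the strong anticupping proofs of Theorems \ref{thm:Hig} and \ref{thm:hyperHig} through the learning/team-learning model appropriate to the reducibility class, showing that any admissible reduction must at some stage commit to reading the $C$-coordinate as lying in $Q_0$ or in $Q_1$; once that commitment is made, the Turing incomparability of $Q_0,Q_1$ prevents the reduction from producing a solution on the other coordinate of $\dg{b}$, and a defeating input is extracted by the priority bookkeeping of the chosen anticupping theorem.
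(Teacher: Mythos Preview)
Your logical template is sound: exhibiting $\dg{c}_0,\dg{c}_1$ in the candidate set $\{\dg{c}:\dg{a}\vee\dg{c}\geq\dg{b}\}$ whose meet is not in that set does refute Brouwerianness. The gap is that you never actually build such a quadruple. The ``Medvedev baseline'' cannot work as stated, since $\mathcal{D}^1_1$ \emph{is} Brouwerian, and the claim that ``the auxiliary information in $\dg{a}$ \ldots suffices to produce a solution on the opposite coordinate'' is already false for your choice $A=\mathrm{CPA}\fr(Q_0\linf Q_1)$: if $f\in A$ happens to be $\rho\fr i\fr h$ with $h\in Q_i$, then $f\oplus g$ for $g\in C_i=Q_i$ carries no $Q_{1-i}$-information whatsoever. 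The ``upgrades'' you cite (Propositions~\ref{prop:wfHig}, \ref{prop:breductiona}, Theorems~\ref{thm:contig:b-l}, \ref{thm:NRMP:contig:tl-w}, \ref{thm:Hig}, \ref{thm:hyperHig}) are LEVEL~4 separation and anticupping results; they manufacture $\Pi^0_1$ sets inside a given Muchnik degree, which is orthogonal to controlling the cup-set of a fixed pair $(\dg{a},\dg{b})$. Nothing in the proposal explains how an anticupping statement of the form ``$\dg{a}\leq\dg{b}\vee\dg{c}\Rightarrow\dg{a}\leq\dg{c}$'' yields the needed failure $\dg{b}\not\leq\dg{a}\vee(\dg{c}_0\wedge\dg{c}_1)$ against reductions of \emph{every} finite bound.

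The paper's argument is structurally different. It fixes one pair $P,Q$ together with an \emph{infinite} family $\{Z_e\}_{e\in\nn}$ of $\Pi^0_1$ sets such that (i) each $Z_e$ cups $P$ to $Q$ already in the $(1,<\omega)$-sense, with roughly $e$ mind-changes, and (ii) for every $R$ in the $(<\omega,1)$-cup-set there is some $e$ with $R\not\leq^{\omega}_1 Z_e$. Clause (ii) is obtained by reading off the finite bound $b$ from the reduction witnessing $Q\leq^{<\omega}_1 P\otimes R$ and then showing $R\not\leq^{\omega}_1 Z_{b+1}$; this is precisely where an infinite family indexed by the bound is needed, and why two fixed witnesses are not enough without a separate argument defeating all bounds simultaneously. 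The construction rests on the Jockusch--Soare \emph{independence} theorem (Theorem~\ref{thm:JS2}), not merely an antichain: $P$ is assembled from chains $S_{\lrangle{e,0}}\fr\cdots\fr S_{\lrangle{e,e}}$ of growing length and $Z_e=S_{\lrangle{e,e+1}}$, so that along the $(b+1)$-st leaf any reduction from $P\otimes R$ to $Q$ is forced to use at least $b+1$ distinct indices. Since $\leq^1_{<\omega}$ refines each of the three target reducibilities and each refines $\leq^{<\omega}_1$, the single lemma handles all six structures at once.
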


Put $\mathcal{A}(P,Q)=\{R\subseteq\nn^\nn:Q\leq^1_{<\omega} P\otimes R\}$, and $\mathcal{B}(P,Q)=\{R\subseteq\nn^\nn:Q\leq^{<\omega}_1 P\otimes R\}$.
Note that $\mathcal{A}(P,Q)\subseteq\mathcal{B}(P,Q)$.
Then we show the following lemma.

\begin{lemma}\label{lem:2:nonBrouwer}
There are $\Pi^0_1$ sets $P,Q\subseteq 2^\nn$, and a collection $\{Z_e\}_{e\in\mathbb{N}}$ of $\Pi^0_1$ subsets of $2^\nn$ such that $Z_e\in\mathcal{A}(P,Q)$, and that, for every $R\in\mathcal{B}(P,Q)$, we have $R\not\leq^{\omega}_1 Z_e$ for some $e\in\nn$.
\end{lemma}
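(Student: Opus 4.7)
The plan is to take $P$ to be Medvedev-trivial and to construct $Q$ and the family $\{Z_e\}_{e\in\nn}$ from the Jockusch--Soare sequence of Theorem~\ref{thm:JS2}. Let $\{P_n\}_{n\in\nn}$ be a uniformly computable sequence of pairwise Turing-independent homogeneous special $\Pi^0_1$ subsets of $2^\nn$, and set $P=2^\nn$, $Q=\cmeet_n P_n={\sf CPA}\fr\{P_n\}_n$, and $Z_e=P_e$ for each $e\in\nn$. The verification that $Z_e\in\mathcal{A}(P,Q)$ is immediate: with $\rho_e$ the $e$-th leaf of the corresponding computable tree $T_{\sf CPA}$, the map $g\mapsto\rho_e\fr g$ is a computable function witnessing $Q\leq^1_1 P_e$, hence $Q\leq^1_{<\omega}P\otimes Z_e$.

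The substantive task is showing that every $R\in\mathcal{B}(P,Q)$ fails to be Muchnik-below at least one $Z_e$. Suppose for contradiction that $R\in\mathcal{B}(P,Q)$ and $R\leq^\omega_1 P_e$ for all $e$, and let $\Phi_1,\dots,\Phi_b$ be the $b$ total computable functions witnessing $Q\leq^{<\omega}_1 R$. First, $R$ contains no computable element (otherwise some $\Phi_i$ would send it into the special set $Q$, a contradiction). Second, for each $e$ we choose $r_e\in R$ together with $p_e\in P_e$ satisfying $r_e\leq_T p_e$, using $R\leq^\omega_1 P_e$. Third, the valid output $\Phi_{i(e)}(r_e)\in Q$ must have the form $\rho_n\fr p$ with $p\in P_n$, $p\leq_T r_e\leq_T p_e$; Turing-independence of the sequence $\{P_n\}$ then forces $n=e$, so each $r_e$ is routed to the $e$-th prefix $\rho_e$ of $T_{\sf CPA}$. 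Pigeonhole now yields an index $i^*\in\{1,\dots,b\}$ and an infinite $E\subseteq\nn$ with $i(e)=i^*$ for all $e\in E$.

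The main obstacle is converting this configuration into a contradiction: the single continuous computable function $\Phi_{i^*}$ must, for infinitely many $e\in E$, map the $r_e\in R$ into $\rho_e\fr P_e$, with the prefixes $\rho_e$ pairwise incomparable in $T_{\sf CPA}$ and the $r_e$ arising from pairwise Turing-independent Muchnik cones. I expect to rule this out by a combined compactness and Baire-category argument that exploits the homogeneity of each $P_e=\prod_n F^e_n$ and the specific structure of the Jockusch--Soare construction; the strategy parallels Higuchi's proof \cite{Higuchi12} that the Medvedev lattice of $\Pi^0_1$ subsets of $2^\nn$ is non-Brouwerian. This last step is the delicate part, since although the finite team $\{\Phi_i\}$ handles each individual $r_e$ easily, one must show that the infinite spread of prefixes $\rho_e$ across Turing-independent sources cannot be accommodated by any uniform computable routing; any such routing would amount to a partial computable selector distinguishing infinitely many Turing-independent homogeneous $\Pi^0_1$ sets, contradicting their construction.
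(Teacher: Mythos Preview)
Your proposed construction does not work: with $P=2^{\nn}$, $Q=\cmeet_n P_n$, and $Z_e=P_e$, the set $R=Q$ itself is a counterexample. Certainly $R=Q\in\mathcal{B}(P,Q)$ via the identity, yet $R\leq^{\omega}_1 Z_e$ for \emph{every} $e$, since $p\mapsto\rho_e\fr p$ is a computable map from $P_e$ into $Q=R$. Thus the conclusion of the lemma fails outright for this $R$, and no amount of Baire-category or compactness argument will rescue it. The ``delicate last step'' you flag is not merely delicate; it is impossible, because the configuration you arrive at (a single $\Phi_{i^*}$ routing infinitely many $r_e$ to pairwise incomparable prefixes $\rho_e$) is perfectly consistent---it is realized already by the identity on $Q$.

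The paper's proof works precisely because $P$ is \emph{not} taken to be trivial. The essential idea is to build into $P$, at its $e$-th leaf $\rho_e$, the $(e{+}1)$-fold concatenation $S_{\lrangle{e,0}}\fr S_{\lrangle{e,1}}\fr\dots\fr S_{\lrangle{e,e}}$, and to design $Q$ so that any reduction $Q\leq^{<\omega}_1 P\otimes R$ restricted to inputs in $[\rho_e]$ must, as one walks down the concatenation tower, output elements extending $e{+}1$ pairwise incomparable leaves of $T_{\sf CPA}$. A team of only $b$ functionals then cannot cope with the depth-$(b{+}2)$ tower at $\rho_{b+1}$, and this is where the contradiction comes from (via a finite iteration argument, not a limiting one). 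Your trivial $P$ discards exactly this leverage: when $P=2^{\nn}$, the condition $Q\leq^{<\omega}_1 P\otimes R$ collapses to $Q\leq^{<\omega}_1 R$, which carries no depth information that could outrun a fixed finite bound $b$.
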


\begin{proof}
By Jockusch-Soare's theorem \ref{thm:JS2}, we have a collection $\{S_i\}_{i\in\nn}$ of nonempty $\Pi^0_1$ subsets of $2^\nn$ such that $x_k\not\leq_T\bigoplus_{j\not=k}x_j$ for any choice $x_i\in S_i$, $i\in\nn$.
Consider the following sets.
\begin{align*}
P&={\sf CPA}\fr\{S_{\lrangle{e,0}}\fr S_{\lrangle{e,1}}\fr\dots\fr S_{\lrangle{e,e}}\}_{e\in\nn},\quad Z_e=S_{\lrangle{e,e+1}},\\
Q&={\sf CPA}\fr\{Q_n\}_{n\in\nn},\quad\text{where }\;Q_{\lrangle{e,i}}=
\begin{cases}
S_{\lrangle{e,i}}\otimes Z_e, & \mbox{ if }i\leq e,\\
(P\setminus[\rho_e])\otimes Z_e, & \mbox{ if }i=e+1,\\
\emptyset, & \mbox{ otherwise.}
\end{cases}
\end{align*}
Here, $\rho_e$ is the $e$-th leaf of the corresponding computable tree $T_{\sf CPA}$ for ${\sf CPA}$.
To see $Z_e\in\mathcal{A}(P,Q)$, choose an element $f\oplus g\in P\otimes Z_e$.
If $f\res n\in T_{\sf CPA}$ or $f\res n$ extends a leaf except $\rho_e$, our learner $\Psi((f\res n)\oplus g)$ guesses an index of the identity function.
If $f\res n$ extends $\rho_e$, then $\Psi$ first guesses $\Phi_{\Psi((f\res n)\oplus g)}(f\oplus g)=(f^{\shft|\rho_e|})\oplus g$.
By continuing this guessing procedure, if $f\res n$ is of the form $\rho_e\fr\tau^0\fr\tau^1\fr\dots\fr\tau^i\fr\tau$ such that $\tau^j$ is a leaf of $S_{\lrangle{e,j}}$ for each $j\leq i$, and $\tau$ does not extend a leaf of $S_{\lrangle{e,j+1}}$, then $\Psi$ guesses $\Phi_{\Psi((f\res n)\oplus g)}(f\oplus g)=(f^{\shft(|\rho_e|+|\tau^0|+\dots+|\tau^i|)})\oplus g$.
Note that $i<e$, since $f\in P$.
It is easy to see that $Q\leq^1_{<\omega}P\otimes Z_e$ via the learner $\Psi$, where $\#\{n\in\nn:\Psi((f\oplus g)\res n+1)\not=\Psi((f\oplus g)\res n)\}\leq e+1$.
Therefore, $Z_e\in\mathcal{A}(P,Q)$.

Fix $R\in\mathcal{B}(P,Q)$.
As $Q\leq^{<\omega}_1P\otimes R$, there is $b\in\mathbb{N}$ such that, for every $f\oplus g\in P\otimes R$, we must have $\Phi_e(f\oplus g)\in Q$ for some $e<b$.
Suppose for the sake of contradiction that $R\leq^{\omega}_1Z_{b+1}$.
Then, for any $h\in Z_{b+1}$, we have $g\in R$ with $g\leq_T h$.
Pick $f_0\in\rho_{b+1}\fr S_{\lrangle{b+1,0}}\subset P\cap[\rho_{b+1}]$.
Since $R\in\mathcal{B}(P,Q)$, there is $e_0<b$ such that $\Phi_{e_0}(f_0\oplus g)\in Q$.
By our choice of $\{S_n\}_{n\in\nn}$ and the property $g\leq_Th\in Z_{b+1}=S_{\lrangle{b+1,b+2}}$, if $e\not=b+1$ or $i\not=0$, then $Q_{\lrangle{e,i}}$ has no $(f_0\oplus g)$-computable element.
Therefore, $\Phi_{e_0}(f_0\oplus g)$ have to extend $\rho_{\lrangle{b+1,0}}$.
Take an initial segment $\sigma_0\subset f_0$ determining $\Phi_{e_0}(\sigma_0\oplus g)\supseteq\rho_{\lrangle{b+1,0}}$.
Extend $\sigma_0$ to a leaf $\tau^0$ of $S_{b+1,0}$, and choose $f_1\in\rho\fr\tau^0\fr S_{b+1,1}\subset P$.
Again we have $e_1<b$ such that $\Phi_{e_1}(f_1\oplus g)\in Q$.
As before, $\Phi_{e_1}(f_1\oplus g)$ have to extend $\rho_{\lrangle{b+1,1}}$.
However, $\rho_{\lrangle{b+1,1}}$ is incomparable with $\rho_{\lrangle{b+1,0}}$.
Hence, we have $e_1\not=e_0$.
Again take an initial segment $\sigma_1\subset f_1$ extending $\sigma_0$ and determining $\Phi_{e_1}(\sigma_1\oplus g)\supseteq\rho_{\lrangle{b+1,1}}$.
By iterating this procedure, we see that $R$ requires at least $b+1$ many indices $e_i$.
This contradicts our assumption.
Therefore, $R\not\leq^{<\omega}_1Z_{b+1}$.
\end{proof}

\begin{proof}[Proof of Theorem \ref{thm:2:nonBrouwer}]
Let $P$, $Q$, and $\{Z_e\}_{e\in\nn}$ be $\Pi^0_1$ sets in \ref{lem:2:nonBrouwer}.
Fix $(\alpha,\beta)\in\{(1,<\omega),(1,\omega|<\omega),(<\omega,1)\}$.
To see $\mathcal{D}^\alpha_\beta$ is not Brouwerian, it suffices to show that there is no $(\alpha,\beta)$-least $R$ satisfying $Q\leq^\alpha_\beta P\otimes R$.
If $R$ satisfies $Q\leq^\alpha_\beta P\otimes R$, then clearly $R\in\mathcal{B}(P,Q)$ since $\leq^\alpha_\beta$ is stronger than or equals to $\leq^{<\omega}_1$.
Then, $R\not\leq^\alpha_\beta Z_e$ for some $e\in\nn$.
Moreover, $Z_e\in\mathcal{A}(P,Q)$ implies $Q\leq^\alpha_\beta P\otimes Z_e$, since $\leq^\alpha_\beta$ is weaker than or equals to $\leq^1_{<\omega}$.
Hence $R$ is not such a smallest set.
By the same argument, it is easy to see that $\mathcal{P}^\alpha_\beta$ is not Brouwerian, since $Z_e$ is $\Pi^0_1$.
\end{proof}

\begin{theorem}\label{thm:5:nonBrouwer2}
$\mathcal{D}^{<\omega}_\omega$ and $\mathcal{P}^{<\omega}_\omega$ are not Brouwrian.
Moreover, the order structures induced by $(\mathcal{P}(\nn^\nn),\leq_{\Sigma^0_2\text{\sf -LEM}})$ and $(\text{the set of all nonempty } \Pi^0_1 \text{ subsets of } 2^\nn,\leq_{\Sigma^0_2\text{\sf -LEM}})$ are not Brouwrian.
\end{theorem}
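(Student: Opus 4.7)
The plan is to mimic the strategy of Theorem \ref{thm:2:nonBrouwer}: establish an analog of Lemma \ref{lem:2:nonBrouwer} tailored to $(<\omega,\omega)$-computability. Specifically, I want to produce $\Pi^0_1$ sets $P,Q\subseteq 2^\nn$ together with a computable collection $\{Z_e\}_{e\in\nn}$ of $\Pi^0_1$ subsets of $2^\nn$ such that $Q\leq^{<\omega}_{\omega}P\otimes Z_e$ for every $e$, while every $R$ with $Q\leq^{<\omega}_{\omega}P\otimes R$ fails to satisfy $R\leq^{\omega}_{1}Z_e$ for some $e$. The theorem then follows exactly as in the proof of Theorem \ref{thm:2:nonBrouwer}: since $R\not\leq^{\omega}_{1}Z_e$ forces $R\not\leq^{<\omega}_{\omega}Z_e$ a fortiori, the set $\{R:Q\leq^{<\omega}_{\omega}P\otimes R\}$ admits no $\leq^{<\omega}_\omega$-least element in $\mathcal{D}^{<\omega}_\omega$, and the same argument restricted to $\Pi^0_1$ sets covers $\mathcal{P}^{<\omega}_\omega$. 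The corresponding statement for $\leq_{\Sigma^0_2\text{\sf -LEM}}$ then transfers via the characterization of $(<\omega,\omega)$-computability in terms of $\Sigma^0_2$-LEM established in Part I \cite{HK_PartI}.

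To carry this out, I would replace the finite iterated concatenation used in Lemma \ref{lem:2:nonBrouwer} by iterated hyperconcatenation, so that the role played there by bounded mind-change learning is taken over here by team learning. Using the independent sequence $\{S_i\}_{i\in\nn}$ from Jockusch-Soare's Theorem \ref{thm:JS2}, I would set
\[
P={\sf CPA}\fr\bigl\{S_{\lrangle{e,0}}\htie S_{\lrangle{e,1}}\htie\dots\htie S_{\lrangle{e,e}}\bigr\}_{e\in\nn},\qquad Z_e=S_{\lrangle{e,e+1}},
\]
and define $Q={\sf CPA}\fr\{Q_n\}_{n\in\nn}$ by an analogous recursive meet, where for $i\leq e$ the component $Q_{\lrangle{e,i}}$ is $S_{\lrangle{e,i}}\otimes Z_e$, together with a further overflow component that guards against reductions leaving the $e$-th branch of $P$.

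The upper side, $Q\leq^{<\omega}_\omega P\otimes Z_e$, is the easier one: for $f\oplus g\in P\otimes Z_e$ lying in the $e$-th branch, a team of $e+1$ learners, one per layer of the hyperconcatenated block, can collectively decide which factor $S_{\lrangle{e,i}}$ the coordinate $f$ eventually settles in, exploiting the team decomposition of $P\htie P$ established in Part I \cite{HK_PartI} and already exploited in Corollary \ref{cor:5:l-tlref}. The harder side is the negative one: suppose $Q\leq^{<\omega}_\omega P\otimes R$ is witnessed by a team of $b$ learners and $R\leq^{\omega}_{1}Z_{b+1}$. I would traverse the $b+2$ successive factors $S_{\lrangle{b+1,0}},\dots,S_{\lrangle{b+1,b+1}}$ of the $(b+1)$-th branch, at each step producing an extension $f_i\oplus g$ on which some learner of the team must commit to a fresh limit index, ultimately exhibiting $b+1$ distinct committed indices among only $b$ available. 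The main obstacle will be that, unlike a concatenated path, a hyperconcatenated path is a limit along an ill-founded tree, so the ``force a new index'' step cannot be phrased directly through leaves; it must instead be phrased through locking sequences in the spirit of the proof of Theorem \ref{thm:hyperHig}, using the non-cupping property of $\htie$ to extract stable behavior from each individual learner. Once this is in place, Turing independence of $\{S_i\}_{i\in\nn}$ together with $g\leq_Th\in Z_{b+1}$ rules out any $Q_{\lrangle{e,i}}$ other than the intended one receiving an $(f_i\oplus g)$-computable witness, and the contradiction closes exactly as in Lemma \ref{lem:2:nonBrouwer}.
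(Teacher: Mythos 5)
Your high-level architecture matches the paper's proof exactly: the same $P={\sf CPA}\fr\{S_{\lrangle{e,0}}\htie\dots\htie S_{\lrangle{e,e}}\}_{e\in\nn}$, the same $Z_e$ and $Q$, the same sandwich between the stronger upper ordering ($\Sigma^0_2$-{\sf LEM}) for the $Z_e$'s and the weaker upper ordering ($\leq^{<\omega}_\omega$) for the ``any $R$'' side, and the same transfer to $\mathcal{P}^{<\omega}_\omega$ via the $\Pi^0_1$-ness of the $Z_e$'s. The positive direction (a team can decode which hyperconcatenated layer $f$ settles in) is also correctly sketched. However, the crux of the argument is left as a gesture, and the gesture points in a direction that does not close.

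The gap is in the negative direction. Your plan is to ``traverse the successive factors $S_{\lrangle{b+1,0}},\dots$, at each step producing an extension on which some learner must commit to a fresh limit index, ultimately exhibiting $b+1$ distinct committed indices among only $b$ available.'' This is the argument from Lemma~\ref{lem:2:nonBrouwer}, which works for $(<\omega,1)$-reducibility because the $\Phi_{e_i}$ are monotone Turing functionals: once $\Phi_{e_i}(\sigma)\supset\rho_{\lrangle{b+1,i}}$ the commitment is irreversible, so incomparable targets force distinct functionals. For team learners this fails: a learner's finite-stage guess is revisable, and the ``limit index'' cannot be pinned down by any finite extension of the input. A single learner $\Psi$ could produce, at finite stages, guesses pointing into $S_{\lrangle{b+1,0}}$, then $S_{\lrangle{b+1,1}}$, and so on, with no contradiction at the counting level. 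You correctly flag this obstacle and say it ``must instead be phrased through locking sequences in the spirit of Theorem~\ref{thm:hyperHig}''; but Theorem~\ref{thm:hyperHig} is a single-learner noncupping result and its locking-sequence mechanism (Lemma~\ref{lem:3b:hconc1}, a Baire-category argument in one cone) does not apply to a team, because a team only requires that at least one learner converges on each input, not that all do. Extracting ``stable behavior from each individual learner'' simultaneously is precisely the hard part, and it is not a corollary of the single-learner theorem.

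What the paper actually proves here is a separate combinatorial lemma, Lemma~\ref{lem:5:n_o-comp}: if $\{S_i\}_{i\leq n}$ is such that $\bigcup_{k\ne i}S_k$ contains nothing computable in any $x_i\in S_i$, then there is no $(n,\omega)$-computable function from $\bhtie_{i\leq n}S_i$ to $\bigoplus_{i\leq n}S_i$. Its proof is not the leaf-by-leaf traversal you outline. Instead it builds restricted ``hearts'' $S^\heartsuit_E$ by replacing some factor trees with finite subtrees, and runs a pigeonhole induction producing sets $A\subseteq n$ and $B\subseteq n+1$ of ``locked'' learners and components: learner $i$ is locked when, within some cone, $D_i\cap S^\heartsuit_{(n+1)\setminus B}$ lands entirely inside one component $S^*_j$; that learner and component are then removed and the heart is shrunk. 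When the process terminates, the residual unlocked learners $A^-=n\setminus A$ are strictly fewer than the residual unvisited components $B^-=(n+1)\setminus B$, which is what makes the final diagonalization possible. One then builds a single point $y\in S^\heartsuit_{B^-}$ that alternates forever among the residual components, forcing each unlocked learner either to change its mind infinitely often or to converge to the wrong component. This is considerably more than ``once this is in place''; it is the heart of the theorem, and none of it is supplied (or recoverable as an easy adaptation) by your plan. Without proving Lemma~\ref{lem:5:n_o-comp} or an equivalent, the claimed contradiction in the negative direction does not follow.
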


\begin{lemma}\label{lem:5:n_o-comp}
Let $\{S_i\}_{i\leq n}$ be a collection of $\Pi^0_1$ subsets of $2^\nn$ with the property for each $i\leq n$ that $\bigcup_{k\not=i}S_k$ has no element computable in $x_i\in S_i$.
Then, there is no $(n,\omega)$-computable function from $\bhtie_{i\leq n}S_i$ to $\bigoplus_{i\leq n}S_i$.
\end{lemma}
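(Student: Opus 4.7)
The plan is to adapt the locking-sequence argument from Theorem \ref{thm:hyperHig} to a pigeonhole-based construction exploiting the numerical gap between the $n+1$ sets $S_0,\dots,S_n$ and the $n$ learners comprising any $(n,\omega)$-reduction. Suppose for contradiction that $\Gamma:\bhtie_{i\leq n}S_i\to\bigoplus_{i\leq n}S_i$ is identified by a team $\{\Psi_j\}_{j<n}$. First I would generalize Lemma \ref{lem:3b:hconc1}: for every string $\rho$ extendible in the heart of the corresponding tree of $\bhtie_{i\leq n}S_i$, and every $j<n$, either $\Psi_j$ has a locking sequence $\rho_j\supseteq\rho$ in the heart above which its prediction is constant on all further heart-extensions, or some infinite path in $\bhtie_{i\leq n}S_i$ extending $\rho$ causes $\Psi_j$ to diverge (in which case that learner is already defeated along that path). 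Iterating over $j<n$, using closure of the heart under the branching moves of the hyperconcatenation, yields a common locking node $\rho^*$ at which every surviving learner has committed to a computable index $e_j$, and $\Phi_{e_j}$ is intended to output a point in $i_j\fr S_{i_j}$ for some $i_j\in\{0,\dots,n\}$.

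Now pigeonhole: the map $j\mapsto i_j$ cannot be surjective onto $\{0,\dots,n\}$, so some index $i^*\in\{0,\dots,n\}$ is unused. Since the ill-founded structure of $\bhtie_{i\leq n}S_i$ allows any heart-extendible node to be prolonged to an infinite path routed into a single copy of $S_{i^*}$ (via the analogue of the construction $\bigcup_{\tau\in T_Q}(\concat_{i<|\tau|}T_P\fr\lrangle{\tau(i)})\fr T_P$ that defined $\htie$ in the binary case), I would extend $\rho^*$ to a path $g\in\bhtie_{i\leq n}S_i$ which is Turing equivalent to some $f\in S_{i^*}$, up to trivial padding on the finitely many components already consumed by $\rho^*$. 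By the independence hypothesis, $\bigcup_{k\neq i^*}S_k$ contains no element computable in $f$, hence none computable in $g$. Thus for every $j<n$, the output $\Phi_{e_j}(g)$, being $g$-computable, cannot belong to $i_j\fr S_{i_j}$ since $i_j\neq i^*$, contradicting the assumption that some learner in the team identifies $\Gamma$ on $g$.

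The main obstacle will be the coordination between locking-stabilization across all $n$ learners and the requirement that $\rho^*$ remain extendible in every direction $i\leq n$, so that the pigeonhole choice of $i^*$ can actually be realized. This flexibility is delicate, because while iterating the locking-sequence procedure a learner may only stabilize after we have prolonged $\rho$ deep into one specific $S_i$, potentially closing off access to other components. As in the analogous step in the proof of Theorem \ref{thm:hyperHig} (conditions (1)--(3) of Lemma \ref{lem:3b:hconc2}), I plan to resolve this by inserting branch-switching moves in the hyperconcatenation (the generalisations of the $\lrangle{m}$ markers in $Q\htie P$) before attempting to force each learner's stabilization, thereby preserving all $n+1$ exit routes at $\rho^*$. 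A learner who refuses to stabilize within the heart gives a diverging path on which the team already fails and can be discarded at the outset. Once the common locking node with all exits open is secured, the pigeonhole step and the independence hypothesis complete the contradiction.
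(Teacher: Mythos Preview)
Your pigeonhole intuition is right, but the ``discard'' step contains a genuine gap. If a learner $\Psi_j$ has no locking sequence in the heart, you correctly observe that \emph{some} path makes $\Psi_j$ diverge---but on that particular path the \emph{other} learners may well succeed, so the team does not fail there. You cannot simply discard $\Psi_j$ and continue with $n-1$ learners, because defeating the remaining learners (via your pigeonhole step) may force you onto a path on which $\Psi_j$ happens to converge and output correctly. Coordinating the divergence of the non-locking learners with the pigeonhole defeat of the locking learners on a \emph{single common} path is exactly the crux, and your proposal does not explain how to do it; the sentence ``the team already fails and can be discarded at the outset'' is simply false for a team.

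The paper's proof sidesteps locking sequences entirely and uses a different, more robust dichotomy. For each learner $\Psi_i$ one asks whether, above some node $\sigma$ in the current restricted heart $S^\heartsuit_{(n+1)\setminus B_s}$, the set $D_i=\{x:F_i(x)\in\bigoplus_k S_k\}$ is confined to a single component $S^*_j$. If yes, record $(i,j)$, enlarge $B_s$ by $j$, and restrict the heart accordingly; if no, then above \emph{every} node $D_i$ meets at least two components of $B^-$, so one can force $\Psi_i$'s first output bit (hence its prediction) to flip above any given node while remaining inside $S^\heartsuit_{B^-}$. When this terminates, the ``confined'' learners are automatically defeated because the final path lives in $S^\heartsuit_{B^-}$, disjoint from their committed targets; the ``unconfined'' learners are \emph{all} made to diverge along one path by interleaving their mind-change moves, which are available inside $S^\heartsuit_{B^-}$ for every such learner simultaneously. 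The counting $\#B^-=\#A^-+1\geq 1$ is the form the pigeonhole takes here. This confinement dichotomy is what makes the simultaneous defeat possible; your locking-sequence dichotomy does not.
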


\begin{proof}
Assume the existence of an $(n,\omega)$-computable function from $\bhtie_{i\leq n}S_i$ to $\bigoplus_{i\leq n}S_i$ which is identified by $n$ many learners $\{\Psi_i\}_{i<n}$.
Let $F_i$ be a partial $(n,\omega)$-computable function identified by $\Psi$, i.e., $F_i(x)=\Phi_{\lim_n\Psi(x\res n)}(x)$.
Note that $\bhtie_{i\leq n}S_i\subseteq\bigcup_{i<n}{\rm dom}(F_i)$.
For each $i<n$, put $D_i={\rm dom}(F_i)\cap F_i^{-1}(\bigoplus_{i\leq n}S_i)$.
Let $T_{S_i}$ denote the corresponding tree for $S_i$, for each $i\leq n$.
Define $S^\heartsuit_E$ for each $E\subseteq n+1$ to be the set of all infinite paths through the following tree $T_E$.
\begin{align*}
&T_E=\bhtie_{\sigma\in T_0^E}\left(\bhtie_{\sigma\in T_1^E}\left(\dots\left(\bhtie_{\sigma\in T_{n-1}^E}[T_n^E]\right)\dots\right)\right).\\
&\text{Here, }
T_i^E=
\begin{cases}
T_{S_i}^{ext},&\mbox{ if }i\in E,\\
\text{some finite subtree of }T_{S_i}^{ext},&\mbox{ otherwise.}
\end{cases}
\end{align*}
Here, the choice of ``some finite subtree of $T_{S_i}^{ext}$'' depends on the context, and is implicitly determined when $E$ is defined.
For each $E\subseteq n+1$, clearly $S^\heartsuit_E$ is a closed subset of $\bhtie_{i\leq n}S_i$.
Divide $S^\heartsuit_{n+1}$ into $n+1$ many parts $\{S^*_i\}_{i\leq n}$, where $S^\heartsuit_{n+1}$ is equal to $\bigcup_{i\leq n}S_i^*$, and each $S^*_i$ is degree-isomorphic to $S_i$.

For each $i\leq n$, check whether there is a string $\sigma$ extendible in $S^\heartsuit_{n+1}$ such that $S^\heartsuit_{n+1}\cap D_i\cap[\sigma]$ is contained in $S^*_j$ for some $j\leq n$.
If yes, for such a least $i\leq n$, choose a witness $\sigma_0=\sigma$, and put $A_0=\{i\}$, and $B_0=\{j\}$.
Then, for such $j\in B_0$, ``some finite subtree of $T_{S_j}^{ext}$'' is choosen as the set of all strings $\eta$ used in $\sigma_0$ as a part of $T_{S_j}^{ext}$ in the sense of the definition of $\bhtie_{i\leq n}S_i$, or successors of such $\eta$ in $T_{S_j}^{ext}$.
Note that $\sigma_0$ is also extendible in $S^\heartsuit_{(n+1)\setminus\{j\}}$.
Inductively, for some $s<n$ assume that $\sigma_s$, $A_s$, and $B_s$ has been already defined.
For each $i\not\in A_s$, check whether there is a string $\sigma\supseteq\sigma_s$ extendible in $S^\heartsuit_{(n+1)\setminus B_s}$ such that $S^\heartsuit_{(n+1)\setminus B_s}\cap D_i\cap[\sigma]$ is contained in $S^*_j$ for some $j\not\in B_s$.
If yes, for such a least $i\not\in A_s$, choose a witness $\sigma_0=\sigma$, and put $A_{s+1}=A_s\cup\{i\}$, and $B_{s+1}=B_s\cup\{j\}$.
As before, for such $j\in B_{s+1}$, ``some finite subtree of $T_{S_j}^{ext}$'' is choosen as the set of all strings $\eta$ used in $\sigma_{s+1}$ as a part of $T_{S_j}^{ext}$, or successors of such $\eta$ in $T_{S_j}^{ext}$.
Note that $\sigma_{s+1}$ is also extendible in $S^\heartsuit_{(n+1)\setminus B_{s+1}}$.
If no such $i\not\in A_s$ exists, finish our construction of $\sigma$, $A$, and $B$.
Then, put $A=A_s$, $B=B_s$, and define $\sigma^*$ to be the last witness $\sigma_s$.

Put $A^-=n\setminus A$ and $B^-=(n+1)\setminus B$.
Note that $\#A^-+1=\#B^-$, since $\#A=\#B$.
Therefore, $B$ contains at least one element.
By our assumption, for any $x\in S^\heartsuit_{B^-}\cap[\sigma^*]\not=\emptyset$, we must have $F_i(x)\in\bigoplus_{i\leq n}S_i$ for some $i\in A^-$.
Thus, $A^-$ is nonempty.

Fix a sequence $\alpha\in(A^-)^{\nn}$ such that, for each $i\in A^-$, there are infinitely many $n\in\nn$ such that $\alpha(n)=i$.
First set $\tau_0=\sigma^*$.
Inductively assume that $\tau_s\supseteq\sigma^*$ has been already defined.
By our definition of $\sigma^*$, $A$ and $B$, if $\xi$ extends $\sigma^*$, then the set $S^\heartsuit_{B^-}\cap D_{\alpha(s)}\cap[\xi]$ intersects with $S_j^*$ for at least two $j\in B^-$.
Therefore, we can choose $x\in S_{B^-}^\heartsuit\cap D_{\alpha(s)}\cap[\tau_s]\cap S_j^*\not=\emptyset$ for some $j\in B^-$.
Then, $F_{\alpha(s)}(x;0)=j$, by our assumption of $\{S_i\}_{i\leq n}$.
Find a string $\tau^*_s$ such that $\tau_s\subseteq\tau^*_s\subset x$ and $F_{\alpha(s)}(\tau^*_s;0)=j$.
Again, we can choose $x^*\in S_{B^-}^\heartsuit\cap D_{\alpha(s)}\cap[\tau^*_s]\cap S_k^*\not=\emptyset$ for some $k\in B^-\setminus\{j\}$.
Then, we must have $F_{\alpha(s)}(x^*;0)=k\not=j$.
Let $\tau_{s+1}$ be a string such that $\tau^*_{s}\subseteq\tau_{s+1}\subset x^*$ and $F_{\alpha(s)}(\tau_{s+1};0)=k$.
Therefore, between $\tau_s$ and $\tau_{s+1}$, the learner $\Psi_{\alpha(s)}$ changes his mind.

Define $y=\bigcup_s\tau_s$.
Then $y$ is contained in $S^\heartsuit_{B^-}$, since $S^\heartsuit_{B^-}$ is closed.
However, for each $i\in A^-$, by our construction of $y$, the value $F_i(y)$ does not converge.
Moreover, for each $i\not\in A^-$, by our definition of $A$, $B$, and $\sigma^*\subset y$, even if $F_i(y)$ converges, $F_i(y)\not\in\bigoplus_{i\leq n}S_i$.
Consequently, there is no $(n,\omega)$-computable function from $\bhtie_{i\leq n}S_i\supset S^\heartsuit_{B^-}$ to $\bigoplus_{i\leq n}S_i$ as desired.
\end{proof}

Put $\mathcal{J}(P,Q)=\{R\subseteq\nn^\nn:Q\leq_{\Sigma^0_2\text{\sf -LEM}}P\otimes R\}$, and $\mathcal{K}(P,Q)=\{R\subseteq\nn^\nn:Q\leq^{<\omega}_\omega P\otimes R\}$.
Note that $\mathcal{J}(P,Q)\subseteq\mathcal{K}(P,Q)$.
Then we show the following lemma.

\begin{lemma}\label{lem:2:nonBrouwer2}
There are $\Pi^0_1$ sets $P,Q\subseteq 2^\nn$, and a collection $\{Z_e\}_{e\in\mathbb{N}}$ of $\Pi^0_1$ subsets of $2^\nn$ such that $Z_e\in\mathcal{J}(P,Q)$, and that, for every $R\in\mathcal{K}(P,Q)$, we have $R\not\leq^{\omega}_1 Z_e$ for some $e\in\nn$.
\end{lemma}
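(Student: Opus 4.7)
The plan is to follow the template of Lemma \ref{lem:2:nonBrouwer} but with the iterated concatenation $\bsqcap$ replaced by the iterated hyperconcatenation $\bhtie$, and with Lemma \ref{lem:5:n_o-comp} taking over the role of the elementary counting argument used there. Fix a computable family $\{S_i\}_{i\in\nn}$ of nonempty $\Pi^0_1$ subsets of $2^\nn$ as in Jockusch-Soare's Theorem \ref{thm:JS2}, so that every choice sequence $(x_i\in S_i)_{i\in\nn}$ is Turing-independent and no $S_i$ contains an element of a {\sf PA} degree. Then set $Z_e=S_{\lrangle{e,e+1}}$, $P_e=\bhtie_{i\leq e}S_{\lrangle{e,i}}$, $P={\sf CPA}\ntie\{P_e\}_{e\in\nn}$, and $Q={\sf CPA}\ntie\{Q_n\}_{n\in\nn}$ where $Q_{\lrangle{e,i}}=S_{\lrangle{e,i}}\otimes Z_e$ for $i\leq e$, $Q_{\lrangle{e,e+1}}=(P\setminus[\rho_e])\otimes Z_e$, and $Q_n=\emptyset$ otherwise. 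Each of these sets is $\Pi^0_1$ by the standard effectiveness of $\htie$ and $\ntie$.

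To verify $Z_e\in\mathcal{J}(P,Q)$, I would construct a $\Sigma^0_2\text{-}{\sf LEM}$-reduction $Q\leq_{\Sigma^0_2\text{-}{\sf LEM}}P\otimes Z_e$ mimicking the learner in Lemma \ref{lem:2:nonBrouwer}. Given $f\oplus h\in P\otimes Z_e$, the reduction outputs the identity on $f$ whenever $f\in{\sf CPA}$ or $f$ extends some $\rho_{e'}$ with $e'\neq e$; when $f$ extends $\rho_e$, the initial segment $f^{\shft|\rho_e|}$ lies in the iterated hyperconcatenation $P_e=\bhtie_{i\leq e}S_{\lrangle{e,i}}$, and a team of $e+1$ learners (built from the confident/eventually-Popperian team of Part I \cite[Section 4]{HK_PartI} and Corollary \ref{cor:5:DNR4}, iterated) locates the branch $i\leq e$ one ultimately falls into, producing $\rho_{\lrangle{e,i}}\fr(x_i\oplus h)\in Q$ with $x_i\in S_{\lrangle{e,i}}$. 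Finitely many learners suffice, so the reduction is of $\Sigma^0_2\text{-}{\sf LEM}$ type, which (as in Part I) coincides with $\leq^{<\omega}_{\omega}$ up to the required piecewise structure, and so $Z_e\in\mathcal{J}(P,Q)$.

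For the separation, fix $R\in\mathcal{K}(P,Q)$ witnessed by a team $\{\Psi_i\}_{i<b}$ of learners, and claim $R\not\leq^\omega_1 Z_b$. If instead $R\leq^\omega_1 Z_b=S_{\lrangle{b,b+1}}$, choose $h\in Z_b$ and $g\leq_T h$ with $g\in R$. For each $f\in P\cap[\rho_b]$, some learner $\Psi_i$ outputs $F_i(f\oplus g)\in Q$. Jockusch-Soare independence forces this output into $\bigcup_{i\leq b}\rho_{\lrangle{b,i}}\fr Q_{\lrangle{b,i}}$: the ${\sf CPA}$-branch is excluded since no $S_j$ contains {\sf PA} degrees, so $f\oplus g$ cannot compute a {\sf PA} completion; the branches $\rho_{\lrangle{e',\cdot}}$ for $e'\neq b$ are excluded because $f\oplus g$ is computable from a join of elements of $S_{\lrangle{b,0}},\dots,S_{\lrangle{b,b}}$ and $h=S_{\lrangle{b,b+1}}$, which by independence computes no element of $S_{\lrangle{e',j}}$ for $e'\neq b$; and the branch $i=b+1$ is excluded because $P\setminus[\rho_b]$ lives outside the Turing ideal generated by the chosen sets. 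Projecting the output to its $S_{\lrangle{b,i^*}}$-coordinate yields, relative to the oracle $g$, a $(b,\omega)$-computable function from $\bhtie_{i\leq b}S_{\lrangle{b,i}}$ into $\bigoplus_{i\leq b}S_{\lrangle{b,i}}$, contradicting the $g$-relativization of Lemma \ref{lem:5:n_o-comp} (the independence hypothesis $\bigcup_{k\neq i}S_{\lrangle{b,k}}$ contains no element computable in $x_i\oplus g$ survives adjoining $g\leq_T h$). The resulting triple $(P,Q,\{Z_e\})$ then establishes Lemma \ref{lem:2:nonBrouwer2}, and Theorem \ref{thm:5:nonBrouwer2} follows exactly as Theorem \ref{thm:2:nonBrouwer} followed from Lemma \ref{lem:2:nonBrouwer}. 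The main technical obstacle is the case analysis in the separation step: pinning down precisely why every one of the $b$ learners must land in a $\rho_{\lrangle{b,i}}$-branch with $i\leq b$, so that Lemma \ref{lem:5:n_o-comp} actually applies with the right arity.
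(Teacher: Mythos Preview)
Your proposal is essentially the paper's own proof: the same Jockusch--Soare family $\{S_i\}$, the same definitions of $P$, $Q$, $Z_e$ with $\htie$ in place of $\fr$, and the same contradiction via Lemma~\ref{lem:5:n_o-comp} (relativized to $g$) at the index $e=b$. The only point where you drift from the paper is the verification of $Z_e\in\mathcal{J}(P,Q)$: you produce a team of learners and then assert this is ``of $\Sigma^0_2\text{-}{\sf LEM}$ type,'' whereas the paper argues directly that one application of $\Sigma^0_1\text{-}{\sf LEM}$ (to test whether $f$ extends $\rho_e$) followed by a finite iteration of $\Sigma^0_2\text{-}{\sf LEM}$ (to peel off the $e$ layers of $\htie$) suffices---this is the cleaner route, since the lemma must yield $Z_e\in\mathcal{J}$, not merely $Z_e\in\mathcal{K}$.
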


\begin{proof}
By Jockusch-Soare's theorem \ref{thm:JS2}, we have a collection $\{S_i\}_{i\in\nn}$ of nonempty $\Pi^0_1$ subsets of $2^\nn$ such that $x_k\not\leq_T\bigoplus_{j\not=k}x_j$ for any choice $x_i\in S_i$, $i\in\nn$.
Consider the following sets.
\begin{align*}
P&={\sf CPA}\fr\{S_{\lrangle{e,0}}\htie S_{\lrangle{e,1}}\htie\dots\htie S_{\lrangle{e,e}}\}_{e\in\nn},\quad Z_e=S_{\lrangle{e,e+1}},\\
Q&={\sf CPA}\fr\{Q_n\}_{n\in\nn},\quad\text{where }\;Q_{\lrangle{e,i}}=
\begin{cases}
S_{\lrangle{e,i}}\otimes Z_e, & \mbox{ if }i\leq e,\\
(P\setminus[\rho_e])\otimes Z_e, & \mbox{ if }i=e+1,\\
\emptyset, & \mbox{ otherwise.}
\end{cases}
\end{align*}
Here, $\rho_e$ is the $e$-th leaf of the corresponding computable tree $T_{\sf CPA}$ for ${\sf CPA}$.
To see $Z_e\in\mathcal{J}(P,Q)$, for $f\oplus g\in P\otimes Z_e$, by using {\sf $\Sigma^0_1$-LEM}, check whether $f$ does no extend $\rho_e$.
If no, outputs $\rho_{e,e+1}\fr(f\oplus g)$.
If $f$ extends $\rho_e$, it is not hard to see that an finite iteration of {\sf $\Sigma^0_2$-LEM} can divide $(\rho_e\fr S_{\lrangle{e,0}}\htie S_{\lrangle{e,1}}\htie\dots\htie S_{\lrangle{e,e}})\otimes Z_e$ into $\{S_{\lrangle{e,i}}\otimes Z_e\}_{i\leq e}$.

Fix $R\in\mathcal{K}(P,Q)$.
As $Q\leq^{\omega}_{<\omega}P\otimes R$, some $(b,\omega)$-computable function $F$ maps $P\otimes R$ into $Q$.
Suppose for the sake of contradiction that $R\leq^{\omega}_1Z_{b}$.
Then, for any $h\in Z_{b}$, we have $g\in R$ with $g\leq_T h$.
Then, $F$ maps $(P\cap[\rho_{b}])\otimes\{g\}$ into $Q\cap(\bigcup_{i\leq b}\rho_{\lrangle{b,i}})$ by our choice of $\{S_n\}_{n\in\nn}$.
Note that $(P\cap[\rho_{b}])\otimes\{g\}\equiv^1_1(\bhtie_{i\leq b}S_{\lrangle{e,i}})\otimes\{g\}$, and $Q\cap(\bigcup_{i\leq b}\rho_{\lrangle{b,i}})\equiv^1_1(\bigoplus_{i\leq b}S_{\lrangle{e,i}})\otimes Z_e$.
Therefore, by Lemma \ref{lem:5:n_o-comp}, $F$ is not $(b,\omega)$-computable.
\end{proof}

\begin{proof}[Proof of Theorem \ref{thm:5:nonBrouwer2}]
Let $P$, $Q$, and $\{Z_e\}_{e\in\nn}$ be $\Pi^0_1$ sets in \ref{lem:2:nonBrouwer2}.
Then, by the same argument as in the proof of Theorem \ref{thm:2:nonBrouwer}, it is not hard to show the desired statement.
\end{proof}

\begin{cor}
If $(\alpha,\beta)\in\{(1,1),(1,\omega),(\omega,1)\}$, and $(\gamma,\delta)\in\{(1,<\omega),(1,\omega|<\omega),(<\omega,1),(<\omega,\omega)\}$, then, there is an elementary difference between $\mathcal{D}^\alpha_\beta$ and $\mathcal{D}^\gamma_\delta$, in the language of partial orderings $\{\leq\}$.
\end{cor}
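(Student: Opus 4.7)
The plan is to exhibit a single first-order sentence in the language $\{\leq\}$ that holds in every $\mathcal{D}^\alpha_\beta$ with $(\alpha,\beta)\in\{(1,1),(1,\omega),(\omega,1)\}$ and fails in every $\mathcal{D}^\gamma_\delta$ with $(\gamma,\delta)\in\{(1,<\omega),(1,\omega|<\omega),(<\omega,1),(<\omega,\omega)\}$. The natural candidate is the Brouwerian axiom $\mathtt{Brou}$: for every pair $a,b$ there exists a least $c$ such that $a\vee c\geq b$.

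First I would verify that $\mathtt{Brou}$ is expressible in the pure order language. Since the join $a\vee c$ is the least upper bound, the relation ``$a\vee c\geq b$'' can be unfolded as $\forall x\,(a\leq x\wedge c\leq x\to b\leq x)$, requiring no function symbol. Consequently $\mathtt{Brou}$ has the shape
\[
\forall a\,\forall b\,\exists c\,\bigl[\forall x\,(a\leq x\wedge c\leq x\to b\leq x)\;\wedge\;\forall d\,\bigl(\forall y\,(a\leq y\wedge d\leq y\to b\leq y)\to c\leq d\bigr)\bigr],
\]
which is a first-order sentence of $\{\leq\}$.

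Next I would collect the positive side: $\mathcal{D}^1_1\models\mathtt{Brou}$ by Medvedev \cite{Med}, $\mathcal{D}^\omega_1\models\mathtt{Brou}$ by Muchnik \cite{Muc}, and $\mathcal{D}^1_\omega\models\mathtt{Brou}$ by the result of Part I recalled at the opening of Section 5.4 (where the Brouwerian residuation is described in terms of the $\Sigma^0_2$-double negation elimination). For the negative side, Theorem \ref{thm:2:nonBrouwer} gives $\mathcal{D}^1_{<\omega},\mathcal{D}^1_{\omega|<\omega},\mathcal{D}^{<\omega}_1\not\models\mathtt{Brou}$, and Theorem \ref{thm:5:nonBrouwer2} gives $\mathcal{D}^{<\omega}_\omega\not\models\mathtt{Brou}$. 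Hence $\mathtt{Brou}$ separates the two families, so for every pairing $(\alpha,\beta)$ and $(\gamma,\delta)$ from the prescribed sets, $\mathcal{D}^\alpha_\beta$ and $\mathcal{D}^\gamma_\delta$ satisfy different sentences in $\{\leq\}$.

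The step requiring the most care is the translation from the Brouwerian condition (which is stated using $\vee$) into a $\{\leq\}$-formula; once this is noted the rest is a straightforward bookkeeping of the previously proved (non-)Brouwerian results. I do not anticipate any additional obstacle beyond remarking that in each of these structures binary joins are realized by the disjoint union $\oplus$ in the underlying representatives, so ``$a\vee c$ exists'' is automatic, and therefore the reformulation displayed above really does capture the same property that is refuted in Theorems \ref{thm:2:nonBrouwer} and \ref{thm:5:nonBrouwer2}.
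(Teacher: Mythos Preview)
Your proposal is correct and follows essentially the same route as the paper: both exhibit the Brouwerian (Heyting implication) axiom as a single $\{\leq\}$-sentence, invoke Medvedev, Muchnik, and the Part I result for the positive instances, and Theorems \ref{thm:2:nonBrouwer} and \ref{thm:5:nonBrouwer2} for the negative ones. The only difference is cosmetic---you explicitly unfold $\vee$ as a least upper bound, whereas the paper simply remarks that $\vee$ is first-order definable in $\{\leq\}$.
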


\begin{proof}
Recall that the degree structures $\mathcal{D}^1_1$, $\mathcal{D}^1_\omega$, and $\mathcal{D}^\omega_1$ are Browerian, i.e., they satisfy the following elementary formula $\psi$ in the language of partial orders.
\[\psi\equiv(\forall p,q)(\exists r)(\forall s)\;(p\leq q\vee r\;\&\;(p\leq q\vee s\;\rightarrow\;r\leq s)).\]
Here, the supremum $\vee$ is first-order definable in the language of partial orders.
On the other hand, by Theorem \ref{thm:2:nonBrouwer} and \ref{thm:5:nonBrouwer2}, $\mathcal{D}^1_{<\omega}$, $\mathcal{D}_{\omega|<\omega}$, $\mathcal{D}^{<\omega}_{1}$, and $\mathcal{D}^{<\omega}_\omega$ are not Brouwerian, i.e., they satisfy $\neg\psi$.
\end{proof}

\subsection{Open Questions}

\begin{question}[Small Questions]~
\begin{enumerate}
\item Determine the intermediate logic corresponding the degree structure $\mathcal{D}^1_{\omega}$, where recall that $\mathcal{D}^1_1$ and $\mathcal{D}^\omega_1$ are exactly Jankov's Logic.
\item Does there exist $\Pi^0_1$ sets $P,Q\subseteq 2^\nn$ with $P\leq^1_\omega Q$ such that there is no $|a|$-bounded learnable function $\Gamma:Q\to P$ for any $a\in\mathcal{O}$?
For a $\Pi^0_1$ set $\widehat{P}$ in Theorem \ref{thm:contig:b-l}, does there exist a function $\Gamma:\widehat{P}\to P$ $(1,\omega)$-computable via an $|a|$-bounded learner for some notation $a\in\mathcal{O}$?
\item Does there exist a pair of special $\Pi^0_1$ sets $P,Q\subseteq 2^\nn$ with a function $\Gamma:Q\htie P\to Q\oplus P$ (or $\Gamma:Q\cls\btie P\to Q\oplus P$) which is learnable by a team of confident learners (or a team of eventually-Popperian learners)?
\item Let $P_0$, $P_1$, $Q_0$, and $Q_1$ be $\Pi^0_1$ subsets of $2^\nn$ with $Q_0\leq^1_\omega Q_1$ and $P_0\leq^1_\omega P_1$.
Then, does $\bhk{P_0\vee Q_0}_{\Sigma^0_2}\leq^1_\omega\bhk{P_1\vee Q_1}_{\Sigma^0_2}$ hold?
Moreover, if $Q_0\leq^1_\omega Q_1$ is witnessed by an eventually Lipschitz learner, then does $P_0\htie Q_0\leq^1_\omega P_1\htie Q_1$ hold?
\item Compare the reducibility $\leq^\omega_{tt,1}$ and other reducibility notions (e.g., $\leq^{<\omega}_{tt,1}$, $\leq^1_{<\omega}$, $\leq^1_{\omega|<\omega}$, $\leq^{<\omega}_1$ and $\leq^1_\omega$) for $\Pi^0_1$ subsets of Cantor space $2^\nn$.
\end{enumerate}
\end{question}

\begin{question}[Big Questions]~
\begin{enumerate}
\item Are there elementary differences between any two different degree structures $\mathcal{D}^\alpha_{\beta|\gamma}$ and  $\mathcal{D}^{\alpha'}_{\beta'|\gamma'}$ ($\mathcal{P}^\alpha_{\beta|\gamma}$ and  $\mathcal{P}^{\alpha'}_{\beta'|\gamma'}$)?
\item Is the commutative concatenation $\tie$ first-order definable in the structure $\mathcal{D}^1_1$ or $\mathcal{P}^1_1$?
\item Is each local degree structure $\mathcal{P}^{\alpha}_{\beta|\gamma}$ first-order definable in the global degree structure $\mathcal{D}^{\alpha}_{\beta|\gamma}$?
\item Is the structure $\mathcal{P}^1_\omega$ dense?
\item Investigate properties of $(\alpha,\beta|\gamma)$-degrees $\dg{a}$ assuring the existence of $\dg{b}>\dg{a}$ with the same $(\alpha',\beta'|\gamma')$-degree as $\dg{a}$.
\item Investigate the nested nested model, the nested nested nested model, and so on.
\item Does there exist a natural intermediate notion between $(<\omega,\omega)$-computability (team-learnability) and $(\omega,1)$-computability (nonuniform computability) on $\Pi^0_1$ sets?
\item (Ishihara) Define a uniform (non-adhoc) interpretation (such as the Kleene realizability interpretation) translating each intuitionistic arithmetical sentence (e.g., $(\neg\neg\exists n\forall m A(n,m))\rightarrow(\exists n\forall m A(n,m))$) into a partial multi-valued function (e.g., $\Sigma^0_2\text{-}{\sf DNE}:\subseteq\nn^\nn\rightrightarrows\nn^\nn$).
\end{enumerate}
\end{question}

\begin{ack}\upshape
The authors were partially supported by Grant-in-Aid for JSPS fellows.
The second author (Kihara) would like to thank Douglas Cenzer, Hajime Ishihara, Dick de Jongh, Arno Pauly, and Albert Visser, for valuable comments and helpful discussion, and the second author also would like to thank Makoto Tatsuta and Yoriyuki Yamagata for introducing him to the syntactical study on Limit Computable Mathematics.
The second author is also grateful to Sam Sanders who helped his English writing.
Finally, the authors would like to thank the anonymous referees for their valuable comments and suggestions.
\end{ack}

\addcontentsline{toc}{section}{Bibliography}
\bibliographystyle{plain}
\bibliography{IMref}

\begin{thebibliography}{10}

\bibitem{Al1}
Christopher~P. Alfeld.
\newblock Non-branching degrees in the {M}edvedev lattice of {$\Pi^0_1$}
  classes.
\newblock {\em J. Symb. Log.}, 72(1):81--97, 2007.

\bibitem{Bin}
Stephen Binns.
\newblock A splitting theorem for the {M}edvedev and {M}uchnik lattices.
\newblock {\em Math. Log. Q.}, 49(4):327--335, 2003.

\bibitem{BS}
Stephen Binns and Stephen~G. Simpson.
\newblock Embeddings into the {M}edvedev and {M}uchnik lattices of {$\Pi^0_1$}
  classes.
\newblock {\em Arch. Math. Log.}, 43(3):399--414, 2004.

\bibitem{BlBl}
Lenore Blum and Manuel Blum.
\newblock Toward a mathematical theory of inductive inference.
\newblock {\em Information and Control}, 28(2):125--155, 1975.

\bibitem{Bra1}
Vasco Brattka.
\newblock Effective {B}orel measurability and reducibility of functions.
\newblock {\em Math. Log. Q.}, 51(1):19--44, 2005.

\bibitem{BMP}
Vasco Brattka, Matthew de~Brecht, and Arno Pauly.
\newblock Closed choice and a uniform low basis theorem.
\newblock {\em Ann. Pure Appl. Logic}, 163(8):986--1008, 2012.

\bibitem{BG}
Vasco Brattka and Guido Gherardi.
\newblock Effective choice and boundedness principles in computable analysis.
\newblock {\em Bulletin of Symbolic Logic}, 17(1):73--117, 2011.

\bibitem{BGa}
Vasco Brattka and Guido Gherardi.
\newblock Weihrauch degrees, omniscience principles and weak computability.
\newblock {\em J. Symb. Log.}, 76(1):143--176, 2011.

\bibitem{BGM}
Vasco Brattka, Guido Gherardi, and Alberto Marcone.
\newblock The {B}olzano-{W}eierstrass theorem is the jump of weak {K}\"onig's
  lemma.
\newblock {\em Ann. Pure Appl. Logic}, 163(6):623--655, 2012.

\bibitem{CN79}
J.~Case and S.~Ngo-Manguelle.
\newblock Refinements of inductive inference by {P}opperian machines.
\newblock Technical Report 152, SUNY Buffalo, Dept. of Computer Science, 1979.

\bibitem{CKWW}
Douglas Cenzer, Takayuki Kihara, Rebecca Weber, and Guohua Wu.
\newblock Immunity and non-cupping for closed sets.
\newblock {\em Tbilisi Math. J.}, 2:77--94, 2009.

\bibitem{CH1}
Douglas~A. Cenzer and Peter~G. Hinman.
\newblock Density of the {M}edvedev lattice of {$\Pi^0_1$} classes.
\newblock {\em Arch. Math. Log.}, 42(6):583--600, 2003.

\bibitem{CH2}
Douglas~A. Cenzer and Peter~G. Hinman.
\newblock Degrees of difficulty of generalized r.e. separating classes.
\newblock {\em Arch. Math. Log.}, 46(7-8):629--647, 2008.

\bibitem{CR}
Douglas~A. Cenzer and Jefferey~B. Remmel.
\newblock {$\Pi^0_1$} classes in mathematics.
\newblock In {\em Handbook of Recursive Mathematics, vol $2$}, Stud. Logic
  Found. Math., pages 623--821. Elsevier, 1998.

\bibitem{CK}
Joshua~A. Cole and Takayuki Kihara.
\newblock The {$\forall\exists$}-theory of the effectively closed {M}edvedev
  degrees is decidable.
\newblock {\em Arch. Math. Log.}, 49(1):1--16, 2010.

\bibitem{CoSi}
Joshua~A. Cole and Stephen~G. Simpson.
\newblock Mass problems and hyperarithmeticity.
\newblock {\em J. Math. Log.}, 7(2):125--143, 2007.

\bibitem{deBre13}
Matthew de~Brecht.
\newblock Levels of discontinuity, limit-computability, and jump operators.
\newblock preprint, 2013.

\bibitem{DGJM}
Rodney~G. Downey, Noam Greenberg, Carl G.~Jockusch Jr., and Kevin~G. Milans.
\newblock Binary subtrees with few labeled paths.
\newblock {\em Combinatorica}, 31(3):285--303, 2011.

\bibitem{DHra}
Rodney~G. Downey and Denis~R. Hirschfeldt.
\newblock {\em Algorithmic Randomness and Complexity}.
\newblock Theory and Applications of Computability. Springer, 2010.
\newblock 883 pages.

\bibitem{Dup}
Jacques Duparc.
\newblock Wadge hierarchy and {V}eblen hierarchy {P}art {I}: {B}orel sets of
  finite rank.
\newblock {\em J. Symb. Log.}, 66(1):56--86, 2001.

\bibitem{HRM}
Y.~L. Ershov, S.~S. Goncharov, A.~Nerode, J.~B. Remmel, and V.~W. Marek,
  editors.
\newblock {\em Handbook of Recursive Mathematics, Volume 2: Recursive Algebra,
  Analysis and Combinatorics}.
\newblock Studies in Logic and the Foundations of Mathematics. North Holland,
  1998.
\newblock 798 pages.

\bibitem{Gol}
E.~Mark Gold.
\newblock Limiting recursion.
\newblock {\em J. Symb. Log.}, 30(1):28--48, 1965.

\bibitem{GM}
Noam Greenberg and Joseph~S. Miller.
\newblock Diagonally non-recursive functions and effective {H}ausdorff
  dimension.
\newblock {\em Bull. London Math. Soc.}, 43(4):636--654, 2011.

\bibitem{Hemmerling08}
Armin Hemmerling.
\newblock Hierarchies of function classes defined by the first-value operator.
\newblock {\em Theor. Inform. Appl.}, 42(2):253--270, 2008.

\bibitem{Hertling96}
Peter Hertling.
\newblock Topological complexity with continuous operations.
\newblock {\em J. Complexity}, 12(4):315--338, 1996.

\bibitem{Higuchi12}
Kojiro Higuchi.
\newblock Effectively closed mass problems and intuitionism.
\newblock {\em Ann. Pure Appl. Logic}, 163(6):693--697, 2012.

\bibitem{HK_PartI}
Kojiro Higuchi and Takayuki Kihara.
\newblock Inside the muchnik degrees, part i.
\newblock preprint, 2013.

\bibitem{Hin}
Peter~G. Hinman.
\newblock A survey of {M}ucnik and {M}edvedev degrees.
\newblock {\em Bulletin of Symbolic Logic}, 18(2):161--229, 2012.

\bibitem{JORS}
Sanjay Jain, Daniel~N. Osherson, James~S. Royer, and Arun Sharma.
\newblock {\em Systems That Learn: An Introduction to Learning Theory}.
\newblock MIT Press, 1999.
\newblock 329 pages.

\bibitem{JR}
J.~E. Jayne and C.~A. Rogers.
\newblock First level {B}orel functions and isomorphism.
\newblock {\em J. Math. Pure Appl.}, 61:177--205, 1982.

\bibitem{Joc}
Carl~G. Jockusch.
\newblock Degrees of functions with no fixed points.
\newblock In {\em Logic, methodology and philosophy of science, VIII (Moscow,
  1987)}, Stud. Logic Found. Math., pages 191--201. North-Holland, Amsterdam,
  1989.

\bibitem{JS2}
Carl~G. Jockusch and Robert~I. Soare.
\newblock Degrees of members of {$\Pi^0_1$} classes.
\newblock {\em Pacific J. Math.}, 40(3):605--616, 1972.

\bibitem{JS}
Carl~G. Jockusch and Robert~I. Soare.
\newblock {$\Pi^0_1$} classes and degrees of theories.
\newblock {\em Trans. Amer. Math. Soc.}, 173:33--56, 1972.

\bibitem{MRSK13}
Miroslav Ka\v{c}ena, Luca Motto~Ros, and Brian Semmes.
\newblock Some observations on ``a new proof of a theorem of {J}ayne and
  {R}ogers''.
\newblock {\em Real Anal. Exchange}, 38(1):121--132, 2012/2013.

\bibitem{Kihta}
Takayuki Kihara.
\newblock Decomposing {B}orel functions using the {S}hore-{S}laman join
  theorem.
\newblock submitted, 2013.

\bibitem{LSS}
Andrew E.~M. Lewis, Richard~A. Shore, and Andrea Sorbi.
\newblock Topological aspects of the {M}edvedev lattice.
\newblock {\em Arch. Math. Log.}, 50(3-4):319--340, 2011.

\bibitem{Malek06}
Agnieszka Ma{\l}ek.
\newblock A classification of {B}aire one star functions.
\newblock {\em Real Anal. Exchange}, 32:205--212, 2006.

\bibitem{Med}
Y.~T. Medvedev.
\newblock Degrees of difficulty of the mass problems.
\newblock {\em Dokl. Akad. Nauk SSSR}, 104:501--504, 1955.
\newblock (In Russian).

\bibitem{MosDS}
Yiannis~N. Moschovakis.
\newblock {\em Descriptive Set Theory}.
\newblock Mathematical Surveys and Monographs. American Mathematical Society,
  2009.
\newblock 502 pages.

\bibitem{MR2}
Luca Motto~Ros.
\newblock Game representations of classes of piecewise definable functions.
\newblock {\em Math. Log. Q.}, 57(1):95--112, 2011.

\bibitem{MRpre}
Luca Motto~Ros.
\newblock On the structure of finite levels and $\omega$-decomposable {B}orel
  functions.
\newblock {\em to appear in Journal of Symbolic Logic}, 2013.

\bibitem{MRS}
Luca Motto~Ros and Brian Semmes.
\newblock A new proof of a theorem of {J}ayne and {R}ogers.
\newblock {\em Real Anal. Exchange}, 35(1):195--203, 2010.

\bibitem{Muc}
A.~A. Muchnik.
\newblock On strong and weak reducibility of algorithmic problems.
\newblock {\em Sibirskii Math. Zh.}, 4:1328--1341, 1963.
\newblock (In Russian).

\bibitem{NieR}
Andre Nies.
\newblock {\em Computability and Randomness}.
\newblock Oxford Logic Guides. Oxford University Press, 2009.
\newblock 433 pages.

\bibitem{PBreta}
Arno Pauly and Matthew de~Brecht.
\newblock Non-deterministic computation and the {J}ayne {R}ogers {T}heorem.
\newblock to appear in Electronic Proceedings in Theoretical Computer Science,
  2012.

\bibitem{PawSab}
Janusz Pawlikowski and Marcin Sabok.
\newblock Decomposing {B}orel functions and structure at finite levels of the
  {B}aire hierarchy.
\newblock {\em Ann. Pure Appl. Log.}, 163(12):1748--1764, 2012.

\bibitem{lRP13}
St{\'e}phane~Le Roux and Arno Pauly.
\newblock Closed choice for finite and for convex sets.
\newblock In Paola Bonizzoni, Vasco Brattka, and Benedikt L{\"o}we, editors,
  {\em CiE}, volume 7921 of {\em Lecture Notes in Computer Science}, pages
  294--305. Springer, 2013.

\bibitem{Sabok09}
Marcin Sabok.
\newblock $\sigma$-continuity and related forcings.
\newblock {\em Arch. Math. Log.}, 48(5):449--464, 2009.

\bibitem{Sem}
Brian Semmes.
\newblock {\em A Game for the {B}orel Functions}.
\newblock PhD thesis, Universiteit van Amsterdam, 2009.

\bibitem{Sha}
Paul Shafer.
\newblock Coding true arithmetic in the {M}edvedev and {M}uchnik degrees.
\newblock {\em J. Symb. Log.}, 76(1):267--288, 2011.

\bibitem{Sim}
Stephen~G. Simpson.
\newblock Mass problems and randomness.
\newblock {\em Bull. Symb. Log.}, 11(1):1--27, 2005.

\bibitem{Sim2}
Stephen~G. Simpson.
\newblock An extension of the recursively enumerable {T}uring degrees.
\newblock {\em J. London Math. Soc.}, 75:287--297, 2007.

\bibitem{Sim5}
Stephen~G. Simpson.
\newblock Some fundamental issues concerning degrees of unsolvability.
\newblock In {\em Computational prospects of infinity Part II. Presented
  talks}, volume~15 of {\em Lect. Notes Ser. Inst. Math. Sci. Natl. Univ.
  Singap.}, pages 313--332. World Sci. Publ., Hackensack, NJ, 2007.

\bibitem{SimRM}
Stephen~G. Simpson.
\newblock {\em Subsystems of Second Order Arithmetic}.
\newblock Perspectives in Logic. Cambridge University Press, 2009.
\newblock 464 pages.

\bibitem{Simta}
Stephen~G. Simpson.
\newblock Mass problems associated with effectively closed sets.
\newblock {\em Tohoku Math. J.}, 63(4):489--517, 2011.

\bibitem{SiSl}
Stephen~G. Simpson and Theodore Slaman.
\newblock Medvedev degrees of {$\Pi^0_1$} subsets of $2^\omega$.
\newblock unpublished, 2001.

\bibitem{Soa}
Robert~I. Soare.
\newblock {\em Recursively Enumerable Sets and Degrees}.
\newblock Perspectives in Mathematical Logic. Springer, Heidelberg, 1987.
\newblock XVIII+437 pages.

\bibitem{Sole98}
Slawomir Solecki.
\newblock Decomposing {B}orel sets and functions and the structure of {B}aire
  class $1$ functions.
\newblock {\em J. Amer. Math. Soc.}, 11:521--550, 1998.

\bibitem{Wei}
Klaus Weihrauch.
\newblock {\em Computable Analysis: An Introduction}.
\newblock Texts in Theoretical Computer Science. Springer, 2000.
\newblock 285 pages.

\end{thebibliography}

\printindex

\end{document}